\documentclass[draft, 12pt]{article}
\usepackage{amsmath,amsthm,amssymb}

\usepackage[top=30truemm, bottom=30truemm, left=25truemm, right=25truemm]{geometry}

\newtheorem{defi}{Definition}[section]

\newtheorem{lem}[defi]{Lemma}
\newtheorem{prop}[defi]{Proposition}
\newtheorem{thm}[defi]{Theorem}
\newtheorem{cor}[defi]{Corollary}
\newtheorem{ex}[defi]{Example}

\usepackage[dvipdfmx, colorlinks=true]{hyperref}
\hypersetup{final=true}

\DeclareMathOperator{\rank}{rank}
\DeclareMathOperator{\Span}{Span}

\usepackage{ytableau}

\title{
An Algebra Associated with a Flag in
a Subspace Lattice over a Finite Field
and the Quantum Affine Algebra $U_q(\widehat{\mathfrak{sl}}_2)$
}
\author{Yuta Watanabe}
\date{\today}

\begin{document}
\maketitle

\begin{abstract}
In this paper, we introduce 
an algebra $\mathcal{H}$ from a subspace lattice
with respect to a fixed flag
which contains its incidence algebra as a proper subalgebra.
We then establish a relation between the algebra $\mathcal{H}$ and
the quantum affine algebra $U_{q^{1/2}}(\widehat{\mathfrak{sl}}_2)$,
where $q$ denotes the cardinality of the base field.
It is an extension of the well-known relation 
between the incidence algebra of a subspace lattice and 
the quantum algebra $U_{q^{1/2}}(\mathfrak{sl}_2)$.
We show that there exists an algebra homomorphism
from $U_{q^{1/2}}(\widehat{\mathfrak{sl}}_2)$ to $\mathcal{H}$
and that
any irreducible module for $\mathcal{H}$ is irreducible
as an $U_{q^{1/2}}(\widehat{\mathfrak{sl}}_2)$-module.

\bigskip

\noindent
{\bf 2010 Mathematics Subject Classification}:
51E20; 20G42\\
\noindent
{\bf Keywords}:
subspace lattice;
incidence algebra;
quantum affine algebra;
Young diagram
\end{abstract}

\section{Introduction}\label{introduction}

By a \emph{subspace lattice}, also known as a \emph{projective geometry},
we mean the partially ordered set (poset) of
all subspaces of a finite-dimensional vector space over a finite field,
where the ordering is given by inclusion.
In the field of combinatorics,
subspace lattices are regarded as $q$-analogs of Boolean lattices
and therefore they have been studied from many combinatorial points of view, such as
Grassmann codes and Grassmann graphs.
On the other hand,
the \emph{quantum affine algebras} $U_q(\widehat{\mathfrak{sl}}_2)$ are 
Hopf algebras that are $q$-deformations of the universal enveloping algebra
of the affine Lie algebra $\widehat{\mathfrak{sl}}_2$ and
their representations are developed in \cite[Section~5]{Ch} as trigonometric solutions of the quantum Yang--Baxter equation.
Recently, the author succeeded in \cite{W} in establishing a relation between
an algebra associated with a subspace lattice and the quantum affine algebras $U_q(\widehat{\mathfrak{sl}}_2)$
as an extension of the well-known relation
between the incidence algebra of a subspace lattice and 
the quantum algebras $U_q(\mathfrak{sl}_2)$.
In this paper, we introduce 
another algebra and establish its relation to the quantum affine algebra  $U_q(\widehat{\mathfrak{sl}}_2)$
which is in some sense the opposite extreme to that obtained in \cite{W}.

Here we briefly recall the known facts.
See \cite{Te90}, \cite{Te03} and \cite{W} for more detail.
Let $H$ denote an $N$-dimensional vector space
over a finite field $\mathbb{F}_q$ of $q$ elements
and let $P$ denote the subspace lattice consisting of all subspaces of $H$.
From the poset structure of $P$, we define the \emph{lowering matrix} $L$ indexed by $P$
whose $(x,y)$-entry is $1$ if $y$ covers $x$ and $0$ otherwise for $x,y \in P$.
Similarly, we define the \emph{raising matrix} $R$ indexed by $P$
whose $(x,y)$-entry is $1$ if $x$ covers $y$ and $0$ otherwise for $x,y \in P$.
The poset $P$ has the grading which is a partition of $P$
into nonempty sets
\begin{align*}
P_i = \lbrace y \in P \mid \dim y = i \rbrace
&&
(0 \le i \le N).
\end{align*}
From this grading structure, for $0 \le i \le N$,
we define the \emph{$i$-th projection matrix} $E_i^\star$ 
by the diagonal matrix indexed by $P$
whose $(x,x)$-entry is $1$ if $x \in P_i$ and $0$ otherwise for $x \in P$.
By the \emph{incidence algebra}, we mean the complex matrix algebra generated by
the above three kinds of matrices $L$, $R$ and $E_i^\star$, where $0 \le i \le N$.
It is known that there exists a surjective algebra homomorphism 
from the quantum algebra $U_{q^{1/2}}(\mathfrak{sl}_2)$ to the incidence algebra.
Moreover, it is also known that
any irreducible module for the incidence algebra induces 
an irreducible $U_{q^{1/2}}(\mathfrak{sl}_2)$-module of type $1$.

In our previous paper \cite{W},
we extended the algebra homomorphism as follows.
Let us fix one subspace $x \in P$
with $0 < \dim x < N$ and
consider the following new ``rectangle'' partition of $P$
with respect to $x$:
\begin{equation}
P_{i,j} = 
\lbrace y \in P \mid \dim y = i+j, \dim(y \cap x) = i \rbrace
\label{eq:Pi}
\end{equation}
for $0 \le i \le \dim x$ and for $0 \le j \le N - \dim x$.
Remark that this is a refinement of the grading.
Then define the new projection matrices with respect to this partition
and define the complex matrix algebra generated by
the lowering, raising matrices and these new projection matrices.
By the construction, this new algebra contains the incidence algebra as its subalgebra.
Then it is shown in \cite{W} that there exists an algebra homomorphism 
from the quantum affine algebra $U_{q^{1/2}}(\widehat{\mathfrak{sl}}_2)$ to the new algebra,
which extends the above algebra homomorphism
from $U_{q^{1/2}}(\mathfrak{sl}_2)$ to the incidence algebra.
Moreover it is also shown in \cite{W} that 
any irreducible module for the new algebra 
induces an irreducible $U_{q^{1/2}}(\widehat{\mathfrak{sl}}_2)$-module of type $(1,1)$
which is more precisely a tensor product of two evaluation modules.

Now we summarize the main results of this paper.
We fix a (full) flag $\lbrace x_i \rbrace_{i = 0}^N$ on $H$ instead of the subspace $x \in P$,
and consider the following new ``hyper-cubic'' partition of $P$
with respect to $\lbrace x_i \rbrace_{i = 0}^N$:
\begin{equation}
P_\mu =
\lbrace y \in P \mid \dim(y \cap x_i) = \mu_1 + \mu_2 + \cdots + \mu_i \; (1 \le i \le N)\rbrace
\label{eq:Pi1}
\end{equation}
for $\mu = (\mu_1, \mu_2, \ldots, \mu_N) \in \lbrace 0,1 \rbrace^N$.
Then for $\mu \in \lbrace 0,1 \rbrace^N$,
we define the \emph{projection matrix} $E_\mu^*$ 
by the diagonal matrix indexed by $P$
whose $(y,y)$-entry is $1$ if $y \in P_\mu$ and $0$ otherwise for $y \in P$.
We next define the complex matrix algebra $\mathcal{H}$ generated by
the lowering, raising matrices and these new projection matrices
$E_\mu^*$, where $\mu \in \lbrace 0,1 \rbrace^N$.
By the construction, the algebra $\mathcal{H}$ contains the incidence algebra as its subalgebra.
We prove that there exists an algebra homomorphism 
from the quantum affine algebra $U_{q^{1/2}}(\widehat{\mathfrak{sl}}_2)$ to the algebra $\mathcal{H}$,
which again extends the above algebra homomorphism
from $U_{q^{1/2}}(\mathfrak{sl}_2)$ to the incidence algebra.
Moreover it is also proved that 
any irreducible module for the algebra $\mathcal{H}$ 
induces an irreducible $U_{q^{1/2}}(\widehat{\mathfrak{sl}}_2)$-module of type $(1,1)$
which is more precisely a tensor product of evaluation modules of dimension $2$.
Our main results are Theorems \ref{support} and \ref{isomorphic}.
To prove the main theorems,
we classify all the $\mathcal{H}$-modules up to isomorphism
and determine the multiplicities appearing in the standard module.

Seen from the viewpoint
of the action of the general linear group $\mathrm{GL}(N,q)$ on the subspace lattice $P$,
we may say the results of this paper are ``opposite'' to those obtained in our previous paper \cite{W}.
(In this paper, however, we will not take this point of view in any essential way.
We refer the reader to \cite{Du} for this viewpoint.)
Indeed, the partitions \eqref{eq:Pi} and \eqref{eq:Pi1}
turn out to be the orbits of
maximal and minimal parabolic subgroups of $\mathrm{GL}(N,q)$, respectively.
More precisely,
the corresponding subgroups
stabilize the fixed subspace $x$
and the fixed flag $\lbrace x_i \rbrace_{i = 0}^N$, respectively.

It is worth pointing out that our proofs involve
a natural and intrinsic combinatorial characterization
of the subspace lattice,
while the method used in our previous paper \cite{W}
is rather oriented towards Lie theory and the representation theory of quantum groups.
In this paper, 
we fix a basis $v_1, v_2, \ldots, v_N$ for $H$
such that $x_i$ is spanned by $v_1, v_2, \ldots, v_i$ for $1 \le i \le N$.
With respect to the basis,
we identify each subspace in $P$
with a certain matrix whose entries are in the base field $\mathbb{F}_q$.

Then,
we relate these matrices to
classical combinatorial objects, such as
Ferrers boards, rook placements and inversion numbers,
and interpret algebraic properties of subspaces in terms of these matrices
(and moreover, of other combinatorial objects above).
Almost all the problems which we concern in this paper
arrive at problems in such classical combinatorial fields.
This type of argument is motivated by
Delsarte \cite{De}
and the technique used in this paper is a kind of
a generalized version of that in \cite{De}.

Comparing the partitions \eqref{eq:Pi} and \eqref{eq:Pi1} again,
one may ask whether same kinds of results can still be obtained
if we take a more general partition, 
which is defined by replacing a subspace or a full flag  by a general flag.
We will not develop this point here because
the required computation is expected to be far more complicated.
However we emphasize that we have done for the two extremal and the most essential cases,
and conjecture that similar results still hold in the general case.

We organize this paper as follows.
In Section \ref{hyper-cubic},
we recall the basic notation and introduce a hyper-cubic structure in a subspace lattice.
In Section \ref{Ferrers},
we recall some notation on Ferrers boards, rook placements and inversion numbers
which is used in this paper.
In Sections \ref{matrixrep} and \ref{sec:number},
we introduce a matrix representation of $P$ and interpret 
some properties of matrices in terms of 
rook placements and inversion numbers.
In Sections \ref{sec:H} and \ref{sec:strH},
we introduce the main object of this paper, the algebra $\mathcal{H}$,
and discuss the structure of it.
In Sections \ref{LRactiononV}, \ref{LRRLactiononV}, \ref{sec:kappa} and \ref{sec:Hmod},
we discuss the $\mathcal{H}$-action on the standard module and
classify all the irreducible $\mathcal{H}$-modules up to isomorphism.
In Section \ref{Uq(sl2hat)},
for the convenience of the reader,
we repeat the relevant material,
including the definition of the quantum affine algebra $U_q(\widehat{\mathfrak{sl}}_2)$,
from \cite{Ch} without proofs,
thus making our exposition self-contained.
In Section \ref{sec:main},
our main results are stated and proved.

\section{A subspace lattice and its hyper-cubic structure}\label{hyper-cubic}

We now begin our formal argument.
Recall  the integers 
$\mathbb{Z} = \lbrace 0, \pm 1, \pm 2, \ldots \rbrace$
and the natural numbers
$\mathbb{N} = \lbrace 0, 1, 2, \ldots \rbrace$
and let $\mathbb{C}$ denote the complex field.
The Kronecker delta is denoted by $\delta$.
Throughout the paper except Section \ref{Uq(sl2hat)},
we fix $N \in \mathbb{N} \setminus \lbrace 0 \rbrace$.
Throughout the paper except 
Sections \ref{Ferrers}, \ref{sec:kappa} and \ref{Uq(sl2hat)},
we fix a prime power $q$.
Let $\mathbb{F}_q$ denote a finite field of $q$ elements
and let $H$ denote a vector space over
$\mathbb{F}_q$ with dimension $N$.
Let $P$ denote the set of all subspaces of $H$.
We view $P$ as a poset
with the partial order given by inclusion.
The poset $P$ is a graded lattice of rank $N$ 
where the rank function is defined by its dimension and called the \emph{subspace lattice}.
For two subspaces $y,z \in P$, we say $y$ \emph{covers} $z$
whenever $z \subseteq y$ and $\dim z = \dim y - 1$.
By a \emph{(full) flag} on $H$ we mean a sequence $\lbrace x_i \rbrace_{i=0}^N$
of subspaces in $P$ such that $\dim x_i = i$ for $0 \le i \le N$
and $x_{i-1} \subsetneq x_i$ for $1 \le i \le N$.
For the rest of this paper, 
we fix a flag $\lbrace x_i \rbrace_{i=0}^N$ on $H$.
A basis $v_1, v_2, \ldots, v_N$ for $H$
is said to be \emph{adapted to the flag $\lbrace x_i \rbrace_{i=0}^N$}
whenever each $x_i$ is spanned by $v_1, v_2, \ldots, v_i$ for $1 \le i \le N$.

By the \emph{$N$-cube} we mean the poset
consisting of all $N$-tuples in $\lbrace 0,1 \rbrace^N$
with the partial order $\mu \le \nu$ defined by
$\mu_m \le \nu_m$ for all $1 \le m \le N$,
where $\mu = (\mu_1,\mu_2,\ldots,\mu_N),
\nu = (\nu_1,\nu_2,\ldots,\nu_N) \in \lbrace 0,1 \rbrace^N$.
(We note that it is isomorphic to the Boolean lattice of all subsets of an $N$-set.)
The $N$-cube is a graded lattice of rank $N$
with the rank function defined by 
\[
|\mu| = \mu_1 + \mu_2 + \cdots + \mu_N
\]
for $\mu = (\mu_1,\mu_2,\ldots,\mu_N) \in \lbrace 0,1 \rbrace^N$.

\begin{prop}\label{location}
There exists an order-preserving map
from the subspace lattice $P$ to the $N$-cube which sends
$y \in P$ to $(\mu_1,\mu_2,\ldots,\mu_N) \in \lbrace 0,1 \rbrace^N$
where
\[
\dim (y \cap x_m) = \mu_1 + \mu_2 + \cdots + \mu_m
\]
for $1 \le m \le N$.
Moreover this map is surjective.
\end{prop}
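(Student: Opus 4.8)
The plan is to verify two things: first, that the map $y \mapsto (\mu_1,\dots,\mu_N)$ is well-defined, i.e.\ that for each $y \in P$ the sequence of jumps $\dim(y \cap x_m) - \dim(y \cap x_{m-1})$ indeed lies in $\{0,1\}^N$; second, that it is order-preserving and surjective. For well-definedness, I would fix $y \in P$ and set $d_m = \dim(y \cap x_m)$ for $0 \le m \le N$, with the convention $d_0 = \dim(y \cap x_0) = \dim(y \cap \{0\}) = 0$. Since $x_{m-1} \subseteq x_m$, we have $y \cap x_{m-1} \subseteq y \cap x_m$, so $d_{m-1} \le d_m$. For the upper bound $d_m - d_{m-1} \le 1$, the key observation is that $x_{m-1}$ has codimension $1$ in $x_m$, so $(y \cap x_m) + x_{m-1}$ is a subspace of $x_m$ containing $x_{m-1}$, hence equals either $x_{m-1}$ or $x_m$; in either case $\dim\big((y\cap x_m) + x_{m-1}\big) - \dim x_{m-1} \le 1$. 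Combining this with the modular (dimension) law
\[
\dim(y \cap x_m) + \dim x_{m-1} = \dim\big((y \cap x_m) + x_{m-1}\big) + \dim\big((y \cap x_m) \cap x_{m-1}\big)
\]
and noting $(y \cap x_m) \cap x_{m-1} = y \cap x_{m-1}$, we get $d_m - d_{m-1} = \dim\big((y\cap x_m)+x_{m-1}\big) - \dim x_{m-1} \in \{0,1\}$, so $\mu_m := d_m - d_{m-1} \in \{0,1\}$ as required. This step is the main substantive point; everything else is bookkeeping.

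Order-preservation is then immediate: if $y \subseteq z$ in $P$, then $y \cap x_m \subseteq z \cap x_m$ for every $m$, so $\mu_1 + \cdots + \mu_m = \dim(y \cap x_m) \le \dim(z \cap x_m) = \nu_1 + \cdots + \nu_m$ for all $m$, which is precisely the partial order $\mu \le \nu$ on the $N$-cube (it is the standard fact that $\mu \le \nu$ coordinatewise iff all partial sums satisfy $\sum_{i\le m}\mu_i \le \sum_{i \le m}\nu_i$ — or one can simply note $\mu_m = d_m - d_{m-1}$ and $\nu_m = e_m - e_{m-1}$ with $d_{m-1} \le e_{m-1}$, $d_m \le e_m$, and rule out $\mu_m > \nu_m$ using that each is in $\{0,1\}$; the cleanest route is to invoke the partial-sum characterization directly).

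For surjectivity, I would use the basis $v_1,\dots,v_N$ adapted to the flag $\{x_i\}_{i=0}^N$, which exists by the remark preceding the proposition. Given $\mu = (\mu_1,\dots,\mu_N) \in \{0,1\}^N$, let $S = \{ m : \mu_m = 1 \}$ and take $y = \Span\{ v_m : m \in S \}$. Then $x_m = \Span\{v_1,\dots,v_m\}$, and since the $v_i$ are linearly independent, $y \cap x_m = \Span\{ v_i : i \in S,\ i \le m \}$, so $\dim(y \cap x_m) = |\{ i \in S : i \le m \}| = \mu_1 + \cdots + \mu_m$. Hence $y$ maps to $\mu$, proving surjectivity and completing the proof.
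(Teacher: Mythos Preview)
Your well-definedness and surjectivity arguments are fine and match the paper's. The gap is in the order-preserving step. The ``standard fact'' you invoke---that $\mu \le \nu$ coordinatewise iff $\sum_{i\le m}\mu_i \le \sum_{i\le m}\nu_i$ for all $m$---is false for $\{0,1\}$-sequences: take $\mu=(0,1)$ and $\nu=(1,0)$, where the partial sums $0\le 1$ and $1\le 1$ hold but $\mu_2 > \nu_2$. Your fallback sketch (``rule out $\mu_m > \nu_m$ using that each is in $\{0,1\}$'') fails for the same reason: with $d_{m-1}=0$, $d_m=1$, $e_{m-1}=1$, $e_m=1$ you have $d_{m-1}\le e_{m-1}$, $d_m\le e_m$, yet $\mu_m=1>0=\nu_m$. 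So from $y\subseteq z$ you have only established the partial-sum (dominance) order, not the coordinatewise order that defines the $N$-cube.

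What is needed is a direct argument at each coordinate. The paper does this: assuming $y\subseteq z$ and $\lambda_m=0$ (i.e.\ $z\cap x_{m-1}=z\cap x_m$), observe
\[
(y\cap x_m)\setminus(y\cap x_{m-1}) = (y\cap x_m)\setminus x_{m-1} \subseteq (z\cap x_m)\setminus x_{m-1} = (z\cap x_m)\setminus(z\cap x_{m-1}) = \emptyset,
\]
so $y\cap x_{m-1}=y\cap x_m$ and hence $\mu_m=0$. The key is that the inclusion $y\subseteq z$ gives $(y\cap x_m)\setminus x_{m-1}\subseteq (z\cap x_m)\setminus x_{m-1}$, which is strictly more information than the dimension inequalities you used.
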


\begin{proof}
Let $y \in P$ and $1 \le m \le N$.
We have $\dim(y \cap x_{m-1}) \le \dim(y \cap x_m)$
since $x_{m-1} \subseteq x_m$.
We also have $\dim(y \cap x_m) - \dim(y \cap x_{m-1})\le 1$ since 
$\dim x_m - \dim x_{m-1} = 1$.
Thus
$\mu_m = \dim(y \cap x_m) - \dim(y \cap x_{m-1})$
is either $0$ or $1$.
Therefore 
this correspondence becomes 
a map from $P$ to the $N$-cube.
Let $y, z \in P$ satisfy $y \subseteq z$ and
let $\mu = (\mu_1, \mu_2, \ldots, \mu_N), \lambda = (\lambda_1, \lambda_2, \ldots, \lambda_N) \in \lbrace 0,1 \rbrace^N$
be the images of $y, z$ under the map, respectively.
If there exists $1 \le m \le N$ such that  $z \cap x_{m-1} = z \cap x_m$,
then
\[
(y \cap x_m) \setminus (y \cap x_{m-1}) =
(y \cap x_m) \setminus x_{m-1} \subseteq
(z \cap x_m) \setminus x_{m-1} =
(z \cap x_m) \setminus (z \cap x_{m-1}) = \emptyset,
\]
and so
we have
$y \cap x_{m-1} = y \cap x_m$.
Therefore, $\lambda_m = 0$ implies $\mu_m = 0$ for any $1 \le m \le N$,
which is equivalent to $\mu \le \lambda$.
We have now proved that the map preserves the ordering.
To show its surjectivity,
let $v_1, v_2, \ldots, v_N$ denote a basis for $H$
adapted to the flag $\lbrace x_i \rbrace_{i=0}^N$.
For any $\mu = (\mu_1, \mu_2, \ldots, \mu_N) \in \lbrace 0,1 \rbrace^N$,
consider the subspace $y \in P$ spanned by
the vectors $\lbrace v_i \mid 1 \le i \le N, \mu_i = 1 \rbrace$.
For each $1 \le m \le N$,
the intersection $y \cap x_m$
is spanned by the vectors 
$\lbrace v_i \mid 1 \le i \le m, \mu_i = 1 \rbrace$.
Therefore,
$\dim (y \cap x_m) - \dim (y \cap x_{m-1}) = \mu_m$
for $1 \le m \le N$
and so $y$ is mapped to $\mu$.
This proves the map is surjective.
\end{proof}

\begin{defi}\label{defi:location}\normalfont
If $\mu \in \lbrace 0,1 \rbrace^N$ is the image of $y \in P$
by the map in Proposition \ref{location},
we call $\mu$ the \emph{location} of $y$.
For $\mu \in \lbrace 0,1 \rbrace^N$,
let $P_\mu$ denote the set of all subspaces at location $\mu$.
For notational convenience, for $\mu \in \mathbb{Z}^N$ we set $P_\mu = \emptyset$
unless $\mu \in \lbrace 0,1 \rbrace^N$.
\end{defi}

Note that 
$P$ is the disjoint union of $P_\mu$, where $\mu \in \lbrace 0,1 \rbrace^N$.
Observe that $\dim y = |\mu|$ for $y \in P_\mu$.

\begin{defi}\label{mcovers}\normalfont
Let $1 \le m \le N$.
For $\mu = (\mu_1, \mu_2, \ldots, \mu_N), \nu = (\nu_1, \nu_2, \ldots, \nu_N) \in \lbrace 0,1 \rbrace^N$,
we say $\mu$ \emph{$m$-covers} $\nu$ whenever 
$\nu_m < \mu_m$ and $\nu_n = \mu_n$ for $1 \le n \le N$ with $n \neq m$.
Similarly, for $y, z \in P$, we say 
$y$ \emph{$m$-covers} $z$
whenever $y$ covers $z$ and
the location of $y$ $m$-covers the location of $z$.
\end{defi}

For each $1 \le m \le N$,
let $\widehat{m}$ denote the $N$-tuple in $\lbrace 0,1 \rbrace^N$
with a $1$ in $m$-th coordinate and $0$ elsewhere.
To simplify the notation,
we consider the coordinate-wise addition in $\mathbb{Z}^N$ so that
$\mu$ $m$-covers $\nu$ if and only if $\mu = \nu + \widehat{m}$ for $\mu, \nu \in \lbrace 0, 1 \rbrace^N$.

\begin{lem}\label{lem:adj}
For $\mu = (\mu_1, \mu_2, \ldots, \mu_N) \in \lbrace 0,1 \rbrace^N$ and for $1 \le m \le N$,
the following (i), (ii) hold.
\begin{enumerate}
\item Given $y \in P_\mu$, the number of subspaces $m$-covered by $y$ is
\[
\delta_{\mu_m,1}q^{\mu_{m+1}+\mu_{m+2}+ \cdots + \mu_N}.
\]
\item Given $y \in P_\mu$, the number of subspaces which $m$-cover $y$ is
\[
\delta_{\mu_m,0}q^{(m-1) - (\mu_1 + \mu_2 + \cdots + \mu_{m-1})}.
\]
\end{enumerate}
\end{lem}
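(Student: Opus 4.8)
The plan is to fix $y \in P_\mu$ and count, for each $m$, the subspaces $z$ with $z = y - \widehat{m}$ (part (i)) or $z = y + \widehat{m}$ (part (ii)) in the covering sense. The key observation is that $m$-covering is controlled entirely by how the intersections with the flag change at step $m$, so I would work with a basis $v_1, \dots, v_N$ adapted to the flag and reduce each count to a local linear-algebra problem near the index $m$.

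For part (i): if $\mu_m = 0$, then $\dim(y \cap x_m) = \dim(y \cap x_{m-1})$, and any $z$ covered by $y$ has $\dim(z \cap x_m) \ge \dim(y \cap x_m) - 1 = \dim(y \cap x_{m-1}) - 1 \ge \dim(z \cap x_{m-1})$ forcing $\mu'_m = 0$ where $\mu'$ is the location of $z$; hence no $z$ is $m$-covered by $y$, matching the factor $\delta_{\mu_m, 1}$. When $\mu_m = 1$, I would note that $y$ $m$-covers $z$ iff $z$ is a hyperplane of $y$ with $z \cap x_{m-1} = y \cap x_{m-1}$ and $z \cap x_m \subsetneq y \cap x_m$; equivalently, writing $W = y \cap x_{m-1}$ (a subspace of $y$ of codimension $\mu_m + \mu_{m+1} + \cdots + \mu_N - 1$ wait — of codimension equal to the number of coordinates $\ge m$ that are $1$, which is $\mu_m + \mu_{m+1} + \cdots + \mu_N$ when... ) I would count hyperplanes $z$ of $y$ that contain a fixed codimension-$(1 + \#\{n > m : \mu_n = 1\})$ subspace. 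Actually the cleanest route: hyperplanes of $y$ containing $W$ but not containing any vector of $y \cap x_m$ outside $W$; since $\dim(y \cap x_m) - \dim W = 1$, the hyperplanes of $y$ containing $W$ form a projective space of dimension $(\dim y - \dim W) - 1 = |\mu| - (\mu_1 + \cdots + \mu_{m-1}) - 1 = \mu_{m+1} + \cdots + \mu_N$ (using $\mu_m = 1$), and the condition of not containing the extra line in $y \cap x_m$ removes exactly those through a fixed point, leaving $q^{\mu_{m+1} + \cdots + \mu_N}$ of them. This is the combinatorial heart of (i).

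For part (ii): the factor $\delta_{\mu_m, 0}$ is handled symmetrically — if $\mu_m = 1$ then any $z$ covering $y$ still has $\mu'_m = 1$, so there is nothing to count. When $\mu_m = 0$, a subspace $z$ that $m$-covers $y$ satisfies $z \supsetneq y$, $\dim z = \dim y + 1$, with the new vector lying in $x_m$ but not in $x_{m-1} + y$ — more precisely $z \cap x_{m-1} = y \cap x_{m-1}$ and $\dim(z \cap x_m) = \dim(y \cap x_m) + 1$. I would count such $z$ by choosing a vector $v \in x_m \setminus (x_{m-1} + (y \cap x_m))$ and setting $z = y + \langle v \rangle$, then dividing by the overcounting factor $q$ (the number of vectors giving the same $z$, up to the subspace $y$). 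The number of available vectors is $q^m - q^{(m-1) + \dim(y \cap x_m) - \dim(y \cap x_{m-1})}$... carefully: $|x_m| = q^m$, and the forbidden set $x_{m-1} + (y \cap x_m)$ has dimension $(m-1) + 1 = m$ when $\mu_m = 0$? No — since $y \cap x_m = y \cap x_{m-1}$ when $\mu_m = 0$, we get $x_{m-1} + (y \cap x_m) = x_{m-1}$, of size $q^{m-1}$, but we must also avoid producing a $z$ with the wrong location in other coordinates, which forces $v \notin y$; combining, the count of good vectors modulo $y$ is $q^{m-1} - q^{\dim(y \cap x_{m-1})} = q^{m-1} - q^{\mu_1 + \cdots + \mu_{m-1}}$, and dividing the "new directions" appropriately yields $q^{(m-1) - (\mu_1 + \cdots + \mu_{m-1})}$.

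I expect the main obstacle to be the bookkeeping in part (ii): one must simultaneously ensure that adding the new vector changes the intersection with $x_m$ (to get $\mu'_m = 1$) and does \emph{not} disturb the intersections with $x_n$ for $n \ne m$ (to keep $\mu'_n = \mu_n$), and then correctly account for the many vectors $v$ that yield the same subspace $z$. I would organize this by always reducing modulo $y$ first — i.e. working in the quotient $H / y$ with the induced flag images — so that "distinct $z$" corresponds to "distinct lines in $H/y$", after which the forbidden region is a single subspace and the count is a clean difference of powers of $q$. Once the reduction to $H/y$ (for (ii)) and to $y$ itself (for (i)) is set up correctly, both counts become the standard fact that the number of complements / the number of subspaces of a given dimension avoiding a fixed subspace is a monomial in $q$, and the stated formulas drop out.
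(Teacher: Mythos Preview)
Your approach is correct and genuinely different from the paper's. The paper proves both parts by double counting: for (i) it counts pairs $(\mathbf{u}, z)$ with $\mathbf{u}$ an $n$-tuple of independent vectors in $y \setminus (y \cap x_m)$ and $z = \Span\mathbf{u} + (y \cap x_{m-1})$, computing $|S|$ two ways; for (ii) it counts pairs $(u, z)$ with $u \in x_m \setminus x_{m-1}$ and $z = \langle u \rangle + y$. You instead characterize the relevant subspaces directly---in (i) as hyperplanes of $y$ containing $y \cap x_{m-1}$ but not $y \cap x_m$, in (ii) as lines in $H/y$ lying in the image of $x_m$ but not of $x_{m-1}$---and read the count off as a difference of two Gaussian numbers. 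Your route is shorter and more conceptual once these characterizations are justified; the paper's route avoids quotient constructions but pays for it with bookkeeping on the auxiliary sets $U_n$, $U_n(z)$, $U(z)$.

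Two places in your writeup need tightening. First, the argument that $\mu_m = 0$ forces $\mu'_m = 0$ for $z$ covered by $y$ is garbled as written; the clean reason is simply $z \cap x_m = z \cap (y \cap x_m) = z \cap (y \cap x_{m-1}) = z \cap x_{m-1}$ (or just invoke that the location map is order-preserving, Proposition~\ref{location}). Second---and you flag this yourself---you must actually verify that your candidate subspaces have the correct location in \emph{all} coordinates, not just at $m$. For (i): $z \supseteq y \cap x_{m-1}$ gives $z \cap x_k = y \cap x_k$ for $k < m$, and $z \not\supseteq y \cap x_m$ forces $z \cap x_k \subsetneq y \cap x_k$ for every $k \ge m$. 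For (ii): $v \in x_m$ gives the jump at every $k \ge m$, while $v \notin x_{m-1} + y$ prevents any jump for $k < m$; here the key identity is $(x_{m-1}+y)\cap x_m = x_{m-1}$ when $\mu_m=0$, so that ``$v \in x_m \setminus x_{m-1}$'' and ``$\bar v \in \overline{x_m} \setminus \overline{x_{m-1}}$'' coincide. These checks are short but essential. Finally, the intermediate expression ``$q^{m-1} - q^{\mu_1+\cdots+\mu_{m-1}}$'' in your part (ii) is not the right vector count in the quotient (the images of $x_m$ and $x_{m-1}$ have dimensions $m-(\mu_1+\cdots+\mu_{m-1})$ and $(m-1)-(\mu_1+\cdots+\mu_{m-1})$), though you arrive at the correct final answer.
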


\begin{proof}
(i)
Let $\widetilde{P}$ be the set of subspaces in $P$ which are $m$-covered by $y$.
Then $\widetilde{P} \subseteq P_{\mu-\widehat{m}}$.
If $\mu_m = 0$, then $\mu-\widehat{m} \not\in \lbrace0,1\rbrace^N$ and so $\widetilde{P} = \emptyset$.
We may assume $\mu_m = 1$.
For $z \in \widetilde{P}$,
we have $y \cap x_{m-1} = z \cap x_{m-1} = z \cap x_m$
since $z \in P_{\mu-\widehat{m}}$, $y \in P_\mu$ and 
$z \subseteq y$.

Set $n = \dim y - \dim (y \cap x_m) = \mu_{m+1}+\mu_{m+2}+ \cdots + \mu_N$.
Let $U_n$ denote the set of $n$-sets of linearly independent vectors $\mathbf{u} = \lbrace u_1, u_2, \ldots, u_n \rbrace \subseteq y \setminus (y \cap x_m)$
such that $(\Span \mathbf{u}) \cap (y \cap x_m) = 0$.
Since $\dim y = |\mu|$ and $\dim (y \cap x_m) = \mu_1+\cdots+\mu_m$, we have
\[
|U_n|
= \prod_{k = 1}^{n}
(q^{|\mu|} - q^{\mu_1+\cdots+\mu_m+k-1}).
\]
For $\mathbf{u} \in U_n$ and $1 \le k \le m$,
we have $(\Span \mathbf{u}) \cap x_k = 0$.
For $\mathbf{u} \in U_n$ and $m \le k \le N$,
since $y = \Span \mathbf{u} + (y \cap x_m)$ and $y \cap x_m \subseteq x_k$,
we have
$y \cap x_k = \left(\Span \mathbf{u} + (y \cap x_m)\right) \cap x_k = (\Span \mathbf{u}) \cap x_k + (y \cap x_m)$.

We count the cardinality of the following set $S$ in two ways.
\[
S = \lbrace (\mathbf{u}, z)
\mid \mathbf{u} \in U_n, z \in \widetilde{P},
z = \Span \mathbf{u} + (y \cap x_{m-1})
\rbrace.
\]
Let $\mathbf{u} \in U_n$, and
set $z = \Span \mathbf{u} + (y \cap x_{m-1})$.
Since $\Span \mathbf{u} \subseteq y$,
we have $z \subseteq y$.
For $1 \le k \le m-1$, we have
$z \cap x_k = \left(\Span \mathbf{u} + (y \cap x_{m-1})\right) \cap x_k \supseteq y\cap x_k$,
and moreover, equality must hold since $z \subseteq y$.
So, we have $\dim (z \cap x_k) = \dim (y \cap x_k)$.

For $m \le k \le N$,
since $y \cap x_{m-1} \subseteq x_k$, we have
$z \cap x_k = \left(\Span \mathbf{u} + (y \cap x_{m-1})\right) \cap x_k = (\Span \mathbf{u} \cap x_k) + (y \cap x_{m-1})$.
Recall that 
$y \cap x_k = (\Span \mathbf{u} \cap x_k) + (y \cap x_m)$.
Since the sums in these two equations are direct,
we have
$\dim (z \cap x_k) - \dim (y \cap x_k) =  
\dim(y \cap x_m) - \dim(y \cap x_{m-1})
= 1$.
Thus, $z \in \widetilde{P}$.
By these comments, we have
\begin{equation}
|S| = |U_n|.
\label{eq:S1}
\end{equation}
Conversely, let $z \in \widetilde{P}$.
We have $\dim z - \dim (z \cap x_m) = n$ since
$z \in P_{\mu - \widehat{m}}$.
So, there exists an
$n$-set of linearly independent vectors $\mathbf{u} = \lbrace u_1, u_2, \ldots, u_n \rbrace \subseteq z \setminus (z \cap x_m)$
such that $z = \Span \mathbf{u} + (z \cap x_m)$
and $(\Span \mathbf{u}) \cap (z \cap x_m) = 0$.
Let $U_n(z)$ denote the set of such $n$-sets.
Since $\dim z = |\mu|-1$ and $\dim (z \cap x_m) = \mu_1+\cdots+\mu_m-1$,
we have
\[
|U_n(z)|
= \prod_{k = 1}^{n}
(q^{|\mu|-1} - q^{\mu_1+\cdots+\mu_m+k-2})
= q^{-n}|U_n|.
\]
For $\mathbf{u} \in U_n(z)$, we have
$\mathbf{u} \subseteq y \setminus (y \cap x_m)$
since $z \subseteq y$, and
we also have $(\Span \mathbf{u}) \cap (y \cap x_m) = (\Span \mathbf{u}) \cap (z \cap x_m) =0$
since $\Span \mathbf{u} \subseteq z \subseteq y$.
Thus, $U_n(z) \subseteq U_n$.
We may write $z = \Span \mathbf{u} + (y \cap x_{m-1})$
since $y \cap x_{m-1} = z \cap x_m$.
Moreover,
if $\mathbf{u} \in U_n$ satisfies $z = \Span \mathbf{u} + (y \cap x_{m-1})$,
then 
$\mathbf{u} \subseteq z$ and $(\Span \mathbf{u}) \cap (z \cap x_m) =0$,
which imply $\mathbf{u} \in U_n(z)$.
By these comments,  we have
$U_n(z) = \lbrace \mathbf{u} \in U_n \mid
z = \Span \mathbf{u} + (y \cap x_{m-1}) \rbrace$
for $z \in \widetilde{P}$, and so
\begin{equation}
|S|
= \sum_{z \in \widetilde{P}} |U_n(z)|
= |\widetilde{P}| \times q^{-n}|U_n|.
\label{eq:S2}
\end{equation}
Thus, by \eqref{eq:S1} and \eqref{eq:S2},
we have $|\widetilde{P}| = q^n$.
The result follows.

(ii)
Let $\widetilde{P}$ be the set of subspaces in $P$ which $m$-cover $y$.
Then $\widetilde{P} \subseteq P_{\mu+\widehat{m}}$.
If $\mu_m = 1$, then $\mu+\widehat{m} \not\in \lbrace0,1\rbrace^N$ and so $\widetilde{P} = \emptyset$.
We may assume $\mu_m = 0$.
Let $U = x_m \setminus x_{m-1}$.
We have $|U| = q^m - q^{m-1}$.
We also have
$U \cap y \subseteq (y \cap x_m) \setminus (y \cap x_{m-1}) = \emptyset$.

We count the cardinality of the following set $S$ in two ways.
\[
S = \lbrace (u, z)
\mid u \in U, z \in \widetilde{P},
z = (\Span u) + y
\rbrace.
\]
Let $u \in U$, and set $z = (\Span u) + y$.
Then $y \subseteq z$.
For $m \le k \le N$,
since $\Span u \subseteq x_k$, we have
$z \cap x_k = (\Span u + y) \cap x_k = (\Span u) + (y \cap x_k)$.
Since the sum is direct, we have 
$\dim(z \cap x_k) = \dim(y \cap x_k) + 1$.
For $1 \le k \le m-1$,
since $y \cap x_m \subseteq x_{m-1}$
and $\Span u \cap x_{m-1} = 0$, we have
\begin{align*}
z \cap x_k &= (z \cap x_m) \cap x_{m-1} \cap x_k \\
&= (\Span u + y \cap x_m) \cap x_{m-1} \cap x_k \\
&= (\Span u \cap x_{m-1} + y \cap x_{m-1}) \cap x_k \\
&= y \cap x_k.
\end{align*}
In particular, $\dim (z \cap x_k) = \dim (y \cap x_k)$.
Thus $z \in \widetilde{P}$.
By these comments, we have
\begin{equation}
|S| = |U|.
\label{eq:S3}
\end{equation}
Conversely, let $z \in \widetilde{P}$.
We have $\dim (z \cap x_m) - \dim (z \cap x_{m-1}) = 1$
since $z \in P_{\mu+\widehat{m}}$.
Denote $U(z) = (z \cap x_m) \setminus (z \cap x_{m-1})$.
For any $u \in U(z)$,
we have $z \cap x_m = \Span u + (z \cap x_{m-1})$.
Since $\dim (z \cap x_m) = \mu_1 + \cdots + \mu_{m-1}+1$ and $\dim (z \cap x_{m-1}) = \mu_1 + \cdots + \mu_{m-1}$,
we have
\[
|U(z)| = q^{\mu_1 + \cdots + \mu_{m-1}+1} - q^{\mu_1 + \cdots + \mu_{m-1}} = q^{(\mu_1+\cdots+\mu_{m-1})-(m-1)}|U|.
\]
By definition, we have $U(z) \subseteq U$.
Let $u \in U$ satisfy $z = \Span u + y$.
Then $u \in z$, and so $u \in U(z)$.
By these comments, we have
$\lbrace u \in U \mid z = (\Span u) + y\rbrace = U(z)$, and so
\begin{equation}
|S| 
= \sum_{z \in \widetilde{P}} |U(z)|
= |\widetilde{P}| \times q^{(\mu_1+\cdots+\mu_{m-1})-(m-1)}|U|.
\label{eq:S4}
\end{equation}
Thus, by \eqref{eq:S3} and \eqref{eq:S4},
we have $|\widetilde{P}| = q^{(m-1)-(\mu_1+\cdots+\mu_{m-1})}$.
The result follows.
\end{proof}

\begin{lem}\label{lem:count}
Let $1 \le m < n \le N$.
For $\mu = (\mu_1, \mu_2, \ldots, \mu_N) \in \lbrace 0,1 \rbrace^N$ with $\mu_m = \mu_n = 1$, 
the following hold.
\begin{enumerate}
\item Given $z \in P_\mu$ and $y \in P_{\mu - \widehat{m} - \widehat{n}}$
with $y \subseteq z$,
there exists a unique element in $P_{\mu - \widehat{n}}$
which $m$-covers $y$ and which is $n$-covered by $z$.
\item Given $z \in P_\mu$ and $y \in P_{\mu - \widehat{m} - \widehat{n}}$
with $y \subseteq z$,
there exist exactly $q$ elements in $P_{\mu - \widehat{m}}$
which $n$-cover $y$ and which are $m$-covered by $z$.
\item Given $y \in P_{\mu - \widehat{m}}$ and $z \in P_{\mu - \widehat{n}}$,
if there exists an element that is covered by both $y$ and $z$,
then there exists a unique element that covers both $y$ and $z$.
\item Given $y \in P_{\mu - \widehat{m}}$ and $z \in P_{\mu - \widehat{n}}$,
if there exists an element that covers both $y$ and $z$,
then there exists a unique element that is covered by both $y$ and $z$.
\end{enumerate}
\end{lem}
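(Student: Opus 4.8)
The plan is to reduce (i) and (ii) to a count of the codimension-one subspaces of $z$ lying over $y$, carried out inside the two-dimensional quotient $z/y$, and to settle (iii) and (iv) by elementary dimension counting (these are just instances of modularity of the subspace lattice).

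For (i) and (ii), fix $z\in P_\mu$ and $y\in P_{\mu-\widehat m-\widehat n}$ with $y\subseteq z$, so $\dim z=|\mu|$ and $\dim y=|\mu|-2$. Since $m<n$, the locations of $y$ and $z$ yield the basic identity
\[
\dim(z\cap x_k)-\dim(y\cap x_k)=\#\{\,j\in\{m,n\}\mid j\le k\,\}\qquad(0\le k\le N),
\]
so this quantity is $0$ for $k<m$, is $1$ for $m\le k<n$, and is $2$ for $k\ge n$. If $w$ satisfies $y\subseteq w\subseteq z$ and $\dim w=|\mu|-1$, then by Proposition \ref{location} its location $\lambda$ satisfies $\mu-\widehat m-\widehat n\le\lambda\le\mu$ with $|\lambda|=|\mu|-1$, whence $\lambda\in\{\mu-\widehat m,\ \mu-\widehat n\}$; and unwinding Definition \ref{mcovers} shows that the elements sought in (i) are exactly the $w$ with $y\subseteq w\subseteq z$, $\dim w=|\mu|-1$ and $\lambda=\mu-\widehat n$, while those in (ii) are exactly the $w$ satisfying the same first two conditions and $\lambda=\mu-\widehat m$. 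Since such $w$ correspond to the lines of $z/y\cong\mathbb{F}_q^2$, there are exactly $q+1$ of them, so it suffices to prove that exactly one has location $\mu-\widehat n$.

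The key step is to single out $w^*:=(z\cap x_m)+y$. From the identity at $k=m$ one gets $\dim(z\cap x_m)=\dim(y\cap x_m)+1$, hence $\dim w^*=|\mu|-1$ and $y\subseteq w^*\subseteq z$; moreover $w^*$ is the only $w$ as above with $z\cap x_m\subseteq w$. I would then compute the location of $w^*$ coordinatewise: for $k<m$ one has $w^*\cap x_k=z\cap x_k=y\cap x_k$, while for $k\ge m$, since $z\cap x_m\subseteq x_k$, the modular law gives $w^*\cap x_k=(z\cap x_m)+(y\cap x_k)$ with $(z\cap x_m)\cap(y\cap x_k)=y\cap x_m$, so $\dim(w^*\cap x_k)=\dim(y\cap x_k)+1$; reading off consecutive differences shows the location of $w^*$ is $\mu-\widehat n$. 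Conversely, if $w$ is as above with $z\cap x_m\not\subseteq w$, then $z\cap x_k\not\subseteq w$ for all $k\ge m$, which forces $w+(z\cap x_k)=z$ and hence $\dim(w\cap x_k)=\dim(z\cap x_k)-1$ for $k\ge m$, while $\dim(w\cap x_k)=\dim(y\cap x_k)$ for $k<m$; reading off differences shows the location of $w$ is $\mu-\widehat m$. Therefore a subspace $w$ with $y\subseteq w\subseteq z$ and $\dim w=|\mu|-1$ has location $\mu-\widehat n$ precisely when $w=w^*$, which gives (i), and the remaining $q$ of the $q+1$ candidates give (ii).

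For (iii) and (iv), note that $\mu_m=\mu_n=1$ with $m\ne n$ forces $\mu-\widehat m\ne\mu-\widehat n$, so $y\ne z$ and $\dim y=\dim z=|\mu|-1$. In (iii), if some $w_0$ is covered by both $y$ and $z$, then $w_0\subseteq y\cap z\subsetneq y$ (properly, else $y=z$), so $y\cap z=w_0$, $\dim(y+z)=|\mu|$, and $y+z$ covers both $y$ and $z$; it is the unique such subspace since anything covering both contains $y+z$ and has dimension $|\mu|$. Part (iv) is dual: if some $w_0$ covers both $y$ and $z$, then $y+z=w_0$, so $\dim(y\cap z)=|\mu|-2$, $y\cap z$ is covered by both $y$ and $z$, and it is the unique such subspace. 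I expect the only real work to be the key step for (i)/(ii)---identifying $w^*$ and verifying that every other codimension-one subspace of $z$ over $y$ has location $\mu-\widehat m$; the dimension bookkeeping is routine once the ranges $k<m$, $m\le k<n$, $k\ge n$ are treated separately, with directness of the relevant sums justified as in the proof of Lemma \ref{lem:adj}.
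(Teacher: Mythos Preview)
Your proof is correct and follows essentially the same approach as the paper: construct the distinguished intermediate subspace explicitly, count the remaining $q$ via the $q+1$ lines in $z/y$, and handle (iii)--(iv) by elementary dimension arguments. The only cosmetic difference is that you write the special element as $w^*=(z\cap x_m)+y$ whereas the paper uses $y+(z\cap x_{n-1})$ (these coincide since $z\cap x_m\subseteq z\cap x_{n-1}$ and both sums have dimension $|\mu|-1$), and you organize (i)/(ii) via the dichotomy ``$z\cap x_m\subseteq w$ or not'' rather than proving (i) first and deducing (ii) by subtraction.
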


\begin{proof}
(i)
We first show the existence of such element.
Set $w = y + (z \cap x_{n-1})$
and let $\mu'$ denote the location of $w$.
We have $y \subseteq w \subseteq z$ and $\mu - \widehat{m} - \widehat{n} \le \mu' \le \mu$.
Since $m \le n-1$, we have
$w \cap x_m = (y + z \cap x_{n-1}) \cap x_m \supseteq z \cap x_m$,
and moreover equality must hold since $w \subseteq z$.
So, $\dim (w \cap x_m) = \dim (z \cap x_m)$.
Since $z \cap x_{n-1} \subseteq x_n$, we have
$w \cap x_n = (y + z \cap x_{n-1}) \cap x_n = y \cap x_n + z \cap x_{n-1}$.
Since $z \in P_\mu$ and $y \in P_{\mu - \widehat{m} - \widehat{n}}$,
we have $\dim (w \cap x_n) = \dim (y \cap x_n) + \dim (z \cap x_{n-1}) - \dim (y \cap x_{n-1}) = \dim (y \cap x_n) - 1$.
Thus the location $\mu'$ must be $\mu - \widehat{n}$,
i.e., $w \in P_{\mu - \widehat{n}}$.
Since $y \subseteq w \subseteq z$,
the element $w$ must $m$-cover $y$ and be $n$-covered by $z$.

We next show the uniqueness of such element.
Take any $w' \in P_{\mu - \widehat{n}}$
which covers $y$ and which is covered by $z$.
Then $w'$ must contain both $y$ and $z \cap x_{n-1}$.
So $w \subseteq w'$. By computing dimensions, $w$ and $w'$ must coincide.
The result follows.

(ii)
Let $\widetilde{P}$ be the set of subspaces in $P$ which cover $y$ and which are covered by $z$.
Since $\dim (z/y) = 2$,
we have $|\widetilde{P}| = (q^2-1)/(q-1) = q+1$.
Let $w \in \widetilde{P}$ and let $\mu'$ be the location of $w$.
Then, we have $\mu' \in \lbrace \mu - \widehat{n}, \mu - \widehat{m}\rbrace$.
So, the $q+1$ elements in $\widetilde{P}$
must belong to either $P_{\mu - \widehat{n}}$ or $P_{\mu - \widehat{m}}$.
Therefore the result follows from (i).

(iii)
Let $w$ be an element that is covered by both $y$ and $z$.
Then we have $w \subseteq y \cap z$.
Since $y$ and $z$ are distinct, we have
$\dim y - 1 = \dim w \le \dim (y \cap z) \le \dim y - 1$,
and so $w = y \cap z$.
Set $w' = y+z$.
Then $\dim w' = \dim y + 1 = \dim z + 1$.
This means $w'$ is an element 
that covers both $y$ and $z$.
The uniqueness is clear.

(iv) Similar to (iii).
\end{proof}

\section{Ferrers boards}\label{Ferrers}

We introduce the notion of Ferrers boards.
For the general theory on this topic, we refer the reader to \cite[Chapters 1 and 2]{St}.
Note that we modify the notations of \cite{St} to fit our setting.

Let $\mu = (\mu_1, \mu_2, \ldots, \mu_N) \in \lbrace 0,1 \rbrace^N$.
Then $\mu$ has a natural correspondence 
with a bipartition of $\lbrace 1, 2, \ldots, N\rbrace$,
which is defined by
\begin{align}
S_\mu = \lbrace s \in \mathbb{N} \mid 1 \le s \le N, \mu_s = 0 \rbrace,
&&
T_\mu = \lbrace t \in \mathbb{N} \mid 1 \le t \le N, \mu_t = 1 \rbrace.
\label{STmu}
\end{align}
We remark that
$S_\mu$ and $T_\mu$ are empty if and only if
$\mu = \mathbf{1} = (1,1,\ldots,1)$ and $\mu = \mathbf{0} = (0,0,\ldots,0)$, respectively.
The \emph{Ferrers board} of shape $\mu$ is defined by
\begin{equation}
B_\mu = \lbrace
(s,t) \in S_\mu \times T_\mu \mid s < t
\rbrace.
\label{Bmu}
\end{equation}
If both $S_\mu$ and $T_\mu$ are not empty,
i.e.\ if $\mu \neq \mathbf{0}, \mathbf{1}$,
we can draw a Ferrers board as  
a two-dimensional subarray of a matrix whose rows indexed by $S_\mu$
and columns indexed by $T_\mu$,
whose $(s,t)$-entry has a box for all $(s,t) \in B_\mu$.

This subarray is also known as a \emph{Young diagram} of shape $\mu$.

\begin{ex}[$N=13$]\label{ex1}
Let $\mu = (0,1,1,0,1,1,0,1,1,0,0,1,0) \in \lbrace 0,1 \rbrace^{13}$.
Then the corresponding Ferrers board $B_\mu$ has the following subarray form:
\[
\begin{ytableau}
   \none[2]&\none[3]&\none[5]&\none[6]&\none[8]&\none[9]&\none[12]&\none[] \\
   *(white)&*(white)&*(white)&*(white)&*(white)&*(white)&*(white)&\none[1] \\
   \none[]&\none[]&*(white)&*(white)&*(white)&*(white)&*(white)&\none[4] \\
   \none[]&\none[]&\none[]&\none[]&*(white)&*(white)&*(white)&\none[7] \\
   \none[]&\none[]&\none[]&\none[]&\none[]&\none[]&*(white)&\none[10] \\
   \none[]&\none[]&\none[]&\none[]&\none[]&\none[]&*(white)&\none[11] \\
   \none[]&\none[]&\none[]&\none[]&\none[]&\none[]&\none[]&\none[13] \\
\end{ytableau}
\]
\end{ex}

Take a nonempty Ferrers board $B_\mu$ of shape $\mu$.
For $(s_0,t_0) \in B_\mu$,
the \emph{rectangle} in $B_\mu$ with respect to $(s_0,t_0)$,
denoted by $B_\mu(s_0,t_0)$, is defined by
\begin{equation}
B_\mu(s_0,t_0) =
\lbrace
(s, t) \in B_\mu \mid s \le s_0, t \ge t_0
\rbrace.
\label{Bmu(s,t)}
\end{equation}
It is actually the rectangle in the corresponding Young diagram
which includes the top-right corner 
and the $(s_0,t_0)$-th box as its bottom-left corner.
We remark that such a rectangle
is called the \emph{Durfee square}
if it is the largest square in $B_\mu$.
To see the rectangle structure, we use the following notation:
\begin{align}
S_\mu(m) = \lbrace s \in S_\mu \mid  s \le m \rbrace,
&&
T_\mu(m) = \lbrace t \in T_\mu \mid t \ge m \rbrace,
\label{STmu(s,t)}
\end{align}
for $1 \le m \le N$
so that we can write $B_\mu(s_0,t_0) = S_\mu(s_0) \times T_\mu(t_0)$.

\begin{ex}[$N=13$]\label{ex2}
Take $\mu \in \lbrace 0,1 \rbrace^{13}$ as in Example \ref{ex1}.
Then $(4,6) \in B_\mu$ and
the rectangle $B_\mu(4,6)$ is the 
set of the following eight elements:
\begin{align*}
(1,6), && (1,8), && (1,9), && (1,12), &&
(4,6), && (4,8), && (4,9), && (4,12).
\end{align*}
In the corresponding Young diagram,
$B_\mu(4,6)$ is the following gray rectangle.
\[
\begin{ytableau}
   \none[2]&\none[3]&\none[5]&\none[6]&\none[8]&\none[9]&\none[12]&\none[] \\
   *(white)&*(white)&*(white)&*(gray!50)&*(gray!50)&*(gray!50)&*(gray!50)&\none[1] \\
   \none[]&\none[]&*(white)&*(gray!50)&*(gray!50)&*(gray!50)&*(gray!50)&\none[4] \\
   \none[]&\none[]&\none[]&\none[]&*(white)&*(white)&*(white)&\none[7] \\
   \none[]&\none[]&\none[]&\none[]&\none[]&\none[]&*(white)&\none[10] \\
   \none[]&\none[]&\none[]&\none[]&\none[]&\none[]&*(white)&\none[11] \\
   \none[]&\none[]&\none[]&\none[]&\none[]&\none[]&\none[]&\none[13] \\
\end{ytableau}
\]
\end{ex}

Take a nonempty
Ferrers board $B_\mu$ of shape $\mu$.
A subset of $B_\mu$
such that no two elements have a common entry
is called a \emph{rook placement} on $B_\mu$.
Let $\sigma$ denote a rook placement on $B_\mu$.
The \emph{row index set} $\pi_1(\sigma)$ and 
the \emph{column index set} $\pi_2(\sigma)$ of $\sigma$ are defined by
\begin{align}
\pi_1(\sigma) = \lbrace s \in S_\mu \mid \text{$(s,t) \in \sigma$ for some $t$}\rbrace,
&&
\pi_2(\sigma) = \lbrace t \in T_\mu \mid \text{$(s,t) \in \sigma$ for some $s$}\rbrace,
\label{pisigma}
\end{align}
respectively.
Remark that $|\pi_1(\sigma)| = |\pi_2(\sigma)| = |\sigma|$.
Assume $\sigma \neq \emptyset$.
For $1 \le i \le |\sigma|$,
we denote by $s_i$ and by $t_i$ the $i$-th smallest element in $\pi_1(\sigma)$ and in $\pi_2(\sigma)$, respectively.
Then  $\sigma$ gives rise to a permutation of
$\lbrace 1, 2, \ldots, |\sigma|\rbrace$
which sends $i$ to $j$ where $(s_i, t_j) \in \sigma$.

\begin{lem}\label{PIcond}
Let $\mu \in \lbrace 0,1 \rbrace^N$ and $\sigma$ be a rook placement on $B_\mu$
with the row/column index sets $\pi_1 = \pi_1(\sigma)$, $\pi_2 = \pi_2(\sigma)$, respectively.
Then the pair $(\pi_1, \pi_2)$ satisfies the following.
\begin{enumerate}
\item $|\pi_1| = |\pi_2|$.
\item 
Let $n$ denote the common value in (i).
For $1 \le i \le n$,
the $i$-th smallest element in $\pi_1$ is strictly smaller than
the $i$-th smallest element in $\pi_2$.
\end{enumerate}
\end{lem}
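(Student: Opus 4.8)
The plan is to obtain (i) immediately from the rook-placement condition, and to establish (ii) by a short pigeonhole argument applied to the permutation that $\sigma$ induces on $\{1,2,\ldots,|\sigma|\}$.

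For (i), I would argue as follows: since $\sigma$ is a rook placement, distinct elements of $\sigma$ lie in distinct rows and in distinct columns. Hence the map $(s,t) \mapsto s$ sends $\sigma$ bijectively onto $\pi_1$ and $(s,t) \mapsto t$ sends $\sigma$ bijectively onto $\pi_2$, so $|\pi_1| = |\sigma| = |\pi_2|$. This is exactly the remark recorded just before the statement, so (i) costs nothing.

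For (ii), I may assume $\sigma \neq \emptyset$, since otherwise $n = 0$ and there is nothing to prove. Write $n = |\sigma|$ and list $\pi_1 = \lbrace s_1 < s_2 < \cdots < s_n \rbrace$, $\pi_2 = \lbrace t_1 < t_2 < \cdots < t_n \rbrace$ in increasing order, and let $w$ be the permutation of $\lbrace 1, \ldots, n \rbrace$ with $(s_i, t_{w(i)}) \in \sigma$; this $w$ is well defined precisely by the rook-placement condition, exactly as in the paragraph preceding the lemma. Since $\sigma \subseteq B_\mu$, the defining condition \eqref{Bmu} of the Ferrers board gives $s_i < t_{w(i)}$ for every $i$. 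Now fix $i$ with $1 \le i \le n$. The set $w^{-1}(\lbrace 1, \ldots, i \rbrace)$ has exactly $i$ elements, all lying in $\lbrace 1, \ldots, n \rbrace$, so it cannot be contained in the $(i-1)$-element set $\lbrace 1, \ldots, i-1 \rbrace$; pick $j \ge i$ in it. Then $w(j) \le i$, hence $t_{w(j)} \le t_i$ because the $t_k$ increase with $k$, while $s_i \le s_j$ because the $s_k$ increase with $k$. Combining with $s_j < t_{w(j)}$ yields $s_i \le s_j < t_{w(j)} \le t_i$, so $s_i < t_i$, which is (ii).

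I do not expect a genuine obstacle. The only point requiring care is the well-definedness of the induced permutation $w$, which is handled by the rook-placement hypothesis, and the essential content is the pigeonhole observation locating an index $j \ge i$ inside $w^{-1}(\lbrace 1, \ldots, i \rbrace)$; the remaining inequalities are just bookkeeping with the two increasing sequences. One should also note in passing why the "image" version of the argument fails — bounding $s_k$ and $t_{w(k)}$ for $k \le i$ with $w(k) \ge i$ does not control $s_i$ versus $t_i$ — which is why the preimage formulation is the one to use.
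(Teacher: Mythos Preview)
Your proof is correct and follows essentially the same route as the paper's: both reduce (ii) to the induced permutation on $\{1,\ldots,n\}$ and use the pigeonhole observation that some index $j\ge i$ has image at most $i$, then chain the inequalities $s_i\le s_j < t_{w(j)}\le t_i$. The only difference is notational (the paper writes $\widetilde{\sigma}$ and $k$ where you write $w$ and $j$) and that you spell out the preimage-counting step in slightly more detail.
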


\begin{proof}
(i)
It is clear.

(ii)
We may assume $\sigma \neq \emptyset$ since otherwise the assertion is clear.
Let $\widetilde{\sigma}$ denote the permutation of $\lbrace 1, 2, \ldots, n \rbrace$
corresponding to $\sigma$.
For $1 \le i \le n$, we write $s_i$, $t_i$ for the $i$-th smallest element in $\pi_1$, $\pi_2$, respectively.
Fix $1 \le i \le n$.
Since $\widetilde{\sigma}$ is a permutation,
there exists $i \le k \le n$ such that $\widetilde{\sigma}(k) \le i$.
So we have $(s_k, t_{\widetilde{\sigma}(k)}) \in \sigma$.
Therefore $s_i \le s_k < t_{\widetilde{\sigma}(k)} \le t_i$
as desired.
\end{proof}

\begin{prop}\label{prop:condsigma}
Let $\mu \in \lbrace 0,1 \rbrace^N$.
For a pair $(\pi_1, \pi_2)$ such that $\pi_1 \subseteq S_\mu$ and $\pi_2 \subseteq T_\mu$,
the following are equivalent:
\begin{enumerate}
\item there exists a rook placement $\sigma$ on $B_\mu$ such that $\pi_1 = \pi_1(\sigma)$
and $\pi_2 = \pi_2(\sigma)$;
\item it satisfies (i), (ii) in Lemma \ref{PIcond}.
\end{enumerate}
\end{prop}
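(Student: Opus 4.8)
The plan is to note that the implication (i)~$\Rightarrow$~(ii) is precisely the content of Lemma~\ref{PIcond}, so the only work lies in (ii)~$\Rightarrow$~(i), and there the natural move is to exhibit the ``diagonal'' rook placement associated with the pair $(\pi_1,\pi_2)$.

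Concretely, suppose $(\pi_1,\pi_2)$ satisfies conditions (i), (ii) of Lemma~\ref{PIcond}, and let $n$ denote the common value $|\pi_1| = |\pi_2|$. Writing $s_1 < s_2 < \cdots < s_n$ for the elements of $\pi_1$ and $t_1 < t_2 < \cdots < t_n$ for the elements of $\pi_2$, I would set $\sigma = \lbrace (s_i,t_i) \mid 1 \le i \le n \rbrace$, which is the rook placement corresponding to the identity permutation of $\lbrace 1,\ldots,n \rbrace$. (When $n = 0$ this is $\sigma = \emptyset$, and the claim is trivial.)

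It then remains to verify three points. First, $\sigma \subseteq B_\mu$: since $\pi_1 \subseteq S_\mu$ and $\pi_2 \subseteq T_\mu$ we have $(s_i,t_i) \in S_\mu \times T_\mu$, and condition (ii) of Lemma~\ref{PIcond} gives $s_i < t_i$, so $(s_i,t_i) \in B_\mu$ by \eqref{Bmu}. Second, $\sigma$ is a rook placement: the first coordinates $s_1,\ldots,s_n$ are pairwise distinct and the second coordinates $t_1,\ldots,t_n$ are pairwise distinct, so no two elements of $\sigma$ have a common entry. Third, by construction the row index set of $\sigma$ is $\lbrace s_1,\ldots,s_n \rbrace = \pi_1$ and the column index set of $\sigma$ is $\lbrace t_1,\ldots,t_n \rbrace = \pi_2$, that is, $\pi_1(\sigma) = \pi_1$ and $\pi_2(\sigma) = \pi_2$. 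This gives (i).

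There is no genuine obstacle here; the one point requiring care is that conditions (i) and (ii) of Lemma~\ref{PIcond} must be used together -- (i) so that the pairing $s_i \leftrightarrow t_i$ is well defined, and (ii) so that each pair $(s_i,t_i)$ actually lies in the Ferrers board $B_\mu$ rather than merely in $S_\mu \times T_\mu$. Other permutation-induced placements with the same row/column index sets could be used as well, but the diagonal placement is the cleanest witness and requires only the hypotheses exactly as stated.
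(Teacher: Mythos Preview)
Your proof is correct and matches the paper's approach essentially line for line: the paper also cites Lemma~\ref{PIcond} for (i)~$\Rightarrow$~(ii) and, for the converse, constructs the same diagonal placement $\sigma = \{(s_i,t_i) \mid 1 \le i \le n\}$ with $s_i$ and $t_i$ the $i$-th smallest elements of $\pi_1$ and $\pi_2$.
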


\begin{proof}
We have shown in Lemma \ref{PIcond}
that (i) implies (ii).

Suppose we are given $\pi_1 \subseteq S_\mu$ and $\pi_2 \subseteq T_\mu$
satisfying (i), (ii) in Lemma \ref{PIcond}.
By the condition (i) in Lemma \ref{PIcond}, we set $n = |\pi_1| = |\pi_2|$.
Let $\sigma = \lbrace (s_i, t_i) \mid 1 \le i \le n\rbrace$, where
each $s_i$, $t_i$ is the $i$-th smallest element in $\pi_1$, $\pi_2$, respectively.
By the condition (ii) in Lemma \ref{PIcond}, we have $\sigma \subseteq B_\mu$
and so $\sigma$ is a rook placement on $B_\mu$.
By construction, it is clear that $\pi_1 = \pi_1(\sigma)$ and $\pi_2 = \pi_2(\sigma)$.
So (ii) implies (i).
\end{proof}

\begin{defi}\label{sigmatype}\normalfont
Let $\mu \in \lbrace 0,1 \rbrace^N$
and consider the Ferrers board $B_\mu$ of shape $\mu$.
Then the \emph{type} of a rook placement $\sigma$ on $B_\mu$
is defined by
the disjoint union
\[
\pi_1(\sigma) \cup \pi_2(\sigma) \subseteq \lbrace 1, 2, \ldots, N \rbrace,
\]
where $\pi_1(\sigma)$, $\pi_2(\sigma)$ are the row/column index sets of $\sigma$ defined in \eqref{pisigma}.
\end{defi}

\begin{lem}\label{LSmuLTmu}
Let $\mu \in \lbrace 0,1 \rbrace^N$.
For $\lambda \subseteq \lbrace 1, 2, \ldots, N \rbrace$,
the following are equivalent:
\begin{enumerate}
\item there exists a rook placement on $B_\mu$ of type $\lambda$;
\item the pair $(\lambda \cap S_\mu, \lambda \cap T_\mu)$ satisfies (i), (ii) in Lemma \ref{PIcond}.
\end{enumerate}
\end{lem}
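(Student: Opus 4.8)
The plan is to reduce the statement directly to Proposition \ref{prop:condsigma}, by observing that the \emph{type} of a rook placement encodes exactly the pair of row/column index sets, once we remember that $S_\mu$ and $T_\mu$ partition $\lbrace 1,2,\ldots,N\rbrace$.

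First I would record the elementary fact, immediate from \eqref{STmu}, that $\lbrace 1,2,\ldots,N\rbrace = S_\mu \cup T_\mu$ is a disjoint union. Consequently, for any $\lambda \subseteq \lbrace 1,\ldots,N\rbrace$ we have $\lambda = (\lambda \cap S_\mu) \cup (\lambda \cap T_\mu)$ (disjointly), and conversely a subset of $S_\mu$ together with a subset of $T_\mu$ recombine into such a $\lambda$. Next, for a rook placement $\sigma$ on $B_\mu$ the row index set satisfies $\pi_1(\sigma) \subseteq S_\mu$ and the column index set satisfies $\pi_2(\sigma) \subseteq T_\mu$ by \eqref{pisigma}; hence if $\lambda := \pi_1(\sigma) \cup \pi_2(\sigma)$ is the type of $\sigma$, intersecting with $S_\mu$ and $T_\mu$ recovers $\lambda \cap S_\mu = \pi_1(\sigma)$ and $\lambda \cap T_\mu = \pi_2(\sigma)$. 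In other words, the assignment $\sigma \mapsto \lambda$ carries exactly the same information as $\sigma \mapsto (\pi_1(\sigma),\pi_2(\sigma))$.

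For the implication (i)$\Rightarrow$(ii), I would take a rook placement $\sigma$ on $B_\mu$ of type $\lambda$; by the previous paragraph $(\lambda \cap S_\mu, \lambda \cap T_\mu) = (\pi_1(\sigma),\pi_2(\sigma))$, which satisfies conditions (i), (ii) of Lemma \ref{PIcond} by that lemma. For (ii)$\Rightarrow$(i), I would set $\pi_1 = \lambda \cap S_\mu$ and $\pi_2 = \lambda \cap T_\mu$; these lie in $S_\mu$, $T_\mu$ respectively and, by hypothesis, satisfy (i), (ii) of Lemma \ref{PIcond}, so Proposition \ref{prop:condsigma} furnishes a rook placement $\sigma$ on $B_\mu$ with $\pi_1(\sigma) = \pi_1$ and $\pi_2(\sigma) = \pi_2$. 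Its type is then $\pi_1(\sigma) \cup \pi_2(\sigma) = (\lambda \cap S_\mu) \cup (\lambda \cap T_\mu) = \lambda$, as required.

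There is no genuine obstacle here; the only point needing care is the bookkeeping that the datum $\lambda$ splits canonically into its row and column parts via the partition $\lbrace 1,\ldots,N\rbrace = S_\mu \cup T_\mu$, so that Proposition \ref{prop:condsigma} applies verbatim after this translation. One should also note the trivial boundary cases $\mu = \mathbf{0}$ or $\mu = \mathbf{1}$ (when $B_\mu$ is empty and the only rook placement is $\sigma = \emptyset$, of type $\lambda = \emptyset$) are consistent with the statement, since then one of $S_\mu$, $T_\mu$ is empty and the pair $(\lambda \cap S_\mu, \lambda \cap T_\mu)$ satisfies Lemma \ref{PIcond} only for $\lambda = \emptyset$.
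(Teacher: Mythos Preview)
Your proof is correct and follows exactly the approach of the paper, which simply says ``Immediate from Proposition~\ref{prop:condsigma}.'' You have written out in full the bookkeeping that makes this immediacy clear, namely that the disjoint decomposition $\lambda = (\lambda \cap S_\mu) \cup (\lambda \cap T_\mu)$ identifies the type datum with the pair $(\pi_1(\sigma),\pi_2(\sigma))$.
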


\begin{proof}
Immediate from Proposition \ref{prop:condsigma}.
\end{proof}

\begin{lem}\label{cardeven}
For $\lambda \subseteq \lbrace 1, 2, \ldots, N \rbrace$,
the following are equivalent:
\begin{enumerate}
\item there exists a rook placement on $B_\mu$ of type $\lambda$ for some $\mu \in \lbrace 0,1 \rbrace^N$;
\item the cardinality of $\lambda$ is even. 
\end{enumerate}
\end{lem}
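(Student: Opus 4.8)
The plan is to deduce everything from Lemma~\ref{LSmuLTmu}, which translates condition (i) into a purely combinatorial statement about interleaving two subsets of $\lambda$. Recall from \eqref{STmu} that for any $\mu \in \lbrace 0,1 \rbrace^N$ the sets $S_\mu$ and $T_\mu$ partition $\lbrace 1,2,\ldots,N\rbrace$, so $\lambda \cap S_\mu$ and $\lambda \cap T_\mu$ partition $\lambda$. By Lemma~\ref{LSmuLTmu}, statement (i) holds precisely when there is some $\mu \in \lbrace 0,1 \rbrace^N$ for which the pair $(\lambda \cap S_\mu, \lambda \cap T_\mu)$ satisfies conditions (i), (ii) of Lemma~\ref{PIcond}.

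For the implication (i)~$\Rightarrow$~(ii): if such a $\mu$ exists, then condition (i) of Lemma~\ref{PIcond} gives $|\lambda \cap S_\mu| = |\lambda \cap T_\mu|$, and summing these over the partition $\lambda = (\lambda \cap S_\mu) \cup (\lambda \cap T_\mu)$ shows that $|\lambda| = 2|\lambda \cap S_\mu|$ is even.

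For the implication (ii)~$\Rightarrow$~(i): write $\lambda = \lbrace a_1 < a_2 < \cdots < a_{2n} \rbrace$ and split it by the parity of position, putting the odd-position elements into one block and the even-position ones into the other. Concretely, define $\mu \in \lbrace 0,1 \rbrace^N$ by $\mu_{a_{2i-1}} = 0$ and $\mu_{a_{2i}} = 1$ for $1 \le i \le n$, and $\mu_m = 0$ for $m \notin \lambda$. Then $\lambda \cap S_\mu = \lbrace a_1, a_3, \ldots, a_{2n-1}\rbrace$ and $\lambda \cap T_\mu = \lbrace a_2, a_4, \ldots, a_{2n}\rbrace$ have the same size $n$, and the $i$-th smallest element of $\lambda \cap S_\mu$ is $a_{2i-1}$, which is strictly smaller than the $i$-th smallest element $a_{2i}$ of $\lambda \cap T_\mu$. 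Hence conditions (i), (ii) of Lemma~\ref{PIcond} hold for this pair, and Lemma~\ref{LSmuLTmu} produces a rook placement on $B_\mu$ of type $\lambda$.

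There is essentially no serious obstacle here: once the earlier lemmas are in place, the only idea needed is that interleaving — splitting a linearly ordered set of even cardinality by the parity of rank — always yields two blocks the first of which is strictly dominated termwise by the second. The only point requiring (minor) care is bookkeeping: one must check that the chosen $\mu$ realizes exactly the prescribed partition of $\lambda$, which is immediate since $S_\mu$ and $T_\mu$ depend on $\mu$ coordinatewise and the coordinates of $\mu$ outside $\lambda$ play no role.
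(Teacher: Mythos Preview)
Your proof is correct and follows essentially the same route as the paper: both directions go through Lemma~\ref{LSmuLTmu}, and for (ii)~$\Rightarrow$~(i) both arguments exhibit an explicit bipartition of $\lambda$ satisfying the conditions of Lemma~\ref{PIcond}. The only difference is cosmetic: the paper puts the $n$ smallest elements of $\lambda$ into $\pi_1$ and the $n$ largest into $\pi_2$, whereas you interleave by parity of position; either choice verifies the required termwise inequality immediately.
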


\begin{proof}
Fix $\lambda \subseteq \lbrace 1, 2, \ldots, N \rbrace$.
Suppose 
there exists a rook placement $\sigma$
on $B_\mu$ of type $\lambda$ for some $\mu \in \lbrace 0,1 \rbrace^N$.
Then by Lemma \ref{LSmuLTmu},
the pair $(\lambda \cap S_\mu, \lambda \cap T_\mu)$ satisfies (i), (ii) in Lemma \ref{PIcond}.
In particular, $|\lambda| = |\lambda \cap S_\mu| + |\lambda \cap T_\mu|$ is even.
So (ii) holds.

Conversely,
we suppose $|\lambda| = 2n$ for some $n \in \mathbb{N}$ and show (i) holds.
Let $(\pi_1, \pi_2)$ denote the bipartition of $\lambda$ where $\pi_1$ contains the first $n$ smallest elements in $\lambda$
and $\pi_2$ contains the remaining $n$ elements in $\lambda$.
Take any $\mu \in \lbrace 0,1 \rbrace^N$ such that $\pi_1 \subseteq S_\mu$ and $\pi_2 \subseteq T_\mu$.
Then  we have $\pi_1 = \lambda \cap S_\mu$ and $\pi_2 = \lambda \cap T_\mu$.
Observe that the pair $(\pi_1, \pi_2)$ satisfies (i), (ii) in Lemma \ref{PIcond}.
So by Lemma \ref{LSmuLTmu},
there exists a rook placement on $B_\mu$ of type $\lambda$.
In particular, (i) holds.
\end{proof}

Since rook placements
can be seen as permutations,
we define the concept of inversions.
Let $\sigma$ be a nonempty rook placement on a Ferrers board $B_\mu$ of shape $\mu$.
For $(s_0,t_0) \in \sigma$,
the \emph{local inversion number of $\sigma$ at $(s_0,t_0)$},
denoted by $\mathrm{inv}(\sigma, s_0, t_0)$,
is defined by
\begin{equation}
\mathrm{inv}(\sigma, s_0, t_0) =
|\lbrace (s, t) \in \sigma \mid s < s_0, t > t_0 \rbrace|= |\sigma \cap B_\mu(s_0,t_0)| - 1.
\label{LIN}
\end{equation}
For a rook placement $\sigma$,
the \emph{(total) inversion number of $\sigma$}, denoted by $\mathrm{inv}(\sigma)$,
is defined by
\[
\mathrm{inv}(\sigma) = \sum_{(s,t) \in \sigma} \mathrm{inv}(\sigma, s, t).
\]

\begin{ex}[$N=13$]\label{ex3}
Take $\mu \in \lbrace 0,1 \rbrace^{13}$ as in Example \ref{ex1}.
Consider the following rook placement $\sigma$ on $B_\mu$:
\[
\sigma = \lbrace (1,9), (4,6), (10,12)\rbrace.
\]
Then we have 
$\mathrm{inv}(\sigma, 1, 9) = \mathrm{inv}(\sigma, 10, 12) = 0$
and $\mathrm{inv}(\sigma, 4,6) = 1$.
Thus $\mathrm{inv}(\sigma) = 1$.
\[
\begin{ytableau}
   \none[2]&\none[3]&\none[5]&\none[6]&\none[8]&\none[9]&\none[12]&\none[] \\
   *(white)&*(white)&*(white)&*(white)&*(white)&*(white)\star&*(white)&\none[1] \\
   \none[]&\none[]&*(white)&*(white)\star&*(white)&*(white)&*(white)&\none[4] \\
   \none[]&\none[]&\none[]&\none[]&*(white)&*(white)&*(white)&\none[7] \\
   \none[]&\none[]&\none[]&\none[]&\none[]&\none[]&*(white)\star&\none[10] \\
   \none[]&\none[]&\none[]&\none[]&\none[]&\none[]&*(white)&\none[11] \\
   \none[]&\none[]&\none[]&\none[]&\none[]&\none[]&\none[]&\none[13] \\
\end{ytableau}
\]
\end{ex}

\begin{lem}\label{lem3.9}
Let $\mu \in \lbrace 0,1 \rbrace^N$
and
let
$\lambda \subseteq \lbrace 1, 2, \ldots, N \rbrace$ satisfy (ii) in Lemma \ref{LSmuLTmu}.
For $1 \le m \le N$ and for a rook placement $\sigma$ on $B_\mu$ of type $\lambda$,
we have
\begin{equation}
|\sigma \cap (S_\mu(m) \times T_\mu(m))|
=
|\lambda \cap S_\mu(m)| + |\lambda \cap T_\mu(m)| - \frac{|\lambda|}{2}. 
\label{eq:sigmamm}
\end{equation}
In particular, this number is independent of the choice of $\sigma$.
\end{lem}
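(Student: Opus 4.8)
The plan is to split the rook placement $\sigma$ into three blocks according to where its cells sit relative to the index $m$, and then express each of the three quantities appearing in \eqref{eq:sigmamm} in terms of the sizes of these blocks.

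I would start from the observation that, since $\sigma\subseteq B_\mu\subseteq S_\mu\times T_\mu$, for a cell $(s,t)\in\sigma$ the memberships $s\in S_\mu$ and $t\in T_\mu$ are automatic; hence $(s,t)$ lies in $S_\mu(m)\times T_\mu(m)$ exactly when $s\le m$ and $t\ge m$, so the left-hand side of \eqref{eq:sigmamm} equals $|\{(s,t)\in\sigma:s\le m,\ t\ge m\}|$. Because every cell of $B_\mu$ satisfies $s<t$, the three conditions ``$s\le m$ and $t\ge m$'', ``$s\le m$ and $t<m$'', and ``$s>m$'' are mutually exclusive and exhaust $\sigma$ (in the last case $t>s>m$, so $t\ge m$ is forced). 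Let $\sigma_1,\sigma_2,\sigma_3$ be the three corresponding blocks, so $|\sigma_1|+|\sigma_2|+|\sigma_3|=|\sigma|$; and $|\sigma|=|\lambda|/2$, because $\sigma$ having type $\lambda$ means $\lambda=\pi_1(\sigma)\sqcup\pi_2(\sigma)$ with $\pi_1(\sigma)\subseteq S_\mu$, $\pi_2(\sigma)\subseteq T_\mu$ and $|\pi_1(\sigma)|=|\pi_2(\sigma)|=|\sigma|$.

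Next I would read off the remaining two terms. From $\sigma$ having type $\lambda$ (Definition \ref{sigmatype}) we get $\pi_1(\sigma)=\lambda\cap S_\mu$ and $\pi_2(\sigma)=\lambda\cap T_\mu$; since a cell of the rook placement $\sigma$ is determined by its row, and also by its column, we obtain $|\lambda\cap S_\mu(m)|=|\{s\in\pi_1(\sigma):s\le m\}|=|\sigma_1|+|\sigma_2|$ and $|\lambda\cap T_\mu(m)|=|\{t\in\pi_2(\sigma):t\ge m\}|=|\sigma_1|+|\sigma_3|$. Substituting these together with $|\lambda|/2=|\sigma_1|+|\sigma_2|+|\sigma_3|$ into the right-hand side of \eqref{eq:sigmamm} yields $(|\sigma_1|+|\sigma_2|)+(|\sigma_1|+|\sigma_3|)-(|\sigma_1|+|\sigma_2|+|\sigma_3|)=|\sigma_1|$, which is the left-hand side. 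The ``in particular'' clause follows at once, since the right-hand side of \eqref{eq:sigmamm} does not mention $\sigma$.

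The whole argument is bookkeeping, so there is no serious obstacle; the one point that needs a moment's care — and which I would single out — is the case analysis itself: checking that intersecting $\sigma$ with the rectangle $S_\mu(m)\times T_\mu(m)$ is the same as imposing $s\le m$ and $t\ge m$ on cells of $\sigma$, and that the block with $s>m$ automatically satisfies $t\ge m$. Both follow from $s<t$ together with $\mu_s=0\neq1=\mu_t$ for cells of $B_\mu$.
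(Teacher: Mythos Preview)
Your proof is correct and follows essentially the same approach as the paper's. Both arguments partition $\sigma$ according to the four ``quadrants'' determined by the conditions $s\le m$ versus $s>m$ and $t\ge m$ versus $t<m$, observe that the quadrant $\{s>m,\ t<m\}$ contributes nothing (since $s<t$ on $B_\mu$), and then reduce the identity to straightforward bookkeeping; the only cosmetic difference is that the paper writes the three nonempty pieces as intersections with $S_\mu(m)\times T_\mu$, $S_\mu\times T_\mu(m)$, and their complements, rather than naming the blocks $\sigma_1,\sigma_2,\sigma_3$ explicitly.
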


\begin{proof}
Since $\pi_1(\sigma) = \lambda \cap S_\mu$, $\pi_2(\sigma) = \lambda \cap T_\mu$ and
$\sigma$ is a rook placement,
we have
\begin{align*}
|\lambda \cap S_\mu(m)| &= 
|\pi_1(\sigma) \cap S_\mu(m)| = 
|\sigma \cap (S_\mu(m) \times T_\mu)|, \\
|\lambda \cap T_\mu(m)| &= 
|\pi_2(\sigma) \cap T_\mu(m)| = 
|\sigma \cap (S_\mu \times T_\mu(m))|,\\
|\lambda| &= |\pi_1(\sigma)| + |\pi_2(\sigma)| = 2|\sigma|.
\end{align*}
Set $\overline{S_\mu(m)} = S_\mu \setminus S_\mu(m)$ and $\overline{T_\mu(m)} = T_\mu \setminus T_\mu(m)$.
Then we have
\begin{align*}
&|\lambda \cap S_\mu(m)| + |\lambda \cap T_\mu(m)| - \frac{|\lambda|}{2} \\
&= |\sigma \cap (S_\mu(m) \times T_\mu)| + |\sigma \cap (S_\mu \times T_\mu(m))| - |\sigma| \\
&= |\sigma \cap (S_\mu(m) \times T_\mu(m))| - |\sigma \cap (\overline{S_\mu(m)} \times \overline{T_\mu(m)})|.
\end{align*}
So, it remains to show that 
$\sigma \cap (\overline{S_\mu(m)} \times \overline{T_\mu(m)}) = \emptyset$.
Observe that 
\[
B_\mu \cap \left(\overline{S_\mu(m)} \times \overline{T_\mu(m)}\right) = 
\lbrace (s,t) \in B_\mu \mid t < m < s \rbrace = \emptyset.
\]
Therefore,
from $\sigma \subseteq B_\mu$, the result follows.
\end{proof}

The next lemma is a generalization of 
\cite[Corollary 1.3.10]{St}
and the proof of the next lemma is motivated by that of \cite[Corollary 1.3.10]{St}.

\begin{lem}\label{qinvsigma}
Let $\mu \in \lbrace 0,1 \rbrace^N$ and let
$\lambda \subseteq \lbrace 1, 2, \ldots, N \rbrace$ satisfy (ii) in Lemma \ref{LSmuLTmu}.
For $1 \le m \le N$, 
let $\rho(m,\mu,\lambda)$ denote the left-hand side of \eqref{eq:sigmamm}.
Then for $q \in \mathbb{C}$ with $q \neq 0, 1$, we have
\[
\sum_\sigma q^{\mathrm{inv}(\sigma)} = 
\prod_{s \in \lambda \cap S_\mu} \frac{q^{\rho(s,\mu,\lambda)} - 1}{q-1},
\]
where the sum is taken over all 
rook placements $\sigma$ on $B_\mu$ of type $\lambda$.
\end{lem}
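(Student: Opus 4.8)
The plan is to induct on $n$, the common value $\tfrac12|\lambda|$, by peeling off the \emph{largest} row of a rook placement. First I would unpack the hypothesis via Lemma~\ref{LSmuLTmu}: it says $\pi_1:=\lambda\cap S_\mu$ and $\pi_2:=\lambda\cap T_\mu$ have equal cardinality $n$, and, writing $s_1<s_2<\cdots<s_n$ and $t_1<t_2<\cdots<t_n$ for their elements, that $s_i<t_i$ for $1\le i\le n$; consequently a rook placement $\sigma$ on $B_\mu$ of type $\lambda$ is exactly a set $\{(s_i,t_{w(i)})\mid 1\le i\le n\}$ for a permutation $w$ of $\{1,\ldots,n\}$ with $s_i<t_{w(i)}$ for all $i$. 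By Lemma~\ref{lem3.9} we have $\rho(m,\mu,\lambda)=|\lambda\cap S_\mu(m)|+|\lambda\cap T_\mu(m)|-n$, and in particular
\[
\rho(s_i,\mu,\lambda)=i+|\lambda\cap T_\mu(s_i)|-n\quad(1\le i\le n),
\qquad
\rho(s_n,\mu,\lambda)=|\lambda\cap T_\mu(s_n)|=|\{\,j\mid t_j>s_n\,\}|,
\]
the last quantity being the number of columns available to the row $s_n$. The base case $\lambda=\emptyset$ is trivial, both sides equalling $1$.

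For the inductive step, given a rook placement $\sigma$ of type $\lambda$, let $(s_n,t_j)$ be its unique rook in row $s_n$ and put $\sigma'=\sigma\setminus\{(s_n,t_j)\}$, a rook placement on $B_\mu$ of type $\lambda':=\lambda\setminus\{s_n,t_j\}$. Since $s_n=\max\pi_1$, every other rook of $\sigma$ lies in a strictly smaller row; hence $\mathrm{inv}(\sigma,s_n,t_j)$ counts exactly the rooks of $\sigma'$ in columns exceeding $t_j$, i.e.\ the $n-j$ columns $t_{j+1},\ldots,t_n$, while deleting the rook $(s_n,t_j)$ (which sits in a larger row) changes no other local inversion number. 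Thus $\mathrm{inv}(\sigma)=(n-j)+\mathrm{inv}(\sigma')$. Conversely, for each $j$ with $t_j>s_n$, adjoining $(s_n,t_j)$ to a rook placement of type $\lambda\setminus\{s_n,t_j\}$ gives back a rook placement of type $\lambda$, and $\sigma\mapsto(j,\sigma')$ is a bijection. Hence
\[
\sum_\sigma q^{\mathrm{inv}(\sigma)}=\sum_{j\,:\,t_j>s_n}q^{\,n-j}\sum_{\sigma'}q^{\mathrm{inv}(\sigma')},
\]
the inner sum running over rook placements of type $\lambda\setminus\{s_n,t_j\}$; the outer factor is $1+q+\cdots+q^{a-1}=(q^{a}-1)/(q-1)$ with $a=\rho(s_n,\mu,\lambda)$.

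The only step that really needs care — the hard part — is checking that the inner sum is independent of $j$, so that it factors out. Here one exploits that the deleted column $t_j$ lies above $s_n$, hence above every element of $\pi_1$. With that, I would verify two things: first, $\lambda':=\lambda\setminus\{s_n,t_j\}$ again satisfies (ii) in Lemma~\ref{LSmuLTmu} (its $S$- and $T$-parts still interlace, as $t_j>s_n$ was the top row); second, for $1\le i\le n-1$, since $s_n\notin S_\mu(s_i)$ while $t_j\in T_\mu(s_i)$, one has $|\lambda'\cap S_\mu(s_i)|=|\lambda\cap S_\mu(s_i)|$ and $|\lambda'\cap T_\mu(s_i)|=|\lambda\cap T_\mu(s_i)|-1$, so that, with $|\lambda'|=|\lambda|-2$, one gets $\rho(s_i,\mu,\lambda')=\rho(s_i,\mu,\lambda)$. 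The induction hypothesis then gives $\sum_{\sigma'}q^{\mathrm{inv}(\sigma')}=\prod_{i=1}^{n-1}(q^{\rho(s_i,\mu,\lambda)}-1)/(q-1)$ for every admissible $j$, and substituting this into the displayed identity, the $j$-sum supplying the missing $i=n$ factor, completes the induction. It is exactly the choice to peel the \emph{largest} row (not the smallest) that forces the $\rho$-data of the smaller board to coincide on the nose with that of the original on the shared rows, and thereby decouples the two sums.
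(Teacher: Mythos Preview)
Your proof is correct and complete; the verification that $\rho(s_i,\mu,\lambda')=\rho(s_i,\mu,\lambda)$ for $i<n$, and hence that the inner sum is independent of $j$, is exactly the crux and you handle it cleanly.

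The paper takes a different route: rather than induct, it exhibits a single bijection between the rook placements $\sigma$ of type $\lambda$ and the integer sequences $(a_s)_{s\in\lambda\cap S_\mu}$ with $0\le a_s\le\rho(s,\mu,\lambda)-1$, sending $\sigma$ to the tuple of its local inversion numbers $a_s=\mathrm{inv}(\sigma,s,t(s))$. This is the classical inversion-table encoding of a permutation, and once one checks the bounds the product formula is immediate from $\mathrm{inv}(\sigma)=\sum_s a_s$. Your induction is essentially this bijection unrolled one row at a time: peeling the largest row $s_n$ isolates the coordinate $a_{s_n}$, and your observation that the remaining $\rho$-values are unchanged is what makes the coordinates independent. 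The bijective argument is more compact and yields the statistic-preserving correspondence as a byproduct; your inductive argument is more self-contained and avoids having to verify the bound $a_{s_i}\le\rho(s_i,\mu,\lambda)-1$ for all $i$ simultaneously.
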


\begin{proof}
If $\lambda = \emptyset$, the assertion is clear.
(Note that $\mathrm{inv}(\emptyset) = 0$.)
We assume $\lambda \neq \emptyset$.
We claim that
there exists a bijection between the following two sets:
\begin{enumerate}
\item rook placements $\sigma$ on $B_\mu$ of type $\lambda$,
\item integer sequences $(a_s)_{s \in \lambda \cap S_\mu}$
such that $0 \le a_s \le \rho(s,\mu,\lambda) - 1$
for $s \in \lambda \cap S_\mu$,
\end{enumerate}
such that $\mathrm{inv}(\sigma) = \sum_{s \in \lambda \cap S_\mu}a_s$.
Suppose for the moment that the claim is true.
Then we have
\[
\sum_{\sigma}q^{\mathrm{inv}(\sigma)}
=
\prod_{s \in \lambda \cap S_\mu}\left( \sum_{a_s = 0}^{\rho(s,\mu,\lambda)-1} q^{a_s}\right)
=
\prod_{s \in \lambda \cap S_\mu} \frac{q^{\rho(s,\mu,\lambda)} - 1}{q-1}.
\]
So the result follows.

Therefore, it remains to prove the claim.
For a given rook placement $\sigma$ on $B_\mu$ of type $\lambda$
and for $s \in \lambda \cap S_\mu$,
there exists a unique $t(s) \in \lambda \cap T_\mu$ such that $(s,t(s)) \in \sigma$.
Thus, we consider the map $\iota$ that sends $\sigma$ to $(a_s)_{s \in \lambda \cap S_\mu}$, where $a_s = \mathrm{inv}(\sigma, s, t(s))$.
Then for $s \in \lambda \cap S_\mu$, we have
\begin{align*}
0 \le a_s &= |\sigma \cap (S_\mu(s) \times T_\mu(t(s)))| - 1 \\
&\le |\sigma \cap (S_\mu(s) \times T_\mu(s))| - 1 \\
&= \rho(s,\mu,\lambda) - 1,
\end{align*}
where the second inequality follows from the fact that $s \le t(s)$.
This implies that the map $\iota$ is from (i) to (ii).
To show the bijectivity of $\iota$,
take a sequence $(a_s)_{s \in \lambda \cap S_\mu}$ in the set (ii).
Set $r = |\lambda \cap S_\mu|$
and for $1 \le i \le r$,
we write $s_i$ the $i$-th smallest element in $\lambda \cap S_\mu$.
By definition,
observe that
\[
0 \le a_{s_i} \le \rho(s_i,\mu,\lambda) - 1 \le 
|\lambda \cap S_\mu(s_i)| - 1 = i - 1,
\]
where the third inequality follows from
$|\lambda|/2 - |\lambda \cap T_\mu(s_i)| = |\lbrace t \in \lambda \cap T_\mu \mid t < s_i \rbrace| \ge 0$.
Then,
there exists a unique permutation $\widetilde{\sigma}$ of $\lbrace 1, 2, \ldots, r\rbrace$
such that 
\begin{equation}
a_{s_i} = |\lbrace j \mid 1 \le j < i, \widetilde{\sigma}(i) < \widetilde{\sigma}(j) \rbrace|.
\label{ai}
\end{equation}
Then consider the set
$\sigma = \lbrace (s_i, t_{\widetilde{\sigma}(i)}) \mid 1 \le i \le r\rbrace$,
where $t_i$ is the $i$-th smallest element in $\lambda \cap T_\mu$.
Fix $1 \le i \le r$.
By \eqref{ai},
we have $\widetilde{\sigma}(i) \ge i - a_{s_i}$ and
so we have
\[
\widetilde{\sigma}(i) \ge i - a_{s_i} \ge i - \rho(s_i,\mu,\lambda) + 1 =
|\lbrace t \in \lambda \cap T_\mu \mid t < s_i \rbrace| + 1.
\]
This implies that $s_i < t_{\widetilde{\sigma}(i)}$.
This holds for any $1 \le i \le r$ and so
$\sigma$ becomes a rook placement on $B_\mu$.
It is clear that $\sigma$ is of type $\lambda$.
By construction, the map which sends $(a_s)_{s \in \lambda \cap S_\mu}$ to $\sigma$
becomes the inverse of $\iota$.
Therefore, our claim holds.
\end{proof}

\section{The matrix representation of $P$}\label{matrixrep}

For a field $\mathbb{K}$ and for two finite nonempty sets $S$ and $T$,
let $\mathrm{Mat}_{S,T}(\mathbb{K})$ denote
the set of all matrices 
with rows indexed by $S$ and columns indexed by $T$
whose entries are in $\mathbb{K}$.
If $S = T$, we write it $\mathrm{Mat}_{S}(\mathbb{K})$ for short.
For $M \in \mathrm{Mat}_{S,T}(\mathbb{K})$,
the \emph{support} of $M$,
denoted by $\mathrm{Supp}(M)$,
is the set of indices containing nonzero entries:
\[
\mathrm{Supp}(M) =
\lbrace (s,t) \in S \times T \mid M_{s,t} \neq 0 \rbrace.
\]

For $\mu \in \lbrace 0,1 \rbrace^N$,
recall the corresponding bipartition $S_\mu, T_\mu$ from \eqref{STmu}
and the Ferrers board $B_\mu$ of shape $\mu$ from \eqref{Bmu}.
We will assume $\mu \neq \mathbf{0}$, $\mathbf{1}$ 
in this section
so that both $S_\mu$ and $T_\mu$ are nonempty.

\begin{defi}\label{supportmu}\normalfont
Let $\mu \in \lbrace 0,1 \rbrace^N$ with $\mu \neq \mathbf{0}$, $\mathbf{1}$.
Let $\mathcal{M}_\mu(\mathbb{F}_q)$ denote the set of matrices in $\mathrm{Mat}_{S_\mu,T_\mu}(\mathbb{F}_q)$
such that $\mathrm{Supp}(M) \subseteq B_\mu$.
\end{defi}

Recall the set $P_\mu$ of subspaces at location $\mu \in \lbrace 0,1 \rbrace^N$ from Definition \ref{defi:location}.

\begin{prop}\label{yY}
Let $\mu \in \lbrace 0,1 \rbrace^N$ with $\mu \neq \mathbf{0}$, $\mathbf{1}$.
Fix a basis $v_1, v_2, \ldots, v_N$ for $H$ adapted to the flag $\lbrace x_i \rbrace_{i=0}^N$.
There exists a bijection from $P_\mu$
to the set 
$\mathcal{M}_\mu(\mathbb{F}_q)$ in Definition \ref{supportmu}
that sends $y \in P_\mu$ to $Y \in \mathcal{M}_\mu(\mathbb{F}_q)$,
where $y$ has a basis
\begin{align*}
\sum_{s \in S_\mu} Y_{s,t}v_s + v_t,
&&
t \in T_\mu.
\end{align*}
\end{prop}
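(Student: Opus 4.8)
The plan is to exhibit the bijection explicitly by showing that the prescribed recipe $Y \mapsto y$ is well-defined (lands in $P_\mu$), is injective, and is surjective. First I would fix $\mu \neq \mathbf{0}, \mathbf{1}$ and a basis $v_1,\dots,v_N$ adapted to the flag. Given $Y \in \mathcal{M}_\mu(\mathbb{F}_q)$, define vectors $w_t = \sum_{s \in S_\mu} Y_{s,t} v_s + v_t$ for $t \in T_\mu$ and let $y = \Span\{w_t \mid t \in T_\mu\}$. Since each $w_t$ has $v_t$-coefficient $1$ and $t \in T_\mu$ while the other nonzero coefficients are at indices in $S_\mu$ (disjoint from $T_\mu$), the matrix of coefficients of $\{w_t\}$ is in row echelon form up to reordering, so these vectors are linearly independent and $\dim y = |T_\mu| = |\mu|$. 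This is the easy part.

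Next I would verify that $y \in P_\mu$, i.e.\ that $\dim(y \cap x_m) = \mu_1 + \cdots + \mu_m$ for all $1 \le m \le N$. The key observation is that $x_m = \Span\{v_1,\dots,v_m\}$, and because $\mathrm{Supp}(Y) \subseteq B_\mu$, for $t \in T_\mu$ with $t \le m$ every nonzero term $Y_{s,t} v_s$ has $s \in S_\mu$ with $s < t \le m$, so $w_t \in x_m$; conversely I claim $y \cap x_m = \Span\{w_t \mid t \in T_\mu, t \le m\}$. The inclusion $\supseteq$ is immediate; for $\subseteq$, take $v = \sum_{t \in T_\mu} c_t w_t \in x_m$, and look at the coordinate of $v$ with respect to $v_t$ for $t \in T_\mu$ with $t > m$: since $s < t$ for all $(s,t) \in B_\mu$ forces no $w_{t'}$ with $t' \le t$ to contribute to the $v_t$-coordinate except $w_t$ itself (whose coefficient there is $1$), while $w_{t'}$ with $t' > m$, $t' \neq t$ contributes $0$ to the $v_t$-coordinate unless... — more carefully, the $v_t$-coordinate of $v$ equals $c_t$ (the only $w_{t'}$ with a nonzero $v_t$-coordinate is $w_t$, as $Y_{s,t'} v_s$ never equals a multiple of $v_t$ for $s \in S_\mu$), so $v \in x_m$ forces $c_t = 0$ for all $t > m$. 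Hence $\dim(y \cap x_m) = |\{t \in T_\mu \mid t \le m\}| = \mu_1 + \cdots + \mu_m$, so $y \in P_\mu$.

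For injectivity, suppose $Y, Y' \in \mathcal{M}_\mu(\mathbb{F}_q)$ give the same subspace $y$. Since $\{w_t\}$ and $\{w'_t\}$ are both the (unique) reduced row-echelon basis of $y$ with pivots at the coordinates indexed by $T_\mu$ — each $w_t$ has a $1$ in position $v_t$ and $0$ in position $v_{t'}$ for every other $t' \in T_\mu$ — we get $w_t = w'_t$ for all $t$, hence $Y = Y'$. For surjectivity, given $y \in P_\mu$, put $y$ into reduced row-echelon form with respect to the ordered basis $v_1,\dots,v_N$; I would argue that the pivot positions are exactly $T_\mu$ (using $\dim(y \cap x_m) = \mu_1 + \cdots + \mu_m$: the number of pivots among the first $m$ coordinates is $\dim(y\cap x_m)$, so coordinate $m$ is a pivot iff $\mu_m = 1$), that for a pivot column $t$ the only nonzero non-pivot entries lie in rows $s$ with $s < t$ and $\mu_s = 0$, i.e.\ $(s,t) \in B_\mu$ (entries above a pivot in RREF, and $s \notin T_\mu$ since pivot columns are cleared), which defines $Y \in \mathcal{M}_\mu(\mathbb{F}_q)$ mapping to $y$. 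The main obstacle is keeping the echelon/pivot bookkeeping precise — specifically the claim that "coordinate $m$ is a pivot iff $\mu_m = 1$" and that non-pivot entries of a pivot column sit exactly in $B_\mu$; everything else is routine linear algebra once the correspondence between pivot positions and $T_\mu$ is nailed down.
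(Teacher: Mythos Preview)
Your proof is correct and takes essentially the same approach as the paper: both identify the unique basis $\{w_t\}_{t \in T_\mu}$ of $y$ in canonical echelon form (coefficient $1$ at $v_t$, coefficient $0$ at every other $v_{t'}$ with $t' \in T_\mu$) and read off $Y$ from the remaining coordinates, the paper starting from $y$ and you starting from $Y$. One small note on your surjectivity step: the echelon form you need takes the \emph{last} nonzero coordinate as the pivot (equivalently, standard RREF under the reversed ordering $v_N,\dots,v_1$), since with the usual left-to-right pivot convention the number of pivots among columns $1,\dots,m$ need not equal $\dim(y\cap x_m)$; once you use that convention your pivot bookkeeping goes through exactly as you describe.
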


\begin{proof}
For $y \in P_\mu$, there exists a basis $w_t$ $(t \in T_\mu)$
for $y$
such that $w_t \in x_t \setminus x_{t-1}$ for each $t \in T_\mu$.
Write each vector $w_t$ as a linear combination of the fixed basis $v_1, v_2, \ldots, v_t$ for $x_t$.
Without loss of generality,
we may assume 
the coefficient of $v_t$ is $1$.
Use linear operations on the basis $w_t$ ($t \in T_\mu$) to make the coefficient of $v_{t'}$
$0$ for any $t' \in T_\mu$ with $t \neq t'$.
Observe that the resulting basis $w_t'$ $(t \in T_\mu)$
is uniquely determined by $y$.
Then from the basis $w_t'$ ($t \in T_\mu$), we construct the matrix
$Y \in \mathrm{Mat}_{S_\mu,T_\mu}(\mathbb{F}_q)$ such that
$Y_{s,t}$ is the coefficient of $v_s$ in $w_t'$.
Then we have $Y \in \mathcal{M}_\mu(\mathbb{F}_q)$
since $w_t' \in x_t$.
On the other hand,
let $Y \in \mathcal{M}_\mu(\mathbb{F}_q)$.
For $t \in T_\mu$, we write $w_t = \sum_{s \in S_\mu} Y_{s,t}v_s + v_t$.
Since $\mathrm{Supp}(Y) \subseteq B_\mu$,
the vector $w_t$ is a linear combination of $v_1, v_2, \ldots, v_t$,
that means $w_t \in x_t \setminus x_{t-1}$.
Therefore the subspace $y$ spanned by the vectors $w_t$ ($t \in T_\mu$) must belong to $P_\mu$.
\end{proof}

\begin{defi}\label{matrixform}\normalfont
Let $\mu \in \lbrace 0,1 \rbrace^N$ with $\mu \neq \mathbf{0}$, $\mathbf{1}$.
Take $y \in P_\mu$.
By the \emph{matrix form} of $y$, we mean
the matrix $Y \in \mathcal{M}_\mu(\mathbb{F}_q)$ which is
the image of $y$
under the bijection in Proposition \ref{yY}.
We note that the matrix form of $y$ depends on the basis $v_1, v_2, \ldots, v_N$ for $H$.
\end{defi}

Let $\mu \in \lbrace 0,1 \rbrace^N$ with $\mu \neq \mathbf{0}$, $\mathbf{1}$.
For $s \in S_\mu$,
we denote by $s^-$ the one smaller element in $S_\mu$.
If there is no such element, we set $s^- = 0$.
For $t \in T_\mu$,
we denote by $t^+$ the one larger element in $T_\mu$.
If there is no such element, we set $t^+ = N+1$.
Observe that for $(s,t) \in B_\mu$,
we have $(s^-,t) \in B_\mu$ if $s^- \neq 0$
and 
we have $(s,t^+) \in B_\mu$ if $t^+ \neq N+1$.
For $M \in \mathcal{M}_{\mu}(\mathbb{F}_q)$
and for $(s,t) \in B_\mu$,
let $M(s,t)$ denote the submatrix of $M$ indexed by
the rectangle with respect to $(s,t)$ in \eqref{Bmu(s,t)}.
Moreover,
we set
\begin{align}
r^-(M,s,t) &= 
\begin{cases}
\mathrm{rank} \left(M(s^-,t)\right) & \text{if $s^- \neq 0$}, \\
0 & \text{if $s^- = 0$},
\end{cases} \label{r-}\\
r^+(M,s,t) &= 
\begin{cases}
\mathrm{rank} \left(M(s,t^+)\right) & \text{if $t^+ \neq N+1$}, \\
0 & \text{if $t^+ = N+1$},
\end{cases} \label{r+}\\
r^{-+}(M,s,t) &= 
\begin{cases}
\mathrm{rank} \left(M(s^-,t^+)\right) & \text{if $s^- \neq 0$ and $t^+ \neq N+1$}, \\
0 & \text{if $s^- = 0$ or $t^+ = N+1$}.
\end{cases}\label{r-+}
\end{align}

\begin{defi}\label{defi:sigmaM}\normalfont
Let $\mu \in \lbrace 0,1 \rbrace^N$ with $\mu \neq \mathbf{0}$, $\mathbf{1}$.
For $M \in \mathcal{M}_{\mu}(\mathbb{F}_q)$,
we define the set $\sigma(M)$
consisting of all indices $(s,t) \in B_\mu$ 
such that
\[
r^{\epsilon}(M,s,t) = \mathrm{rank} \left(M(s,t)\right) - 1
\]
for all $\epsilon \in \lbrace -, +, -+\rbrace$.
\end{defi}

\begin{lem}\label{lem:RP}
Let $\mu \in \lbrace 0,1 \rbrace^N$ with $\mu \neq \mathbf{0}$, $\mathbf{1}$.
For $M \in \mathcal{M}_\mu(\mathbb{F}_q)$,
the set $\sigma(M)$ in Definition \ref{defi:sigmaM} is a rook placement on $B_\mu$.
\end{lem}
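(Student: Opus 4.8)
The plan is to show that no two elements of $\sigma(M)$ share a row or a column. Fix $M \in \mathcal{M}_\mu(\mathbb{F}_q)$ and suppose, for contradiction, that $(s_0,t_1), (s_0,t_2) \in \sigma(M)$ with $t_1 < t_2$ in $T_\mu$. The key is the rank-comparison philosophy already set up in \eqref{r-}--\eqref{r-+}: the rectangles $M(s_0,t)$ for varying $t$ are nested (as $t$ decreases, the rectangle grows by columns), so their ranks are weakly increasing as $t$ decreases, and the defining condition $(s_0,t) \in \sigma(M)$ says precisely that deleting the top row or the leftmost column of $M(s_0,t)$ drops the rank by exactly $1$ (and deleting both drops it by exactly $1$ as well, not $2$).

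First I would record the basic monotonicity facts: for $(s,t) \in B_\mu$, deleting a row or a column of a matrix changes its rank by $0$ or $1$, so $r^-(M,s,t), r^+(M,s,t) \in \{\rank M(s,t)-1, \rank M(s,t)\}$ and $r^{-+}(M,s,t) \in \{\rank M(s,t)-2, \rank M(s,t)-1, \rank M(s,t)\}$, with $r^{-+} \le r^-, r^+ \le \rank M(s,t)$. Thus $(s,t) \in \sigma(M)$ iff $r^-(M,s,t) = r^+(M,s,t) = \rank M(s,t) - 1$ and $r^{-+}(M,s,t) = \rank M(s,t) - 1$ (so equality $r^{-+} = r^- = r^+$ holds at such an index). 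The second ingredient is a telescoping inequality along a row of rectangles: for the chain of columns $t_1 < \cdots < t_k$ (all in $T_\mu$) lying in the $s_0$-th rectangle row, one has $\rank M(s_0, t_1) \ge \rank M(s_0, t_2) \ge \cdots$, and more importantly, if $(s_0, t_i)\in\sigma(M)$ for two consecutive-in-support columns I can compare $\rank M(s_0,t_i)$ with $\rank M(s_0, t_i^+)$ using the $r^+$ condition.

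The heart of the argument is then: if $(s_0, t)$ and $(s_0, t')$ both lie in $\sigma(M)$ with $t < t'$, walk from $t$ up to $t'$ through the intermediate elements of $T_\mu$; at the first step, $r^+(M,s_0,t) = \rank M(s_0,t) - 1 = \rank M(s_0, t^+)$, so the rank strictly drops when we pass from the rectangle at $t$ to the rectangle at $t^+$; iterating, $\rank M(s_0, t') < \rank M(s_0, t)$ strictly, but also $\rank M(s_0,t') \ge \rank M(s_0,t) - (\text{number of columns removed})$ — this alone is not yet a contradiction, so the real point must use that at $(s_0,t')$ we also have $r^-(M,s_0,t') = \rank M(s_0,t')-1$: removing the top row of the larger rectangle $M(s_0,t)$ already accounts for that rank unit, and combined with the column-monotonicity this forces $\rank M(s_0^-, t)$ and $\rank M(s_0, t)$ into an impossible relation. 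I expect the cleanest route is to prove directly the contrapositive-flavored statement: \emph{if $(s_0,t) \in \sigma(M)$ then for every $t' \in T_\mu$ with $t' > t$ and $(s_0^-, t') \in B_\mu$ we have $(s_0, t') \notin \sigma(M)$}, by showing $\rank M(s_0, t') = r^-(M, s_0, t')$, i.e. the top row is redundant in the smaller rectangle because it was already redundant (in a compatible way) in the larger one. The symmetric statement handles two entries in the same column. The main obstacle is getting the submatrix rank bookkeeping exactly right — in particular, correctly translating "$(s,t)\in\sigma(M)$" into a statement about which row/column is in the span of the others, and then propagating that redundancy to a sub- or super-rectangle; this is elementary linear algebra but the nested-rectangle indexing via $s^-, t^+$ makes it fiddly, and care is needed at the boundary cases $s^- = 0$ or $t^+ = N+1$ where the relevant $r^{\epsilon}$ is defined to be $0$.
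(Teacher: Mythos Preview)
Your proposal is in the same spirit as the paper's argument, and the final ``contrapositive-flavored'' plan you sketch would succeed, but you have not yet isolated the one clean step that makes it work, and some of what you write before that point is misleading. The telescoping along the row of rectangles is unnecessary; as you yourself note, it does not by itself give a contradiction. The decisive observation is this: if $(s_0,t)\in\sigma(M)$ then the condition $r^{-+}(M,s_0,t)=r^+(M,s_0,t)$ says precisely that $\rank M(s_0^-,t^+)=\rank M(s_0,t^+)$, i.e.\ row $s_0$ is \emph{redundant} in $M(s_0,t^+)$. Since row dependencies persist under deletion of columns, row $s_0$ remains redundant in $M(s_0,t')$ for every $t'\in T_\mu$ with $t'\ge t^+$, so $r^-(M,s_0,t')=\rank M(s_0,t')$ and hence $(s_0,t')\notin\sigma(M)$. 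Note that the redundancy comes from $M(s_0,t^+)$, \emph{not} from $M(s_0,t)$: in the latter, row $s_0$ is \emph{not} redundant (that is exactly what $r^-(M,s_0,t)=\rank M(s_0,t)-1$ says), so your sentence ``removing the top row of the larger rectangle $M(s_0,t)$ already accounts for that rank unit'' points to the wrong rectangle. The paper runs the symmetric (same-column) version of this argument: from $(s_1,t),(s_2,t)\in\sigma(M)$ with $s_1<s_2$, the condition $r^{-+}(M,s_2,t)=r^-(M,s_2,t)$ says the $t$-th column is redundant in $M(s_2^-,t)$, while $r^+(M,s_1,t)=\rank M(s_1,t)-1$ says the $t$-th column is not redundant in the submatrix $M(s_1,t)$; since $s_1\le s_2^-$, the first fact propagates to the second and gives a contradiction. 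Once you drop the telescoping and use the $r^{-+}$ condition directly, your argument and the paper's coincide.
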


\begin{proof}
Fix $M \in \mathcal{M}_\mu(\mathbb{F}_q)$.
Since $\sigma(M)$ is a subset of $B_\mu$,
it suffices to show that no two elements in $\sigma(M)$ have a common entry.
To do this,
we take $(s_1,t), (s_2, t) \in \sigma(M)$
and assume $s_1 < s_2$.
Observe that $s_2^- \neq 0$.
Since $(s_1,t) \in \sigma(M)$,
we have
\begin{equation}
r^+(M,s_1,t) = \mathrm{rank}\left( M(s_1,t) \right) - 1.
\label{eq:r+(M,s1,t)}
\end{equation}
Since $(s_2,t) \in \sigma(M)$, we have 
$r^{-+}(M,s_2,t) = r^{-}(M,s_2,t)$.
By definition, $r^{-+}(M,s_2,t) = r^+(M,s_2^-,t)$,$r^{-}(M,s_2,t) = \mathrm{rank}\left( M(s_2^-,t) \right)$ and so we obtain
\begin{equation}
r^+(M,s_2^-,t) = \mathrm{rank}\left( M(s_2^-,t) \right).
\label{eq:r+(M,s2-,t)}
\end{equation}
By \eqref{eq:r+(M,s1,t)},
the $t$-th column of $M(s_1,t)$ can't be expressed as a linear combination of other columns of $M(s_1,t)$.
By \eqref{eq:r+(M,s2-,t)},
the $t$-th column of $M(s_2^-,t)$ can be expressed as a linear combination of other columns of $M(s_2^-,t)$.
This implies $s_2^- < s_1$,
which contradicts to $s_1 < s_2$.
Therefore we must have $s_1 = s_2$.
Similarly, 
if we take $(s,t_1), (s, t_2) \in \sigma(M)$, then one can show that $t_1 = t_2$.
So the result follows.
\end{proof}

Recall the local inversion numbers of a rook placement from \eqref{LIN}.

\begin{lem}\label{lem:rankinv}
Let $\mu \in \lbrace 0,1 \rbrace^N$ with $\mu \neq \mathbf{0}$, $\mathbf{1}$.
For $M \in \mathcal{M}_\mu(\mathbb{F}_q)$,
we have
\[
\mathrm{rank} \left( M(s,t) \right) = \mathrm{inv}(\sigma(M),s,t) + 1
\]
for $(s,t) \in \sigma(M)$.
\end{lem}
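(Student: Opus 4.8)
I want to compute $\mathrm{rank}(M(s,t))$ for $(s,t)\in\sigma(M)$ by induction along the rook placement, peeling off one rook at a time. The natural induction parameter is $\mathrm{inv}(\sigma(M),s,t)$, or equivalently $|\sigma(M)\cap B_\mu(s,t)|$; recall from \eqref{LIN} that these differ by $1$. So I would set $\sigma = \sigma(M)$ and prove by induction on $k = \mathrm{inv}(\sigma,s,t)$ that $\mathrm{rank}(M(s,t)) = k+1$ for every $(s,t)\in\sigma$.

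\textbf{Base case.} Suppose $(s,t)\in\sigma$ with $\mathrm{inv}(\sigma,s,t)=0$, i.e.\ $(s,t)$ is the only element of $\sigma$ lying in the rectangle $B_\mu(s,t) = S_\mu(s)\times T_\mu(t)$. I first want to show $\mathrm{rank}(M(s,t))\ge 1$, i.e.\ $M(s,t)\neq 0$: if it were zero then, unwinding Definition~\ref{defi:sigmaM}, all three of $r^-, r^+, r^{-+}$ would equal $0 = \mathrm{rank}(M(s,t))$, contradicting $r^\epsilon(M,s,t) = \mathrm{rank}(M(s,t)) - 1 = -1$. (Here I am using the convention that when $s^-=0$ or $t^+=N+1$ the relevant $r^\epsilon$ is $0$, so the defining equations of $\sigma(M)$ force $\mathrm{rank}(M(s,t))=1$ directly in those boundary situations.) For the upper bound, I would argue that if $\mathrm{rank}(M(s,t))\ge 2$ then some strictly smaller rectangle still has rank $\ge 1$, and iterating produces another element of $\sigma$ inside $B_\mu(s,t)$ — the key sub-claim being that a minimal nonzero rectangle, i.e.\ one where shrinking the row bound or the column bound drops the rank, has its bottom-left corner in $\sigma(M)$. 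This minimality argument is really the crux and is best isolated as a separate observation (or folded into Definition~\ref{defi:sigmaM}'s interpretation).

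\textbf{Inductive step.} For $(s,t)\in\sigma$ with $\mathrm{inv}(\sigma,s,t)=k\ge 1$, I would use the relations defining membership in $\sigma(M)$: since $(s,t)\in\sigma(M)$ we have $r^-(M,s,t) = r^+(M,s,t) = r^{-+}(M,s,t) = \mathrm{rank}(M(s,t)) - 1$. Now I want to locate the ``next'' rook. Among the rooks of $\sigma$ strictly inside $B_\mu(s,t)$, pick $(s',t')$ with $s' = s^-$-side or $t' = t^+$-side behaviour — more precisely, I expect that exactly one of the three smaller rectangles $M(s^-,t)$, $M(s,t^+)$, $M(s^-,t^+)$ contains a rook of $\sigma$ ``closest'' to $(s,t)$, and an inclusion–exclusion on ranks of these three rectangles, together with the submodularity-type inequality $\mathrm{rank}(M(s^-,t)) + \mathrm{rank}(M(s,t^+)) \ge \mathrm{rank}(M(s,t)) + \mathrm{rank}(M(s^-,t^+))$ for nested rectangles, lets me conclude $\mathrm{rank}(M(s,t)) = \mathrm{rank}(M(s',t'))+1$ where $(s',t')$ is the appropriate corner. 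Then $\mathrm{inv}(\sigma,s',t') = k-1$ by \eqref{LIN} (one fewer rook strictly above-right), and the induction hypothesis gives $\mathrm{rank}(M(s',t')) = k$, hence $\mathrm{rank}(M(s,t)) = k+1$.

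\textbf{Main obstacle.} The delicate point is the bookkeeping in the inductive step: translating the equalities $r^\epsilon(M,s,t) = \mathrm{rank}(M(s,t))-1$ into a statement that pins down which neighbouring rectangle drops the rank, and showing its relevant corner again lies in $\sigma(M)$ (rather than merely having the right rank). I expect this to hinge on the rank-submodularity inequality for nested submatrices together with the fact, provable as in Lemma~\ref{lem:RP}, that the rooks of $\sigma(M)$ sitting in $B_\mu(s,t)$ form a permutation pattern with no repeated rows or columns, so that ``the rook immediately northwest of $(s,t)$ inside the rectangle'' is well defined. Once that structural fact is in hand the rank computation is forced, and summing the local inversion numbers would then also recover $\mathrm{rank}$ of the full board, but for the present lemma only the local statement is needed.
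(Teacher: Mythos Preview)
Your inductive scheme has a genuine gap in the step. You propose, for a rook $(s,t)$ with $\mathrm{inv}(\sigma,s,t)=k\ge 1$, to locate another rook $(s',t')$ inside $B_\mu(s,t)$ with $\mathrm{inv}(\sigma,s',t')=k-1$ and with $\mathrm{rank}(M(s',t'))=\mathrm{rank}(M(s,t))-1$. But such a rook need not exist. Take three rooks in $B_\mu(s,t)$ at positions $(s_1,t_2),(s_2,t_3),(s_3,t_1)$ with $s_1<s_2<s_3=s$ and $t_1<t_2<t_3$ (the $3$-cycle pattern). Then $\mathrm{inv}(\sigma,s_3,t_1)=2$ while $\mathrm{inv}(\sigma,s_1,t_2)=\mathrm{inv}(\sigma,s_2,t_3)=0$; no rook has local inversion number $1$. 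So the assertion ``one fewer rook strictly above-right'' is simply false for a generic permutation pattern, and the induction on $k$ collapses. The same example shows that your ``appropriate corner'' $(s',t')$ cannot always be taken to be a rook: the natural neighbouring corner $(s^-,t^+)$ is generally \emph{not} in $\sigma(M)$, so your induction hypothesis (which speaks only of rooks) does not apply to it.

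The paper avoids this by not inducting along rooks at all. It writes $\mathrm{rank}(M(s,t))$ as a telescoping double sum over \emph{all} boxes $(s',t')\in B_\mu(s,t)$ of the second difference
\[
\mathrm{rank}(M(s',t'))-r^-(M,s',t')-r^+(M,s',t')+r^{-+}(M,s',t'),
\]
and then checks, case by case from Definition~\ref{defi:sigmaM} and the inequalities $\mathrm{rank}\ge r^{\pm}\ge r^{-+}\ge \mathrm{rank}-2$, that this second difference is $1$ exactly when $(s',t')\in\sigma(M)$ and $0$ otherwise. This immediately gives the stronger identity $\mathrm{rank}(M(s,t))=|\sigma(M)\cap B_\mu(s,t)|$ for every box $(s,t)\in B_\mu$, which specializes to the lemma when $(s,t)\in\sigma(M)$. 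If you want to rescue an inductive argument, the fix is to prove this stronger statement by induction on $|B_\mu(s,t)|$: then the predecessor $(s^-,t^+)$ is always available regardless of whether it is a rook, and the rook-placement property of $\sigma(M)$ guarantees that $B_\mu(s,t)\setminus B_\mu(s^-,t^+)$ (the $s$-th row together with the $t$-th column) contains at most one rook, namely $(s,t)$ itself if it is one.
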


\begin{proof}
Fix $(s,t) \in \sigma(M)$.
Observe that
$\mathrm{rank} \left( M(s,t) \right)$ can be computed as follows:
\[
\sum_{(s',t') \in B_\mu(s,t)}
\left(
\mathrm{rank} \left( M(s',t') \right)
-
r^-(M,s',t')
-
r^+(M,s',t')
+
r^{-+}(M,s',t')
\right).
\]
Then by the definition of $\sigma(M)$,
each summand is $1$ if $(s',t') \in \sigma(M)$.
We claim that each summand is $0$ if $(s',t') \not\in \sigma(M)$.
Suppose for the moment that the claim is true.
Then
$\mathrm{rank} \left( M(s,t) \right)$
is equal to the cardinality of $\sigma(M) \cap B_\mu(s,t)$.
The result follows from the definition of local inversion numbers.

Therefore, it remains to prove the claim.
If $(s',t') \not\in \sigma(M)$,
then there exists $\epsilon \in \lbrace -, + ,-+\rbrace$
such that $r^\epsilon(M,s',t') \neq \mathrm{rank}\left( M(s',t')\right)-1$.
If $\epsilon = +$, then $r^+(M,s',t') = \mathrm{rank}\left( M(s',t')\right)$.
In this case, 
the $t'$-th column of $M(s',t')$ can be expressed as a linear combination of other columns of $M(s',t')$.
In particular, if $s'^- \neq 0$, 
the $t'$-th column of $M(s'^-,t')$ can be expressed as a linear combination of other columns of $M(s'^-,t')$.
This implies $r^{-+}(M,s',t') = r^{-}(M,s',t')$,
which is also true if $s'^- = 0$.
Therefore, the summand is $0$.
Similarly, if $\epsilon = -$, the summand is $0$.
If $\epsilon = -+$, then we have two possibilities:
$r^{-+}(M,s',t') = \mathrm{rank}\left( M(s',t')\right)$ or
$r^{-+}(M,s',t') = \mathrm{rank}\left( M(s',t')\right)-2$.
For the first case, we have
$\mathrm{rank}\left( M(s',t')\right) = r^-(M,s',t') = r^+(M,s',t') = r^{-+}(M,s',t')$
since 
we have
$\mathrm{rank}\left( M(s',t')\right) \ge r^-(M,s',t') \ge r^{-+}(M,s',t')$
and 
$\mathrm{rank}\left( M(s',t')\right) \ge r^+(M,s',t') \ge r^{-+}(M,s',t')$
by definition.
This also implies the summand is $0$.
For the second case, we have
$\mathrm{rank}\left( M(s',t')\right) = r^-(M,s',t')+1 = r^+(M,s',t')+1 = r^{-+}(M,s',t')+2$
since 
we have
$\mathrm{rank}\left( M(s',t')\right) \le r^-(M,s',t')+1 \le r^{-+}(M,s',t')+2$
and 
$\mathrm{rank}\left( M(s',t')\right) \le r^+(M,s',t')+1 \le r^{-+}(M,s',t')+2$
by definition.
This also implies the summand is $0$.
Hence the claim holds.
\end{proof}

\begin{lem}\label{MandRP}
Let $\mu \in \lbrace 0,1 \rbrace^N$ with $\mu \neq \mathbf{0}$, $\mathbf{1}$.
For a subset $\sigma \subseteq B_\mu$,
the following are equivalent:
\begin{enumerate}
\item there exists $M \in \mathcal{M}_\mu(\mathbb{F}_q)$
such that $\sigma(M) = \sigma$.
\item it is a rook placement on $B_\mu$.
\end{enumerate}
\end{lem}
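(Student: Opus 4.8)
The plan is to prove the two implications separately. The implication (i) $\Rightarrow$ (ii) has already been established: it is exactly Lemma \ref{lem:RP}. For (ii) $\Rightarrow$ (i), given a rook placement $\sigma$ on $B_\mu$ I would exhibit the most economical matrix realizing it, namely the $0$-$1$ matrix $M \in \mathrm{Mat}_{S_\mu,T_\mu}(\mathbb{F}_q)$ defined by $M_{s,t} = 1$ if $(s,t) \in \sigma$ and $M_{s,t} = 0$ otherwise. Since $\mathrm{Supp}(M) = \sigma \subseteq B_\mu$, we have $M \in \mathcal{M}_\mu(\mathbb{F}_q)$, and the entire task reduces to checking $\sigma(M) = \sigma$.

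The key observation is that, because $\sigma$ is a rook placement, for every $(s,t) \in B_\mu$ the submatrix $M(s,t)$ is a $0$-$1$ matrix whose nonzero positions are exactly $\sigma \cap B_\mu(s,t)$, and these have pairwise distinct rows and pairwise distinct columns; such a matrix has rank equal to the number of its nonzero entries, so $\mathrm{rank}(M(s,t)) = |\sigma \cap B_\mu(s,t)|$, and likewise for $M(s^-,t)$, $M(s,t^+)$ and $M(s^-,t^+)$. The four rectangles $B_\mu(s,t)$, $B_\mu(s^-,t)$, $B_\mu(s,t^+)$, $B_\mu(s^-,t^+)$ differ precisely by the row-$s$ strip and the column-$t$ strip of $B_\mu(s,t)$, whose unique common element is $(s,t)$, so the same inclusion-exclusion that underlies the proof of Lemma \ref{lem:rankinv} yields
\[
\mathrm{rank}(M(s,t)) - r^-(M,s,t) - r^+(M,s,t) + r^{-+}(M,s,t) = |\sigma \cap \lbrace (s,t) \rbrace|
\]
for every $(s,t) \in B_\mu$, where the boundary conventions of \eqref{r-}--\eqref{r-+} take care of the degenerate cases $s^- = 0$ and $t^+ = N+1$.

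From this identity both inclusions are immediate. If $(s,t) \in \sigma$, then the unique element of $\sigma$ lying in row $s$ (resp.\ in column $t$) of $B_\mu(s,t)$ is $(s,t)$ itself, whence $r^-(M,s,t) = r^+(M,s,t) = r^{-+}(M,s,t) = \mathrm{rank}(M(s,t)) - 1$, i.e.\ $(s,t) \in \sigma(M)$; thus $\sigma \subseteq \sigma(M)$. Conversely, if $(s,t) \in \sigma(M)$, substituting $r^-(M,s,t) = r^+(M,s,t) = r^{-+}(M,s,t) = \mathrm{rank}(M(s,t)) - 1$ into the displayed identity gives $|\sigma \cap \lbrace (s,t) \rbrace| = 1$, so $(s,t) \in \sigma$; thus $\sigma(M) \subseteq \sigma$. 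Hence $\sigma(M) = \sigma$, which proves (i). The argument is light, and the only point demanding a little care — the closest thing to an obstacle — is bookkeeping the boundary conventions when $s$ is the least element of $S_\mu$ or $t$ is the greatest element of $T_\mu$, so that the rank-counting identity for $M(s,t)$ holds uniformly; everything else follows directly from the defining property of a rook placement.
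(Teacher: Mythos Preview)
Your proof is correct and follows exactly the paper's approach: the paper cites Lemma~\ref{lem:RP} for (i)$\Rightarrow$(ii) and, for (ii)$\Rightarrow$(i), constructs the same $0$--$1$ matrix $M_\sigma$ supported on $\sigma$, merely asserting that ``it is easy to check that $\sigma(M)=\sigma$.'' Your rank-counting argument via the identity $\mathrm{rank}(M(s,t))=|\sigma\cap B_\mu(s,t)|$ and inclusion--exclusion is precisely the verification the paper leaves to the reader.
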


\begin{proof}
Lemma \ref{lem:RP} shows that (i) implies (ii).

Assume we are given a rook placement $\sigma$ on $B_\mu$.
Consider the matrix $M_\sigma \in \mathcal{M}_\mu(\mathbb{F}_q)$
defined by
\[
(M_\sigma)_{s,t} =
\begin{cases}
1 & \text{if $(s,t) \in \sigma$}, \\
0 & \text{otherwise}
\end{cases}
\]
for $s \in S_\mu$, $t \in T_\mu$.
Then it is easy to check that $\sigma(M) = \sigma$.
So (ii) implies (i).
\end{proof}

\section{The number of matrices with given parameter}\label{sec:number}

Let $\mu \in \lbrace 0,1 \rbrace^N$ with $\mu \neq \mathbf{0}, \mathbf{1}$.
Recall from Lemma \ref{MandRP}
that
each matrix $\mathcal{M}_\mu(\mathbb{F}_q)$
corresponds to a rook placement on the Ferrers board $B_\mu$ of shape $\mu$.
Recall the sets from \eqref{STmu} and \eqref{STmu(s,t)}.
To simplify the notation, we set
\begin{equation}
n(\pi_1) = \sum_{s \in \pi_1}|S_\mu(s)|
\label{npi1}
\end{equation}
for a subset $\pi_1 \subseteq S_\mu$.

\begin{defi}\label{columnfull}\normalfont
Let $\mu \in \lbrace 0,1 \rbrace^N$.
A subset $\lambda \subseteq \lbrace 1, 2, \ldots, N \rbrace$ is said to be 
\emph{column-full} with respect to $\mu$
whenever $T_\mu \subseteq \lambda$.
Moreover, 
a rook placement $\sigma$ on $B_\mu$
is said to be \emph{column-full}
whenever 
the type of $\sigma$ is column-full.
\end{defi}

Let $\mu \in \lbrace 0,1 \rbrace^N$.
We remark that
a rook placement $\sigma$ on $B_\mu$
is column-full if and only if
the column index set $\pi_2(\sigma)$, defined in \eqref{pisigma}, is maximal.

\begin{prop}\label{numbersigma}
Let $\mu \in \lbrace 0,1 \rbrace^N$ with $\mu \neq \mathbf{0}, \mathbf{1}$
and let $\sigma$ denote a rook placement on $B_\mu$.
Assume $\sigma$ is column-full in Definition \ref{columnfull}.
Then the number of matrices 
$M \in \mathcal{M}_\mu(\mathbb{F}_q)$
such that $\sigma = \sigma(M)$ in Definition \ref{defi:sigmaM} is given by
\[
(q-1)^{|\mu|} q^{\mathrm{inv}(\sigma) + |B_\mu| - n(\pi_1(\sigma))}.
\]
\end{prop}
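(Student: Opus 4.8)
The plan is to count matrices $M \in \mathcal{M}_\mu(\mathbb{F}_q)$ with $\sigma(M) = \sigma$ by building $M$ column by column, from the rightmost column of $B_\mu$ to the leftmost, and at each step keeping track of how many free choices of entries there are so that the partial rank conditions defining $\sigma(M)$ are met. Fix a column-full rook placement $\sigma$. For each $t \in T_\mu$, since $\sigma$ is column-full there is a unique $s(t) \in S_\mu$ with $(s(t),t)\in\sigma$; this is the ``pivot row'' for column $t$. The idea is that, reading off $t$ from largest to smallest, the submatrix $M(s,t)$ for the pivot $(s(t),t)$ must have rank exactly $\mathrm{inv}(\sigma,s(t),t)+1$ by Lemma~\ref{lem:rankinv}, and the new column being adjoined is forced to increase the rank by exactly one relative to the part already fixed to its right, while for the non-pivot positions $(s,t)$ with $s\neq s(t)$ the rank must not increase. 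The first type of condition contributes one factor of $q-1$ (the pivot entry together with the lower entries must produce a genuinely new rank direction, giving a coset of an affine subspace of codimension one, times a nonzero scalar on the pivot), and the second type of condition is automatically a ``complete the linear combination'' prescription that has a unique solution once the rank has been pinned — so those entries below the pivot within the column are free, while those above contribute nothing new. Carefully bookkeeping the total number of free entries and the total number of $(q-1)$-factors should give $(q-1)^{|\mu|}q^{\mathrm{inv}(\sigma)+|B_\mu|-n(\pi_1(\sigma))}$.

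More precisely, here are the key steps. First I would set up the recursion on $T_\mu$ ordered decreasingly, letting $M^{(\ge t)}$ denote the restriction of $M$ to columns $\ge t$ in $B_\mu$, and reformulate ``$\sigma(M)=\sigma$'' as the conjunction over all $t$ of: (a) $\mathrm{rank}(M(s(t),t)) = \mathrm{rank}(M(s(t)^-,t)) + 1 = \mathrm{rank}(M(s(t),t^+))+1$, and (b) for every $s\in S_\mu$ with $s<s(t)$ appearing as a row of column $t$, the $t$-th column of $M(s,t)$ lies in the span of the earlier columns, i.e.\ $\mathrm{rank}(M(s,t))=\mathrm{rank}(M(s,t^+))$ — plus the analogous ``horizontal'' condition which, I expect, will follow automatically. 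Second, given a fixed valid $M^{(>t)}$, I would count the number of ways to adjoin the $t$-th column so that (a) and (b) hold: the non-pivot rows below the last already-placed rook in column $t$ give free entries (contributing powers of $q$), the entries at and above $s(t)$ are then uniquely determined to force the rank conditions, and the requirement that the new column genuinely enlarges the rank at the pivot contributes exactly a factor $(q-1)$ (one for the pivot scalar being nonzero, but offset by one degree of freedom lost — the precise combinatorics here needs to be tracked). Third, I would multiply these local counts and simplify: the exponent of $q-1$ sums to $|T_\mu| = |\mu|$ over the pivots (one per column), and the exponent of $q$ accumulates to $\mathrm{inv}(\sigma)$ (from the $\mathrm{inv}(\sigma,s,t)$ in Lemma~\ref{lem:rankinv} governing how many ``old'' columns each new pivot column must reckon with) plus $|B_\mu| - n(\pi_1(\sigma))$ (the genuinely free off-pivot entries, counted via \eqref{npi1}).

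The main obstacle I anticipate is the precise local count in the second step: disentangling, for a single new column, exactly which entries are free, which are forced, and why the ``rank goes up by exactly one at the pivot, and not at all at the non-pivots'' condition contributes precisely one factor of $q-1$ and the stated power of $q$ — and, crucially, verifying that imposing the vertical rank conditions (the $r^-$ and $r^{-+}$ conditions in Definition~\ref{defi:sigmaM}) automatically forces the horizontal ones (the $r^+$ conditions) so that no overcounting or undercounting occurs. A clean way to handle this is to put $M$ in a ``reduced'' form via column operations within each $B_\mu(s,t)$: because $\sigma$ is column-full, the pivots $\{(s(t),t)\}$ form a permutation-matrix pattern, and I would argue that the orbit of $M$ under such operations (equivalently, the ``new directions'' added by each column) is controlled purely by $\sigma$, reducing the count to a product of affine-space sizes whose dimensions read off from the rectangle sizes $|S_\mu(s)|$ and the inversion structure. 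Getting that reduction airtight — especially the independence of the count from the order in which conditions are imposed — is where the real work lies; the rest is bookkeeping with \eqref{npi1} and $\mathrm{inv}(\sigma) = \sum_{(s,t)\in\sigma}\mathrm{inv}(\sigma,s,t)$.
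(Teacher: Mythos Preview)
Your approach is essentially the paper's: count column by column, using the unique pivot $(s(t),t)\in\sigma$ in each column and the rank conditions from Definition~\ref{defi:sigmaM}. However, your allocation of free versus determined entries is off in a way that would derail the computation. You write that ``the entries at and above $s(t)$ are then uniquely determined to force the rank conditions,'' but this is exactly backwards: the part of column $t$ in rows $S_\mu(s(t)^-)$ (above the pivot) must lie in the column span of $M(s(t)^-,t^+)$, and that span has dimension $r^{-+}=\mathrm{rank}(M(s(t),t))-1=\mathrm{inv}(\sigma,s(t),t)$ by Lemma~\ref{lem:rankinv}. So there are $q^{\mathrm{inv}(\sigma,s(t),t)}$ choices for the above-pivot part, not one. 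Extending by the pivot row gives $q$ further options, of which $q^{r^+}=q^{\mathrm{inv}(\sigma,s(t),t)}$ are bad (they lie in the span of $M(s(t),t^+)$), leaving $(q-1)q^{\mathrm{inv}(\sigma,s(t),t)}$. The entries strictly below the pivot (rows $s'$ with $s(t)<s'<t$) are genuinely free, contributing $q^{|S_\mu(t)|-|S_\mu(s(t))|}$. Multiplying over $t\in T_\mu$ and using $\sum_{t}|S_\mu(t)|=|B_\mu|$ and $\sum_{t}|S_\mu(s(t))|=n(\pi_1(\sigma))$ gives the formula. The $\mathrm{inv}(\sigma)$ term comes from the above-pivot freedom, not from any interaction with ``already-placed rooks.''

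Two further remarks. First, your concern about horizontal versus vertical conditions dissolves once you notice that the count per column depends only on $\sigma$ (via $\mathrm{inv}(\sigma,s(t),t)$ and $|S_\mu(t)|-|S_\mu(s(t))|$), not on the actual entries of the other columns; moreover, ensuring $(s(t),t)\in\sigma(M)$ for every $t$ gives $\sigma\subseteq\sigma(M)$, and since $\sigma(M)$ is a rook placement (Lemma~\ref{lem:RP}) with at most one element per column while $\sigma$ already has one in every column, equality follows. Second, the column-operations/reduced-form machinery you propose is unnecessary overhead; the direct rank-count above suffices.
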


\begin{proof}
Let $t \in T_\mu$.
We count the number of possibilities for the $t$-th column of $M$ with $\sigma = \sigma(M)$.
Since $\sigma$ is a column-full rook placement, there uniquely exists $s \in S_\mu$
such that $(s,t) \in \sigma$.
Since $(s,t) \in \sigma$, we have
\begin{equation}\label{lem5.2_1}
r^-(M,s,t) = r^{-+}(M,s,t) = \mathrm{rank}\left( M(s,t)\right)-1.
\end{equation}
This means that the $t$-th column of the submatrix $M(s^-,t)$ is a linear combination of other columns.
Therefore, the number of possibilities for the $t$-th column of $M(s^-,t)$ is $q^{r(M,s,t)-1}$.
For a given such column of $M(s^-,t)$, the number of possibilities for the $t$-th column of $M(s,t)$ is at most $q$ since $M(s,t)$ has one more row than $M(s^-,t)$.
In other words, the number of possibilities for the $t$-th column of $M(s,t)$ is at most $q^{r(M,s,t)-1} \times q = q^{r(M,s,t)}$.
Similarly, since $(s,t) \in \sigma$, we have 
\begin{equation}\label{lem5.2_2}
r(M,s,t)-1 = r^+(M,s,t).
\end{equation}
This means that the $t$-th column of the submatrix $M(s,t)$ is not a linear combination of other columns.
Since there are $q^{r(M,s,t)-1}$ columns which are linear combinations of columns of $M(s,t^+)$,
the number of possibilities for the $t$-th column of $M(s,t)$ is
\[
q^{r(M,s,t)}-q^{r(M,s,t)-1} = (q-1)q^{r(M,s,t)-1} = (q-1)q^{\mathrm{inv}(\sigma,s,t)}.
\]
The second equality follows from Lemma \ref{lem:rankinv}.
Since $M \in \mathcal{M}_\mu(\mathbb{F}_q)$, or equivalently $\mathrm{Supp}(M) \subseteq B_\mu$,
the $(s',t)$-entries are $0$ if \textcolor{red}{$s'>t$}.
Therefore, for a given $t$-th column of $M(s,t)$,
the number of possibilities for the $t$-th column of $M$ is at most $q^l$, where
\[
l = |\lbrace s' \in S_\mu \mid s < s' \le t \rbrace|
=  |S_\mu(t)| - |S_\mu(s)|.
\]
Observe that any choices of the $t$-th column among the $q^l$ possibilities satisfy both \eqref{lem5.2_1} and \eqref{lem5.2_2} by construction.
Since the conditions \eqref{lem5.2_1} and \eqref{lem5.2_2} are equivalent to $(s,t) \in \sigma$,
the number is exactly $q^l$.
We have shown that the number of possibilities for the $t$-th column of $M$ is
\[
(q-1)q^{\mathrm{inv}(\sigma,s,t)} \times q^{|S_\mu(t)| - |S_\mu(s)|},
\]
which is independent of the choice of other columns of $M$.
Therefore the number of $M$ is obtained by taking the product of the values for all $t \in T_\mu$ since $\sigma$ is column-full. 
The result follows from
the definition of $\mathrm{inv}(\sigma)$
and
the column-full property
and
\[
\sum_{t \in T_\mu}|S_\mu(t)| = |\lbrace (s,t) \in S_\mu \times T_\mu \mid s < t \rbrace| = |B_\mu|.
\]
\end{proof}

\begin{cor}\label{cor:numberM}
Let $\mu \in \lbrace 0,1 \rbrace^N$ with $\mu \neq \mathbf{0}, \mathbf{1}$
and
let
$\lambda \subseteq \lbrace 1, 2, \ldots, N\rbrace$ 
satisfy (ii) in Lemma \ref{LSmuLTmu}.
Assume $\lambda$ is column-full with respect to $\mu$ in Definition \ref{columnfull}.
Then the number of matrices 
$M \in \mathcal{M}_\mu(\mathbb{F}_q)$
such that $\sigma(M)$ is of type $\lambda$
in Definitions \ref{sigmatype} and \ref{defi:sigmaM} is given by
\[
q^{|B_\mu| - n(\lambda \cap S_\mu)}
\prod_{s \in \lambda \cap S_\mu} \left(q^{\rho(s,\mu,\lambda)} - 1\right),
\]
where $\rho(s,\mu,\lambda)$ is defined in Lemma \ref{qinvsigma}.
\end{cor}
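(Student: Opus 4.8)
The plan is to sum the count from Proposition \ref{numbersigma} over all rook placements $\sigma$ on $B_\mu$ of type $\lambda$. Since $\lambda$ is column-full, every rook placement of type $\lambda$ is column-full (its column index set is $T_\mu = \lambda \cap T_\mu$), so Proposition \ref{numbersigma} applies to each such $\sigma$. Moreover, for a rook placement of type $\lambda$ we have $\pi_1(\sigma) = \lambda \cap S_\mu$, hence $n(\pi_1(\sigma)) = n(\lambda \cap S_\mu)$ is the same for every $\sigma$ of type $\lambda$. Therefore
\[
\#\lbrace M \in \mathcal{M}_\mu(\mathbb{F}_q) \mid \sigma(M) \text{ of type } \lambda \rbrace
= \sum_\sigma (q-1)^{|\mu|} q^{\mathrm{inv}(\sigma) + |B_\mu| - n(\lambda \cap S_\mu)}
= (q-1)^{|\mu|} q^{|B_\mu| - n(\lambda \cap S_\mu)} \sum_\sigma q^{\mathrm{inv}(\sigma)},
\]
the sum running over rook placements $\sigma$ on $B_\mu$ of type $\lambda$.

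First I would invoke Lemma \ref{qinvsigma}, which identifies the remaining sum as $\prod_{s \in \lambda \cap S_\mu} (q^{\rho(s,\mu,\lambda)} - 1)/(q-1)$. (The hypothesis of Lemma \ref{qinvsigma} that $\lambda$ satisfy (ii) in Lemma \ref{LSmuLTmu} is exactly our hypothesis; note the notation $\rho(s,\mu,\lambda)$ there matches the statement here.) Substituting, the $(q-1)^{|\mu|}$ factor cancels against the denominator $(q-1)^{|\lambda \cap S_\mu|}$ precisely when $|\lambda \cap S_\mu| = |\mu|$. This is the one arithmetic point that needs care: since $\lambda$ is column-full, $T_\mu \subseteq \lambda$, so $\lambda \cap T_\mu = T_\mu$ has cardinality $|\mu|$; and by condition (i) of Lemma \ref{PIcond} (which $\lambda$ satisfies via Lemma \ref{LSmuLTmu}), $|\lambda \cap S_\mu| = |\lambda \cap T_\mu| = |\mu|$. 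Hence $(q-1)^{|\mu|}/(q-1)^{|\lambda \cap S_\mu|} = 1$, and we are left with $q^{|B_\mu| - n(\lambda \cap S_\mu)} \prod_{s \in \lambda \cap S_\mu}(q^{\rho(s,\mu,\lambda)} - 1)$, as claimed.

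I do not anticipate a serious obstacle: the result is essentially bookkeeping, assembling Proposition \ref{numbersigma}, Lemma \ref{qinvsigma}, and the cardinality identities for column-full $\lambda$. The step most worth stating carefully is the cancellation of the $(q-1)$ powers, i.e.\ verifying $|\lambda \cap S_\mu| = |\mu|$; everything else is direct substitution. One should also note at the outset that there is at least one rook placement of type $\lambda$ — guaranteed by Lemma \ref{LSmuLTmu} — so the sum is over a nonempty index set (though the formula holds formally even if it were empty, in which case no $M$ has $\sigma(M)$ of type $\lambda$; but this case does not arise under our hypotheses).
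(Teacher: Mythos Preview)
Your proposal is correct and follows exactly the approach the paper takes: the paper's proof reads simply ``Use Lemma~\ref{qinvsigma} and Proposition~\ref{numbersigma},'' and your argument spells out precisely those two ingredients together with the bookkeeping (notably the verification that $|\lambda\cap S_\mu|=|\mu|$ so the $(q-1)$ powers cancel). There is no difference in strategy, only in the level of detail.
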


\begin{proof}
Use Lemma \ref{qinvsigma} and Proposition \ref{numbersigma}.
\end{proof}

\section{The algebra $\mathcal{H}$}\label{sec:H}

Recall $\mathrm{Mat}_P(\mathbb{C})$,
the set of all matrices
whose rows and columns are indexed by $P$ and whose entries are in $\mathbb{C}$.
We see it as a $\mathbb{C}$-algebra.
We write $I \in \mathrm{Mat}_P(\mathbb{C})$ for the identity matrix
and $O \in \mathrm{Mat}_P(\mathbb{C})$ for the zero matrix.
In this section, 
we introduce a subalgebra
$\mathcal{H}$ of $\mathrm{Mat}_P(\mathbb{C})$
which represents the $N$-cube structure in $P$.

Let $V = \mathbb{C}P$ denote the vector space over $\mathbb{C}$
consisting of the column vectors whose coordinates are indexed by $P$
and whose entries are in $\mathbb{C}$.
Observe that $\mathrm{Mat}_P(\mathbb{C})$
acts on $V$ by left multiplication.
We call $V$ the \emph{standard module}
for $\mathrm{Mat}_P(\mathbb{C})$.
We equip $V$ with the standard Hermitian inner product defined by 
$\langle u, v \rangle = u^T\bar{v}$
for $u, v \in V$,
where $^T$ denotes transpose and $\bar{}$ denotes complex conjugate.

Recall from Definition \ref{defi:location}
that we have partitioned $P$ into the sets $P_\mu$ of all subspaces at location $\mu$ for $\mu \in \lbrace 0,1 \rbrace^N$.
For $\mu \in \mathbb{Z}^N$, define a diagonal matrix $E_\mu^* \in \mathrm{Mat}_P(\mathbb{C})$ by
\begin{align*}
(E_\mu^*)_{y,y} =
\begin{cases}
1 & \text{if $y \in P_\mu$}, \\
0 & \text{if $y \not\in P_\mu$},
\end{cases}
&&
y \in P.
\end{align*}
Observe that
$E_\mu^* = O$
unless $\mu \in \lbrace 0,1 \rbrace^N$.
By construction, we have 
\begin{align*}
E_\mu^* E_\nu^* = \delta_{\mu,\nu} E_\mu^*,
&&
\mu, \nu \in \lbrace 0,1 \rbrace^N,
\end{align*}
\[
I = \sum_{\mu \in \lbrace 0,1 \rbrace^N} E_\mu^*.
\]
Moreover, we have a decomposition of $V$:
\begin{align*}
V = \sum_{\mu \in \lbrace 0,1 \rbrace^N} E_\mu^* V,
&&
\text{(direct sum)},
\end{align*}
where $E_\mu^* V$ is the subspace of $V$
consisting of the vectors whose nonzero entries are indexed by elements in $P_\mu$.
Thus, the matrix $E_\mu^*$ is the projection from $V$ onto $E_\mu^* V$
and
we call it the \emph{projection matrix}.

\begin{defi}\normalfont\label{K}
By the above comments, the matrices
$E_\mu^*$, where $\mu \in \lbrace 0,1 \rbrace^N$ form a basis
for a commutative subalgebra of $\mathrm{Mat}_P(\mathbb{C})$.
We denote this subalgebra by $\mathcal{K}$.
\end{defi}

We now introduce matrices that generate $\mathcal{K}$.
For $1 \le m \le N$, we define diagonal matrices $K_m \in \mathrm{Mat}_P(\mathbb{C})$ by
\begin{align*}
(K_m)_{y,y} = q^{1/2 - \mu_m},
&&
y \in P_{\mu},
\end{align*}
where $\mu = (\mu_1, \mu_2, \ldots, \mu_N)$.

\begin{lem}\label{lem:K}
For $1 \le m \le N$,
we have
\[
K_m = \sum_{\mu \in \lbrace 0,1 \rbrace^N}q^{1/2 - \mu_m} E_\mu^*,
\]
where $\mu = (\mu_1, \mu_2, \ldots, \mu_N)$.
\end{lem}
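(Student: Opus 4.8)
The statement to prove is Lemma~\ref{lem:K}, which asserts the identity $K_m = \sum_{\mu \in \{0,1\}^N} q^{1/2-\mu_m} E_\mu^*$ for each $1 \le m \le N$. The plan is simply to verify that the two diagonal matrices have the same $(y,y)$-entry for every $y \in P$, since a diagonal matrix in $\mathrm{Mat}_P(\mathbb{C})$ is determined by its diagonal. First I would fix $y \in P$ and let $\mu \in \{0,1\}^N$ be the location of $y$ (this is well defined by Proposition~\ref{location} and Definition~\ref{defi:location}), writing $\mu = (\mu_1, \ldots, \mu_N)$. By the definition of $K_m$, the $(y,y)$-entry of the left-hand side is $q^{1/2-\mu_m}$.

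For the right-hand side, I would recall from the comments preceding Definition~\ref{K} that $(E_\nu^*)_{y,y}$ equals $1$ if $y \in P_\nu$ and $0$ otherwise, and that the sets $P_\nu$, $\nu \in \{0,1\}^N$, partition $P$. Hence, for the fixed $y$ with location $\mu$, we have $(E_\nu^*)_{y,y} = \delta_{\mu,\nu}$. Therefore the $(y,y)$-entry of $\sum_{\nu \in \{0,1\}^N} q^{1/2-\nu_m} E_\nu^*$ collapses to the single surviving term $q^{1/2-\mu_m}$, matching the left-hand side. Since $y \in P$ was arbitrary and both matrices are diagonal (all off-diagonal entries of both sides are zero, as each $E_\nu^*$ is diagonal), the two matrices coincide.

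There is essentially no obstacle here: the lemma is a bookkeeping identity that unpacks the definitions of $K_m$ and of the projection matrices $E_\mu^*$, together with the fact that the $P_\mu$ partition $P$. The only point to be slightly careful about is the summation range: one uses that $E_\nu^* = O$ for $\nu \notin \{0,1\}^N$, so restricting the sum to $\nu \in \{0,1\}^N$ loses nothing, and conversely the location $\mu$ of any $y$ lies in $\{0,1\}^N$, so the matching term is genuinely present in the sum.
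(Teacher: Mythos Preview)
Your proof is correct and is precisely the unpacking of the paper's one-line justification ``Immediate from the construction'': you verify that both diagonal matrices have the same $(y,y)$-entry for every $y \in P$ using the definitions of $K_m$ and $E_\mu^*$ together with the fact that the $P_\mu$ partition $P$.
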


\begin{proof}
Immediate from the construction.
\end{proof}

\begin{prop}
The algebra $\mathcal{K}$ in Definition \ref{K} is generated by $K_m$ for $1 \le m \le N$.
\end{prop}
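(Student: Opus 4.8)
The plan is to show that the subalgebra of $\mathrm{Mat}_P(\mathbb{C})$ generated by $K_1, K_2, \ldots, K_N$ coincides with $\mathcal{K}$, the span of the projections $E_\mu^*$ for $\mu \in \lbrace 0,1 \rbrace^N$. One inclusion is immediate: by Lemma \ref{lem:K} each $K_m$ is a $\mathbb{C}$-linear combination of the $E_\mu^*$, so the algebra they generate is contained in $\mathcal{K}$. For the reverse inclusion I would show that each $E_\mu^*$ lies in the algebra generated by the $K_m$; since the $E_\mu^*$ span $\mathcal{K}$, this suffices.

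To recover the $E_\mu^*$, I would proceed coordinate by coordinate in the $N$-cube. First note that the matrices $K_m$ are simultaneously diagonal, so the algebra they generate is a commutative algebra of diagonal matrices; what must be checked is that it is large enough to separate all the blocks $P_\mu$. For a fixed $m$, the matrix $K_m$ takes only the two values $q^{1/2}$ (on $P_\mu$ with $\mu_m = 0$) and $q^{1/2-1}$ (on $P_\mu$ with $\mu_m = 1$); since $q \neq 1$ these are distinct, so by Lagrange interpolation (a polynomial in $K_m$) one obtains the diagonal matrix
\[
F_m := \sum_{\substack{\mu \in \lbrace 0,1 \rbrace^N \\ \mu_m = 1}} E_\mu^*,
\]
and likewise $I - F_m = \sum_{\mu_m = 0} E_\mu^*$, both lying in the generated algebra (note $I$ itself is the empty product, or $q^{-1/2}K_m + (\text{the complementary interpolation})$). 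Then for any fixed $\mu = (\mu_1, \ldots, \mu_N) \in \lbrace 0,1 \rbrace^N$ I would form the product
\[
\prod_{m : \mu_m = 1} F_m \; \prod_{m : \mu_m = 0} (I - F_m),
\]
which, using $E_\mu^* E_\nu^* = \delta_{\mu,\nu} E_\mu^*$ and $\sum_\nu E_\nu^* = I$, telescopes to exactly $E_\mu^*$. Hence every $E_\mu^*$ is in the generated algebra, giving $\mathcal{K} \subseteq \langle K_1, \ldots, K_N\rangle$.

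The main (and really only) obstacle is purely bookkeeping: one must be careful that $\lbrace 0,1 \rbrace^N$ really is indexed by the distinct diagonal values so that the interpolation step is valid, i.e. that for each $m$ both values $q^{1/2-0}$ and $q^{1/2-1}$ are actually attained (this uses surjectivity of the location map, Proposition \ref{location}, which guarantees $P_\mu \neq \emptyset$ for all $\mu$) and that these two values are distinct (which holds since $q$ is a prime power, in particular $q \neq 1$, so $q^{1/2} \neq q^{-1/2}$). Granting that, the product expansion argument is routine and I would not belabor it. An alternative, essentially equivalent, phrasing: the joint spectrum of $(K_1, \ldots, K_N)$ on $V$ is precisely $\lbrace (q^{1/2-\mu_1}, \ldots, q^{1/2-\mu_N}) : \mu \in \lbrace 0,1 \rbrace^N\rbrace$ with the $\mu$-eigenspace equal to $E_\mu^* V$, so the spectral projections of this commuting family are exactly the $E_\mu^*$ and lie in the generated algebra by the spectral/interpolation theorem for commuting diagonalizable operators. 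Either route completes the proof.
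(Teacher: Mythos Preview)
Your proposal is correct and follows essentially the same idea as the paper's proof: both exploit that each $K_m$ has exactly the two distinct eigenvalues $q^{1/2}$ and $q^{-1/2}$ (since $q\neq 1$), so that the family $(K_1,\ldots,K_N)$ separates all locations $\mu\in\{0,1\}^N$ and hence the projections $E_\mu^*$ lie in the generated algebra. The only difference is presentational: the paper packages the argument as the invertibility of the change-of-basis matrix from $\{K_1^{\nu_1}\cdots K_N^{\nu_N}:\nu\in\{0,1\}^N\}$ to $\{E_\mu^*\}$, observing that this matrix is the $N$-fold tensor power $Q_1^{\otimes N}$ of the invertible $2\times 2$ matrix $Q_1=\begin{pmatrix}1&1\\q^{1/2}&q^{-1/2}\end{pmatrix}$, whereas you invert coordinate-by-coordinate via Lagrange interpolation to get $F_m$ and $I-F_m$, then take products. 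These are two phrasings of the same computation; your version is slightly more constructive, while the paper's tensor-product observation is more compact.
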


\begin{proof}
By Lemma \ref{lem:K}, the matrices $K_m$ $(1 \le m \le N)$ generate a subalgebra
$\mathcal{K}'$ of $\mathcal{K}$.
By Lemma \ref{lem:K} and since $E_\mu^*$ are idempotent, for $\nu = (\nu_1, \nu_2, \ldots, \nu_N) \in \lbrace 0,1 \rbrace^N$, we have
\[
K_1^{\nu_1}K_2^{\nu_2} \cdots K_N^{\nu_N}
=
\sum_{\mu \in \lbrace 0,1 \rbrace^N}
q^{\sum_{m=1}^N(\nu_m/2 - \mu_m\nu_m)}E_\mu^*.
\]
By linear algebra, if the coefficient matrix $Q_N$ indexed by $\lbrace 0,1 \rbrace^N$,
whose $(\nu, \mu)$-entry is $q^{\sum_{m=1}^N(\nu_m/2 - \mu_m\nu_m)}$, is invertible,
then each $E_\mu^*$ is a linear combination of
$K_1^{\nu_1}K_2^{\nu_2} \cdots K_N^{\nu_N}$ $(\nu = (\nu_1, \nu_2, \ldots, \nu_N) \in \lbrace 0,1 \rbrace^N)$.
In particular, in this case, $E_\mu^*$ is a polynomial in $K_m$ $(1 \le m \le N)$
for every $\mu \in \lbrace 0,1 \rbrace^N$ and
consequently, $\mathcal{K}' = \mathcal{K}$.
So, it remains to show that the determinant of $Q_N$ is nonzero.
First, 
observe that if $N=1$, 
\[
\det Q_1 = \det \begin{pmatrix}1&1\\q^{1/2}&q^{-1/2}\end{pmatrix} = q^{-1/2} - q^{1/2} \neq 0
\]
since $q \neq 1$.
We next consider the matrix $Q_1^{\otimes N}$ indexed by $\lbrace 0,1 \rbrace^N$.
The $(\nu, \mu)$-entry of $Q_1^{\otimes N}$ is given by
\[
q^{\sum_{m=1}^N(\nu_m/2 - \mu_m\nu_m)},
\]
which is same as that of $Q_N$.
This means $Q_N = Q_1^{\otimes N}$.
By $Q_N = Q_1^{\otimes N}$ and $\det Q_1 \neq 0$, we conclude that $\det Q_N \neq 0$ as desired.
\end{proof}

Next we introduce two kinds of matrices from covering relations in Definition \ref{mcovers}.
For $1 \le m \le N$,
the matrices $L_m, R_m \in \mathrm{Mat}_P(\mathbb{C})$ are defined by
\begin{align*}
(L_m)_{y,z} = \begin{cases}
1 & \text{if $z$ $m$-covers $y$}, \\
0 & \text{otherwise},
\end{cases}
&&
(R_m)_{y,z} = \begin{cases}
1 & \text{if $y$ $m$-covers $z$}, \\
0 & \text{otherwise}
\end{cases}
\end{align*}
for $y,z \in P$.
We remark that for each $1 \le m \le N$,
the matrices $L_m$ and $R_m$ are transposes of one another.
Recall the comment in the above of Lemma \ref{lem:adj}.

\begin{lem}\label{lem:actLmRm}
For $1 \le m \le N$ and
$\mu \in \lbrace 0,1 \rbrace^N$,
we have the following.
\begin{enumerate}
\item $L_mE_\mu^* = E_{\mu - \widehat{m}}^*L_m$ and 
$R_mE_\mu^* = E_{\mu + \widehat{m}}^*R_m$.
\item $L_mE_\mu^* V \subseteq E_{\mu - \widehat{m}}^* V$ and 
$R_mE_\mu^* V \subseteq E_{\mu + \widehat{m}}^* V$.
\end{enumerate}
\end{lem}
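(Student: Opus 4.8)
The plan is to establish part (i) first by a direct matrix computation, and then derive part (ii) as an immediate consequence. For part (i), I would compute the $(y,z)$-entry of $L_m E_\mu^*$ and of $E_{\mu-\widehat m}^* L_m$ for arbitrary $y,z \in P$. Since $E_\mu^*$ is diagonal, $(L_m E_\mu^*)_{y,z}$ equals $(L_m)_{y,z}$ if $z \in P_\mu$ and $0$ otherwise; similarly $(E_{\mu-\widehat m}^* L_m)_{y,z}$ equals $(L_m)_{y,z}$ if $y \in P_{\mu-\widehat m}$ and $0$ otherwise. So it suffices to observe that whenever $(L_m)_{y,z}\neq 0$ — that is, whenever $z$ $m$-covers $y$ — we have $z \in P_\mu$ if and only if $y \in P_{\mu-\widehat m}$. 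But this is essentially the definition of $m$-covering combined with the coordinate-wise addition convention introduced just before Lemma \ref{lem:adj}: $z$ $m$-covers $y$ means the location of $z$ equals the location of $y$ plus $\widehat m$, so the location of $z$ is $\mu$ exactly when the location of $y$ is $\mu - \widehat m$. The argument for $R_m E_\mu^* = E_{\mu+\widehat m}^* R_m$ is the mirror image, using that $(R_m)_{y,z}\neq 0$ means $y$ $m$-covers $z$, i.e.\ the location of $y$ is the location of $z$ plus $\widehat m$.

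For part (ii), I would apply part (i) together with the fact that $E_\mu^*$ is the projection onto $E_\mu^* V$. Given $w \in E_\mu^* V$, we have $E_\mu^* w = w$, hence $L_m w = L_m E_\mu^* w = E_{\mu-\widehat m}^* L_m w \in E_{\mu-\widehat m}^* V$; this proves $L_m E_\mu^* V \subseteq E_{\mu-\widehat m}^* V$. The inclusion $R_m E_\mu^* V \subseteq E_{\mu+\widehat m}^* V$ follows in exactly the same way from the second identity in part (i).

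I do not anticipate a genuine obstacle here; the statement is a bookkeeping lemma that records how the lowering and raising matrices interact with the projection matrices, and everything reduces to unwinding the definition of $m$-covering (Definition \ref{mcovers}) and the additive notation for locations. The only point requiring a moment of care is the degenerate situation where $\mu - \widehat m$ (or $\mu + \widehat m$) fails to lie in $\{0,1\}^N$: in that case the convention $E_{\mu-\widehat m}^* = O$ is in force, and one should check that both sides of the identity vanish. Indeed, if $\mu_m = 0$ then no $z$ can $m$-cover any $y \in P_\mu$ by Lemma \ref{lem:adj}(i) applied with the roles suitably read off (equivalently, no $y \in P_{\mu-\widehat m}$ exists since $P_{\mu-\widehat m} = \emptyset$), so $L_m E_\mu^* = O = E_{\mu-\widehat m}^* L_m$; the raising case is analogous. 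This case analysis is routine and I would dispatch it in a sentence.
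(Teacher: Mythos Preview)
Your proposal is correct and is essentially the same approach as the paper's, which simply writes ``Immediate from the construction.'' You have spelled out in detail precisely what that phrase means: the entrywise computation using the definition of $m$-covering and the additive convention for locations, together with the degenerate case where $\mu \pm \widehat m \notin \{0,1\}^N$.
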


\begin{proof}
Immediate from the construction.
\end{proof}

Because of Lemma \ref{lem:actLmRm} (ii),
we call $L_m$ the \emph{lowering matrices}
and $R_m$ the \emph{raising matrices}.

\begin{defi}\normalfont\label{H}
Let $\mathcal{H}$ denote the subalgebra of $\mathrm{Mat}_P(\mathbb{C})$
generated by $L_m$, $R_m$ $(1 \le m \le N)$ and the algebra $\mathcal{K}$ in Definition \ref{K}.
\end{defi}

\begin{prop}\label{semisimple}
The algebra $\mathcal{H}$ in Definition \ref{H} is semisimple.
\end{prop}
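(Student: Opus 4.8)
The plan is to exhibit $\mathcal{H}$ as a $*$-closed subalgebra of the full matrix algebra $\mathrm{Mat}_P(\mathbb{C})$, and then invoke the standard fact that any algebra of complex matrices closed under conjugate-transpose is semisimple. Concretely, I would equip $\mathrm{Mat}_P(\mathbb{C})$ with the conjugate-transpose involution $M \mapsto M^\dagger := \overline{M}^T$, which is the adjoint with respect to the standard Hermitian form $\langle\,,\,\rangle$ on $V = \mathbb{C}P$ introduced just before Definition~\ref{K}. It then suffices to check that each generator of $\mathcal{H}$ is fixed by, or paired up under, this involution.

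The key steps, in order: First, observe that each projection matrix $E_\mu^*$ ($\mu \in \lbrace 0,1\rbrace^N$) is a real diagonal matrix, hence $E_\mu^{*\dagger} = E_\mu^*$; consequently every element of $\mathcal{K}$ is self-adjoint, and in particular the generators $K_m = \sum_\mu q^{1/2-\mu_m}E_\mu^*$ (real diagonal) satisfy $K_m^\dagger = K_m$. Second, recall the remark right after the definition of $L_m, R_m$: for each $1 \le m \le N$ the matrices $L_m$ and $R_m$ are (real) transposes of one another, so $L_m^\dagger = R_m$ and $R_m^\dagger = L_m$. Third, since the full set of generators of $\mathcal{H}$ — namely $L_m$, $R_m$ for $1 \le m \le N$ together with the $E_\mu^*$ — is permuted by $\dagger$, the subalgebra they generate is closed under $\dagger$: indeed $(AB)^\dagger = B^\dagger A^\dagger$ reverses products, but the set of generators is stable, so any word in the generators has its $\dagger$-image again a word in the generators, hence lies in $\mathcal{H}$. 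Fourth, apply the general principle: if $\mathcal{A} \subseteq \mathrm{Mat}_n(\mathbb{C})$ satisfies $\mathcal{A}^\dagger = \mathcal{A}$, then $\mathcal{A}$ is semisimple. (Proof sketch of this principle, to be included: for any left ideal $\mathcal{I} \subseteq \mathcal{A}$, its orthogonal complement $\mathcal{I}^\perp$ inside $\mathcal{A}$ with respect to the trace form $(A,B) \mapsto \mathrm{tr}(AB^\dagger)$ — which is positive-definite on $\mathrm{Mat}_n(\mathbb{C})$ — is again a left ideal because $\mathcal{A}$ is $\dagger$-closed, giving $\mathcal{A} = \mathcal{I} \oplus \mathcal{I}^\perp$ as left $\mathcal{A}$-modules; thus every left ideal is a direct summand, so the regular module is completely reducible and $\mathcal{A}$ is semisimple. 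Equivalently, the Jacobson radical of a $\dagger$-closed matrix algebra is a nil $\dagger$-ideal, hence zero.)

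The main obstacle is essentially bookkeeping rather than conceptual: one must be careful that the $\dagger$-closure argument uses the \emph{set} of generators being stable under $\dagger$, not each generator being self-adjoint, and spell out that this suffices for the generated algebra to be $\dagger$-closed. A secondary point worth stating cleanly is why the trace form $\mathrm{tr}(AB^\dagger)$ is positive-definite on $\mathrm{Mat}_P(\mathbb{C})$ (it equals $\sum_{y,z}|A_{y,z}|^2$ when $A = B$), since the complement argument relies on non-degeneracy. Neither of these is difficult; everything else is immediate from the already-established facts that the $E_\mu^*$ are real diagonal and that $L_m, R_m$ are mutual transposes.
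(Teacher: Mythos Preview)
Your proposal is correct and follows exactly the paper's approach: the paper's entire proof is the single sentence ``This follows since $\mathcal{H}$ is closed under the conjugate-transpose map,'' and you have simply spelled out the details behind that sentence. If anything, your write-up is more thorough than the paper's, which leaves both the verification on generators and the general $\dagger$-closed-implies-semisimple principle implicit.
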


\begin{proof}
This follows since $\mathcal{H}$ is closed under the conjugate-transpose map.
\end{proof}

We recall 
the incidence algebra, which is generated by $L$, $R$ and $E_i^\star$ $(0 \le i \le N)$
from the second paragraph in Section \ref{introduction}.
We remark that $\mathcal{H}$ contains the incidence algebra  
as its subalgebra
because
$L = \sum_{m = 1}^N L_m$,
$R = \sum_{m = 1}^N R_m$
and
$E_i^\star = \sum_{\mu \in \lbrace 0,1\rbrace^N, |\mu| = i} E_\mu^*$. 
Moreover,
if $N \ge 2$, 
the incidence algebra is a proper subalgebra of $\mathcal{H}$.

\section{The structure of the algebra $\mathcal{H}$}\label{sec:strH}

In this section, we discuss the relations among the generators
$L_m$, $R_m$, $K_m$ of the algebra $\mathcal{H}$.

\begin{prop}\label{prop:relations2}
For $1 \le m,n \le N$ with $m \neq n$, the following hold.
\begin{enumerate}
\item $L_mK_n = K_nL_m$.
\item $R_mK_n = K_nR_m$.
\item $qL_mK_m = K_mL_m$.
\item $R_mK_m = qK_mR_m$.
\end{enumerate}
\end{prop}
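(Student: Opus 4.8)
The plan is to verify each of the four identities by comparing the action of both sides on an arbitrary basis vector $\hat{y} \in V$ for $y \in P$, using the fact that $V = \sum_{\mu} E_\mu^* V$ is a direct sum and that $K_n$ acts as the scalar $q^{1/2 - \mu_n}$ on $E_\mu^* V$ (Lemma \ref{lem:K}). Since $L_m$ and $R_m$ are transposes of one another, parts (ii) and (iv) follow from parts (i) and (iii) by taking transposes, noting that $K_n$ is diagonal hence symmetric; so it suffices to prove (i) and (iii).

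For (i), take $y \in P_\mu$. Then $K_n \hat{y} = q^{1/2 - \mu_n}\hat{y}$, so $L_m K_n \hat{y} = q^{1/2-\mu_n} L_m \hat{y}$. On the other hand, by Lemma \ref{lem:actLmRm}(ii) we have $L_m \hat{y} \in E_{\mu - \widehat{m}}^* V$, and since $m \neq n$ the $n$-th coordinate of $\mu - \widehat{m}$ equals $\mu_n$; hence $K_n L_m \hat{y} = q^{1/2 - \mu_n} L_m \hat{y}$ as well. For (iii), again take $y \in P_\mu$. If $\mu_m = 0$ then $L_m \hat{y} = 0$ by Lemma \ref{lem:adj}(i) (or Lemma \ref{lem:actLmRm}(ii), since $E_{\mu - \widehat{m}}^* = O$), and both sides vanish. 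If $\mu_m = 1$ then $K_m \hat{y} = q^{-1/2}\hat{y}$, while $L_m \hat{y} \in E_{\mu - \widehat{m}}^* V$ where the $m$-th coordinate of $\mu - \widehat{m}$ is $0$, so $K_m L_m \hat{y} = q^{1/2} L_m \hat{y}$. Comparing, $K_m L_m \hat{y} = q^{1/2}L_m\hat{y} = q \cdot q^{-1/2} L_m \hat{y} = q L_m K_m \hat{y}$, which is (iii).

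Alternatively, and perhaps more cleanly for the write-up, one can argue entirely at the level of matrix identities using Lemma \ref{lem:actLmRm}(i): writing $K_n = \sum_\nu q^{1/2 - \nu_n} E_\nu^*$ and pushing the $E_\nu^*$ past $L_m$ via $L_m E_\nu^* = E_{\nu - \widehat{m}}^* L_m$, one gets $L_m K_n = \sum_\nu q^{1/2 - \nu_n} E_{\nu - \widehat{m}}^* L_m$; reindexing $\mu = \nu - \widehat{m}$ gives $\sum_\mu q^{1/2 - (\mu + \widehat{m})_n} E_\mu^* L_m$, and $(\mu + \widehat{m})_n = \mu_n + \delta_{m,n}$, so the exponent is $1/2 - \mu_n - \delta_{m,n}$. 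For $m \neq n$ this is $1/2 - \mu_n$, yielding $L_m K_n = K_n L_m$; for $m = n$ it is $-1/2 - \mu_m$, yielding $L_m K_m = q^{-1} K_m L_m$, i.e. $q L_m K_m = K_m L_m$. The analogous computation with $R_m E_\nu^* = E_{\nu + \widehat{m}}^* R_m$ gives (ii) and (iv).

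I do not anticipate a serious obstacle here: all four relations are immediate consequences of the commutation rules already recorded in Lemma \ref{lem:actLmRm}(i) together with the explicit diagonal form of $K_n$. The only point requiring a modicum of care is bookkeeping the shift in the $m$-th (resp. $n$-th) coordinate when $L_m$ or $R_m$ is moved past a projection $E_\nu^*$, and in particular separating the case $m = n$ (where the extra factor of $q^{\pm 1}$ appears) from the case $m \neq n$ (where the coordinates involved are disjoint and everything commutes).
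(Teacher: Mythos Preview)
Your proposal is correct and follows essentially the same approach as the paper, which simply states that the result follows by combining Lemmas~\ref{lem:K} and~\ref{lem:actLmRm}(i). Your second (matrix-identity) argument is precisely this combination spelled out, and your first argument is the equivalent statement at the level of the standard module.
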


\begin{proof}
This lemma follows by combining Lemmas \ref{lem:K} and \ref{lem:actLmRm} (i).
\end{proof}

\begin{prop}\label{prop:relations1}
For $1 \le m, n \le N$, we have the following.
\begin{enumerate}
\item $L_m^2 = R_m^2 = 0$.
\item $qL_mL_n = L_nL_m$ if $m < n$.
\item $R_mR_n = qR_nR_m$ if $m < n$.
\item $L_mR_n = R_nL_m$ if $m \neq n$.
\end{enumerate}
\end{prop}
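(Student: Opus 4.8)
The plan is to establish each identity by checking it entry-by-entry on the standard module $V$, using the explicit combinatorial description of the $m$-covering relations from Definition~\ref{mcovers} together with the counting in Lemma~\ref{lem:adj} and, crucially, the local structure results in Lemma~\ref{lem:count}. Since all of $L_m$, $R_m$ have $0/1$ entries and the products we must compare are nonzero only between specific pairs of locations (by Lemma~\ref{lem:actLmRm}(i)), the verification reduces to counting, for fixed $y,z\in P$, the number of ``paths'' $y\to w\to z$ through an intermediate subspace $w$. For part (i), $L_m^2=0$ is immediate because $L_m E_\mu^* = E_{\mu-\widehat m}^* L_m$ forces $L_m^2 E_\mu^* = E_{\mu-2\widehat m}^* L_m^2$, and $\mu - 2\widehat m \notin\{0,1\}^N$; dually for $R_m^2$. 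Part (iv), $L_mR_n = R_nL_m$ for $m\neq n$, is exactly Lemma~\ref{lem:count}(iii)--(iv): given $y$ in some location, the two sides count subspaces obtained by first going up in direction $n$ then down in direction $m$, versus first down in $m$ then up in $n$, and Lemma~\ref{lem:count} says the relevant intermediate subspace exists and is unique precisely when it does on the other side (one checks the four location cases $m<n$ vs.\ $m>n$ crossed with the sign of the relevant entries of $\mu$; the cases where one side is forced to vanish by $\{0,1\}^N$-membership match up on the other side as well).

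For parts (ii) and (iii), which are transposes of one another, it suffices to prove (ii): $qL_mL_n = L_nL_m$ when $m<n$. Fix $z\in P_\mu$ with $\mu_m=\mu_n=1$ (otherwise both sides kill $E_\mu^* V$, by Lemma~\ref{lem:actLmRm}(i) and $\{0,1\}^N$-membership) and fix $y\in P_{\mu-\widehat m-\widehat n}$ with $y\subseteq z$. The $(y,z)$-entry of $L_nL_m$ counts intermediate $w$ with $y \xleftarrow{m} w \xleftarrow{n} z$, i.e.\ $w\in P_{\mu-\widehat n}$ that $m$-covers $y$ and is $n$-covered by $z$; by Lemma~\ref{lem:count}(i) there is exactly one such $w$, so this entry is $1$. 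The $(y,z)$-entry of $L_mL_n$ counts $w'\in P_{\mu-\widehat m}$ that $n$-covers $y$ and is $m$-covered by $z$; by Lemma~\ref{lem:count}(ii) there are exactly $q$ such $w'$, so this entry is $q$. Hence $qL_mL_n$ and $L_nL_m$ agree on $E_\mu^* V$ for every $\mu$, proving (ii). Then (iii) follows by taking transposes, since $L_m^T = R_m$ and transposition reverses the order of a product, turning $qL_mL_n = L_nL_m$ with $m<n$ into $qR_nR_m = R_mR_n$, i.e.\ $R_mR_n = qR_nR_m$.

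The main obstacle is the bookkeeping in (ii) and (iii): one has to be careful that ``$w$ is $n$-covered by $z$'' (location decreases in coordinate $n$) is correctly matched against the hypotheses of Lemma~\ref{lem:count}, and that the asymmetry between the counts $1$ and $q$ in Lemma~\ref{lem:count}(i)--(ii) is precisely what produces the factor $q$ with the correct handedness (it is the larger index $n$ that carries the $q$, consistent with $m<n$). A secondary subtlety is making sure that in every degenerate case — where $\mu$ has a $0$ in coordinate $m$ or $n$, so that one side of the claimed identity is the zero operator on $E_\mu^* V$ — the other side also vanishes there; this is handled uniformly by Lemma~\ref{lem:actLmRm}(i) together with the convention $E_\mu^* = O$ for $\mu\notin\{0,1\}^N$. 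Once these location-by-location checks are organized, each identity is a one-line consequence of the corresponding part of Lemma~\ref{lem:count} (or of the transpose symmetry $L_m^T = R_m$).
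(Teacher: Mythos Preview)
Your approach is exactly the paper's: parts (ii)--(iv) are the matrix reformulations of Lemma~\ref{lem:count}, and part (i) is immediate from the definitions (equivalently, from Lemma~\ref{lem:actLmRm}(i) as you argue). The structure of your argument is fine.

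However, in your treatment of (ii) you have swapped the two products. The $(y,z)$-entry of $L_nL_m$ is $\sum_w (L_n)_{y,w}(L_m)_{w,z}$, which counts $w$ that $n$-covers $y$ and is $m$-covered by $z$; such $w$ lies in $P_{\mu-\widehat m}$, and by Lemma~\ref{lem:count}(ii) there are exactly $q$ of them. Likewise the $(y,z)$-entry of $L_mL_n$ counts $w\in P_{\mu-\widehat n}$ that $m$-covers $y$ and is $n$-covered by $z$, and by Lemma~\ref{lem:count}(i) there is exactly one. With your swapped values ($1$ for $L_nL_m$ and $q$ for $L_mL_n$) the conclusion $qL_mL_n=L_nL_m$ would read $q\cdot q = 1$, which is false. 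Once the swap is corrected, everything goes through, including the transpose argument for (iii).
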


\begin{proof}
(i) It follows from the definition of $L_m$ and $R_m$.
(ii), (iii) These are matrix reformulations of Lemma \ref{lem:count} (i), (ii).
(iv) This is a matrix reformulation of Lemma \ref{lem:count} (iii), (iv).
\end{proof}

\section{The $L_m$- and $R_m$-actions on $V$}\label{LRactiononV}

We now describe a basis for $V$,
which is the key in this paper.
In this section, we fix a basis $v_1, v_2, \ldots, v_N$ for $H$
adapted to the flag $\lbrace x_i \rbrace_{i=0}^N$
and assume that the matrix forms in Definition \ref{matrixform}
are always taken with respect to this basis $v_1, v_2, \ldots, v_N$.

\begin{defi}\label{chiy}\normalfont
Let $\chi$ denote a nontrivial character of the additive group $\mathbb{F}_q$
and let $\mu \in \lbrace 0,1 \rbrace^N$.
For $y \in P_\mu$,
define a vector $\chi_y \in V$ as follows.
\begin{enumerate}
\item If $\mu = \mathbf{0}$ or $\mathbf{1}$,
then for $z \in P$,
the $z$-th entry of $\chi_y$ is $1$ if $y = z$ and $0$ otherwise.
\item If $\mu \neq \mathbf{0}, \mathbf{1}$,
then for $z \in P$,
the $z$-th entry of $\chi_y$ is defined by
\[
\begin{cases}
\chi\left( \mathrm{tr}(YZ^T) \right) & \text{if $z \in P_\mu$}, \\
0 & \text{if $z \not\in P_\mu$},
\end{cases}
\]
where $Y, Z \in \mathcal{M}_\mu(\mathbb{F}_q)$
are the matrix forms of $y, z$, respectively in Definition \ref{matrixform}.
Here $^T$ denotes transpose and $\mathrm{tr}$ denotes the trace map of matrices.
\end{enumerate}
\end{defi}

For the rest of this section,
we fix a nontrivial character $\chi$ of the additive group $\mathbb{F}_q$.

\begin{lem}\label{lem:basis}
For $\mu \in \lbrace 0,1 \rbrace^N$,
the set of vectors $\chi_y \in V$ for $y \in P_\mu$ in Definition \ref{chiy}
forms an orthogonal basis for the vector space $E_\mu^* V$.
\end{lem}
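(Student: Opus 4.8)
The plan is to treat the two cases of Definition \ref{chiy} separately. When $\mu = \mathbf{0}$ or $\mathbf{1}$, the set $P_\mu$ is a singleton, so $E_\mu^* V$ is one-dimensional and the single vector $\chi_y$ is manifestly a nonzero (hence orthogonal) basis. So the substance is the case $\mu \neq \mathbf{0},\mathbf{1}$, where by Proposition \ref{yY} we may identify $P_\mu$ with $\mathcal{M}_\mu(\mathbb{F}_q)$ via the matrix form, and $E_\mu^* V$ is the $|\mathcal{M}_\mu(\mathbb{F}_q)|$-dimensional space of functions supported on $P_\mu$. The vectors $\chi_y$ are then exactly the additive characters of the finite abelian group $\mathcal{M}_\mu(\mathbb{F}_q)$ under the pairing $(Y,Z) \mapsto \chi(\mathrm{tr}(YZ^T))$, so the result is essentially the standard orthogonality of characters of a finite abelian group; the only thing to check is that this pairing is nondegenerate.

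First I would verify that $\chi_y$ depends only on the subspace $y$ and not on incidental choices, which is immediate since the matrix form in Definition \ref{matrixform} is uniquely determined. Next I would compute the Hermitian inner product: for $y, y' \in P_\mu$ with matrix forms $Y, Y'$,
\[
\langle \chi_y, \chi_{y'}\rangle = \sum_{Z \in \mathcal{M}_\mu(\mathbb{F}_q)} \chi\!\left(\mathrm{tr}(YZ^T)\right)\overline{\chi\!\left(\mathrm{tr}(Y'Z^T)\right)} = \sum_{Z \in \mathcal{M}_\mu(\mathbb{F}_q)} \chi\!\left(\mathrm{tr}((Y-Y')Z^T)\right),
\]
using that $\chi$ is a character of the additive group and $\mathrm{tr}((Y-Y')Z^T) = \mathrm{tr}(YZ^T) - \mathrm{tr}(Y'Z^T)$ with $\overline{\chi(a)} = \chi(-a)$. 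Writing $D = Y - Y' \in \mathcal{M}_\mu(\mathbb{F}_q)$, the bilinear form $\mathrm{tr}(DZ^T) = \sum_{(s,t)\in B_\mu} D_{s,t}Z_{s,t}$ is, for fixed $D \neq 0$, a nontrivial $\mathbb{F}_q$-linear functional in $Z$ on the $\mathbb{F}_q$-vector space $\mathcal{M}_\mu(\mathbb{F}_q)$ (pick $(s,t) \in \mathrm{Supp}(D) \subseteq B_\mu$ and vary $Z_{s,t}$ alone, which is permissible since $(s,t) \in B_\mu$). A nontrivial linear functional into $\mathbb{F}_q$ is surjective, so as $Z$ ranges over $\mathcal{M}_\mu(\mathbb{F}_q)$ the value $\mathrm{tr}(DZ^T)$ hits each element of $\mathbb{F}_q$ equally often, namely $|\mathcal{M}_\mu(\mathbb{F}_q)|/q$ times; since $\chi$ is nontrivial, $\sum_{a \in \mathbb{F}_q}\chi(a) = 0$, and the whole sum vanishes. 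When $D = 0$, i.e. $y = y'$, every summand is $1$ and $\langle \chi_y, \chi_y\rangle = |\mathcal{M}_\mu(\mathbb{F}_q)| = |P_\mu| \neq 0$.

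This shows the $\chi_y$ ($y \in P_\mu$) are pairwise orthogonal and nonzero, hence linearly independent; since there are $|P_\mu| = \dim E_\mu^* V$ of them and they all lie in $E_\mu^* V$ by construction (each $\chi_y$ is supported on $P_\mu$), they form an orthogonal basis. The only mildly delicate point is the nondegeneracy argument — making sure that varying a single entry $Z_{s,t}$ for $(s,t) \in B_\mu$ stays inside $\mathcal{M}_\mu(\mathbb{F}_q)$ — but this is exactly the defining condition $\mathrm{Supp}(Z) \subseteq B_\mu$, so it is automatic. I expect no real obstacle here; the proof is a clean instance of Fourier analysis on the finite abelian group $(\mathcal{M}_\mu(\mathbb{F}_q), +)$ equipped with the nondegenerate trace pairing.
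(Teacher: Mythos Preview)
Your proof is correct and follows essentially the same approach as the paper: reduce to the case $\mu \neq \mathbf{0},\mathbf{1}$, compute $\langle \chi_y,\chi_{y'}\rangle = \sum_{Z}\chi(\mathrm{tr}((Y-Y')Z^T))$, observe that for $Y\neq Y'$ the map $Z\mapsto \mathrm{tr}((Y-Y')Z^T)$ hits each element of $\mathbb{F}_q$ equally often so the sum vanishes by orthogonality of $\chi$ with the trivial character, and finish by a dimension count. Your justification of the equidistribution (nontriviality of the linear functional via an entry in $\mathrm{Supp}(D)\subseteq B_\mu$) is slightly more explicit than the paper's, but the argument is the same.
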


\begin{proof}
Let $\mu \in \lbrace 0,1 \rbrace^N$.
For $y \in P_\mu$,
observe that $\chi_y \in E_\mu^* V$ from the construction.
If $\mu = \mathbf{0}$ or $\mathbf{1}$,
then the assertion is trivial, since $\dim E_\mu^* V = 1$.
Assume $\mu \neq \mathbf{0}, \mathbf{1}$ and take $y, y' \in P_\mu$.
Consider the Hermitian inner product
\[
\langle \chi_y, \chi_{y'} \rangle = \sum_{z \in P} \chi_y(z)\overline{\chi_{y'}(z)},
\]
where $\chi_y(z)$, $\chi_{y'}(z)$ denote the $z$-th entries of $\chi_y$, $\chi_{y'}$, respectively.
By the definitions of $\chi_y(z)$, $\chi_{y'}(z)$, we have
\[
\langle \chi_y, \chi_{y'} \rangle = \sum_{Z \in \mathcal{M}_\mu(\mathbb{F}_q)}\chi\left(\mathrm{tr} (Y-Y')Z^T \right),
\]
where $Y, Y'$ are the matrix forms of $y,y'$, respectively.
Assume $y \neq y'$ and equivalently $Y \neq Y'$.
Observe that for $g \in \mathbb{F}_q$,
the number of $Z \in \mathcal{M}_\mu(\mathbb{F}_q)$ such that $\mathrm{tr} (Y-Y')Z^T = g$
does not depend on $g$,
and so the number is $|\mathcal{M}_\mu(\mathbb{F}_q)|/|\mathbb{F}_q| = q^{|B_\mu|-1}$.
Therefore, we have
\[
\langle \chi_y, \chi_{y'} \rangle = q^{|B_\mu|-1} \sum_{g \in \mathbb{F}_q}\chi(g) = 0.
\]
The last equality follows from the orthogonality of the character $\chi$ and the trivial character.
Therefore the set of vectors $\chi_y$ for $y \in P_\mu$
becomes an orthogonal basis for a subspace $V_\mu$ of $E_\mu^* V$.
By comparing their dimensions,
we have $V_\mu = E_\mu^* V$ and the result follows.
\end{proof}

Recall the $m$-covering relation from Definition \ref{mcovers}.

\begin{lem}\label{ymcoversz}
Let $1 \le m \le N$
and
let $\mu, \nu \in \lbrace 0,1 \rbrace^N$ with $\mu, \nu \neq \mathbf{0}, \mathbf{1}$ such that $\mu$ $m$-covers $\nu$.
Take $y \in P_\mu$, $z \in P_\nu$ and let
$Y \in \mathcal{M}_\mu(\mathbb{F}_q)$ and $Z \in \mathcal{M}_{\nu}(\mathbb{F}_q)$
denote the matrix forms of $y, z$, respectively in Definition \ref{matrixform}.
Then $y$ $m$-covers $z$ if and only if
\[
Z_{s,t} = Y_{s,t} + Y_{s,m}Z_{m,t}
\]
for $s \in S_\mu$ and for $t \in T_{\nu}$.
\end{lem}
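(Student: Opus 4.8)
The plan is to translate the inclusion $z\subseteq y$ into a coordinate identity with respect to the fixed adapted basis $v_1,\dots,v_N$ and then read off the coefficients. First I would record the relevant reductions. Since $\mu$ $m$-covers $\nu$, we have $\mu=\nu+\widehat{m}$, so by \eqref{STmu} $S_\mu=S_\nu\setminus\{m\}$ and $T_\mu=T_\nu\cup\{m\}$, with $m\in T_\mu\cap S_\nu$; also $S_\mu\cap T_\mu=\emptyset$ and $S_\mu\cap T_\nu=\emptyset$. Moreover $\dim y=|\mu|=|\nu|+1=\dim z+1$, so by Definition~\ref{mcovers} the condition ``$y$ $m$-covers $z$'' is equivalent to ``$z\subseteq y$''. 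By Proposition~\ref{yY}, $y$ has the basis $w_t=\sum_{s\in S_\mu}Y_{s,t}v_s+v_t$ for $t\in T_\mu$, and $z$ has the basis $w'_t=\sum_{s\in S_\nu}Z_{s,t}v_s+v_t$ for $t\in T_\nu$; in particular $\Span w_m\subseteq y$.

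The key step is to show that $z\subseteq y$ if and only if $w'_t=w_t+Z_{m,t}\,w_m$ for every $t\in T_\nu$. The ``if'' direction is immediate, since then each $w'_t$ lies in $y$ and these vectors span $z$. For the ``only if'' direction, suppose $w'_t\in y$ and write $w'_t=\sum_{t'\in T_\mu}c_{t'}w_{t'}$. Comparing, for each $t''\in T_\mu$, the coefficient of $v_{t''}$ on both sides, and using $S_\mu\cap T_\mu=\emptyset$ (so that the $v_{t''}$-coefficient of $w_{t'}$ equals $\delta_{t',t''}$) together with $m\in S_\nu$, $t\in T_\nu$ and $S_\mu\cap T_\nu=\emptyset$ (so that the $v_{t''}$-coefficient of $w'_t$ equals $1$ if $t''=t$, equals $Z_{m,t}$ if $t''=m$, and equals $0$ otherwise), one finds $c_t=1$, $c_m=Z_{m,t}$ and $c_{t'}=0$ for the remaining $t'$, i.e.\ $w'_t=w_t+Z_{m,t}w_m$.

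Finally I would expand $w_t+Z_{m,t}w_m=v_t+Z_{m,t}v_m+\sum_{s\in S_\mu}\bigl(Y_{s,t}+Z_{m,t}Y_{s,m}\bigr)v_s$ and compare it with $w'_t=v_t+Z_{m,t}v_m+\sum_{s\in S_\mu}Z_{s,t}v_s$ (using $S_\nu=S_\mu\cup\{m\}$); since $\{v_s:s\in S_\mu\}\cup\{v_t,v_m\}$ is part of a basis of $H$, the identity $w'_t=w_t+Z_{m,t}w_m$ holds precisely when $Z_{s,t}=Y_{s,t}+Y_{s,m}Z_{m,t}$ for all $s\in S_\mu$. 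Quantifying over $t\in T_\nu$ then gives the asserted equivalence. There is no real difficulty in the argument; the only delicate point is the bookkeeping of the four index sets $S_\mu,T_\mu,S_\nu,T_\nu$ and the repeated use of the disjointness relations among them to justify reading off coordinates.
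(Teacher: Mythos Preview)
Your proposal is correct and follows essentially the same route as the paper: both arguments express each basis vector $w'_t$ of $z$ as a linear combination of the $w_{t'}$'s for $y$, identify the coefficients by comparing $v_{t''}$-coordinates for $t''\in T_\mu$ to obtain $w'_t=w_t+Z_{m,t}w_m$, and then read off the desired identity from the $v_s$-coordinates for $s\in S_\mu$. Your version is more explicit about the index-set bookkeeping (the relations $S_\mu=S_\nu\setminus\{m\}$, $T_\mu=T_\nu\cup\{m\}$ and the reduction of ``$y$ $m$-covers $z$'' to ``$z\subseteq y$''), but the substance is the same.
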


\begin{proof}
Recalling the bijection of Proposition \ref{yY},
for $t \in T_\mu$ and $t' \in T_{\nu}$,
we write
\begin{align*}
w_t(Y) = \sum_{s \in S_\mu} Y_{s,t}v_s + v_t,
&&
w_{t'}(Z) = \sum_{s' \in S_{\nu}} Z_{s',t'}v_{s'} + v_{t'}.
\end{align*}
Assume $y$ covers $z$.
For each $t' \in T_\nu$,
since $z \subseteq y$, 
the vector $w_{t'}(Z)$ is a linear combination of $w_t(Y)$, where $t \in T_\mu$.
Comparing the coefficients of $v_t$ for $t \in T_\mu$,
we have $w_{t'}(Z) = Z_{m,t'}w_m(Y) + w_{t'}(Y)$.
Then comparing the coefficients of $v_s$ for $s \in S_\mu$, we obtain the desired equality.
On the other hand, assume the equality $Z_{s,t'} = Y_{s,t'} + Z_{m,t'}Y_{s,m}$
for $s \in S_\mu$ and $t' \in T_\nu$.
By the same argument above, we have $w_{t'}(Z) \in y$ for all $t' \in T_\nu$.
This implies $y$ covers $z$, as desired.
\end{proof}

\begin{lem}\label{lem:Lmchiy}
Let $1 \le m \le N$
and
let $\mu, \nu \in \lbrace 0,1 \rbrace^N$ with $\mu, \nu \neq \mathbf{0}, \mathbf{1}$ such that $\mu$ $m$-covers $\nu$.
Take $y \in P_\mu$, $z \in P_\nu$ and
let $Y \in \mathcal{M}_\mu(\mathbb{F}_q)$, $Z \in \mathcal{M}_{\nu}(\mathbb{F}_q)$ denote the matrix forms of $y, z$, respectively in Definition \ref{matrixform}.
Then the $z$-th entry of $L_m\chi_y$
is given by
\[
L_m\chi_y(z) = q^{|S_\mu(m-1)|}\chi\left(\sum_{s \in S_\mu}\sum_{t \in T_\nu} Y_{s,t}Z_{s,t}\right)
\]
if $Y_{s,m} = \sum_{t \in T_\nu} Y_{s,t}Z_{m,t}$
for all $s \in S_\mu$ with $s < m$
and $0$ otherwise.
\end{lem}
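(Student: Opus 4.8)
The plan is to expand the entry $(L_m\chi_y)(z)$ directly from the definitions, to parametrize the subspaces $m$-covering $z$ through their matrix forms by means of Lemma~\ref{ymcoversz}, and then to evaluate the resulting sum over $\mathbb{F}_q$ using orthogonality of additive characters. First I would unwind the $L_m$-action: since $L_m$ acts by left multiplication and $(L_m)_{z,w}=1$ precisely when $w$ $m$-covers $z$, we have
\[
(L_m\chi_y)(z)=\sum_{\substack{w\in P\\ w\ m\text{-covers}\ z}}\chi_y(w).
\]
Any such $w$ has location $\nu+\widehat{m}=\mu$, hence $w\in P_\mu$, and since $\mu\neq\mathbf{0},\mathbf{1}$, Definition~\ref{chiy}(ii) gives $\chi_y(w)=\chi\bigl(\mathrm{tr}(YW^T)\bigr)$, where $W\in\mathcal{M}_\mu(\mathbb{F}_q)$ is the matrix form of $w$. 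Recall also that $m\in T_\mu$, $m\in S_\nu$, $S_\nu=S_\mu\cup\{m\}$ and $T_\mu=T_\nu\cup\{m\}$.

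Next I would set up the parametrization of these $w$. By Lemma~\ref{ymcoversz}, a subspace $w\in P_\mu$ $m$-covers $z$ if and only if $W_{s,t}=Z_{s,t}-W_{s,m}Z_{m,t}$ for all $s\in S_\mu$ and $t\in T_\nu$, while $\mathrm{Supp}(W)\subseteq B_\mu$ forces $W_{s,m}=0$ for $s>m$. I would check conversely that any choice of $(W_{s,m})_{s\in S_\mu,\ s<m}\in\mathbb{F}_q^{\,|S_\mu(m-1)|}$, together with the above formula for the remaining columns, produces a matrix with support in $B_\mu$: indeed $W_{s,m}Z_{m,t}\neq0$ forces $s<m<t$ and $Z_{s,t}\neq0$ forces $s<t$, so $W_{s,t}=0$ whenever $s\ge t$. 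Thus by Proposition~\ref{yY} such a matrix is the form of a subspace $m$-covering $z$, and $w\mapsto(W_{s,m})_{s<m}$ is a bijection onto $\mathbb{F}_q^{\,|S_\mu(m-1)|}$; this is consistent with Lemma~\ref{lem:adj}(ii), since $(m-1)-(\nu_1+\nu_2+\cdots+\nu_{m-1})=(m-1)-(\mu_1+\mu_2+\cdots+\mu_{m-1})=|S_\mu(m-1)|$.

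Finally I would substitute into $\mathrm{tr}(YW^T)=\sum_{s\in S_\mu}\sum_{t\in T_\mu}Y_{s,t}W_{s,t}$: separating the column $t=m$ and using $W_{s,t}=Z_{s,t}-W_{s,m}Z_{m,t}$ for $t\in T_\nu$ together with $W_{s,m}=0$ for $s\ge m$ gives
\[
\mathrm{tr}(YW^T)=\sum_{s\in S_\mu}\sum_{t\in T_\nu}Y_{s,t}Z_{s,t}+\sum_{\substack{s\in S_\mu\\ s<m}}W_{s,m}\Bigl(Y_{s,m}-\sum_{t\in T_\nu}Y_{s,t}Z_{m,t}\Bigr),
\]
where the first term does not depend on $W$. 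Summing the character of this quantity over all $(W_{s,m})_{s<m}$ and factoring yields
\[
(L_m\chi_y)(z)=\chi\!\Bigl(\sum_{s\in S_\mu}\sum_{t\in T_\nu}Y_{s,t}Z_{s,t}\Bigr)\prod_{\substack{s\in S_\mu\\ s<m}}\ \sum_{g\in\mathbb{F}_q}\chi\Bigl(g\bigl(Y_{s,m}-\sum_{t\in T_\nu}Y_{s,t}Z_{m,t}\bigr)\Bigr).
\]
Each inner sum equals $q$ when the coefficient of $g$ vanishes and $0$ otherwise, because $\chi$ is a nontrivial character of $\mathbb{F}_q$. Hence the product equals $q^{|S_\mu(m-1)|}$ exactly when $Y_{s,m}=\sum_{t\in T_\nu}Y_{s,t}Z_{m,t}$ for every $s\in S_\mu$ with $s<m$, and vanishes otherwise, which is precisely the asserted formula.

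The step I expect to be the main obstacle is the parametrization above: one must carefully separate which entries of $W$ are free and which are dictated by $z$, and verify that the dictated columns automatically respect the Ferrers-board support condition, so that the free entries of the $m$-th column give an honest bijective parametrization of the subspaces $m$-covering $z$ (whose count must, and does, agree with Lemma~\ref{lem:adj}(ii)). Once this is in place, the rest is a routine additive-character computation.
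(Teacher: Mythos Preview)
Your proof is correct and follows essentially the same approach as the paper's: both expand $(L_m\chi_y)(z)$ as a sum over subspaces $m$-covering $z$, parametrize their matrix forms via Lemma~\ref{ymcoversz} by the free column $(W_{s,m})_{s<m}$, substitute into $\mathrm{tr}(YW^T)$, and conclude by additive-character orthogonality. Your version is in fact slightly more careful in explicitly verifying that the dictated entries respect the support condition $\mathrm{Supp}(W)\subseteq B_\mu$, a point the paper leaves implicit.
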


\begin{proof}
By the definition of $L_m$,
the $z$-th entry of $L_m\chi_y$ is defined by
\[
L_m\chi_y(z) = \sum_{y'} \chi_y(y'),
\]
where 
the sum is taken over all $y' \in P_\mu$ such that $y'$ $m$-covers $z$.
Then by Definition \ref{chiy} and Lemma \ref{ymcoversz},
we have
\[
L_m\chi_y(z) 
=
\sum_{Y'}\chi\left( \sum_{s \in S_\mu}\sum_{t \in T_\mu} Y_{s,t}Y'_{s,t}\right),
\]
where the sum is taken over all $Y' \in \mathcal{M}_\mu(\mathbb{F}_q)$ such that $Z_{s,t}=Y'_{s,t}+Y'_{s,m}Z_{m,t}$ for $s \in S_\mu$ and for $t \in T_\nu$.
Observe that $T_\mu \setminus T_{\nu} = \{m\}$ and so we have
\begin{align*}
L_m\chi_y(z) &= 
\sum\chi\left( \sum_{s \in S_\mu}\sum_{t \in T_\nu} Y_{s,t}(Z_{s,t} - Y'_{s,m}Z_{m,t}) + \sum_{s \in S_\mu} Y_{s,m}Y'_{s,m}\right) \\
&= 
\sum\chi\left( \sum_{s \in S_\mu}\sum_{t \in T_\nu} Y_{s,t}Z_{s,t}\right)\chi\left( \sum_{s \in S_\mu}\left( Y_{s,m} - \sum_{t \in T_\nu} Y_{s,t}Z_{m,t}\right)Y'_{s,m}\right),
\end{align*}
where the first sum in each line is taken over all $Y'_{s,m} \in \mathbb{F}_q$ such that $Y'_{s,m} = 0$ if $s > m$.
If $Y_{s,m} \neq \sum_{t \in T_\nu} Y_{s,t}Z_{m,t}$ for some $s \in S_\mu$ with $s < m$, then by the same argument as in the proof of Lemma \ref{lem:basis}, the sum is $0$.
If $Y_{s,m} = \sum_{t \in T_\nu} Y_{s,t}Z_{m,t}$ for all $s \in S_\mu$ with $s < m$, then
\[
L_m\chi_y(z) =
q^{|S_\mu(m-1)|}
\chi\left( \sum_{s \in S_\mu}\sum_{t \in T_\nu} Y_{s,t}Z_{s,t}\right).
\]
Here the coefficient $q^{|S_\mu(m-1)|}$ is the number of choices for $Y'_{s,m} \in \mathbb{F}_q$ for $s \in S_\mu$ with $s < m$.
The result follows.
\end{proof}

\begin{lem}\label{lem:Rmchiz}
Let $1 \le m \le N$
and
let $\mu, \nu \in \lbrace 0,1 \rbrace^N$ with $\mu, \nu \neq \mathbf{0}, \mathbf{1}$ such that $\mu$ $m$-covers $\nu$.
Take $y \in P_\mu$, $z \in P_\nu$ and
let $Y \in \mathcal{M}_\mu(\mathbb{F}_q)$, $Z \in \mathcal{M}_{\nu}(\mathbb{F}_q)$ denote the matrix forms of $y, z$, respectively in Definition \ref{matrixform}.
Then the $y$-th entry of $R_m\chi_z$
is given by
\[
R_m\chi_z(y) = q^{|T_\nu(m+1)|}\chi\left(\sum_{s \in S_\mu}\sum_{t \in T_\nu} Y_{s,t}Z_{s,t}\right)
\]
if $Z_{m,t} = - \sum_{s \in S_\mu} Z_{s,t}Y_{s,m}$
for all $t \in T_\nu$ with $t > m$
and $0$ otherwise.
\end{lem}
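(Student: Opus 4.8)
The plan is to compute the $y$-th entry of $R_m\chi_z$ directly from the definition of $R_m$, mirroring the structure of the proof of Lemma \ref{lem:Lmchiy}, but now summing over the subspaces $z'$ that are $m$-covered by $y$ rather than those that $m$-cover $z$. First I would write $R_m\chi_z(y) = \sum_{z'}\chi_z(z')$, where the sum runs over all $z' \in P_\nu$ with $y$ $m$-covering $z'$. Using the bijection of Proposition \ref{yY} I translate this into a sum over matrix forms $Z' \in \mathcal{M}_\nu(\mathbb{F}_q)$, and I invoke Lemma \ref{ymcoversz} to characterise the condition ``$y$ $m$-covers $z'$'' as the system of linear relations $Z'_{s,t} = Y_{s,t} + Y_{s,m}Z'_{m,t}$ for $s \in S_\mu$, $t \in T_\nu$. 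The key observation is that these relations express all the entries $Z'_{s,t}$ with $s \in S_\mu$ in terms of the single row $(Z'_{m,t})_{t \in T_\nu}$ (note $S_\nu = S_\mu \cup \{m\}$ and $T_\nu = T_\mu$), so the free parameters are precisely the entries $Z'_{m,t}$ for $t \in T_\nu$ with $t > m$ (the entries with $t < m$ are forced to be $0$ since $\mathrm{Supp}(Z') \subseteq B_\nu$).

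Next I substitute these relations into the exponent $\mathrm{tr}(Z Z'^T) = \sum_{s \in S_\nu}\sum_{t \in T_\nu} Z_{s,t}Z'_{s,t}$, splitting off the row $s = m$: the terms with $s \in S_\mu$ contribute $\sum_{s\in S_\mu}\sum_{t\in T_\nu} Z_{s,t}(Y_{s,t} + Y_{s,m}Z'_{m,t})$, and the row $s = m$ contributes $\sum_{t \in T_\nu} Z_{m,t}Z'_{m,t}$. Collecting the terms linear in the free parameters $Z'_{m,t}$ ($t > m$), the coefficient of $Z'_{m,t}$ becomes $Z_{m,t} + \sum_{s \in S_\mu} Z_{s,t}Y_{s,m}$, while the constant term is $\sum_{s\in S_\mu}\sum_{t\in T_\nu} Z_{s,t}Y_{s,t} = \sum_{s\in S_\mu}\sum_{t\in T_\nu} Y_{s,t}Z_{s,t}$, matching the claimed expression. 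I then factor the character sum as a product over the free parameters, exactly as in Lemma \ref{lem:basis}: for each $t > m$ the inner sum over $Z'_{m,t} \in \mathbb{F}_q$ is $q$ if the coefficient vanishes and $0$ otherwise. Hence the whole sum is $0$ unless $Z_{m,t} = -\sum_{s\in S_\mu} Z_{s,t}Y_{s,m}$ for all $t \in T_\nu$ with $t > m$, in which case it equals $q^{|T_\nu(m+1)|}\chi\!\left(\sum_{s\in S_\mu}\sum_{t\in T_\nu} Y_{s,t}Z_{s,t}\right)$, the number $|T_\nu(m+1)|$ being the count of free indices $t \in T_\nu$ with $t > m$ (recalling the notation \eqref{STmu(s,t)}).

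The main obstacle I anticipate is purely bookkeeping: keeping track of which index sets $S_\mu, T_\mu, S_\nu, T_\nu$ the various sums range over when $\mu$ $m$-covers $\nu$ (so $T_\mu = T_\nu$ and $S_\nu = S_\mu \sqcup \{m\}$), and correctly identifying that the free entries of the row $(Z'_{m,t})$ are exactly those with $t > m$ because of the support condition $\mathrm{Supp}(Z') \subseteq B_\nu$ forcing $Z'_{m,t}=0$ for $t<m$ and the index $t=m$ not occurring ($m \in S_\nu$, not $T_\nu$). Once the parametrisation is set up cleanly, the character-orthogonality argument is routine and identical in spirit to the one already used in the proof of Lemma \ref{lem:Lmchiy}. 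I would also double-check the sign: the $-$ in $Z_{m,t} = -\sum_{s\in S_\mu} Z_{s,t}Y_{s,m}$ arises because the constraint appears as the vanishing of $Z_{m,t} + \sum_{s\in S_\mu} Z_{s,t}Y_{s,m}$, in contrast to the $+$ situation in Lemma \ref{lem:Lmchiy}, reflecting the transpose relationship between $L_m$ and $R_m$.
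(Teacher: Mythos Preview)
Your approach is correct and is exactly the argument the paper intends: the paper's own proof reads simply ``Similar to the proof of Lemma \ref{lem:Lmchiy}.'' One small bookkeeping slip to fix: when $\mu$ $m$-covers $\nu$ you have $T_\nu = T_\mu \setminus \{m\}$ (not $T_\mu = T_\nu$), but since your computation correctly works with $T_\nu$ throughout, this does not affect the argument.
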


\begin{proof}
Similar to the proof of Lemma \ref{lem:Lmchiy}.
\end{proof}

\begin{lem}\label{numberofZ}
Referring to Lemma \ref{lem:Lmchiy},
let $\lambda$ denote the type of $\sigma(Y)$
in Definitions \ref{sigmatype} and \ref{defi:sigmaM}.
Then
the number of $Z \in \mathcal{M}_\nu(\mathbb{F}_q)$ such that 
$Y_{s,m} = \sum_{t \in T_\nu} Y_{s,t}Z_{m,t}$
for all $s \in S_\mu$ with $s < m$
is given by $q^l$ where
\[
l = |B_\nu| - |\lambda \cap S_\mu(m-1)| - |\lambda \cap T_\mu(m+1)| + |\lambda|/2
\]
if $m \not\in \lambda$,
and $0$ otherwise.
\end{lem}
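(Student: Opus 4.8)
The strategy is to read the displayed condition on $Z$ as a linear system over $\mathbb{F}_q$ whose unknowns are only the entries of the $m$-th row of $Z$, to count its solutions in terms of the rank of a submatrix of $Y$, and then to convert that rank into rook-placement statistics attached to $\sigma(Y)$.

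First I would record the index-set bookkeeping (notation as in Section \ref{Ferrers}). Since $\mu$ $m$-covers $\nu$ we have $\mu_m = 1$ and $\nu_m = 0$, hence $S_\nu = S_\mu \cup \{m\}$ and $T_\mu = T_\nu \cup \{m\}$ are disjoint unions, $S_\mu(m-1) = S_\mu(m)$, $T_\mu(m) = T_\nu(m+1) \cup \{m\}$ is a disjoint union, and $T_\nu(m+1) = T_\mu(m+1)$; in particular $B_\nu$ has exactly $|T_\nu(m+1)|$ cells in its $m$-th row. The condition ``$Y_{s,m} = \sum_{t \in T_\nu} Y_{s,t} Z_{m,t}$ for all $s \in S_\mu$ with $s < m$'' involves only the entries $Z_{m,t}$, and since $\mathrm{Supp}(Z) \subseteq B_\nu$ the only such entries that can be nonzero are $(Z_{m,t})_{t \in T_\nu(m+1)}$, while the remaining $|B_\nu| - |T_\nu(m+1)|$ entries of $Z$ are unconstrained. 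Thus the number we want equals $q^{\,|B_\nu| - |T_\nu(m+1)|}$ times the number of solutions of the linear system $Ax = b$, where $A$ is the submatrix of $Y$ on rows $S_\mu(m)$ and columns $T_\mu(m+1)$ and $b = (Y_{s,m})_{s \in S_\mu(m)}$. Note that $S_\mu(m) \times T_\mu(m+1)$ and $S_\mu(m) \times T_\mu(m)$ are both contained in $B_\mu$, and that the augmented matrix $(A \mid b)$ is precisely the submatrix of $Y$ on rows $S_\mu(m)$ and columns $T_\mu(m)$.

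Next I would compute the two ranks. By the argument proving Lemma \ref{lem:rankinv}, for a rectangle of the form $S_\mu(s_0) \times T_\mu(t_0) \subseteq B_\mu$ the rank of the corresponding submatrix of $Y$ equals $|\sigma(Y) \cap (S_\mu(s_0) \times T_\mu(t_0))|$; hence $\mathrm{rank}(A) = |\sigma(Y) \cap (S_\mu(m) \times T_\mu(m+1))|$ and $\mathrm{rank}(A \mid b) = |\sigma(Y) \cap (S_\mu(m) \times T_\mu(m))|$. Since every cell of $B_\mu$ in column $m$ has row index smaller than $m$, and $\pi_2(\sigma(Y)) = \lambda \cap T_\mu$ with $m \in T_\mu$, the set $\sigma(Y) \cap (S_\mu(m) \times \{m\})$ has one element if $m \in \lambda$ and none otherwise; consequently $\mathrm{rank}(A \mid b) = \mathrm{rank}(A)$ when $m \notin \lambda$ and $\mathrm{rank}(A \mid b) = \mathrm{rank}(A) + 1$ when $m \in \lambda$. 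Applying Lemma \ref{lem3.9} at the index $m$, and using $S_\mu(m) = S_\mu(m-1)$ together with $|\lambda \cap T_\mu(m)| = |\lambda \cap T_\mu(m+1)|$ in the case $m \notin \lambda$, one obtains $\mathrm{rank}(A) = |\lambda \cap S_\mu(m-1)| + |\lambda \cap T_\mu(m+1)| - |\lambda|/2$ whenever $m \notin \lambda$.

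Finally I would assemble the count. The system $Ax = b$ is consistent if and only if $\mathrm{rank}(A) = \mathrm{rank}(A \mid b)$, that is, if and only if $m \notin \lambda$; if $m \in \lambda$ there is no admissible $Z$, which accounts for the value $0$. If $m \notin \lambda$, the solution set of $Ax = b$ has cardinality $q^{\,|T_\nu(m+1)| - \mathrm{rank}(A)}$, so the number of admissible $Z$ is $q^{\,|B_\nu| - |T_\nu(m+1)|} \cdot q^{\,|T_\nu(m+1)| - \mathrm{rank}(A)} = q^{\,|B_\nu| - \mathrm{rank}(A)} = q^{\,l}$ with $l$ as in the statement. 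I expect the main obstacle to be this second step --- translating the two submatrix ranks into the rook-placement statistics by combining the rectangle version of Lemma \ref{lem:rankinv} with Lemma \ref{lem3.9} and keeping exact account of the $m$-th column --- together with checking that the degenerate cases (no $s \in S_\mu$ with $s < m$, so that the system is vacuous, or $B_\nu = \emptyset$) are subsumed uniformly, since all the quantities above then collapse to $0$.
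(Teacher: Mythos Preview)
Your proposal is correct and follows essentially the same argument as the paper: separate the unconstrained entries of $Z$ (those off the $m$-th row) from the constrained $m$-th row, interpret the constraint as a linear system with coefficient matrix the submatrix of $Y$ on $S_\mu(m-1)\times T_\mu(m+1)$ and augmented matrix the submatrix on $S_\mu(m-1)\times T_\mu(m)$, use the rectangle version of the rank formula from the proof of Lemma~\ref{lem:rankinv} to express both ranks as $|\sigma(Y)\cap\text{rectangle}|$, and then identify the rank via Lemma~\ref{lem3.9}. The only cosmetic difference is that the paper detects consistency directly from Definition~\ref{defi:sigmaM} (no $(s,m)\in\sigma(Y)$) and redoes the Lemma~\ref{lem3.9} computation inline, whereas you compute $\mathrm{rank}(A\mid b)$ as a second rectangle count and invoke Lemma~\ref{lem3.9} explicitly; the content is the same.
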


\begin{proof}
We count the number of possibilities for $Z_{s,t} \in \mathbb{F}_q$
for $s \in S_\nu$ and $t \in T_\nu$.
If $s > t$, then $Z_{s,t} = 0$
since $\mathrm{Supp}(Z) \subseteq B_\nu$.
If $s \neq m$ and $s < t$, then $Z_{s,t}$ is arbitrary
and therefore the number of possibilities is $q$.
The number of such pairs $(s,t)$ is given by
\[
|\lbrace (s,t) \in B_\nu \mid s \neq m \rbrace|
=
|B_\nu| - |T_\nu(m+1)|.
\]
For the case $s = m$ and $m < t$,
by the constraint,
the sequence $(Z_{m,t})_{t \in T_\nu, t>m}$
must be a solution of the system of linear equations over $\mathbb{F}_q$:
\[
C\mathbf{u} = \mathbf{c},
\]
where $C = (Y_{s,t})_{s \in S_\mu, s<m, t \in T_\nu, t > m}$
is the coefficient matrix,
$\mathbf{u} = (u_t)_{t \in T_\nu, t > m}$
is the unknown vector
and
$\mathbf{c} = (Y_{s,m})_{s \in S_\mu, s < m}$
is the constant vector.
By linear algebra,
the system $C\mathbf{u} = \mathbf{c}$ has a solution
if and only if
the rank of the augmented matrix $[C, \mathbf{c}]$
is equal to the rank of the coefficient matrix $C$. 
By Definition \ref{defi:sigmaM}, 
it is also equivalent to $(s,m) \not\in \sigma(Y)$ for all $s \in S_\mu$ with
$s < m$,
which means $m \not\in \lambda$.
Moreover,
suppose there is a solution of the system $C\mathbf{u} = \mathbf{c}$.
Since there are $|T_\nu(m+1)|$ columns in $C$,
the number of solutions
is given by
\[
q^{|T_\nu(m+1)| - \rank C}.
\]
By the proof of Lemma \ref{lem:rankinv}, the rank of $C$ is computed as follows:
\begin{align*}
\rank C
&= |\lbrace (s,t) \in \sigma(Y) \mid s \le m-1, t \ge m+1 \rbrace| \\
&= |\lbrace (s,t) \in \sigma(Y) \mid s \le m-1 \rbrace| + |\lbrace (s,t) \in \sigma(Y) \mid t \ge m +1 \rbrace| - |\sigma(Y)|\\
&= |\lambda \cap S_\mu(m-1)| + |\lambda \cap T_\mu(m+1)| - |\lambda|/2.
\end{align*}
Therefore the result follows.
\end{proof}

\begin{lem}\label{numberofY}
Referring to Lemma \ref{lem:Rmchiz},
let $\lambda$ denote the type of $\sigma(Z)$
in Definitions \ref{sigmatype} and \ref{defi:sigmaM}.
Then
the number of $Y \in \mathcal{M}_\mu(\mathbb{F}_q)$ such that 
$Z_{m,t} = - \sum_{s \in S_\mu} Z_{s,t}Y_{s,m}$
for all $t \in T_\nu$ with $t > m$
is given by $q^l$ where
\[
l = |B_\mu| - |\lambda \cap S_\mu(m-1)| - |\lambda \cap T_\mu(m+1)| + |\lambda|/2
\]
if $m \not\in \lambda$,
and $0$ otherwise.
\end{lem}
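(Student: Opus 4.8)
The plan is to follow the proof of Lemma~\ref{numberofZ} with the roles of rows and columns, and of $Y$ and $Z$, interchanged. Since $\mu$ $m$-covers $\nu$ we have $\mu_m=1$ and $\nu_m=0$, so $m\in T_\mu\cap S_\nu$, $S_\mu=S_\nu\setminus\{m\}$ and $T_\mu=T_\nu\cup\{m\}$; in particular $S_\mu(m-1)=S_\nu(m-1)$ and $T_\mu(m+1)=T_\nu(m+1)$. First I would note that the stated condition on $Y$ constrains only the $m$-th column $(Y_{s,m})_{s\in S_\mu}$ of $Y$, while $\mathrm{Supp}(Y)\subseteq B_\mu$ forces $Y_{s,m}=0$ for $s>m$, so that only the entries with $s\in S_\mu(m-1)$ matter. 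The remaining columns of $Y$, those indexed by $t\in T_\nu$, are free subject only to the support condition, and since $|B_\mu|=\sum_{t\in T_\mu}|S_\mu(t)|$ they contribute a factor $q^{|B_\mu|-|S_\mu(m-1)|}$, independently of the $m$-th column.

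For the $m$-th column I would, as in Lemma~\ref{numberofZ}, rewrite the condition as a linear system over $\mathbb{F}_q$ in the unknowns $(Y_{s,m})_{s\in S_\mu(m-1)}$ with one equation for each $t\in T_\nu(m+1)$, having constant vector $(-Z_{m,t})_{t\in T_\nu(m+1)}$. The key step is to identify (up to transpose) the coefficient matrix of this system with the submatrix $Z(m',m'')$ and the corresponding augmented matrix with $Z(m,m'')$, where $m'$ is the predecessor of $m$ in $S_\nu$ and $m''$ the successor of $m$ in $T_\nu$; negating the extra row does not change the rank. Using the identity $\mathrm{rank}(Z(s,t))=|\sigma(Z)\cap B_\nu(s,t)|$ from the proof of Lemma~\ref{lem:rankinv} together with the fact that $\sigma(Z)$ is a rook placement, one sees that $\mathrm{rank}(Z(m,m''))-\mathrm{rank}(Z(m',m''))=|\sigma(Z)\cap(\{m\}\times T_\nu(m+1))|\in\{0,1\}$, and that this difference is $0$ exactly when no pair $(m,t)$ lies in $\sigma(Z)$, i.e.\ when $m\notin\pi_1(\sigma(Z))$, i.e.\ when $m\notin\lambda$. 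Hence the system is consistent if and only if $m\notin\lambda$, and in that case it has $q^{|S_\mu(m-1)|-\mathrm{rank}(Z(m',m''))}$ solutions.

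It then remains to evaluate $\mathrm{rank}(Z(m',m''))=|\sigma(Z)\cap(S_\mu(m-1)\times T_\mu(m+1))|$. By the same inclusion--exclusion as in Lemma~\ref{numberofZ} --- valid because every $(s,t)\in\sigma(Z)\subseteq B_\nu$ with $s\ge m$ automatically has $t\ge m+1$ --- this equals $|\lambda\cap S_\mu(m-1)|+|\lambda\cap T_\mu(m+1)|-|\lambda|/2$, using $\pi_1(\sigma(Z))=\lambda\cap S_\nu$, $\pi_2(\sigma(Z))=\lambda\cap T_\nu$ and $|\sigma(Z)|=|\lambda|/2$. Multiplying the free-column contribution $q^{|B_\mu|-|S_\mu(m-1)|}$ by the solution count yields $q^{l}$ with $l=|B_\mu|-|\lambda\cap S_\mu(m-1)|-|\lambda\cap T_\mu(m+1)|+|\lambda|/2$, while the total is $0$ when $m\in\lambda$; this is the assertion. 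I expect the only genuine work to be the bookkeeping in the second paragraph --- matching the linear-algebra data to the submatrices $Z(m',m'')$ and $Z(m,m'')$ of $Z$, translating the rank condition into ``$m\notin\lambda$'', and checking the degenerate cases where $S_\mu(m-1)=\emptyset$ or $T_\nu(m+1)=\emptyset$ via the conventions in \eqref{r-}--\eqref{r-+} --- after which everything is the column-analogue of computations already made for Lemma~\ref{numberofZ}.
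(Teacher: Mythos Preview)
Your proposal is correct and is precisely the dual argument the paper has in mind; the paper's own proof reads in full ``Similar to the proof of Lemma~\ref{numberofZ}.'' You have carefully spelled out that dual, correctly matching the linear system to the submatrices $Z(m',m'')$ and $Z(m,m'')$, invoking the rank identity from the proof of Lemma~\ref{lem:rankinv}, and reducing the consistency condition to $m\notin\lambda$---nothing more is needed.
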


\begin{proof}
Similar to the proof of Lemma \ref{numberofZ}.
\end{proof}

\begin{defi}\label{type}\normalfont
Let $\mu \in \lbrace 0,1 \rbrace^N$ and take $y \in P_\mu$.
If $\mu \neq \mathbf{0}, \mathbf{1}$,
then
let $Y \in \mathcal{M}_\mu(\mathbb{F}_q)$ denote the matrix form of $y$
in Definition \ref{matrixform}.
Then the \emph{type} of $y$
is defined to be the type of $\sigma(Y)$
in Definitions \ref{sigmatype} and \ref{defi:sigmaM}.
If $\mu = \mathbf{0}$ or $\mathbf{1}$,
then the \emph{type} of $y$
is defined to be the empty set.
We note that the type of $y$ depends on the basis $v_1, v_2, \ldots, v_N$ for $H$
since the matrix form does.
\end{defi}

\begin{lem}\label{Lmchiy0}
Let $\mu \in \lbrace 0,1 \rbrace^N$
and let $\lambda \subseteq \lbrace 1, 2, \ldots, N\rbrace$ satisfy (ii) in Lemma \ref{LSmuLTmu}.
For $1 \le m \le N$,
the following are equivalent:
\begin{enumerate}
\item for any $y \in P_\mu$ of type $\lambda$, we have $L_m\chi_y = 0$;
\item $m \in S_\mu$ or $m \in \lambda$.
\end{enumerate}
\end{lem}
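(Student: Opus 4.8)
The plan is to split on whether $m\in S_\mu$ or $m\in T_\mu$, which exhausts the possibilities since $\mu_m\in\{0,1\}$. If $m\in S_\mu$, i.e.\ $\mu_m=0$, then (ii) holds, and (i) is immediate: the $m$-th coordinate of $\mu-\widehat m$ equals $-1$, so $\mu-\widehat m\notin\{0,1\}^N$ and $E^*_{\mu-\widehat m}=O$; by Lemma~\ref{lem:actLmRm}~(i) this gives $L_mE^*_\mu=E^*_{\mu-\widehat m}L_m=O$, and since each $\chi_y$ with $y\in P_\mu$ lies in $E^*_\mu V$ by Lemma~\ref{lem:basis}, we get $L_m\chi_y=0$ for all such $y$. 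Thus (i)$\Leftrightarrow$(ii) in this case, and henceforth I assume $m\in T_\mu$, i.e.\ $\mu_m=1$; then $m\notin S_\mu$, so (ii) reduces to $m\in\lambda$, and it suffices to prove (i)$\Leftrightarrow m\in\lambda$.

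Put $\nu=\mu-\widehat m\in\{0,1\}^N$ and fix any $y\in P_\mu$ of type $\lambda$, which exists by Lemmas~\ref{LSmuLTmu} and~\ref{MandRP} together with the standing hypothesis on $\lambda$; let $Y$ be its matrix form. Since $L_m\chi_y\in E^*_\nu V$ by Lemma~\ref{lem:actLmRm}~(ii) and $\{\chi_z:z\in P_\nu\}$ spans $E^*_\nu V$ by Lemma~\ref{lem:basis}, we have $L_m\chi_y=0$ iff $L_m\chi_y(z)=0$ for every $z\in P_\nu$. In the main case where moreover $\mu\neq\mathbf1$ and $\nu\neq\mathbf0$, so that $\mu,\nu\neq\mathbf0,\mathbf1$, Lemma~\ref{lem:Lmchiy} states that for $z\in P_\nu$ with matrix form $Z$ the entry $L_m\chi_y(z)$ equals $q^{|S_\mu(m-1)|}\chi\left(\sum_{s\in S_\mu}\sum_{t\in T_\nu}Y_{s,t}Z_{s,t}\right)$ when the linear condition $Y_{s,m}=\sum_{t\in T_\nu}Y_{s,t}Z_{m,t}$ holds for all $s\in S_\mu$ with $s<m$, and equals $0$ otherwise; since a value of the nontrivial character $\chi$ is never $0$ and $q^{|S_\mu(m-1)|}\neq0$, this entry is nonzero exactly when that linear condition is satisfied. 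Hence $L_m\chi_y=0$ if and only if no $Z\in\mathcal{M}_\nu(\mathbb{F}_q)$ satisfies the condition, and Lemma~\ref{numberofZ} says the number of such $Z$ is a positive power $q^l$ of $q$ when $m\notin\lambda$ and is $0$ when $m\in\lambda$. Therefore $L_m\chi_y=0\Leftrightarrow m\in\lambda$; as the right-hand side is independent of the choice of $y$ of type $\lambda$, this gives (i)$\Leftrightarrow m\in\lambda\Leftrightarrow$(ii).

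Finally I would dispose of the two boundary configurations inside the case $m\in T_\mu$ not covered by Lemma~\ref{lem:Lmchiy}, namely $\mu=\mathbf1$ and $\nu=\mathbf0$ (i.e.\ $\mu=\widehat m$). The cleanest route is to note that the formulas of Lemmas~\ref{lem:Lmchiy} and~\ref{numberofZ} remain valid there under the convention that empty sums vanish --- when $\nu=\mathbf0$ the linear condition degenerates to $Y=O$, i.e.\ to $\lambda=\emptyset$ --- so the argument above applies verbatim; alternatively one computes $L_m\chi_y$ directly from the definition of $L_m$ and Definition~\ref{chiy}. For $\mu=\mathbf1$ one gets that $L_m\chi_H$ is supported on the nonempty set of subspaces $m$-covered by $H$, so $L_m\chi_H\neq0$, while $\lambda=\emptyset$ makes (ii) fail; for $\mu=\widehat m$ one gets $L_m\chi_y(0)=\sum_{y'\in P_{\widehat m}}\chi_y(y')$, which equals $|P_{\widehat m}|\neq0$ when $Y=O$ (type $\emptyset$) and vanishes by character orthogonality when $Y\neq O$ (whose type then contains $m$), again matching (i)$\Leftrightarrow m\in\lambda$. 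I expect the main obstacle to be exactly this reconciliation of Lemmas~\ref{lem:Lmchiy} and~\ref{numberofZ} at the boundary --- i.e.\ checking that ``$L_m\chi_y$ has some nonzero entry on $P_\nu$'' is literally equivalent to ``the relevant linear system is solvable for at least one $Z$'' --- but since both lemmas are already in hand, this amounts to careful bookkeeping rather than new work.
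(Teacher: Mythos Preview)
Your proof is correct and follows essentially the same approach as the paper's: both reduce the main case $\mu,\nu\neq\mathbf{0},\mathbf{1}$ to Lemmas~\ref{lem:Lmchiy} and~\ref{numberofZ}, and handle the boundary configurations $m\in S_\mu$, $\mu=\mathbf{1}$, and $\nu=\mathbf{0}$ by direct computation from Definition~\ref{chiy}. Your organization (splitting first on $m\in S_\mu$ versus $m\in T_\mu$, then proving the biconditional in one pass) is slightly cleaner than the paper's separate treatment of (i)$\Rightarrow$(ii) and (ii)$\Rightarrow$(i), but the content is the same; note only that your appeal to ``$\{\chi_z:z\in P_\nu\}$ spans $E^*_\nu V$'' is unnecessary, since $L_m\chi_y=0$ is equivalent to the vanishing of all its standard coordinates $L_m\chi_y(z)$ for $z\in P_\nu$ simply because $L_m\chi_y\in E^*_\nu V$.
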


\begin{proof}
Set $\nu = \mu - \widehat{m}$ so that $\mu$ $m$-covers $\nu$.
Then $\nu \neq \mathbf{1}$.
For $y \in P_\mu$, 
observe that $L_m\chi_y \in E_\nu^*V$ by Lemma \ref{lem:actLmRm} (ii).

(i) $\Rightarrow$ (ii)
Suppose $L_m\chi_y = 0$ for any $y \in P_\mu$ of type $\lambda$.
If $\mu = \mathbf{0}$, then $m \in S_\mu$ and so (ii) holds.
If $\mu = \mathbf{1}$, then $P_\mu = \lbrace y=H \rbrace$ and any subspaces $z \in P_\nu$ are $m$-covered by $y$,
and so the $z$-th entry of $L_m\chi_y$ is 
\[
L_m\chi_y(z) = \chi_y(y) = 1
\]
by Definition \ref{chiy}.
This is a contradiction to $L_m\chi_y = 0$.
If $\nu = \mathbf{0}$, then $P_\nu = \lbrace 0 \rbrace$ and any subspaces $y' \in P_\mu$ $m$-cover $0$,
and so the $0$-th entry of $L_m\chi_y$ is 
\[
L_m\chi_y(0) = \sum_{Y' \in \mathcal{M}_\mu(\mathbb{F}_q)}\chi\left( \mathrm{tr}(YY'^T)\right),
\]
where $Y$ is the matrix form of $y$.
By the same argument as in the proof of Lemmas \ref{lem:basis} and \ref{lem:Lmchiy},
the sum vanishes (if and) only if $Y$ is not the zero matrix from the orthogonality of the characters $\chi$ and the trivial character.
Since $y \in P_{\widehat{m}}$, we must have $m \in \lambda$.
If $\mu, \nu \neq \mathbf{0}, \mathbf{1}$,
then by Lemma \ref{lem:Lmchiy},
$L_m\chi_y=0$ implies that there is no $Z \in \mathcal{M}_{\nu}(\mathbb{F}_q)$
such that $Y_{s,m} = \sum_{t \in T_\nu}Y_{s,t}Z_{m,t}$ for all $s \in S_\mu$ with $s < m$,
where $Y \in \mathcal{M}_{\mu}(\mathbb{F}_q)$ denote the matrix form of $y$ in Definition \ref{matrixform}.
In this case, by Lemma \ref{numberofZ}, we have $m \in \lambda$, where $\lambda$ is the type of $y$.

(ii) $\Rightarrow$ (i)
Suppose $m \in S_\mu$ or $m \in \lambda$.
If $m \in S_\mu$, then $\nu \not\in \lbrace 0,1 \rbrace^N$ and so $E_\mu^*V = 0$.
This implies $L_m\chi_y = 0$ since $L_m\chi_y \in E_\nu^*V$.
We now assume $m \in T_\mu$ and $m \in \lambda$.
Observe that $\mu \neq \mathbf{0}$.
If $\mu = \mathbf{1}$, then $\lambda = \emptyset$ by Definition \ref{type}.
This contradicts to $m \in \lambda$.
If $\nu = \mathbf{0}$, then by the similar argument above, $m \in \lambda$ implies the matrix form of $y$ is not the zero matrix.
Then this implies the $0$-th entry of $L_m\chi_y$ is $0$, which means $L_m\chi_y = 0$.
If $\mu, \nu \neq \mathbf{0}, \mathbf{1}$,
then the result follows from Lemmas \ref{lem:Lmchiy} and \ref{numberofZ}.
\end{proof}

\begin{lem}\label{Rmchiy0}
Let $\nu \in \lbrace 0,1 \rbrace^N$
and let $\lambda \subseteq \lbrace 1, 2, \ldots, N\rbrace$ satisfy (ii) in Lemma \ref{LSmuLTmu} with $\mu$ replaced by $\nu$.
For $1 \le m \le N$,
the following are equivalent:
\begin{enumerate}
\item for any $z \in P_\nu$ of type $\lambda$, we have $R_m\chi_z = 0$;
\item $m \in T_\nu$ or $m \in \lambda$.
\end{enumerate}
\end{lem}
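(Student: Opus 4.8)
The plan is to mirror the proof of Lemma~\ref{Lmchiy0}, replacing Lemma~\ref{lem:Lmchiy} and Lemma~\ref{numberofZ} by their counterparts Lemma~\ref{lem:Rmchiz} and Lemma~\ref{numberofY}. First I would set $\mu = \nu + \widehat{m}$, so that $\mu$ $m$-covers $\nu$; by Lemma~\ref{lem:actLmRm}(ii) we then have $R_m\chi_z \in E_\mu^* V$ for every $z \in P_\nu$. If $m \in T_\nu$, then $\mu \notin \lbrace 0,1 \rbrace^N$, hence $E_\mu^* = O$ and $R_m\chi_z = 0$ for all $z$, so (i) holds; and (ii) holds since $m \in T_\nu$. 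This also disposes of $\nu = \mathbf{1}$. So from here on I may assume $m \in S_\nu$, equivalently $\mu_m = 1$ and $\mu \neq \mathbf{0}$; note that in this case condition (ii) reads simply $m \in \lambda$.

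Next I would clear away the remaining extremal locations. If $\nu = \mathbf{0}$, then $P_\nu = \lbrace 0 \rbrace$ and every $y' \in P_\mu$ $m$-covers $0$, so $R_m\chi_0(y') = \chi_0(0) = 1$ for all $y' \in P_\mu$; thus $R_m\chi_0 \neq 0$ and (i) fails, while (ii) fails as well since $T_\nu = \emptyset$ and the only admissible type is $\lambda = \emptyset$. Assume henceforth $\nu \neq \mathbf{0}$. If $\mu = \mathbf{1}$, then $\nu \neq \mathbf{0}, \mathbf{1}$ and $P_\mu = \lbrace H \rbrace$, so $R_m\chi_z \in E_{\mathbf{1}}^* V$ is determined by its $H$-entry; since $H$ $m$-covers every element of $P_\nu$, we get $R_m\chi_z(H) = \sum_{Z' \in \mathcal{M}_\nu(\mathbb{F}_q)} \chi(\mathrm{tr}(ZZ'^T))$, where $Z$ is the matrix form of $z$, and by the character-orthogonality argument used in the proofs of Lemmas~\ref{lem:basis} and~\ref{lem:Lmchiy} this vanishes if and only if $Z \neq O$. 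Since $S_\nu = \lbrace m \rbrace$ in this case, a short inspection of Definition~\ref{defi:sigmaM} shows that $\sigma(Z) \neq \emptyset$ exactly when $Z \neq O$, and then $m \in \pi_1(\sigma(Z))$; hence $R_m\chi_z = 0$ for every $z \in P_\nu$ of type $\lambda$ if and only if $m \in \lambda$, which is exactly condition (ii).

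For the generic case $\mu, \nu \neq \mathbf{0}, \mathbf{1}$ (still with $m \in S_\nu$), I would fix $z \in P_\nu$ of type $\lambda$ with matrix form $Z$. By Lemma~\ref{lem:Rmchiz}, each entry $R_m\chi_z(y)$ is either $0$ or of the form $q^{|T_\nu(m+1)|}\chi(\cdots)$, and hence nonzero, according as the linear condition $Z_{m,t} = -\sum_{s \in S_\mu} Z_{s,t}Y_{s,m}$ (for all $t \in T_\nu$ with $t > m$) fails or holds for the matrix form $Y$ of $y$; hence $R_m\chi_z = 0$ if and only if this linear system in $Y$ has no solution in $\mathcal{M}_\mu(\mathbb{F}_q)$. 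By Lemma~\ref{numberofY} the number of such solutions is a power of $q$ when $m \notin \lambda$ and is $0$ when $m \in \lambda$, so $R_m\chi_z = 0$ if and only if $m \in \lambda$. Since this criterion depends only on $\lambda$, it holds for one $z \in P_\nu$ of type $\lambda$ if and only if it holds for all of them, and recalling that (ii) now reads $m \in \lambda$ we obtain the equivalence.

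All the genuine content sits in the generic paragraph, but it is already packaged into Lemmas~\ref{lem:Rmchiz} and~\ref{numberofY}, so I do not expect a real obstacle there. The part requiring the most care is the extremal bookkeeping: one must check in each branch that (i) and (ii) stand or fall together even when both fail — in particular that for $\nu = \mathbf{0}$ the only admissible type is $\emptyset$ (so that (ii) fails), and that in the $\mu = \mathbf{1}$ branch the constraint $S_\nu = \lbrace m \rbrace$ forces $Z \neq O$ to be equivalent to $m \in \lambda$.
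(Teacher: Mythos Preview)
Your proposal is correct and follows exactly the route the paper intends: the paper's own proof reads simply ``Similar to the proof of Lemma~\ref{Lmchiy0},'' and what you have written is precisely that mirror argument, swapping Lemmas~\ref{lem:Lmchiy} and~\ref{numberofZ} for Lemmas~\ref{lem:Rmchiz} and~\ref{numberofY} and handling the extremal locations $\nu=\mathbf{0}$, $\nu=\mathbf{1}$, $\mu=\mathbf{1}$ in the expected way.
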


\begin{proof}
Similar to the proof of Lemma \ref{Lmchiy0}.
\end{proof}

Recall from Lemma \ref{cardeven},
a subset $\lambda \subseteq \lbrace 1, 2, \ldots, N \rbrace$ becomes a type
if and only if it has even cardinality.
For $\lambda \subseteq \lbrace 1, 2, \ldots, N \rbrace$ with even cardinality,
let $V_\lambda$
denote the subspace of $V$
spanned by the vectors $\chi_y \in V$
for all $y \in P$ of type $\lambda$ in Definitions \ref{chiy} and \ref{type}.
Then for $\lambda \subseteq \lbrace 1, 2, \ldots, N \rbrace$ with even cardinality,
we define a matrix $E_\lambda \in \mathrm{Mat}_P(\mathbb{C})$ such that
\begin{align*}
(E_\lambda - I)V_\lambda = 0, \\
E_\lambda V_{\lambda'} = 0 && \text{if $\lambda \neq \lambda'$},
\end{align*}
where
$\lambda' \subseteq \lbrace 1, 2, \ldots, N \rbrace$ with even cardinality.
In other words,
$E_\lambda$ is the projection from $V$ onto $V_\lambda$.
Observe that 
$E_\mu^*$ and $E_\lambda$ commute for all $\mu \in \lbrace 0,1 \rbrace^N$ and $\lambda \subseteq \lbrace 1, 2, \ldots, N \rbrace$ with even cardinality.

\begin{lem}\label{lem:EmuElambda}
For $\mu \in \lbrace 0,1 \rbrace^N$ and for $\lambda \subseteq \lbrace 1, 2, \ldots, N \rbrace$ with even cardinality,
the following are equivalent:
\begin{enumerate}
\item $E_\mu^*E_\lambda = E_\lambda E_\mu^* \neq 0$;
\item the pair $(\lambda \cap S_\mu, \lambda \cap T_\mu)$ satisfies (i), (ii) in Lemma \ref{PIcond}.
\end{enumerate}
\end{lem}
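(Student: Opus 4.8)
The plan is to reduce statement (i) to the existence of a subspace $y \in P_\mu$ of type $\lambda$, and then to read off that existence from the combinatorics developed in Sections \ref{Ferrers} and \ref{matrixrep}. First I would record the standard fact that if $P'$ and $Q'$ are commuting idempotents in an algebra of linear operators, then $P'Q'$ is again idempotent with $\mathrm{Im}(P'Q') = \mathrm{Im}(P') \cap \mathrm{Im}(Q')$. Applying this to $P' = E_\mu^*$ and $Q' = E_\lambda$, which commute by the remark preceding the lemma, statement (i) is equivalent to $E_\mu^* V \cap V_\lambda \neq 0$.

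Next I would identify this intersection explicitly. By Lemma \ref{lem:basis} together with the direct sum $V = \bigoplus_{\nu \in \lbrace 0,1 \rbrace^N} E_\nu^* V$, the vectors $\chi_z$ $(z \in P)$ form a basis of $V$; since each $\chi_z$ lies in the single summand $V_\lambda$ indexed by the type of $z$ (which has even cardinality by Lemma \ref{cardeven}), we also get $V = \bigoplus_\lambda V_\lambda$, so that $E_\lambda$ is well defined. As $E_\mu^*$ is the projection onto $\Span\{\chi_z : z \in P_\mu\}$ and $E_\lambda$ the projection onto $\Span\{\chi_z : z \text{ has type } \lambda\}$, a short linear-independence argument in the $\chi$-basis gives
\[
E_\mu^* V \cap V_\lambda = \Span\{\chi_y : y \in P_\mu,\ y \text{ has type } \lambda\}.
\]
Hence (i) holds if and only if there exists $y \in P_\mu$ of type $\lambda$.

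It then remains to decide when such a $y$ exists. If $\mu = \mathbf{0}$ or $\mathbf{1}$, then $P_\mu$ is a single point whose type is $\emptyset$ by Definition \ref{type}, so a $y$ of type $\lambda$ exists precisely when $\lambda = \emptyset$; for these $\mu$ exactly one of $S_\mu, T_\mu$ is empty and the other is $\lbrace 1, \ldots, N \rbrace$, so condition (i) of Lemma \ref{PIcond} for $(\lambda \cap S_\mu, \lambda \cap T_\mu)$ reads $|\lambda| = 0$ while condition (ii) there is vacuous, whence (ii) of the present lemma is also equivalent to $\lambda = \emptyset$. If $\mu \neq \mathbf{0}, \mathbf{1}$, then by Proposition \ref{yY} the matrix forms of the subspaces in $P_\mu$ exhaust $\mathcal{M}_\mu(\mathbb{F}_q)$, and by Lemma \ref{MandRP} the associated sets $\sigma(M)$ exhaust the rook placements on $B_\mu$; therefore, by the definition of type, a $y \in P_\mu$ of type $\lambda$ exists if and only if $B_\mu$ admits a rook placement of type $\lambda$, which by Lemma \ref{LSmuLTmu} is equivalent to (ii). Combining the two cases completes the argument.

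The step I expect to require the most care is the second paragraph: one must verify that $\lbrace \chi_z : z \in P \rbrace$ is genuinely a basis of $V$ (so that $E_\mu^*$ and $E_\lambda$ are simultaneously ``diagonal'' in it) and that $V$ decomposes as the direct sum of the $V_\lambda$, so that the displayed identity for $E_\mu^* V \cap V_\lambda$ is valid. Once this bookkeeping is settled, the remainder is a direct appeal to the cited results.
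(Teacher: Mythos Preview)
Your argument is correct and is exactly the ``matrix interpretation of Lemma \ref{LSmuLTmu}'' that the paper invokes in a single line: you spell out that $E_\mu^*E_\lambda \neq 0$ amounts to the existence of some $y \in P_\mu$ of type $\lambda$, and then trace that existence back to Lemma \ref{LSmuLTmu} via Proposition \ref{yY} and Lemma \ref{MandRP}, with the boundary cases $\mu = \mathbf{0}, \mathbf{1}$ handled separately. There is no substantive difference in approach---your write-up simply makes explicit the bookkeeping the paper leaves to the reader.
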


\begin{proof}
This is a matrix interpretation of Lemma \ref{LSmuLTmu}.
\end{proof}

\section{The $L_mR_m$- and $R_mL_m$-actions on $V$}\label{LRRLactiononV}

In this section, we fix a basis $v_1, v_2, \ldots, v_N$ for $H$
adapted to the flag $\lbrace x_i \rbrace_{i=0}^N$
and assume that the matrix forms in Definition \ref{matrixform}
and the types in Definition \ref{type}
are always taken with respect to this basis $v_1, v_2, \ldots, v_N$.
We also fix a nontrivial character $\chi$ of the additive group $\mathbb{F}_q$.
Recall 
from Section \ref{LRactiononV},
the definition of $E_\lambda$ for $\lambda \subseteq \lbrace 1, 2, \ldots, N \rbrace$ with even cardinality
depends on the basis $v_1, v_2, \ldots, v_N$ and on the character $\chi$.
We show in this section, that $E_\lambda$ is independent of the basis $v_1, v_2, \ldots, v_N$ for $H$
adapted to the flag $\lbrace x_i \rbrace_{i=0}^N$
and the nontrivial character $\chi$ of the additive group $\mathbb{F}_q$.

\begin{lem}\label{lem:RmLmchiy}
Let $1 \le m \le N$,
and let $\mu \in \lbrace 0,1 \rbrace^N$ and $\lambda \subseteq \lbrace 1, 2, \ldots, N \rbrace$ satisfy (ii) in Lemma \ref{LSmuLTmu}.
Set
\begin{equation}
\kappa(m,\mu,\lambda) = |S_\mu(m-1) \setminus \lambda| + |T_\mu(m+1) \setminus \lambda| + |\lambda|/2.
\label{kappa}
\end{equation}
Then for $v \in E_\mu^* E_\lambda V$,
we have the following:
\[
R_mL_m v = 
\begin{cases}
q^{\kappa(m,\mu,\lambda)} v & \text{if $m \in T_\mu$ and $m \not\in \lambda$},\\
0 & \text{if $m \in S_\mu$ or $m \in \lambda$}.
\end{cases}
\]
\end{lem}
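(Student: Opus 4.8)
The plan is to compute $R_mL_mv$ by evaluating it on the orthogonal basis vectors $\chi_y$ of $E_\mu^*E_\lambda V$ supplied by Lemma \ref{lem:basis}, using the explicit matrix-entry formulas for $L_m$ and $R_m$ from Lemmas \ref{lem:Lmchiy} and \ref{lem:Rmchiz}, together with the counting in Lemmas \ref{numberofZ} and \ref{numberofY}. First I would dispose of the degenerate case $m\in S_\mu$ or $m\in\lambda$: by Lemma \ref{Lmchiy0}, in exactly this case $L_m\chi_y=0$ for every $y\in P_\mu$ of type $\lambda$, hence $R_mL_m v = 0$ for all $v\in E_\mu^*E_\lambda V$. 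This also handles the boundary cases $\mu=\mathbf{0}$ (where $m\in S_\mu$ automatically) and $\mu=\mathbf{1}$ (where $\lambda=\emptyset$ forces $m\in T_\mu=\{1,\dots,N\}$, so the relevant case is the first branch and $\nu=\mu-\widehat m\neq\mathbf 1$; if additionally $N=1$ one checks directly that $R_1L_1$ acts as $q^{1/2}\cdot q^{1/2}$ on the one-dimensional space, matching $\kappa(1,\mathbf 1,\emptyset)=1$). So from now on assume $m\in T_\mu$, $m\notin\lambda$, and set $\nu=\mu-\widehat m$.

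Next, for $\mu,\nu\neq\mathbf 0,\mathbf 1$, fix $y\in P_\mu$ of type $\lambda$ with matrix form $Y$, and I would expand $R_mL_m\chi_y = \sum_{z\in P_\nu}\bigl(L_m\chi_y(z)\bigr)\,R_m\chi_z$, then read off its $y'$-entry for $y'\in P_\mu$ with matrix form $Y'$. Using Lemma \ref{lem:Lmchiy} for $L_m\chi_y(z)$ (nonzero only when $Y_{s,m}=\sum_{t\in T_\nu}Y_{s,t}Z_{m,t}$ for all $s\in S_\mu$ with $s<m$, with value $q^{|S_\mu(m-1)|}\chi(\sum_{s,t}Y_{s,t}Z_{s,t})$) and Lemma \ref{lem:Rmchiz} for $R_m\chi_z(y')$ (nonzero only when $Z_{m,t}=-\sum_{s}Z_{s,t}Y'_{s,m}$ for all $t\in T_\nu$ with $t>m$, with value $q^{|T_\nu(m+1)|}\chi(\sum_{s,t}Y'_{s,t}Z_{s,t})$), the $y'$-entry becomes
\[
q^{|S_\mu(m-1)|+|T_\nu(m+1)|}\sum_{Z}\chi\!\left(\sum_{s\in S_\mu}\sum_{t\in T_\nu}(Y_{s,t}+Y'_{s,t})Z_{s,t}\right),
\]
the sum over $Z\in\mathcal M_\nu(\mathbb F_q)$ subject to the two linear constraints above. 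The strategy is then the same orthogonality-of-characters argument used in Lemmas \ref{lem:basis} and \ref{lem:Lmchiy}: the inner sum over the free entries $Z_{s,t}$ (those with $s\neq m$, $s<t$) vanishes unless $Y_{s,t}+Y'_{s,t}=0$ there, i.e. unless $Y'=-Y$ off row $m$; but the constraints on $Y$ and $Y'$ then force $Y'=Y$ entirely in the relevant positions, so only $y'=y$ survives. (Here one uses that $m\notin\lambda$ guarantees the constraint systems are consistent, via Lemmas \ref{numberofZ}, \ref{numberofY}.) Hence $R_mL_m\chi_y$ is a scalar multiple of $\chi_y$, and the scalar is $q^{|S_\mu(m-1)|+|T_\nu(m+1)|}$ times the number of admissible $Z$, which by Lemma \ref{numberofZ} is $q^{|B_\nu|-|\lambda\cap S_\mu(m-1)|-|\lambda\cap T_\mu(m+1)|+|\lambda|/2}$ — wait, I would instead count the $Z$ satisfying \emph{both} constraints simultaneously, which is cleaner and gives the needed exponent directly.

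Thus the remaining task is the bookkeeping: show that $|S_\mu(m-1)|+|T_\nu(m+1)|$ plus the log of the count of admissible $Z$ equals $\kappa(m,\mu,\lambda)=|S_\mu(m-1)\setminus\lambda|+|T_\mu(m+1)\setminus\lambda|+|\lambda|/2$. I expect this exponent arithmetic — keeping straight the various rectangle cardinalities $|B_\nu|$, $|S_\mu(m-1)|$, $|T_\nu(m+1)|$, the rank computation $\rank C = |\lambda\cap S_\mu(m-1)|+|\lambda\cap T_\mu(m+1)|-|\lambda|/2$ from Lemma \ref{numberofZ}, and the fact that $m\in T_\mu\setminus\lambda$ so $T_\nu(m+1)=T_\mu(m+1)$ and $B_\nu$ differs from $B_\mu$ in a controlled way — to be the main obstacle, since it is easy to double-count the contributions of column $m$ and of the constraint-reducing rows. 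I would organize this by splitting $B_\nu = (\text{entries with } s\neq m) \sqcup \emptyset$ and tracking the $s<m$ versus $s>m$ split of the row index set, then verify the identity by a direct cancellation of the $\lambda$-indexed terms. Once the scalar is confirmed to be $q^{\kappa(m,\mu,\lambda)}$ on each basis vector $\chi_y$, it holds on all of $E_\mu^*E_\lambda V$ by linearity, completing the proof.
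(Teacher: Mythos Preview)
Your overall strategy---reduce to the $\chi_y$ basis, compute entries via Lemmas~\ref{lem:Lmchiy}--\ref{numberofY}, and finish with character orthogonality---matches the paper's. But the expansion
\[
R_mL_m\chi_y \;=\; \sum_{z\in P_\nu}\bigl(L_m\chi_y(z)\bigr)\,R_m\chi_z
\]
is false: the scalar $L_m\chi_y(z)$ is the coefficient of $L_m\chi_y$ in the \emph{standard} basis vector $\hat z$, not in the character vector $\chi_z$, so applying $R_m$ termwise gives $\sum_z (L_m\chi_y)(z)\,R_m\hat z$, not what you wrote. This is why your $y'$-entry carries the factor $\chi\!\bigl(\sum (Y_{s,t}+Y'_{s,t})Z_{s,t}\bigr)$ with a \emph{plus} sign; orthogonality would then force $Y'=-Y$ off column $m$, and there is no mechanism in the remaining constraints to convert that into $Y'=Y$. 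Worse, in your setup the entries $Z_{s,t}$ with $s\neq m$ are not free either: substituting your second constraint $Z_{m,t}=-\sum_{s'}Z_{s',t}Y'_{s',m}$ into your first constraint $Y_{s,m}=\sum_t Y_{s,t}Z_{m,t}$ produces linear conditions on the $Z_{s',t}$ themselves, so the character-orthogonality step does not go through as stated.

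The fix is exactly what the paper does: instead of expanding $R_m$ directly, use that $R_m=L_m^{T}$ and compute
\[
\langle R_mL_m\chi_y,\chi_{y'}\rangle \;=\; \langle L_m\chi_y,\,L_m\chi_{y'}\rangle
\;=\; \sum_{z\in P_\nu} L_m\chi_y(z)\,\overline{L_m\chi_{y'}(z)}.
\]
Now only Lemma~\ref{lem:Lmchiy} is needed (twice), the complex conjugate supplies the correct sign $\chi\!\bigl(\sum(Y_{s,t}-Y'_{s,t})Z_{s,t}\bigr)$, and both constraints live on the row $Z_{m,\ast}$ alone, so the entries $Z_{s,t}$ with $s\in S_\mu$ really are free. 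Orthogonality then gives $Y_{s,t}=Y'_{s,t}$ for $t\in T_\nu$, the two (now identical) constraints on $Z_{m,\ast}$ force $Y_{s,m}=Y'_{s,m}$ as well, and the eigenvalue computation reduces to a single application of Lemma~\ref{numberofZ} plus the exponent bookkeeping you already sketched. Your final arithmetic plan (tracking $|B_\mu|-|B_\nu|=|S_\mu(m-1)|-|T_\mu(m+1)|$ and cancelling the $\lambda$-indexed terms) is correct once this route is taken.
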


\begin{proof}
Observe that $R_mL_m$ acts on $E_\mu^* V$ by Lemma \ref{lem:actLmRm} (ii).
Fix $y \in P_\mu$ of type $\lambda$ in Definition \ref{type}.
We show that $\chi_y$ is an eigenvector for $R_mL_m$.
If $m \in S_\mu$ or $m \in \lambda$,
then by Lemma \ref{Lmchiy0},
we have $L_m\chi_y = 0$ and so 
$\chi_y$ is an eigenvector for $R_mL_m$ with respect to the eigenvalue $0$.
If $\mu = \mathbf{1}$,
then $P_\mu = \lbrace H \rbrace$ and $\lambda = \emptyset$.
So we have $\dim E_\mu^* V = 1$.
Therefore, $\chi_y$ is an eigenvector of $R_mL_m$
and the corresponding eigenvalue
is the number of subspaces which are $m$-covered by $y = H$,
which is equal to $q^{N-m} = q^{\kappa(m, \mathbf{1},\emptyset)}$
by Lemma \ref{lem:adj} (i).
Set $\nu = \mu - \widehat{m}$ so that $\mu$ $m$-covers $\nu$.
If $m \in T_\mu$, $m \not\in \lambda$ and $\nu = \mathbf{0}$, then $P_\nu = \lbrace 0 \rbrace$ and $\lambda = \emptyset$.
In other words, the matrix form of $y$ in Definition \ref{matrixform}
equals the zero matrix $O$,
and so $y'$-th entry $\chi_y(y')$ of $\chi_y$ is $1$ if $y' \in P_\mu$
and $0$ if $y' \not\in P_\mu$.
Since $P_\nu = \lbrace 0 \rbrace$,
$\chi_y$ is an eigenvector of $R_mL_m$
and the corresponding eigenvalue
is the number of subspaces which $m$-covers $z = 0$,
which is equal to $q^{m-1} = q^{\kappa(m,\widehat{m},\emptyset)}$ by Lemma \ref{lem:adj} (ii).
If $m \in T_\mu$, $m \not\in \lambda$, $\mu \neq \mathbf{1}$
and $\nu \neq \mathbf{0}$,
then
we have
\[
R_mL_m\chi_y = \frac{1}{|P_\mu|}\sum_{y' \in P_\mu} \langle R_mL_m\chi_y, \chi_{y'}\rangle \chi_{y'}.
\]
Let $y' \in P_\mu$.
Since $L_m$ and $R_m$ are (conjugate-)transposes of one another, 
we have
\begin{align*}
\langle R_mL_m\chi_y, \chi_{y'}\rangle &= \langle L_m\chi_y, L_m\chi_{y'}\rangle \\
&= \sum_{z \in P_\nu} L_m\chi_y(z) \overline{L_m\chi_{y'}(z)}.
\end{align*}
Let $Y, Y' \in \mathcal{M}_\mu(\mathbb{F}_q)$ and $Z \in \mathcal{M}_{\nu}(\mathbb{F}_q)$
be the matrix forms of $y, y', z$, respectively in Definition \ref{matrixform}.
Then by Lemma \ref{lem:Lmchiy}, it becomes
\begin{align*}
\sum_{z \in P_\nu} L_m\chi_y(z) \overline{L_m\chi_{y'}(z)} = q^{2|S_\mu(m-1)|}
\sum\chi\left(\sum_{s \in S_\mu}\sum_{t \in T_\nu} \left(Y_{s,t} - Y'_{s,t}\right)Z_{s,t}\right),
\end{align*}
where the sum is taken over all $Z \in \mathcal{M}_\nu(\mathbb{F}_q)$
such that
\begin{align}
\sum_{t \in T_\nu} Y_{s,t}Z_{m,t} = Y_{s,m},
&&
\sum_{t \in T_\nu} Y'_{s,t}Z_{m,t} = Y'_{s,m}
\label{equations}
\end{align}
for all $s \in S_\mu$ with $s < m$.
Then, since
$\mathrm{Supp}(Z) \subseteq B_\nu$,
by the orthogonality of the character $\chi$ and the trivial character,
the sum vanishes unless 
$Y_{s,t} = Y'_{s,t}$
for all $s \in S_\mu$ and $t \in T_\nu$ with $s < t$,
which by \eqref{equations} and Lemma \ref{numberofZ}
implies $Y = Y'$ and so $y = y'$.
In particular, $\chi_y$ is an eigenvector of $R_mL_m$.
Moreover, using Lemma \ref{numberofZ}
and $|P_\mu| = q^{|B_\mu|}$,
we can easily show that the corresponding eigenvalues is 
$q^{\kappa(m,\mu,\lambda)}$.
\end{proof}

\begin{lem}\label{lem:LmRmchiy}
Let $1 \le m \le N$,
and let $\mu \in \lbrace 0,1 \rbrace^N$ and $\lambda \subseteq \lbrace 1, 2, \ldots, N \rbrace$ satisfy (ii) in Lemma \ref{LSmuLTmu}.
Recall $\kappa(m,\mu,\lambda)$ from \eqref{kappa}.
Then for $v \in E_\mu^* E_\lambda V$,
we have the following:
\[
L_mR_m v = 
\begin{cases}
q^{\kappa(m,\mu,\lambda)} v & \text{if $m \in S_\mu$ and $m \not\in \lambda$},\\
0 & \text{if $m \in T_\mu$ or $m \in \lambda$}.
\end{cases}
\]
\end{lem}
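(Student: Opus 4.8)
The plan is to obtain Lemma \ref{lem:LmRmchiy} from Lemma \ref{lem:RmLmchiy} by a duality argument, exploiting the symmetry between the flag $\lbrace x_i \rbrace_{i=0}^N$ and the ``dual flag'' obtained by reversing the roles of $0$'s and $1$'s. Concretely, there is an order-reversing bijection $y \mapsto y^\perp$ on $P$ (with respect to a suitable nondegenerate bilinear form adapted to the flag, or equivalently $y \mapsto$ the annihilator in the dual space with the transposed flag) which sends $P_\mu$ to $P_{\overline\mu}$, where $\overline\mu = \mathbf{1} - \mu = (1-\mu_1, \ldots, 1-\mu_N)$, and which interchanges the $m$-covering relations ``from below'' and ``from above''. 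Under the induced map on $\mathrm{Mat}_P(\mathbb{C})$, the matrix $L_m$ is carried to $R_m$ and $R_m$ to $L_m$, while $E_\mu^*$ is carried to $E_{\overline\mu}^*$; one also checks that the type is preserved, so $E_\lambda$ is carried to $E_\lambda$ (the relevant symmetry in Lemma \ref{LSmuLTmu} is exactly the exchange $S_\mu \leftrightarrow T_\mu$ combined with reversing the linear order on $\lbrace 1, \ldots, N\rbrace$, which preserves rook placements and their types).

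First I would set up this involution carefully and record how it acts on $L_m$, $R_m$, $E_\mu^*$, and $V_\lambda$; the key points are that $m \in T_\mu \iff m \in S_{\overline\mu}$ and that $\kappa(m,\mu,\lambda) = \kappa(m,\overline\mu,\lambda')$ for the correspondingly transformed data, which is a direct computation from \eqref{kappa} since $S_\mu(m-1) \setminus \lambda$ and $T_{\overline\mu}$-counts match up after the reversal. Then, applying Lemma \ref{lem:RmLmchiy} to $\overline\mu$ and transporting back along the involution converts the statement ``$R_mL_m = q^{\kappa(m,\overline\mu,\cdot)}$ on $E_{\overline\mu}^*E_\lambda V$ when $m \in T_{\overline\mu}$, $m \notin \lambda$'' into precisely ``$L_mR_m = q^{\kappa(m,\mu,\lambda)}$ on $E_\mu^* E_\lambda V$ when $m \in S_\mu$, $m \notin \lambda$,'' and similarly for the vanishing case.

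Alternatively, and perhaps more self-containedly, I would mirror the proof of Lemma \ref{lem:RmLmchiy} line by line: write $L_mR_m$ as acting on $E_\mu^* V$ via Lemma \ref{lem:actLmRm}(ii), fix $y \in P_\mu$ of type $\lambda$, dispose of the degenerate cases ($m \in T_\mu$ or $m \in \lambda$ giving $R_m\chi_y = 0$ by Lemma \ref{Rmchiy0}; $\mu = \mathbf{0}$; $\mu + \widehat{m} = \mathbf{1}$) using Lemmas \ref{lem:adj}, \ref{Rmchiy0}, and then in the generic case expand $\langle L_mR_m\chi_y, \chi_{y'}\rangle = \langle R_m\chi_y, R_m\chi_{y'}\rangle$ using Lemma \ref{lem:Rmchiz}, and invoke Lemma \ref{numberofY} together with $|P_\mu| = q^{|B_\mu|}$ and the orthogonality of $\chi$ to conclude $\chi_y$ is an eigenvector with eigenvalue $q^{\kappa(m,\mu,\lambda)}$. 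The only subtlety is bookkeeping: one must check that the exponent that comes out of counting the relevant $Y' \in \mathcal{M}_\mu(\mathbb{F}_q)$ via Lemma \ref{numberofY} is exactly $\kappa(m,\mu,\lambda) = |S_\mu(m-1)\setminus\lambda| + |T_\mu(m+1)\setminus\lambda| + |\lambda|/2$, the point being that now $m \in S_\mu$ so the row index $m$ plays the role the column index $m$ played before.

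The main obstacle, in either approach, is verifying that $E_\lambda$ (the projection onto $V_\lambda$) is genuinely respected: in the duality approach this is the claim that the type, which is defined via ranks of submatrices of the matrix form, transforms correctly under $y \mapsto y^\perp$, which requires relating the matrix form of $y^\perp$ to (a suitable transpose/antitranspose of) that of $y$ and checking Definition \ref{defi:sigmaM} is invariant; in the direct approach it is the honest-but-tedious recount confirming the eigenvalue is $q^{\kappa(m,\mu,\lambda)}$ with the $S$/$T$ roles swapped. I would therefore favor the duality argument but make the transformation of matrix forms and types fully explicit, as that is the step most prone to sign/ordering errors.
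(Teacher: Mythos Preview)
Your second approach---mirroring the proof of Lemma~\ref{lem:RmLmchiy} line by line, using Lemmas~\ref{lem:Rmchiz}, \ref{numberofY}, \ref{Rmchiy0}, and \ref{lem:adj} in place of their counterparts---is exactly what the paper does; its proof reads in full ``Similar to the proof of Lemma~\ref{lem:RmLmchiy}.'' Your outline of that mirror argument is accurate, including the identification of the eigenvalue via the exponent count from Lemma~\ref{numberofY}.

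Your duality approach via $y \mapsto y^\perp$ is a genuinely different route. It is conceptually appealing because it explains \emph{why} the two lemmas are symmetric rather than just re-running the computation, and once set up it would give Lemmas~\ref{lem:Rmchiz}, \ref{numberofY}, \ref{Rmchiy0}, and \ref{lem:old} all for free from their counterparts. However, as you correctly flag, the cost is front-loaded: you must verify that the involution carries matrix forms to (anti)transposed matrix forms and hence preserves the rook placement $\sigma(M)$ and the type, so that $E_\lambda$ is respected. This is true but requires choosing the bilinear form compatibly with the fixed basis and tracking the reversal of the linear order on $\{1,\ldots,N\}$ through Definitions~\ref{defi:sigmaM} and~\ref{sigmatype}; the paper avoids this entirely by simply redoing the calculation. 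Given that all the mirrored ingredients (Lemmas~\ref{lem:Rmchiz}, \ref{numberofY}, \ref{Rmchiy0}) are already stated and proved in the paper, the direct approach is shorter here.
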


\begin{proof}
Similar to the proof of Lemma \ref{lem:RmLmchiy}.
\end{proof}

\begin{prop}\label{prop:Eilambda}
For $\lambda \subseteq \lbrace 1, 2, \ldots, N \rbrace$ with even cardinality,
the matrix $E_\lambda$ belongs to the algebra $\mathcal{H}$
in Definition \ref{H}.
\end{prop}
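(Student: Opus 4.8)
The plan is to realize $E_\lambda$ explicitly inside $\mathcal{H}$ using the eigenvalue computations of Lemmas \ref{lem:RmLmchiy} and \ref{lem:LmRmchiy}. Since $\mathcal{K} \subseteq \mathcal{H}$ by Definition \ref{H}, every $E_\mu^*$ and the identity $I = \sum_{\mu \in \lbrace 0,1 \rbrace^N} E_\mu^*$ already lie in $\mathcal{H}$. For each $1 \le m \le N$ I would introduce $A_m := L_m R_m + R_m L_m \in \mathcal{H}$; by Lemma \ref{lem:actLmRm}(i) both $L_m R_m$ and $R_m L_m$ commute with every $E_\nu^*$, hence so does $A_m$. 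The first step is to combine Lemmas \ref{lem:RmLmchiy} and \ref{lem:LmRmchiy}: on each nonzero subspace $E_\mu^* E_\lambda V$ — where, by Lemma \ref{lem:EmuElambda}, the hypotheses of those two lemmas are met — exactly one of $m \in S_\mu$, $m \in T_\mu$ holds, so precisely one of $L_m R_m$, $R_m L_m$ contributes the scalar $q^{\kappa(m,\mu,\lambda)}$ (from \eqref{kappa}) while the other vanishes; hence $A_m$ acts on $E_\mu^* E_\lambda V$ as $q^{\kappa(m,\mu,\lambda)}$ if $m \notin \lambda$ and as $0$ if $m \in \lambda$.

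Next I would use that $\lbrace E_\mu^* E_\lambda \rbrace$, over $\mu \in \lbrace 0,1 \rbrace^N$ and $\lambda$ of even cardinality, is a family of pairwise orthogonal idempotents summing to $I$, so $V = \bigoplus_{\mu,\lambda} E_\mu^* E_\lambda V$, and each $A_m$ is scalar on every summand. Consequently $A_1,\dots,A_N$ pairwise commute and are simultaneously diagonalizable. The crucial point is that $q \ge 2$ and $\kappa(m,\mu,\lambda) \ge 0$, so $q^{\kappa(m,\mu,\lambda)} \ne 0$; therefore the kernel of $A_m$ on $V$ is exactly $\bigoplus_{\lambda \ni m} E_\lambda V$. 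I would then let $Z_m$ be the spectral projection of $A_m$ onto its $0$-eigenspace: by Lagrange interpolation over the finitely many distinct eigenvalues of $A_m$, $Z_m$ is a polynomial in $A_m$, so $Z_m \in \mathcal{H}$, and one reads off $Z_m = \sum_{\lambda \ni m} E_\lambda$ and $I - Z_m = \sum_{\lambda \not\ni m} E_\lambda$.

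Finally, since the $Z_m$ are polynomials in the commuting $A_m$ they commute with one another, and using $E_{\lambda'} E_{\lambda''} = \delta_{\lambda',\lambda''} E_{\lambda'}$ a one-line computation gives $\prod_{m \in \lambda} Z_m = \sum_{\lambda' \supseteq \lambda} E_{\lambda'}$ and $\prod_{m \notin \lambda} (I - Z_m) = \sum_{\lambda'' \subseteq \lambda} E_{\lambda''}$, whose product is $E_\lambda$. Since each factor lies in $\mathcal{H}$, so does $E_\lambda$. The only point requiring real care is the zero/nonzero dichotomy for the eigenvalues of $A_m$ — that $A_m$ annihilates precisely the $E_\lambda V$ with $m \in \lambda$ — which is what lets the single operator $A_m$ detect membership of $m$ in $\lambda$; it follows at once from the two lemmas once one notes $\kappa(m,\mu,\lambda) \ge 0$. (One could instead argue $\mu$ by $\mu$, showing each $E_\mu^* E_\lambda \in \mathcal{H}$ and summing over $\mu$, but the global argument is cleaner.) I do not anticipate any serious obstacle.
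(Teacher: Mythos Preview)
Your argument is correct and rests on the same key observation as the paper's proof --- that $A_m := L_mR_m + R_mL_m$ acts as $q^{\kappa(m,\mu,\lambda)}$ on $E_\mu^*E_\lambda V$ when $m\notin\lambda$ and as $0$ when $m\in\lambda$, so that the vanishing of $A_m$ detects membership of $m$ in $\lambda$ --- but the extraction of $E_\lambda$ from this data is organized differently. The paper fixes $\mu$, takes products $\prod_{m\in\lambda'} E_\mu^*A_m$ for various complements $\lambda'$, and then runs an induction on $|\lambda|$ to solve a triangular system for the $E_\mu^*E_\lambda$, summing over $\mu$ at the end; this is precisely the ``$\mu$ by $\mu$'' alternative you mention in passing. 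Your route instead passes directly to the spectral idempotent $Z_m$ of $A_m$ for the eigenvalue $0$ (a polynomial in $A_m$ by Lagrange interpolation), identifies $Z_m = \sum_{\lambda\ni m}E_\lambda$, and reads off $E_\lambda = \prod_{m\in\lambda}Z_m\cdot\prod_{m\notin\lambda}(I-Z_m)$. Your approach is a bit slicker --- it avoids the induction and the $\mu$-localization --- while the paper's version makes the dependence on the individual $E_\mu^*E_\lambda$ more explicit, which it later exploits (e.g.\ in the remark following the proposition about independence from the choice of basis and character). Either way the only substantive input is the zero/nonzero dichotomy, which as you note follows from $\kappa(m,\mu,\lambda)\ge 0$.
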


\begin{proof}
Referring to \eqref{kappa},
we set
\[
\theta(m, \mu, \lambda)
=
\begin{cases}
q^{\kappa(m,\mu,\lambda)}
& \text{if $m \not\in \lambda$}, \\
0 & \text{if $m \in \lambda$}
\end{cases}
\]
for $1 \le m \le N$, $\mu \in \lbrace 0,1 \rbrace^N$ and $\lambda \subseteq \lbrace 1, 2, \ldots, N \rbrace$ 
satisfying (ii) in Lemma \ref{LSmuLTmu}.
Then by Lemmas \ref{lem:RmLmchiy} and \ref{lem:LmRmchiy},
we have
\[
R_mL_m + L_mR_m
= \sum_{\mu,\lambda}
\theta(m,\mu,\lambda) E_\mu^* E_\lambda,
\]
where the sum
is taken over
all pairs $\mu \in \lbrace 0,1 \rbrace^N$ and $\lambda \subseteq \lbrace 1, 2, \ldots, N \rbrace$ 
satisfying (ii) in Lemma \ref{LSmuLTmu}.
Pick $\mu \in \lbrace 0,1 \rbrace^N$ and
multiply each term on the left of the above equation,
by $E_\mu^*$.
Then we obtain
\[
E_\mu^*R_mL_m + E_\mu^*L_mR_m
= \sum_{\lambda}
\theta(m,\mu,\lambda) E_\mu^*E_{\lambda},
\]
where the sum
is taken over
$\lambda \subseteq \lbrace 1, 2, \ldots, N \rbrace$ 
satisfying (ii) in Lemma \ref{LSmuLTmu}.
For a subset $\lambda' \subseteq \lbrace 1, 2, \ldots, N \rbrace$, since $E_\mu^*, E_\lambda$ are mutually commutative and they are idempotents, we have
\begin{equation}
\prod_{m \in \lambda'}
\left(E_\mu^*R_mL_m + E_\mu^*L_mR_m\right)
= \sum_{\lambda}
\left(
\prod_{m \in \lambda'}
\theta(m,\mu,\lambda)\right) E_\mu^*E_{\lambda},
\label{eq:lambda'theta}
\end{equation}
where the sum
is taken over
$\lambda \subseteq \lbrace 1, 2, \ldots, N \rbrace$ 
satisfying (ii) in Lemma \ref{LSmuLTmu}.
Observe that the coefficient $\prod_{m \in \lambda'}
\theta(m,\mu,\lambda)$ vanishes if and only if $\lambda \cap \lambda' \neq \emptyset$.

We show that each $E_\mu^*E_{\lambda}$ is a polynomial in $E_\mu^*R_mL_m + E_\mu^*L_mR_m$ $(1 \le m \le N)$ by induction on $|\lambda|$.
If we apply $\lambda' = \lbrace 1, 2, \ldots, N \rbrace$ to the equation \eqref{eq:lambda'theta}, then
the right-hand side becomes a nonzero scalar multiple of $E_\mu^*E_{\emptyset}$.
This means that $E_\mu^*E_{\emptyset}$ is a a polynomial in $E_\mu^*R_mL_m + E_\mu^*L_mR_m$ $(1 \le m \le N)$.
Suppose each $E_\mu^*E_{\lambda''}$ is a polynomial in $E_\mu^*R_mL_m + E_\mu^*L_mR_m$ $(1 \le m \le N)$
for all $|\lambda''| < k$.
Then for $\lambda \subseteq \lbrace 1, 2, \ldots, N \rbrace$ 
with $|\lambda| = k$
satisfying (ii) in Lemma \ref{LSmuLTmu},
we apply $\lambda' = \lbrace 1, 2, \ldots, N \rbrace \setminus \lambda$ to the equation \eqref{eq:lambda'theta}.
The right-hand side is a nonzero scalar multiple of $E_\mu^*E_{\lambda}$ plus
a linear combination of $E_\mu^*E_{\lambda''}$ with $|\lambda''| < k$,
which is a polynomial in $E_\mu^*R_mL_m + E_\mu^*L_mR_m$ $(1 \le m \le N)$ by inductive hypothesis.
This means $E_\mu^*E_{\lambda}$ is also a polynomial in $E_\mu^*R_mL_m + E_\mu^*L_mR_m$ $(1 \le m \le N)$.
Therefore 
each $E_\mu^*E_{\lambda}$ is a polynomial in $E_\mu^*R_mL_m + E_\mu^*L_mR_m$ $(1 \le m \le N)$.
Observe that
for $\lambda \subseteq \lbrace 1, 2, \ldots, N \rbrace$ with even cardinality,
we have
\[
E_\lambda = \sum_\mu E_\mu^*E_{\lambda}
\]
where the sum is taken over all 
$\mu \in \lbrace 0,1 \rbrace^N$
such that the pair $(\lambda \cap S_\mu, \lambda \cap T_\mu)$ satisfies (i), (ii) in Lemma \ref{PIcond}.
Then the result follows.
\end{proof}

We remark that the above proof of Proposition \ref{prop:Eilambda}
also shows that the matrices $E_\lambda$ are independent of the basis $v_1, v_2, \ldots, v_N$ for $H$
adapted to the flag $\lbrace x_i \rbrace_{i=0}^N$
and the nontrivial character $\chi$ of the additive group $\mathbb{F}_q$.

\begin{lem}\label{lem:ker}
Let $V_{\mathrm{new}}$ denote the set of all $v \in V$ such that
$L_mv = 0$ for all $1 \le m \le N$.
Then
we have
\begin{align*}
V_{\mathrm{new}} = 
\sum_{\mu, \lambda} E_\mu^*E_\lambda V
&&\text{(direct sum)},
\end{align*}
where the sum is taken over all pairs $(\mu, \lambda)$
with $\mu \in \lbrace 0,1 \rbrace^N$ and $\lambda \subseteq \lbrace 1, 2, \ldots, N \rbrace$ 
satisfying (ii) in Lemma \ref{LSmuLTmu}
such that $\lambda$ is column-full with respect to $\mu$ in Definition \ref{columnfull}.
\end{lem}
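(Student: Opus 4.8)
The plan is to run everything through the direct sum decomposition of $V$ into the joint eigenspaces $E_\mu^*E_\lambda V$ and the scalar action of $R_mL_m$ on each of them recorded in Lemma~\ref{lem:RmLmchiy}. First I would fix the decomposition to work with. By Lemma~\ref{lem:basis} the vectors $\chi_y$ with $y\in P_\mu$ form an orthogonal basis of $E_\mu^*V$, and sorting them by type (Definition~\ref{type}) shows $E_\mu^*V=\bigoplus_\lambda E_\mu^*E_\lambda V$, where $\lambda$ runs over the subsets for which $(\lambda\cap S_\mu,\lambda\cap T_\mu)$ satisfies (i),(ii) of Lemma~\ref{PIcond} --- equivalently $E_\mu^*E_\lambda\ne O$, by Lemma~\ref{lem:EmuElambda} --- and for which $E_\mu^*E_\lambda V$ is spanned by $\{\chi_y: y\in P_\mu,\ \mathrm{type}(y)=\lambda\}$; summing over $\mu$ gives $V=\bigoplus_{(\mu,\lambda)}E_\mu^*E_\lambda V$. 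I would also record the elementary translation that, since $\{1,\dots,N\}=S_\mu\sqcup T_\mu$, $\lambda$ is column-full with respect to $\mu$ (Definition~\ref{columnfull}) if and only if $T_\mu\subseteq\lambda$, i.e.\ if and only if for every $m$ one has $m\in S_\mu$ or $m\in\lambda$.

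For the inclusion $\supseteq$, take a column-full pair $(\mu,\lambda)$ and $1\le m\le N$. By the translation just made, $m\in S_\mu$ or $m\in\lambda$, so Lemma~\ref{Lmchiy0} gives $L_m\chi_y=0$ for every $y\in P_\mu$ of type $\lambda$; since these $\chi_y$ span $E_\mu^*E_\lambda V$, the matrix $L_m$ annihilates $E_\mu^*E_\lambda V$. This holds for all $m$, so $E_\mu^*E_\lambda V\subseteq V_{\mathrm{new}}$, and summing over the column-full pairs yields one inclusion.

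For the reverse inclusion, take $v\in V_{\mathrm{new}}$ and expand $v=\sum_{(\mu,\lambda)}v_{\mu,\lambda}$ with $v_{\mu,\lambda}=E_\mu^*E_\lambda v$. Fix a pair $(\mu_0,\lambda_0)$ appearing in the decomposition that is \emph{not} column-full, and pick $m_0\in T_{\mu_0}\setminus\lambda_0$. By Lemma~\ref{lem:RmLmchiy} the operator $R_{m_0}L_{m_0}$ acts on each summand $E_\mu^*E_\lambda V$ as a scalar ($q^{\kappa(m_0,\mu,\lambda)}$ or $0$), hence respects the decomposition $V=\bigoplus E_\mu^*E_\lambda V$, and on $E_{\mu_0}^*E_{\lambda_0}V$ it acts as $q^{\kappa(m_0,\mu_0,\lambda_0)}\ne0$ because $m_0\in T_{\mu_0}$ and $m_0\notin\lambda_0$. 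Since $L_{m_0}v=0$ forces $R_{m_0}L_{m_0}v=0$, comparing $(\mu_0,\lambda_0)$-components gives $q^{\kappa(m_0,\mu_0,\lambda_0)}v_{\mu_0,\lambda_0}=0$, so $v_{\mu_0,\lambda_0}=0$. As $(\mu_0,\lambda_0)$ ranges over all non-column-full pairs in the decomposition, $v$ lies in the sum of the $E_\mu^*E_\lambda V$ over column-full pairs; this is the opposite inclusion, and the sum is direct as a subsum of $V=\bigoplus_{(\mu,\lambda)}E_\mu^*E_\lambda V$.

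The argument involves no new computation --- the two facts it rests on, that the $E_\mu^*E_\lambda V$ genuinely split $V$ and that each $R_mL_m$ is block-diagonal (indeed scalar) on this splitting, are already in hand from Lemmas~\ref{lem:basis} and~\ref{lem:RmLmchiy}. The only place demanding a little care is the reverse inclusion, where one must be entitled to read off the $(\mu_0,\lambda_0)$-component of the identity $R_{m_0}L_{m_0}v=0$, i.e.\ know that $R_{m_0}L_{m_0}$ preserves each summand; that is exactly what the scalar action in Lemma~\ref{lem:RmLmchiy} provides.
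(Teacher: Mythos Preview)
Your proof is correct and follows essentially the same route as the paper: both use the decomposition $V=\bigoplus_{\mu,\lambda}E_\mu^*E_\lambda V$, invoke Lemma~\ref{Lmchiy0} for the inclusion $\supseteq$, and for the reverse direction pick $m_0\in T_{\mu_0}\setminus\lambda_0$ and use the nonzero scalar action of $R_{m_0}L_{m_0}$ from Lemma~\ref{lem:RmLmchiy}. If anything you are more explicit than the paper at the one delicate point --- the paper simply says ``by the fact that $V$ is the direct sum of $E_\mu^*E_\lambda V$, the result follows,'' whereas you spell out that $R_{m_0}L_{m_0}$ is block-diagonal on the decomposition so that one may legitimately project the equation $R_{m_0}L_{m_0}v=0$ onto the $(\mu_0,\lambda_0)$-component.
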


\begin{proof}
Take $\mu \in \lbrace 0,1 \rbrace^N$ and $\lambda \subseteq \lbrace 1, 2, \ldots, N \rbrace$ 
satisfying (ii) in Lemma \ref{LSmuLTmu}.
Observe that
the following are equivalent:
\begin{enumerate}
\item for $1 \le m \le N$, we have either $m \in S_\mu$ or $m \in \lambda$;
\item $\lambda$ is column-full with respect to $\mu$.
\end{enumerate}
Then by Lemma \ref{Lmchiy0}, 
if $\lambda$ is column-full with respect to $\mu$,
we have $E_\mu^*E_\lambda V \subseteq V_{\mathrm{new}}$.
Suppose $\lambda$ is not column-full with respect to $\mu$.
Then
there exists 
$1 \le m \le N$ such that $m \in T_\mu$ and $m \not\in \lambda$.
By Lemma \ref{lem:RmLmchiy},
for any $v \in E_\mu^*E_\lambda V$,
$R_mL_mv$ is a nonzero scalar multiple of $v$. In particular,
$L_mv \neq 0$ and so $v \not\in V_{\mathrm{new}}$.
By above comments and by the fact that $V$ is the direct sum of $E_\mu^*E_\lambda V$, the result follows.
\end{proof}

Recall the column-full property in Definition \ref{columnfull}.
For $\mu \in \lbrace 0,1 \rbrace^N$ and $\lambda \subseteq \lbrace 1, 2, \ldots, N \rbrace$ satisfying (ii) in Lemma \ref{LSmuLTmu},
we say $\lambda$ is \emph{row-full} with respect to $\mu$
if $S_\mu \subseteq \lambda$.

\begin{lem}\label{lem:old}
Let $V_{\mathrm{old}}$ denote the set of all $v \in V$ such that
$R_mv = 0$ for all $1 \le m \le N$.
Then
we have
\begin{align*}
V_{\mathrm{old}} = 
\sum_{\mu, \lambda} E_\mu^*E_\lambda V
&&\text{(direct sum)},
\end{align*}
where the sum is taken over all pairs $(\mu, \lambda)$
with $\mu \in \lbrace 0,1 \rbrace^N$ and $\lambda \subseteq \lbrace 1, 2, \ldots, N \rbrace$ 
satisfying (ii) in Lemma \ref{LSmuLTmu}
such that $\lambda$ is row-full with respect to $\mu$.
\end{lem}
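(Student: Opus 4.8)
The plan is to mirror the proof of Lemma~\ref{lem:ker} essentially verbatim, interchanging the roles of $L_m$ and $R_m$, replacing the column-full condition by the row-full condition, and invoking Lemmas~\ref{Rmchiy0} and \ref{lem:LmRmchiy} in place of Lemmas~\ref{Lmchiy0} and \ref{lem:RmLmchiy}. The backbone of the argument is the orthogonal decomposition $V = \sum_{\mu,\lambda} E_\mu^*E_\lambda V$ (direct sum), where $\mu$ runs over $\lbrace 0,1\rbrace^N$ and $\lambda$ over subsets of $\lbrace 1,\ldots,N\rbrace$ for which $(\lambda\cap S_\mu,\lambda\cap T_\mu)$ satisfies (i),(ii) in Lemma~\ref{PIcond}; this follows from Lemma~\ref{lem:basis}, Lemma~\ref{cardeven}, the definition of $E_\lambda$, and Lemma~\ref{lem:EmuElambda}. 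Since each $R_m$ maps $E_\mu^*E_\lambda V$ into $E_{\mu+\widehat{m}}^*V$ by Lemma~\ref{lem:actLmRm}(ii) and commutes with every $E_\lambda$, the subspace $V_{\mathrm{old}}=\bigcap_m \ker R_m$ is itself a sum of some of the summands $E_\mu^*E_\lambda V$, so it suffices to decide, for each admissible pair $(\mu,\lambda)$, whether $E_\mu^*E_\lambda V \subseteq V_{\mathrm{old}}$.

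First I would record the elementary equivalence: for $\mu\in\lbrace 0,1\rbrace^N$ and $\lambda$ satisfying (ii) in Lemma~\ref{LSmuLTmu}, the statement ``$m\in T_\mu$ or $m\in\lambda$ for every $1\le m\le N$'' is the same as $S_\mu\subseteq\lambda$, i.e.\ $\lambda$ is row-full with respect to $\mu$; this is immediate because $\lbrace 1,\ldots,N\rbrace$ is the disjoint union of $S_\mu$ and $T_\mu$. For the inclusion direction, if $\lambda$ is row-full with respect to $\mu$, then by this equivalence together with Lemma~\ref{Rmchiy0} we get $R_m\chi_z = 0$ for every $z\in P_\mu$ of type $\lambda$ and every $1\le m\le N$; since such $\chi_z$ span $E_\mu^*E_\lambda V$ (Lemma~\ref{lem:basis}, Definition~\ref{type}), this yields $E_\mu^*E_\lambda V\subseteq V_{\mathrm{old}}$. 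For the converse, if $\lambda$ is not row-full with respect to $\mu$, pick $m\in S_\mu$ with $m\notin\lambda$; then Lemma~\ref{lem:LmRmchiy} gives $L_mR_m v = q^{\kappa(m,\mu,\lambda)}v$ for all $v\in E_\mu^*E_\lambda V$, a nonzero scalar multiple of $v$, so $R_m v\neq 0$ whenever $v\neq 0$ and hence $E_\mu^*E_\lambda V\cap V_{\mathrm{old}} = 0$. Combining the two directions with the direct sum decomposition produces exactly the asserted description of $V_{\mathrm{old}}$.

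I do not expect a genuine obstacle here; the only points requiring a little care are verifying that $V_{\mathrm{old}}$ really is a union of the summands $E_\mu^*E_\lambda V$ (which is where compatibility of $R_m$ with the decomposition and the commutativity of $E_\mu^*$ with $E_\lambda$ enter), and checking that the degenerate locations $\mu=\mathbf{0},\mathbf{1}$ (where $\lambda=\emptyset$ and the Ferrers board is empty) are handled — they are, since Lemmas~\ref{Rmchiy0} and \ref{lem:LmRmchiy} are stated to include these cases. Thus the proof will simply read ``Similar to the proof of Lemma~\ref{lem:ker}'' once these substitutions are spelled out.
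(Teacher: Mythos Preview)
Your proposal is correct and follows essentially the same approach as the paper, which simply writes ``Similar to the proof of Lemma~\ref{lem:ker}.'' One small remark: the claim that $R_m$ commutes with every $E_\lambda$ is not established in the paper and is in any case unnecessary, since you go on to prove the dichotomy (either $E_\mu^*E_\lambda V\subseteq V_{\mathrm{old}}$ or $E_\mu^*E_\lambda V\cap V_{\mathrm{old}}=0$) directly, which together with the direct sum decomposition of $V$ is all that is needed.
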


\begin{proof}
Similar to the proof of Lemma \ref{lem:ker}.
\end{proof}

\section{The scalar $\kappa(m,\mu,\lambda)$}\label{sec:kappa}

In this section, we discuss on the scalar $\kappa(m,\mu,\lambda)$ in \eqref{kappa}.

\begin{lem}\label{lem:kappa01}
Let $\mu \in \lbrace 0,1 \rbrace^N$ and $\lambda \subseteq \lbrace 1, 2, \ldots, N \rbrace$ 
satisfy (ii) in Lemma \ref{LSmuLTmu}.
Referring to \eqref{kappa},
we have the following.
\[
\sum_{m} (-1)^{\mu_m}\kappa(m,\mu,\lambda)
=
\frac{(N-1)(N - 2|\mu|)}{2},
\]
where the sum is taken over all $1 \le m \le N$ with $m \not\in \lambda$.
\end{lem}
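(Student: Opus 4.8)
The plan is to strip the weighted sum down to an elementary identity about $\pm1$-sequences. First I would extract the one structural fact I need: since $\lambda$ satisfies (ii) in Lemma~\ref{LSmuLTmu}, the pair $(\lambda\cap S_\mu,\lambda\cap T_\mu)$ satisfies (i) in Lemma~\ref{PIcond}, so $|\lambda\cap S_\mu|=|\lambda\cap T_\mu|=|\lambda|/2$; combined with $|S_\mu|=N-|\mu|$ and $|T_\mu|=|\mu|$ this gives $|S_\mu\setminus\lambda|-|T_\mu\setminus\lambda|=N-2|\mu|$. Then I would enumerate $\{1,2,\ldots,N\}\setminus\lambda=\{a_1<a_2<\cdots<a_r\}$, where $r=N-|\lambda|$, and set $\epsilon_i=(-1)^{\mu_{a_i}}$, so that $\epsilon_i=1$ iff $a_i\in S_\mu$ and $\epsilon_i=-1$ iff $a_i\in T_\mu$; the displayed equality just obtained reads $\sum_{i=1}^r\epsilon_i=N-2|\mu|$. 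Because the $a_j$ are increasing, for $m=a_i$ one has $|S_\mu(m-1)\setminus\lambda|=|\{\,j<i:\epsilon_j=1\,\}|$ and $|T_\mu(m+1)\setminus\lambda|=|\{\,j>i:\epsilon_j=-1\,\}|$.

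Next I would substitute $[\epsilon_j=1]=\tfrac{1+\epsilon_j}{2}$ and $[\epsilon_j=-1]=\tfrac{1-\epsilon_j}{2}$ and collect terms. The contribution to $\sum_{m\notin\lambda}(-1)^{\mu_m}\kappa(m,\mu,\lambda)$ coming from the first two summands of \eqref{kappa} becomes
\[
\frac12\sum_i\epsilon_i(i-1)+\frac12\sum_{j<i}\epsilon_i\epsilon_j+\frac12\sum_i\epsilon_i(r-i)-\frac12\sum_{i<j}\epsilon_i\epsilon_j,
\]
and the two quadratic sums coincide (both range over the unordered pairs), hence cancel; what survives is $\frac12\sum_i\epsilon_i\big((i-1)+(r-i)\big)=\frac{r-1}{2}\sum_i\epsilon_i=\frac{(N-|\lambda|-1)(N-2|\mu|)}{2}$. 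The remaining $|\lambda|/2$-summand of \eqref{kappa} contributes $\frac{|\lambda|}{2}\sum_i\epsilon_i=\frac{|\lambda|(N-2|\mu|)}{2}$, and adding the two pieces gives $\frac{(N-1)(N-2|\mu|)}{2}$, which is the assertion.

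The computation is short and entirely elementary; the one place demanding care is the bookkeeping of signs after interchanging the order of summation, together with the (easy) observation that $\sum_{j<i}\epsilon_i\epsilon_j$ and $\sum_{i<j}\epsilon_i\epsilon_j$ are literally the same sum and therefore cancel. I would also briefly check the degenerate cases $\mu=\mathbf{0}$ and $\mu=\mathbf{1}$, where necessarily $\lambda=\emptyset$, but these are already subsumed by the general argument, since it never uses that $S_\mu$ or $T_\mu$ is nonempty. I do not anticipate a genuine obstacle.
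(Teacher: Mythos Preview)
Your argument is correct. You encode $\{1,\ldots,N\}\setminus\lambda$ as a $\pm1$-sequence $(\epsilon_i)$, rewrite each count via $[\epsilon_j=\pm1]=\tfrac{1\pm\epsilon_j}{2}$, and then observe that the quadratic cross-terms $\sum_{j<i}\epsilon_i\epsilon_j$ and $\sum_{i<j}\epsilon_i\epsilon_j$ cancel by symmetry, leaving the linear term $\tfrac{r-1}{2}\sum_i\epsilon_i$ plus the $|\lambda|/2$ contribution. Together with the identity $\sum_i\epsilon_i=N-2|\mu|$ this gives the claim in one pass, with no induction.

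The paper proceeds differently: it fixes $\mu$, writes the left-hand side as $F(\lambda)$, and argues by induction on $|\lambda|$. After a preliminary cancellation (the paper observes that $\sum_{s\in S_\mu\setminus\lambda}|T_\mu(s+1)\setminus\lambda|$ and $\sum_{t\in T_\mu\setminus\lambda}|S_\mu(t-1)\setminus\lambda|$ both count pairs $(s,t)$ with $s<t$), it verifies the base case $\lambda=\emptyset$ by summing two triangular numbers, and then shows $F(\lambda)=F(\lambda')$ where $\lambda'=\lambda\setminus\{s,t\}$ with $s=\max(\lambda\cap S_\mu)$, $t=\max(\lambda\cap T_\mu)$. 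Your approach is more self-contained and avoids the inductive bookkeeping; the paper's reduction to $\lambda=\emptyset$ has the minor advantage of foreshadowing the reduction technique used immediately afterward in the proof of Lemma~\ref{lem:mathfrakq}, where a similar ``remove a matched $(0,1)$ pair'' move is the main engine.
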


\begin{proof}
Fix $\mu \in \lbrace 0,1 \rbrace^N$
and
we prove the assertion by induction on the cardinality of $\lambda$.
Let $F(\lambda)$ denote
the left-hand side of the equation.
Observe that
\begin{align*}
F(\lambda) &= \left(
\sum_{s \in S_\mu \setminus \lambda} |S_\mu(s-1) \setminus \lambda|
+
\sum_{s \in S_\mu \setminus \lambda} |T_\mu(s+1) \setminus \lambda|
+
\sum_{s \in S_\mu \setminus \lambda} \frac{|\lambda|}{2}
\right) \\
&\quad-
\left(
\sum_{t \in T_\mu \setminus \lambda} |S_\mu(t-1) \setminus \lambda|
+
\sum_{t \in T_\mu \setminus \lambda} |T_\mu(t+1) \setminus \lambda|
+
\sum_{t \in T_\mu \setminus \lambda} \frac{|\lambda|}{2}
\right).
\end{align*}
Each of the second and fourth sums counts the number of pairs $(s,t) \in S_\mu \times T_\mu$ with $s,t \not\in \lambda$ and $t > s$.
Thus,
the second and fourth terms cancel out,
i.e.,
\[
F(\lambda)=
\left(
\sum_{s \in S_\mu \setminus \lambda} |S_\mu(s-1) \setminus \lambda|
\right)
-
\left(
\sum_{t \in T_\mu \setminus \lambda} |T_\mu(t+1) \setminus \lambda|
\right)
+
\frac{|\lambda|}{2}\left(|S_\mu \setminus \lambda| - |T_\mu \setminus \lambda|\right).
\]

If $\lambda = \emptyset$
then
we have
\[
\sum_{s \in S_\mu} |S_\mu(s-1)|
=
0 + 1 + \cdots + (N-|\mu|-1)
=
\frac{(N-|\mu|)(N-|\mu|-1)}{2},
\]
and
\[
\sum_{t \in T_\mu} |T_\mu(t+1)|
=
0 + 1 + \cdots + (|\mu|-1)
=
\frac{|\mu|(|\mu|-1)}{2}.
\]
Therefore,
we have
\[
F(\emptyset) = 
\frac{(N-|\mu|)(N-|\mu|-1)}{2}
-
\frac{|\mu|(|\mu|-1)}{2}
=
\frac{(N-1)(N - 2|\mu|)}{2}
\]
and the result follows.

If $|\lambda| \ge 1$,
there exist 
$s = \max (\lambda \cap S_\mu)$ and $t = \max (\lambda \cap T_\mu)$
since 
the pair $(\lambda \cap S_\mu, \lambda \cap T_\mu)$ satisfies (i) in Lemma \ref{PIcond}.
Set
$\lambda' = \lambda \setminus \lbrace s,t \rbrace$
and
observe that
$\lambda'$ satisfies (ii) in Lemma \ref{LSmuLTmu} and we have 
\[
\sum_{s' \in S_\mu \setminus \lambda} |S_\mu(s'-1) \setminus \lambda| =
\left(
\sum_{s' \in S_\mu \setminus \lambda'} |S_\mu(s'-1) \setminus \lambda'|
\right)
-|S_\mu \setminus \lambda|,
\]
and
\[
\sum_{t' \in T_\mu \setminus \lambda} |T_\mu(t'+1) \setminus \lambda| =
\left(
\sum_{t' \in T_\mu \setminus \lambda'} |T_\mu(t'+1) \setminus \lambda'|
\right)
-|T_\mu \setminus \lambda|.
\]
Therefore,
since $|\lambda| = |\lambda'|+2$,
we have
\[
F(\lambda) = F(\lambda')
\]
and by the inductive hypothesis, the result follows.
\end{proof}

In the next lemma, we do not assume $q$ to be a prime power.

\begin{lem}\label{lem:mathfrakq}
Let $\mu = (\mu_1, \mu_2, \ldots, \mu_N) \in \lbrace 0,1 \rbrace^N$ and $\lambda \subseteq \lbrace 1, 2, \ldots, N \rbrace$ 
satisfy (ii) in Lemma \ref{LSmuLTmu}.
Referring to \eqref{kappa},
for $q \in \mathbb{C}$ with $q \neq 0,1$,
we have the following.
\[
\sum_{m} (-1)^{\mu_m}q^{\kappa(m,\mu,\lambda)}
=
\frac{q^{N-|\mu|} - q^{|\mu|}}{q-1},
\]
where the sum is taken over all $1 \le m \le N$ with $m \not\in \lambda$.
\end{lem}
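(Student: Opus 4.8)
The plan is to avoid the induction on $|\lambda|$ used for Lemma~\ref{lem:kappa01} and instead reduce the identity to a one‑line telescoping fact about arbitrary $0/1$‑sequences, applied to the subsequence of $\mu$ indexed by the positions outside $\lambda$. This is the multiplicative counterpart of the additive computation already carried out in Lemma~\ref{lem:kappa01}.

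First I would record the elementary identity. For $k\ge 0$ and $\eta=(\eta_1,\ldots,\eta_k)\in\{0,1\}^k$, set $z(j)=|\{\,i : i<j,\ \eta_i=0\,\}|$ and $o(j)=|\{\,i : i>j,\ \eta_i=1\,\}|$. Then, with $a=|\{\,i:\eta_i=0\,\}|$ and $b=|\{\,i:\eta_i=1\,\}|$,
\[
\sum_{j=1}^{k}(-1)^{\eta_j}\,q^{\,z(j)+o(j)}=\frac{q^{a}-q^{b}}{q-1}.
\]
To prove this, put $A_j=q^{\,|\{i\le j:\eta_i=0\}|+|\{i>j:\eta_i=1\}|}$ for $0\le j\le k$, so that $A_0=q^{b}$ and $A_k=q^{a}$. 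A short case check (treating $\eta_j=0$ and $\eta_j=1$ separately) shows that the $j$-th summand on the left equals $(A_j-A_{j-1})/(q-1)$ in both cases; the sum then telescopes to $(A_k-A_0)/(q-1)$.

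Next I would restrict to the complement of $\lambda$. For $m\notin\lambda$, reading off from \eqref{kappa} gives
\[
\kappa(m,\mu,\lambda)-\frac{|\lambda|}{2}=|\{\,s\in S_\mu\setminus\lambda:s<m\,\}|+|\{\,t\in T_\mu\setminus\lambda:t>m\,\}|,
\]
which is exactly the quantity $z(j)+o(j)$ attached to $m$ when $m$ is viewed as the $j$-th entry of the subsequence $\eta:=(\mu_m)_{m\notin\lambda}$; likewise $(-1)^{\mu_m}$ is the sign $(-1)^{\eta_j}$. Hence the left‑hand side of the lemma is $q^{|\lambda|/2}$ times the Step~1 sum for $\eta$. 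For $\eta$ we have $a=|S_\mu\setminus\lambda|$ and $b=|T_\mu\setminus\lambda|$; since $\lambda$ satisfies (ii) in Lemma~\ref{LSmuLTmu}, the pair $(\lambda\cap S_\mu,\lambda\cap T_\mu)$ satisfies in particular (i) in Lemma~\ref{PIcond}, so $|\lambda\cap S_\mu|=|\lambda\cap T_\mu|=|\lambda|/2$ and therefore $a=(N-|\mu|)-|\lambda|/2$, $b=|\mu|-|\lambda|/2$. Substituting into the elementary identity and multiplying by $q^{|\lambda|/2}$ yields
\[
q^{|\lambda|/2}\cdot\frac{q^{(N-|\mu|)-|\lambda|/2}-q^{|\mu|-|\lambda|/2}}{q-1}=\frac{q^{N-|\mu|}-q^{|\mu|}}{q-1},
\]
which is the claim.

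The only point requiring care is the bookkeeping in the restriction step: one must check carefully that the exponent in $\kappa(m,\mu,\lambda)$, after subtracting $|\lambda|/2$, is literally $z(j)+o(j)$ for the restricted sequence, so that the Step~1 identity can be quoted verbatim. One should also dispose of the degenerate cases $\mu=\mathbf{0}$, $\mu=\mathbf{1}$, and $\lambda$ row‑full or column‑full with respect to $\mu$, where $\eta$ becomes short or empty; in all of these the elementary identity still applies directly (the empty sum corresponding to $a=b$). I do not expect any genuine obstacle beyond this.
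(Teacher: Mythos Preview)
Your argument is correct and is genuinely different from the paper's. The paper introduces an auxiliary notion of a ``$\kappa$-sequence'' with respect to $\nu\in\{0,1\}^n$, shows that the sum $f(\nu,\mathfrak{a};q)=\sum_i(-1)^{\nu_i}q^{(-1)^{\nu_i}\mathfrak{a}_i}$ is invariant under deleting an adjacent pair of coordinates with $\nu_{i-1}\neq\nu_i$, iterates this reduction until $\nu$ is constant (or of length $\le 2$), and then identifies the resulting arithmetic progression by appealing to Lemma~\ref{lem:kappa01}. Your route bypasses all of this: after stripping off the common factor $q^{|\lambda|/2}$ you recognise $\kappa(m,\mu,\lambda)-|\lambda|/2$ as the purely combinatorial exponent $z(j)+o(j)$ on the restricted sequence $\eta=(\mu_m)_{m\notin\lambda}$, and then a single telescoping identity $\sum_j(-1)^{\eta_j}q^{z(j)+o(j)}=(q^a-q^b)/(q-1)$ finishes the job. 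Your proof is shorter, self-contained (it does not use Lemma~\ref{lem:kappa01}), and handles all $\mu,\lambda$ uniformly---the degenerate cases you flag at the end require no special treatment, since the telescoping identity is valid for every $\eta\in\{0,1\}^k$ including $k=0$. The paper's reduction machinery is conceptually pleasant and could in principle be reused, but for this particular lemma your direct telescoping is the cleaner argument.
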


\begin{proof}
For notational convenience,
in this proof we use the following notation.
Take $n \in \mathbb{N} \setminus \lbrace 0 \rbrace$.
For $\nu = (\nu_1, \nu_2, \ldots, \nu_n) \in \lbrace 0,1 \rbrace^n$,
a sequence
$\mathfrak{a} = (\mathfrak{a}_1, \mathfrak{a}_2, \ldots, \mathfrak{a}_n) \in \mathbb{Z}^n$
is called a \emph{$\kappa$-sequence with respect to $\nu$}
whenever
it satisfies
\[
\mathfrak{a}_i = \begin{cases}
\mathfrak{a}_{i-1} + 1 & \text{if $\nu_{i-1} = \nu_i$}, \\
-\mathfrak{a}_{i-1} & \text{if $\nu_{i-1} \neq \nu_i$}
\end{cases}
\]
for $2 \le i \le n$.
We call $\nu \in \lbrace 0,1 \rbrace^n$ \emph{reduced}
if $n \le 2$ or $\nu$ is either $\mathbf{0}$ or $\mathbf{1}$.
Let $\mathfrak{a} = (\mathfrak{a}_1, \mathfrak{a}_2, \ldots, \mathfrak{a}_n) \in \mathbb{Z}^n$ be 
a $\kappa$-sequence with respect to a non-reduced $\nu = (\nu_1, \nu_2, \ldots, \nu_n) \in \lbrace 0,1 \rbrace^n$.
Then 
we have $\nu_{i-1} \neq \nu_i$ for some $2 \le i \le n$.
Let $\nu' \in \lbrace 0,1 \rbrace^{n-2}$
be the sequence obtained from $\nu$ by removing 
the coordinates $i-1$ and $i$,
and let $\mathfrak{a}' \in \mathbb{Z}^{n-2}$
denote the sequence obtained from $\mathfrak{a}$ by removing 
the same pair of coordinates.
Then it is easy to show that
the sequence $\mathfrak{a}'$ is again 
a $\kappa$-sequence with respect to $\nu'$.
Moreover,
by continuing this process,
any $\kappa$-sequence reaches a $\kappa$-sequence with respect to a reduced tuple $\nu$.
More precisely,
a $\kappa$-sequence $\mathfrak{a}$ with respect to $\nu \in \lbrace 0,1 \rbrace^n$
becomes
\begin{enumerate}
\item a $\kappa$-sequence of length $2$ with respect to $(0,1)$ or $(1,0)$
if $2|\nu| = n$,
\item a $\kappa$-sequence of length $n - 2|\nu|$ with respect to $\mathbf{0} \in \lbrace 0,1 \rbrace^{n - 2|\nu|}$
if $2|\nu| < n$,
\item a $\kappa$-sequence of length $2|\nu| - n$ with respect to $\mathbf{1} \in \lbrace 0,1 \rbrace^{2|\nu| - n}$
if $2|\nu| > n$.
\end{enumerate}
We call this a \emph{reduced $\kappa$-sequence from $\mathfrak{a}$}.
For a $\kappa$-sequence $\mathfrak{a} = (\mathfrak{a}_1, \mathfrak{a}_2, \ldots, \mathfrak{a}_n) \in \mathbb{Z}^n$
with respect to $\nu = (\nu_1, \nu_2, \ldots, \nu_n) \in \lbrace 0,1 \rbrace^n$,
we define
\[
f(\nu, \mathfrak{a};q) = 
\sum_{i = 1}^n (-1)^{\nu_i}q^{(-1)^{\nu_i}\mathfrak{a}_i}.
\]
Observe that
the value $f(\nu, \mathfrak{a};q)$
is invariant under the reducing process above.
In particular,
if $\mathfrak{a}'$ is a reduced $\kappa$-sequence with respect to $\nu'$ obtained from a $\kappa$-sequence $\mathfrak{a}$ with respect to $\nu$,
then we have
$f(\nu, \mathfrak{a};q) = f(\nu', \mathfrak{a}';q)$.

Set $n = N - |\lambda|$.
Let $\nu = \nu(\mu,\lambda) \in \lbrace 0,1 \rbrace^n$ be 
the sequence obtained from $\mu$ by
removing all the coordinates indexed by $\lambda$.
Consider the sequence
$\mathfrak{a} \in \mathbb{Z}^n$ defined by
\[
\mathfrak{a} = ((-1)^{\mu_m}\kappa(m,\mu,\lambda))_{m \in \lbrace 1,2,\dots,N\rbrace \setminus \lambda},
\]
where the index $m$ increases from left to right. 
For $1 \le m < m' \le N$ with $m,m' \not\in \lambda$,
observe that
\[
\kappa(m,\mu,\lambda) - \kappa(m',\mu,\lambda) 
=
|\lbrace t \in T_\mu \setminus \lambda \mid m < t \le m' \rbrace|
-
|\lbrace s \in S_\mu \setminus \lambda \mid m \le s < m' \rbrace|.
\]
Therefore,
the sequence $\mathfrak{a}$ is a $\kappa$-sequence with respect to $\nu$.
Let $\mathfrak{a}'$ be a reduced $\kappa$-sequence with respect to $\nu'$ from $\mathfrak{a}$.
Then the left-hand side of the desired identity
becomes $f(\nu', \mathfrak{a}';q)$.

We first consider the case $2|\mu| = N$.
Then
we have $|S_\mu| = |T_\mu|$ and
so
$2|\nu| = n$
since the pair $(\lambda \cap S_\mu, \lambda \cap T_\mu)$ satisfies (i) in Lemma \ref{PIcond}.
Thus, $\mathfrak{a}'$ is a $\kappa$-sequence of length $2$ with respect to $(0,1)$ or $(1,0)$
and so
$f(\nu', \mathfrak{a}';q) = 0$ and the result follows.
We next consider the case $2|\mu| < N$.
Then
by the similar argument above,
we have $2|\nu| < n$.
Thus, $\mathfrak{a}'$ is a $\kappa$-sequence of length $n - 2|\nu|= N - 2|\mu|$ with respect to $\mathbf{0} \in \lbrace 0,1 \rbrace^{n - 2|\nu|}$.
By the definition of $\kappa$-sequence,
$\mathfrak{a}'$
is an
arithmetic sequence
with common difference $1$.
We claim that
\[
\mathfrak{a}' =
(|\mu|, |\mu|+1, \ldots, N-|\mu|-1).
\]
To show this,
since it is an arithmetic sequence, 
it suffices to show that
\[
\sum_{a' \in \mathfrak{a}'} a'
=
\frac{(N-1)(N - 2|\mu|)}{2}.
\]
This follows from Lemma \ref{lem:kappa01} since
$\sum_{a' \in \mathfrak{a}'} a' = \sum_{a \in \mathfrak{a}} a$.
For the case $2|\mu| > N$,
the proof is similar to that for the case $2|\mu| < N$.
Hence the result follows.
\end{proof}

\section{The $\mathcal{H}$-modules}\label{sec:Hmod}

Recall from Proposition \ref{semisimple}
that the algebra $\mathcal{H}$ is semisimple.
Thus the standard module $V$ is a direct sum of
irreducible $\mathcal{H}$-modules,
and every irreducible $\mathcal{H}$-module appears in $V$
up to isomorphism.
We now discuss the $\mathcal{H}$-submodules of $V$,
which from now on we call $\mathcal{H}$-modules for short.

\begin{prop}\label{prop:WWv}
Any irreducible $\mathcal{H}$-module
is generated by 
a nonzero vector $v \in V$
such that $L_mv = 0$ for all $1 \le m \le N$.
\end{prop}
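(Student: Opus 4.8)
The plan is to exploit the dimension grading of $V$, which is visible inside $\mathcal{H}$ via the matrices $E_i^\star = \sum_{|\mu| = i} E_\mu^*$, and to locate the required vector in the bottom graded component of the module. Let $W$ be an irreducible $\mathcal{H}$-module; by the convention adopted just above, $W$ is a nonzero $\mathcal{H}$-submodule of $V$ (consistent with the semisimplicity from Proposition \ref{semisimple}). Since each $E_i^\star$ lies in $\mathcal{H}$, since the $E_i^\star$ $(0 \le i \le N)$ are mutually orthogonal idempotents, and since $\sum_{i=0}^N E_i^\star = I$, we obtain the decomposition $W = \bigoplus_{i=0}^N E_i^\star W$. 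Because $W \neq 0$, there is a smallest index $i_0$ with $E_{i_0}^\star W \neq 0$; fix any nonzero vector $v \in E_{i_0}^\star W$.

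Next I would verify that $L_m v = 0$ for every $1 \le m \le N$. Summing the identity $L_m E_\mu^* = E_{\mu - \widehat{m}}^* L_m$ of Lemma \ref{lem:actLmRm}(i) over all $\mu$ with $|\mu| = i$ gives $L_m E_i^\star = E_{i-1}^\star L_m$ (reading $E_{-1}^\star$ as the zero matrix). Hence
\[
L_m v = L_m E_{i_0}^\star v = E_{i_0 - 1}^\star L_m v \in E_{i_0 - 1}^\star W = 0,
\]
where the last equality holds by the minimality of $i_0$ (and trivially when $i_0 = 0$). Thus $v$ is a nonzero vector annihilated by all $L_m$; in the notation of Lemma \ref{lem:ker}, $v \in V_{\mathrm{new}}$.

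Finally, since $I \in \mathcal{H}$, the subspace $\mathcal{H}v$ is a nonzero $\mathcal{H}$-submodule of $W$, so irreducibility forces $\mathcal{H}v = W$; that is, $W$ is generated by $v$. I do not anticipate any serious obstacle here: the only points needing care are the compatibility of the lowering matrices with the dimension grading, which is immediate from Lemma \ref{lem:actLmRm}(i), and the observation that one must pass to the \emph{minimal} graded component, so that the lowering operators --- which strictly decrease the grading --- are forced to annihilate it.
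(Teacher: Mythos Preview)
Your proof is correct but follows a different route from the paper. You use the dimension grading: since the $E_i^\star$ lie in $\mathcal{H}$ and $L_m$ strictly lowers the grade (via Lemma \ref{lem:actLmRm}(i)), any nonzero vector in the minimal graded piece $E_{i_0}^\star W$ is killed by every $L_m$. The paper instead argues iteratively from an arbitrary nonzero $w \in W$: setting $\Phi(w) = \{m : L_m w \neq 0\}$, it replaces $w$ by $L_m w$ for $m = \min \Phi(w)$ and uses the relations $L_m^2 = 0$ and $qL_mL_n = L_nL_m$ ($m<n$) from Proposition \ref{prop:relations1}(i),(ii) to conclude $\Phi(L_m w) \subsetneq \Phi(w)$, reaching a vector with $\Phi = \emptyset$ in finitely many steps. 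Your argument is shorter and uses less machinery (it never needs the commutation relations among the $L_m$); the paper's argument is more constructive, exhibiting the annihilated vector as an explicit word in the $L_m$ applied to any chosen $w$, which foreshadows the dual construction in Proposition \ref{prop:actLR} where words in the $R_m$ build a basis.
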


\begin{proof} 
Set
$\Phi(v) = \lbrace m \mid 1 \le m \le N, L_mv \neq 0\rbrace$
for $v \in V$.
Let $W$ denote an irreducible $\mathcal{H}$-module and take a nonzero vector $w \in W$.
If $\Phi(w) = \emptyset$, then 
$L_mw = 0$ for all $1 \le m \le N$ and by the irreducibility of $W$,
the module
$W$ is generated by $w$ and so the result follows.
Suppose $\Phi(w) \neq \emptyset$.
Let $m = \min \Phi(w)$ and set $w' = L_mw \in W$.
By Proposition \ref{prop:relations1} (i) and (ii), 
we have $\Phi(w') \subsetneq \Phi(w)$.
By continuing this process at most $|\Phi(w)|$ times, 
we get a nonzero vector $v \in W$ such that
$\Phi(v) = \emptyset$.
By the same argument above, 
the assertion holds.
\end{proof}

Recall from Sections \ref{LRactiononV} and \ref{LRRLactiononV}, that there are the matrices $E_\lambda$ in $\mathcal{H}$
and that they turn out to be independent of the basis $v_1, v_2, \ldots, v_N$ for $H$ and the nontrivial character $\chi$ of the additive group $\mathbb{F}_q$.
By Lemma \ref{lem:ker} and Proposition \ref{prop:WWv},
it suffices to consider the module $\mathcal{H}v$ for $v \in \sum_{\mu, \lambda}E_\mu^* E_\lambda V$, 
where the sum is taken over all pairs $(\mu, \lambda)$
with $\mu \in \lbrace 0,1 \rbrace^N$ and $\lambda \subseteq \lbrace 1, 2, \ldots, N \rbrace$ 
satisfying (ii) in Lemma \ref{LSmuLTmu}
such that $\lambda$ is column-full with respect to $\mu$ in Definition \ref{columnfull}.

\begin{prop}\label{prop:actLR}
Let $\mu \in \lbrace 0,1 \rbrace^N$ and $\lambda \subseteq \lbrace 1, 2, \ldots, N \rbrace$ 
satisfy (ii) in Lemma \ref{LSmuLTmu}, and assume that $\lambda$ is column-full with respect to $\mu$ in Definition \ref{columnfull}.
Recall $\kappa(m,\mu,\lambda)$ in \eqref{kappa}.
For a nonzero vector $v \in E_\mu^* E_\lambda V$,
the $\mathcal{H}$-module $\mathcal{H}v$ has a basis
\begin{align}
w(\varepsilon) \in E_{\mu+\varepsilon}^*V,
&&
\varepsilon = (\varepsilon_1, \varepsilon_2, \ldots, \varepsilon_N),
\quad
\varepsilon_m =
\begin{cases}
0 & \text{if $m \in \lambda$}, \\
\text{$0$ or $1$} & \text{if $m \not\in \lambda$},
\end{cases}
\label{basis}
\end{align}
on which the generators $L_m$, $R_m$ $(1 \le m \le N)$
act as follows:
\begin{align*}
L_mw(\varepsilon) &= 
q^{\kappa(m,\mu,\lambda)-(\varepsilon_1+ \cdots + \varepsilon_{m-1})}
w(\varepsilon - \widehat{m}), \\
R_mw(\varepsilon) &= 
q^{\varepsilon_{m+1}+ \cdots + \varepsilon_N}
w(\varepsilon + \widehat{m}),
\end{align*}
where we set $w(\varepsilon) = 0$ if $\varepsilon$ is not of the form in \eqref{basis}.
\end{prop}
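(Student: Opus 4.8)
The plan is to build the asserted basis explicitly from $v$ by applying raising matrices, deduce the two action formulas, and then read off linear independence and the identity $\operatorname{span}\{w(\varepsilon)\}=\mathcal{H}v$. First, some preliminaries. Since $\lambda$ is column-full with respect to $\mu$ we have $T_\mu\subseteq\lambda$, hence $\{1,\dots,N\}\setminus\lambda\subseteq S_\mu$, so $\mu+\varepsilon\in\{0,1\}^N$ for every $\varepsilon$ as in \eqref{basis}; by Lemma \ref{lem:ker}, $E_\mu^*E_\lambda V\subseteq V_{\mathrm{new}}$, so $L_mv=0$ for all $m$; and by Lemma \ref{Rmchiy0}, $R_mv=0$ whenever $m\in\lambda$. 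For an allowed $\varepsilon$ with support $\{m_1<m_2<\cdots<m_k\}$ I set $w(\varepsilon)=q^{-\binom{k}{2}}R_{m_1}R_{m_2}\cdots R_{m_k}v$, so that $w(\mathbf 0)=v$; by Lemma \ref{lem:actLmRm}(ii) we have $w(\varepsilon)\in E_{\mu+\varepsilon}^*V$, and clearly $w(\varepsilon)\in\mathcal{H}v$. Throughout I use the convention $w(\cdot)=0$ outside the range \eqref{basis}, which is exactly what makes the stated formulas uniform (e.g.\ for $\varepsilon_m=0$ the right-hand side $w(\varepsilon-\widehat m)$ is zero, matching $L_mw(\varepsilon)=0$).

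Next I would establish the $R_m$-action. When $\varepsilon_m=0$ and $m\notin\lambda$, moving $R_m$ into its sorted position inside $R_mR_{m_1}\cdots R_{m_k}v$ by repeated use of the $q$-commutation $R_aR_b=qR_bR_a$ ($a<b$) of Proposition \ref{prop:relations1}(iii) produces precisely the factor $q^{\varepsilon_{m+1}+\cdots+\varepsilon_N}$ (the normalization $q^{-\binom{k}{2}}$ was chosen exactly so that $\binom{k+1}{2}-\binom{k}{2}=k$ absorbs the rest), giving $R_mw(\varepsilon)=q^{\varepsilon_{m+1}+\cdots+\varepsilon_N}w(\varepsilon+\widehat m)$. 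When $\varepsilon_m=1$, the same sorting brings two copies of $R_m$ together and $R_m^2=0$ (Proposition \ref{prop:relations1}(i)) yields $R_mw(\varepsilon)=0$; when $m\in\lambda$ (so $m\neq m_i$ for all $i$), $R_m$ commutes with each $R_{m_i}$ up to a power of $q$, so $R_mw(\varepsilon)$ is a scalar multiple of $R_{m_1}\cdots R_{m_k}R_mv=0$. The same ideas give $L_mw(\varepsilon)=0$ when $\varepsilon_m=0$: if $m\in S_\mu$ this is forced by $w(\varepsilon)\in E_{\mu+\varepsilon}^*V$ together with $(\mu+\varepsilon)_m=0$, while if $m\in T_\mu$ then $m\in\lambda$, so $L_m$ commutes with every $R_{m_i}$ by Proposition \ref{prop:relations1}(iv) and $L_mw(\varepsilon)$ is a scalar multiple of $R_{m_1}\cdots R_{m_k}L_mv=0$.

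The heart of the proof is the $L_m$-action when $\varepsilon_m=1$, which I would prove by induction on $|\varepsilon|$. The base case $\varepsilon=\widehat m$ is exactly Lemma \ref{lem:LmRmchiy}: since $m\notin\lambda$ and $m\in S_\mu$ by column-fullness, $L_mw(\widehat m)=L_mR_mv=q^{\kappa(m,\mu,\lambda)}v=q^{\kappa(m,\mu,\lambda)}w(\mathbf 0)$. For $|\varepsilon|\ge 2$, choose $n\neq m$ with $\varepsilon_n=1$ and write $\varepsilon=\varepsilon'+\widehat n$ with $\varepsilon'_m=1$; then $w(\varepsilon)$ is a known scalar multiple of $R_nw(\varepsilon')$, $L_mR_n=R_nL_m$ by Proposition \ref{prop:relations1}(iv), the inductive hypothesis evaluates $L_mw(\varepsilon')$ as a scalar times $w(\varepsilon'-\widehat m)$, and the already-established $R_n$-action re-assembles $R_nw(\varepsilon'-\widehat m)$ into a scalar times $w(\varepsilon-\widehat m)$. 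Collecting the powers of $q$ and using the observation that whether $n<m$ or $n>m$ contributes the same correction to both $\varepsilon_1+\cdots+\varepsilon_{m-1}$ and to the $R_n$-normalization exponent, one gets exactly $L_mw(\varepsilon)=q^{\kappa(m,\mu,\lambda)-(\varepsilon_1+\cdots+\varepsilon_{m-1})}w(\varepsilon-\widehat m)$. I expect this exponent bookkeeping to be the main technical nuisance, and — more conceptually — the crucial point is that one must \emph{not} try to iterate the identity $R_mL_m+L_mR_m=q^{\kappa(m,\mu,\lambda)}$, which holds only on $E_\mu^*E_\lambda V$ and not on the translated spaces $E_{\mu+\varepsilon}^*V$; routing everything instead through the $R_m$-action and the commutation relations, reducing to a single application of the identity in the base case, is what makes the argument go through.

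Finally, with both action formulas in hand: each $w(\varepsilon)$ is nonzero, since applying $L_{m_k},L_{m_{k-1}},\dots,L_{m_1}$ in turn multiplies it by a succession of nonzero powers of $q$ (here $q\neq 0$) and returns a nonzero multiple of $w(\mathbf 0)=v\neq 0$. As $w(\varepsilon)\in E_{\mu+\varepsilon}^*V$ and the locations $\mu+\varepsilon$ are pairwise distinct, the vectors $w(\varepsilon)$ are linearly independent. Their span is stable under $L_m$ and $R_m$ by the action formulas, and under each $K_m$, which by Lemma \ref{lem:K} acts on $w(\varepsilon)\in E_{\mu+\varepsilon}^*V$ as the scalar $q^{1/2-(\mu+\varepsilon)_m}$; hence this span is an $\mathcal{H}$-submodule containing $v$, and since it is also contained in $\mathcal{H}v$ it equals $\mathcal{H}v$. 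This gives the claimed basis and the stated actions of $L_m$ and $R_m$.
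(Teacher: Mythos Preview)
Your proposal is correct and follows essentially the same route as the paper: define $w(\varepsilon)$ by applying raising operators to $v$, verify the $R_m$- and $L_m$-actions via the $q$-commutation relations of Proposition~\ref{prop:relations1} together with a single application of Lemma~\ref{lem:LmRmchiy} to $v$, and conclude that the span of the $w(\varepsilon)$ is $\mathcal{H}$-stable and hence equals $\mathcal{H}v$. Your normalization $w(\varepsilon)=q^{-\binom{k}{2}}R_{m_1}\cdots R_{m_k}v$ agrees with the paper's $w(\varepsilon)=R_N^{\varepsilon_N}\cdots R_1^{\varepsilon_1}v$, and your inductive computation of $L_mw(\varepsilon)$ is simply a more explicit version of what the paper compresses into a single sentence (the paper moves $R_m$ all the way to the right so that $L_mR_m$ hits $v$ directly, which is exactly the point you flag about avoiding the identity on the translated spaces $E_{\mu+\varepsilon}^*V$).
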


\begin{proof}
Let $\mathcal{H}^+$ denote the subalgebra of $\mathcal{H}$ generated by $R_1, R_2, \ldots, R_N$.
Consider $\mathcal{H}^+v$,
the $\mathcal{H}^+$-module generated by $v$.
We show that $\mathcal{H}^+v$
is an $\mathcal{H}$-module.
Let $1 \le m \le N$.
Then
$\mathcal{H}^+v$ is $R_m$-invariant 
by the construction
and $K_m$-invariant 
by
Proposition \ref{prop:relations2} (ii), (iv).
In addition, 
$\mathcal{H}^+v$ is $L_m$-invariant by
Proposition \ref{prop:relations1} (i), (iii), (iv), 
Lemma \ref{lem:LmRmchiy} and since $L_mv = 0$ by Lemma \ref{lem:ker}.
Since $\mathcal{H}$ is generated by 
$R_m$, $L_m$ and $K_m$, for $1 \le m \le N$,
$\mathcal{H}^+v$
is an $\mathcal{H}$-module.
Thus we have $\mathcal{H}^+v = \mathcal{H}v$.
By Proposition 
\ref{prop:relations1} (i), (iii),
$\mathcal{H}^+v$
is spanned by
\[
w(\varepsilon) = 
R_N^{\varepsilon_N}R_{N-1}^{\varepsilon_{N-1}} \cdots R_1^{\varepsilon_1}v,
\]
for $\varepsilon = (\varepsilon_1, \varepsilon_2, \ldots, \varepsilon_N) 
\in \lbrace 0,1 \rbrace^N$.
By Lemma \ref{lem:actLmRm} (ii),
$w(\varepsilon) \in E_{\mu+\varepsilon}^*V$.
By Lemma \ref{Rmchiy0},
$w(\varepsilon) \neq 0$
if and only if $m \in S_\mu$ and $m \not\in \lambda$ for all $1 \le m \le N$
with $\varepsilon_m = 1$.
Thus \eqref{basis} forms a basis for 
$\mathcal{H}v$.
For $1 \le m \le N$,
the $L_m$-actions on $w(\varepsilon)$
follow from Proposition \ref{prop:relations1} (iii), (iv), Lemma \ref{lem:LmRmchiy} and $L_mv = 0$.
Similarly, for $1 \le m \le N$,
the $R_m$-actions on $w(\varepsilon)$
follow from Proposition \ref{prop:relations1} (i), (iii).
The result follows.
\end{proof}

\begin{prop}\label{prop:actK}
Referring to Proposition \ref{prop:actLR},
the basis \eqref{basis} for
$\mathcal{H}v$ satisfies the following.
\begin{align*}
K_mw(\varepsilon) &= 
q^{1/2 - (\mu_m + \varepsilon_m)}
w(\varepsilon),
\end{align*}
for $1 \le m \le N$, where $\mu = (\mu_1,\mu_2,\ldots,\mu_N)$
and $\varepsilon = (\varepsilon_1,\varepsilon_2,\ldots,\varepsilon_N)$.
\end{prop}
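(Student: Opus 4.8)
The plan is to deduce this immediately from the fact, already recorded in Proposition \ref{prop:actLR}, that each basis vector $w(\varepsilon)$ lies in the subspace $E_{\mu+\varepsilon}^*V$, together with the explicit expansion of $K_m$ in terms of the projections $E_\nu^*$. First I would recall from Lemma \ref{lem:K} that
\[
K_m = \sum_{\nu \in \lbrace 0,1 \rbrace^N} q^{1/2 - \nu_m}\, E_\nu^*,
\]
so that $K_m$ acts on $E_\nu^* V$ as the scalar $q^{1/2 - \nu_m}$ for every $\nu \in \lbrace 0,1 \rbrace^N$. Next I would observe that, by Lemma \ref{lem:actLmRm} (ii) (as used in the proof of Proposition \ref{prop:actLR}), we have $w(\varepsilon) \in E_{\mu+\varepsilon}^* V$. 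Applying the displayed expansion with $\nu = \mu + \varepsilon$ then gives $K_m w(\varepsilon) = q^{1/2 - (\mu+\varepsilon)_m}\, w(\varepsilon)$.

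It remains only to check the small bookkeeping point that $(\mu+\varepsilon)_m = \mu_m + \varepsilon_m$, i.e.\ that $\mu + \varepsilon \in \lbrace 0,1 \rbrace^N$ so that the coordinate-wise sum is literally the $m$-th entry of a $\lbrace 0,1 \rbrace$-tuple. This is exactly where the shape constraint on $\varepsilon$ in \eqref{basis} enters: if $\varepsilon_m = 1$ then (for $w(\varepsilon) \neq 0$) Proposition \ref{prop:actLR} forces $m \in S_\mu$, i.e.\ $\mu_m = 0$, so $\mu_m + \varepsilon_m = 1$; and if $\varepsilon_m = 0$ the entry is simply $\mu_m$. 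In either case $(\mu+\varepsilon)_m = \mu_m + \varepsilon_m \in \lbrace 0,1 \rbrace$, and the formula
\[
K_m w(\varepsilon) = q^{1/2 - (\mu_m + \varepsilon_m)}\, w(\varepsilon)
\]
follows. There is no real obstacle here; the statement is essentially a restatement of the definition of $K_m$ combined with the already-established location of $w(\varepsilon)$, and the only thing to be careful about is the trivial verification just described.
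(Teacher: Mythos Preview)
Your argument is correct and matches the paper's proof: it simply notes that $w(\varepsilon) \in E_{\mu+\varepsilon}^*V$ by Proposition~\ref{prop:actLR} and then reads off the eigenvalue of $K_m$ from its definition (Lemma~\ref{lem:K}). Your extra verification that $\mu+\varepsilon \in \{0,1\}^N$ is a welcome bit of care but not a different method.
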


\begin{proof}
By Proposition \ref{prop:actLR},
we have $w(\varepsilon) \in E_{\mu + \varepsilon}^*V$.
The result follows from the definition of $K_m$.
\end{proof}

\begin{thm}\label{thm:endptshape}
For any irreducible $\mathcal{H}$-module $W$,
there uniquely exist 
$\mu \in \lbrace 0,1 \rbrace^N$ and $\lambda \subseteq \lbrace 1, 2, \ldots, N \rbrace$
satisfying (ii) in Lemma \ref{LSmuLTmu} where $\lambda$ is column-full with respect to $\mu$,
such that
$W$ is generated by a nonzero vector in $E_\mu^* E_\lambda V$.
Moreover,
$W$ is determined up to isomorphism by
$\mu$ and $\lambda$.
\end{thm}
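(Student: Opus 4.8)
The plan is to extract the pair $(\mu,\lambda)$ directly from the module $W$, building on Propositions \ref{prop:WWv}, \ref{prop:Eilambda}, \ref{prop:actLR}, \ref{prop:actK} and Lemma \ref{lem:ker}; most of the genuine work is already contained in those statements, and the theorem is largely a matter of assembling them.

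\emph{Existence.} Starting from an irreducible $\mathcal{H}$-module $W$, I would use Proposition \ref{prop:WWv} to choose a nonzero generator $v$ annihilated by every $L_m$, so that $v \in V_{\mathrm{new}}$. By Lemma \ref{lem:ker}, $V_{\mathrm{new}} = \bigoplus_{(\mu,\lambda)} E_\mu^* E_\lambda V$, the sum over all pairs with $\lambda$ column-full with respect to $\mu$. Decomposing $v = \sum_{(\mu,\lambda)} v_{\mu,\lambda}$ along this direct sum and recalling that each idempotent $E_\mu^* E_\lambda$ lies in $\mathcal{H}$ (Proposition \ref{prop:Eilambda}), every component $v_{\mu,\lambda} = E_\mu^* E_\lambda v$ lies in $W$; since $v \neq 0$, at least one component is nonzero, and for such a component $\mathcal{H} v_{\mu,\lambda}$ is a nonzero submodule of the irreducible $W$, hence equals $W$. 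This $v_{\mu,\lambda}$ is the sought generator lying in $E_\mu^* E_\lambda V$ with $\lambda$ column-full with respect to $\mu$.

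\emph{Uniqueness and the isomorphism class.} Fix any admissible generator $v \in E_\mu^* E_\lambda V$ and apply Proposition \ref{prop:actLR} to obtain the basis $\lbrace w(\varepsilon) \rbrace$ of $W = \mathcal{H}v$ with $w(\varepsilon) \in E_{\mu+\varepsilon}^* V$ and $w(\mathbf{0}) = v$. Since $\mu+\varepsilon = \mu$ only for $\varepsilon = \mathbf{0}$, the set $\lbrace \nu \in \lbrace 0,1 \rbrace^N : E_\nu^* W \neq 0 \rbrace$ has $\mu$ as its unique minimal element, so $\mu$ is determined by $W$ alone and $E_\mu^* W$ is the line $\mathbb{C}v$. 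Because $\lambda$ is column-full, every $m \notin \lambda$ lies in $S_\mu$, so the $R_m$-formula of Proposition \ref{prop:actLR} gives $R_m v = w(\widehat{m}) \neq 0$ for $m \notin \lambda$ and $R_m v = 0$ for $m \in \lambda$; hence $\lambda = \lbrace m : R_m v = 0 \rbrace$ for a (scalar-unique) nonzero vector $v \in E_\mu^* W$, so $\lambda$ too is determined by $W$ alone. This proves uniqueness of the pair. Finally, if $W$ and $W'$ are irreducible with the same $(\mu,\lambda)$ and nonzero generators $v, v' \in E_\mu^* E_\lambda V$, then by Propositions \ref{prop:actLR} and \ref{prop:actK} the actions of $L_m$, $R_m$, $K_m$ on $\lbrace w(\varepsilon) \rbrace$ and $\lbrace w'(\varepsilon) \rbrace$ are given by identical formulas involving only $\mu$, $\lambda$ and the scalars $\kappa(m,\mu,\lambda)$, so the linear bijection $w(\varepsilon) \mapsto w'(\varepsilon)$ is an $\mathcal{H}$-module isomorphism.

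The step I expect to require the most care is ensuring that the extraction of $\mu$ and $\lambda$ depends on the abstract module $W$ and not on the chosen generator; this is handled above through the one-dimensionality of $E_\mu^* W$ and the fact that $R_m$ detects membership in $\lambda$, both read off directly from Proposition \ref{prop:actLR}. I do not anticipate a genuine obstacle beyond this bookkeeping.
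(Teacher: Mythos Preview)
Your proposal is correct and follows essentially the same approach as the paper: existence comes from Proposition~\ref{prop:WWv} plus the decomposition of $V_{\mathrm{new}}$ in Lemma~\ref{lem:ker} and the fact that $E_\mu^*E_\lambda\in\mathcal{H}$, while uniqueness and the isomorphism class are read off from the explicit basis of Proposition~\ref{prop:actLR}. The only difference is cosmetic: where the paper simply says ``by comparing the two bases \eqref{basis} we obtain $(\mu',\lambda')=(\mu,\lambda)$'', you spell out this comparison concretely by identifying $\mu$ as the unique minimal $\nu$ with $E_\nu^*W\neq 0$ and recovering $\lambda$ as $\{m:R_m\,E_\mu^*W=0\}$.
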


\begin{proof}
By Proposition \ref{prop:WWv},
there exists 
a nonzero vector $v \in W$
with $L_mv = 0$ for all $1 \le m \le N$
such that $W = \mathcal{H}v$.
According to the direct sum decomposition in Lemma \ref{lem:ker},
we write 
\[
v = \sum_{\mu, \lambda} E_\mu^* E_\lambda v.
\]
Since $v$ is nonzero,
there exists a pair $(\mu,\lambda)$ such
that $E_\mu^* E_\lambda v \neq 0$.
By Proposition \ref{prop:Eilambda},
$E_\mu^* E_\lambda v$ belongs to $W$
and
so
by the irreducibility of $W$,
$E_\mu^*E_\lambda v$ generates $W$.
Suppose there exist another pair $(\mu',\lambda')$ and a vector $v' \in V$ such that
$E_{\mu'}^* E_{\lambda'} v'$ also generates $W$.
Thus we have the two bases \eqref{basis} for $W$.
However,
by comparing them,
we obtain $(\mu',\lambda') = (\mu,\lambda)$
and the result follows.
\end{proof}

\begin{defi}\normalfont
Referring to Theorem \ref{thm:endptshape},
we call $\mu \in \lbrace 0,1 \rbrace^N$
the \emph{endpoint} of $W$
and $\lambda \subseteq \lbrace 1, 2, \ldots, N \rbrace$ 
the \emph{shape} of $W$.
\end{defi}

\begin{cor}
Let $\lambda \subseteq \lbrace 1, 2, \ldots, N \rbrace$ with even cardinality.
For an irreducible $\mathcal{H}$-module $W$ of shape $\lambda$,
we have
\[
\dim W = 2^{N - |\lambda|}.
\]
\end{cor}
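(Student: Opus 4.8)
The plan is to reduce the statement directly to Theorem \ref{thm:endptshape} and Proposition \ref{prop:actLR}, since essentially all the work has already been carried out there. First I would invoke Theorem \ref{thm:endptshape} to fix the endpoint $\mu \in \lbrace 0,1 \rbrace^N$ of $W$: this yields a nonzero vector $v \in E_\mu^* E_\lambda V$ with $W$ isomorphic to $\mathcal{H}v$, where the pair $(\mu,\lambda)$ satisfies (ii) in Lemma \ref{LSmuLTmu} and $\lambda$ is column-full with respect to $\mu$. (Note that the hypothesis that $|\lambda|$ is even is precisely what guarantees, via Lemma \ref{cardeven}, that such a $\mu$ and hence such an $\mathcal{H}$-module of shape $\lambda$ exists at all.)

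Next I would apply Proposition \ref{prop:actLR} to this vector $v$. It produces an explicit basis $\lbrace w(\varepsilon) \rbrace$ for $\mathcal{H}v$ indexed by the tuples $\varepsilon = (\varepsilon_1, \ldots, \varepsilon_N)$ with $\varepsilon_m = 0$ for $m \in \lambda$ and $\varepsilon_m \in \lbrace 0,1 \rbrace$ for $m \notin \lambda$. It then only remains to count these tuples: the coordinates indexed by $\lambda$ are forced, while each of the $N - |\lambda|$ coordinates indexed by $\lbrace 1, 2, \ldots, N \rbrace \setminus \lambda$ can be chosen freely in $\lbrace 0,1 \rbrace$. Hence there are exactly $2^{N - |\lambda|}$ such tuples, and therefore $\dim W = \dim \mathcal{H}v = 2^{N - |\lambda|}$.

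There is no real obstacle here: Proposition \ref{prop:actLR} already asserts that the $w(\varepsilon)$ form a basis (in particular that they are nonzero and linearly independent), so once the endpoint is fixed the argument is a pure bookkeeping step. The only point worth a moment's care is that the value $2^{N - |\lambda|}$ depends on $W$ only through its shape $\lambda$ and not through the endpoint $\mu$, but this is evident from the count itself, which never mentions $\mu$.
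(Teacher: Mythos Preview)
Your proposal is correct and follows essentially the same approach as the paper: the paper's own proof is the single sentence ``Count the vectors in the basis \eqref{basis} for $W$,'' which is exactly what you do after invoking Theorem \ref{thm:endptshape} and Proposition \ref{prop:actLR}. Your write-up simply makes explicit the steps that the paper leaves implicit.
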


\begin{proof}
Count the vectors in the basis \eqref{basis} for $W$.
\end{proof}

\begin{thm}
For $\mu \in \lbrace 0,1 \rbrace^N$ and $\lambda \subseteq \lbrace 1, 2, \ldots, N \rbrace$
satisfying (ii) in Lemma \ref{LSmuLTmu} where $\lambda$ is column-full with respect to $\mu$,
there exists an irreducible $\mathcal{H}$-module of endpoint $\mu$ and shape $\lambda$.
Moreover, the multiplicity in $V$ is given by
\[
q^{|B_\mu| - n(\lambda \cap S_\mu)}
\prod_{s \in \lambda \cap S_\mu} \left(q^{\rho(s,\mu,\lambda)} - 1\right),
\]
where $n(\lambda \cap S_\mu)$ is defined in \eqref{npi1}
and $\rho(s,\mu,\lambda)$ is defined in Lemma \ref{qinvsigma}.
\end{thm}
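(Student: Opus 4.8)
The plan is to assemble the statement from the structure theory already in place, using chiefly Lemma~\ref{lem:ker}, Proposition~\ref{prop:actLR} and Corollary~\ref{cor:numberM}; essentially all of the genuine computation has already been carried out, and what remains is a bookkeeping argument relating multiplicities in $V$ to dimensions of the spaces $E_\mu^* E_\lambda V$. First I would dispose of the two degenerate endpoints. If $\mu = \mathbf{1}$ then $S_\mu = \emptyset$, so condition (ii) of Lemma~\ref{LSmuLTmu} forces $\lambda \cap T_\mu = \emptyset$, which together with column-fullness gives $T_\mu = \emptyset$, impossible for $N \ge 1$; so there is nothing to prove. If $\mu = \mathbf{0}$ then column-fullness and (ii) force $\lambda = \emptyset$, and one checks directly that $E_\mathbf{0}^* V$ is a one-dimensional irreducible $\mathcal{H}$-module of endpoint $\mathbf{0}$ and shape $\emptyset$ occurring with multiplicity $1$, which agrees with the stated formula (the product is empty and $|B_\mathbf{0}| = n(\emptyset) = 0$). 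So from now on assume $\mu \neq \mathbf{0},\mathbf{1}$.

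For existence: by Lemma~\ref{lem:EmuElambda} the hypothesis that $(\lambda \cap S_\mu, \lambda \cap T_\mu)$ satisfies (i), (ii) of Lemma~\ref{PIcond} gives $E_\mu^* E_\lambda \neq 0$, so I would pick a nonzero $v \in E_\mu^* E_\lambda V$ and set $W = \mathcal{H}v$. By Proposition~\ref{prop:actLR}, $W$ has the explicit basis $\{w(\varepsilon)\}$ described there, with $w(\mathbf{0}) = v$, and $L_m$ carries $w(\varepsilon)$ to a nonzero scalar times $w(\varepsilon - \widehat{m})$ when $\varepsilon_m = 1$ and to $0$ when $\varepsilon_m = 0$. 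A short linear-algebra argument on this basis then shows $\{u \in W : L_m u = 0 \text{ for all } m\} = \mathbb{C}v$. Since $\mathcal{H}$ is semisimple (Proposition~\ref{semisimple}), $W$ decomposes into irreducibles, and by Proposition~\ref{prop:WWv} each summand contributes at least one dimension to this kernel; hence $W$ is irreducible, and being generated by $v \in E_\mu^* E_\lambda V$ it has endpoint $\mu$ and shape $\lambda$ by Theorem~\ref{thm:endptshape}.

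For the multiplicity, decompose $V = \bigoplus_i W_i$ into irreducible $\mathcal{H}$-modules and let $(\mu_i,\lambda_i)$ be the endpoint and shape of $W_i$. By the argument just used (applied inside each $W_i$ via Theorem~\ref{thm:endptshape} and Proposition~\ref{prop:actLR}), the subspace of $W_i$ annihilated by all $L_m$ is one-dimensional and contained in $E_{\mu_i}^* E_{\lambda_i} V$. Since the $L_m$ preserve each $W_i$, taking the common kernel $V_{\mathrm{new}}$ of all $L_m$ on $V$ and comparing with the decomposition $V_{\mathrm{new}} = \bigoplus E_{\mu'}^* E_{\lambda'} V$ of Lemma~\ref{lem:ker}, then restricting to the $(\mu,\lambda)$-graded component, shows that the multiplicity of the module of endpoint $\mu$ and shape $\lambda$ in $V$ equals $\dim E_\mu^* E_\lambda V$. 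Finally, the vectors $\chi_y$ ($y \in P$) form an orthogonal basis of $V$ refining both $V = \bigoplus_\mu E_\mu^* V$ (Lemma~\ref{lem:basis}) and $V = \bigoplus_\lambda V_\lambda$, so $E_\mu^* E_\lambda V$ is spanned by $\{\chi_y : y \in P_\mu,\ \mathrm{type}(y) = \lambda\}$; by the bijection $P_\mu \to \mathcal{M}_\mu(\mathbb{F}_q)$ of Proposition~\ref{yY} and Definition~\ref{type}, its dimension is the number of $M \in \mathcal{M}_\mu(\mathbb{F}_q)$ with $\sigma(M)$ of type $\lambda$, which Corollary~\ref{cor:numberM} evaluates as $q^{|B_\mu| - n(\lambda \cap S_\mu)}\prod_{s \in \lambda \cap S_\mu}(q^{\rho(s,\mu,\lambda)} - 1)$.

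The step I expect to be the most delicate is the multiplicity bookkeeping, namely verifying that each irreducible summand of $V$ contributes exactly one dimension to the common kernel of the $L_m$ and that this dimension lies in precisely the graded piece $E_{\mu_i}^* E_{\lambda_i} V$ indexed by its endpoint and shape; everything else is either a direct citation of an earlier result or the elementary observation about the $w(\varepsilon)$-basis.
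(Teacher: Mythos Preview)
Your approach is essentially the paper's own: pick a nonzero $v\in E_\mu^*E_\lambda V$, use the basis of Proposition~\ref{prop:actLR} together with semisimplicity to see $\mathcal{H}v$ is irreducible, and identify the multiplicity with $\dim E_\mu^*E_\lambda V$ via Corollary~\ref{cor:numberM}. If anything you are more careful than the paper about the bookkeeping step, which the paper states in a single sentence; your route via $V_{\mathrm{new}}$ and Lemma~\ref{lem:ker} is a clean way to justify it.

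There is one slip in your degenerate-case handling. For $\mu=\mathbf{0}$ you assert that $E_\mathbf{0}^*V$ is itself a one-dimensional irreducible $\mathcal{H}$-module, but $E_\mathbf{0}^*V$ is not $\mathcal{H}$-stable: the raising matrices $R_m$ move it into $E_{\widehat{m}}^*V$. The correct statement is that for the unique nonzero $v\in E_\mathbf{0}^*E_\emptyset V$ the module $\mathcal{H}v$ is the $2^N$-dimensional irreducible of endpoint $\mathbf{0}$ and shape $\emptyset$, occurring with multiplicity $1=\dim E_\mathbf{0}^*E_\emptyset V$, which still matches the claimed formula since $B_\mathbf{0}=\emptyset$ and $\lambda\cap S_\mathbf{0}=\emptyset$. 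In fact your general argument for $\mu\neq\mathbf{0},\mathbf{1}$ goes through verbatim for $\mu=\mathbf{0}$, with the sole change that the final dimension count is the trivial observation $|P_\mathbf{0}|=1$ rather than an invocation of Corollary~\ref{cor:numberM} (which is stated only for $\mu\neq\mathbf{0},\mathbf{1}$); so you did not actually need to separate this case out.
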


\begin{proof}
Take a nonzero vector $v \in E_\mu^* E_\lambda V$.
We show that $W = \mathcal{H}v$ is irreducible.
Consider an
irreducible $\mathcal{H}$-module decomposition of $W$
as follows.
\begin{align*}
W = W_1 + W_2 + \cdots + W_r,
&&
\text{(direct sum)}
\end{align*}
for some positive integer $r \ge 1$.
According to this decomposition,
we write $v = w_1 + w_2 + \cdots + w_r$
such that $w_n \in W_n$ $(1 \le n \le r)$.
Since this sum is direct and $v \in E_\mu^* E_\lambda W$,
we find $w_n \in E_\mu^* E_\lambda W$ for $1 \le n \le r$.
However, 
by Proposition \ref{prop:actLR},
we have $\dim E_\mu^* E_\lambda W = 1$.
Thus, all the vectors $w_n$ $(1 \le n \le r)$ are scalar multiples of $v$.
This forces $r = 1$, i.e., $W$ is irreducible.

The multiplicity of $W$ in $V$ is $\dim E_\mu^* E_\lambda V$, which is determined in Corollary \ref{cor:numberM}.
\end{proof}

\section{The quantum affine algebra $U_q(\widehat{\mathfrak{sl}}_2)$}\label{Uq(sl2hat)}

In this section, we fix a nonzero scalar $q \in \mathbb{C}$ which is not a root of unity.
For $n \in \mathbb{N}$, we define
\[
[n]_q = \frac{q^n - q^{-n}}{q-q^{-1}}.
\]
We recall the definition of $U_q(\widehat{\mathfrak{sl}}_2)$ from $\cite{Ch}$ in terms of Chevalley generators.

\begin{defi}[{\cite[Section~2]{Ch}}]\label{Uqsl2hat}\normalfont
The quantum affine algebra $U_q(\widehat{\mathfrak{sl}}_2)$ is the associative $\mathbb{C}$-algebra generated by $e_i^{\pm}, k_i, k_i^{-1}$ $(i = 0,1)$ with the relations
\begin{align}
&
k_ik_i^{-1} = k_i^{-1}k_i = 1, 
&&
k_0k_1 = k_1k_0,\label{defrel1}\\
&
k_ie_i^{\pm} = q^{\pm2}e_i^{\pm}k_i,
&&
k_ie_j^{\pm} = q^{\mp2}e_j^{\pm}k_i,  \quad i \neq j, \label{defrel2}\\
&
e_i^+e_i^- - e_i^-e_i^+ = \frac{k_i - k_i^{-1}}{q-q^{-1}}, 
&&
e_0^{\pm}e_1^{\mp} - e_1^{\mp}e_0^{\pm} = 0,\label{defrel3}
\end{align}
\begin{align}
&(e_i^{\pm})^3 e_j^{\pm} - [3]_q (e_i^{\pm})^2 e_j^{\pm} e_i^{\pm} + [3]_q e_i^{\pm} e_j^{\pm} (e_i^{\pm})^2 -  e_j^{\pm} (e_i^{\pm})^3 = 0, \quad i \neq j.\label{defrel4}
\end{align}
We call $e_i^{\pm}, k_i, k_i^{-1}$ $(i = 0,1)$ the {\it Chevalley generators} for $U_q(\widehat{\mathfrak{sl}}_2)$.
\end{defi}

It is known that the quantum affine algebra $U_q(\widehat{\mathfrak{sl}}_2)$
has the following Hopf algebra structure.
The comultiplication $\Delta$ satisfies
\begin{align*}
\Delta(e_i^+) = e_i^+ \otimes k_i + 1 \otimes e_i^+,
&&
\Delta(e_i^-) = e_i^- \otimes 1 + k_i^{-1} \otimes e_i^-,
&&
\Delta(k_i) = k_i \otimes k_i.
\end{align*}
It is also known that there exists a family of
finite-dimensional irreducible $U_q(\widehat{\mathfrak{sl}}_2)$-modules
$V_d(\alpha)$ for $d \in \mathbb{N}$, $\alpha \in \mathbb{C} \setminus \lbrace 0 \rbrace$,
where $V_d(\alpha)$ has a basis $\lbrace u_i \rbrace_{i=0}^d$ satisfying
\begin{align*}
&e_0^+ u_i = \alpha [i+1]_q u_{i+1} & (0 \le i \le d-1), && e_0^+ u_d = 0, \\
&e_1^+ u_i = [d-i+1]_q u_{i-1} & (1 \le i \le d), && e_1^+ u_0 = 0, \\
&e_0^- u_i = \alpha^{-1} [d-i+1]_q u_{i-1} & (1 \le \varepsilon \le d), && e_0^- u_0 = 0, \\
&e_1^- u_i = [i+1]_q u_{i+1} & (0 \le i \le d-1), && e_1^- u_d = 0, \\
&k_0 u_i = q^{2i-d} u_i & (0 \le i \le d),\\
&k_1 u_i = q^{d-2i} u_i & (0 \le i \le d).
\end{align*}
We call $V_d(\alpha)$ the {\it evaluation module} for $U_q(\widehat{\mathfrak{sl}}_2)$
with the {\it evaluation parameter} $\alpha$.
We recurrently define the algebra homomorphism
$\Delta^{(N)}: U_q(\widehat{\mathfrak{sl}}_2) \to \underbrace{U_q(\widehat{\mathfrak{sl}}_2) \otimes \cdots \otimes U_q(\widehat{\mathfrak{sl}}_2)}_{\text{$(N+1)$ times}}$
for $N \in \mathbb{N}$ by
\begin{align*}
\Delta^{(0)} &= \mathrm{id}, \\
\Delta^{(1)} &= \Delta, \\
\Delta^{(N)} &= (\underbrace{\mathrm{id} \otimes \cdots \otimes \mathrm{id}}_{\text{$(N-2)$ times}} \otimes \Delta) \circ \Delta^{(N-1)}
&& (N \ge 2).
\end{align*}
This algebra homomorphism
$\Delta^{(N)}$ is called the {\it $N$-fold comultiplication}.
For each $N \ge 1$,
by the $(N-1)$-fold comultiplication $\Delta^{(N-1)}$, a tensor product of $N$ evaluation modules again becomes
a $U_q(\widehat{\mathfrak{sl}}_2)$-module.
More precisely,
a tensor product $V_{d_1}(\alpha_1) \otimes \cdots \otimes V_{d_N}(\alpha_N)$
has a basis 
\begin{align}
u(\varepsilon) = u_{\varepsilon_1} \otimes \cdots \otimes u_{\varepsilon_N},
&&
0 \le \varepsilon_1 \le d_1, \quad \ldots, \quad 0 \le \varepsilon_N \le d_N,
\label{basisfortensor}
\end{align}
on which the Chevalley generators act as follows:
\begin{align}
e_0^+u(\varepsilon) &= 
\sum_{m = 1}^N 
\alpha_m [\varepsilon_m+1]_q q^{2(\varepsilon_{m+1} + \cdots + \varepsilon_N) - (d_{m+1} + \cdots + d_N)} u(\varepsilon + \widehat{m}),\label{e0+u}
\\
e_1^+u(\varepsilon) &= 
\sum_{m = 1}^N 
[d_m - \varepsilon_m + 1]_q q^{(d_{m+1} + \cdots + d_N) - 2(\varepsilon_{m+1} + \cdots + \varepsilon_N)} u(\varepsilon - \widehat{m}),\label{e1+u}
\\
e_0^-u(\varepsilon) &= 
\sum_{m = 1}^N 
\alpha_m^{-1} [d_m - \varepsilon_m + 1]_q q^{(d_1 + \cdots + d_{m-1}) - 2(\varepsilon_1 + \cdots + \varepsilon_{m-1})} u(\varepsilon - \widehat{m}),\label{e0-u}\\
e_1^-u(\varepsilon) &= 
\sum_{m = 1}^N 
[\varepsilon_m + 1]_q q^{2(\varepsilon_1 + \cdots + \varepsilon_{m-1}) - (d_1 + \cdots + d_{m-1})} u(\varepsilon +\widehat{m}),\label{e1-u}\\
k_0u(\varepsilon) &= q^{2(\varepsilon_1 + \cdots + \varepsilon_N) - (d_1 + \cdots + d_N)}u(\varepsilon), \label{k0u}\\
k_1u(\varepsilon) &= q^{(d_1 + \cdots + d_N) - 2(\varepsilon_1 + \cdots + \varepsilon_N)}u(\varepsilon),\label{k1u}
\end{align}
where $\varepsilon = (\varepsilon_1, \varepsilon_2, \ldots, \varepsilon_N) \in \mathbb{Z}^N$
and
we define $u(\varepsilon) = 0$
if $\varepsilon$ is not of the form
in \eqref{basisfortensor}.

Let $W$ denote a finite-dimensional irreducible
$U_q(\widehat{\mathfrak{sl}}_2)$-module.
By \cite[Proposition~3.2]{Ch},
there exist scalars $\epsilon_0, \epsilon_1 \in \lbrace -1,1 \rbrace$
such that
each eigenvalue of $k_i$ on $W$ is $\epsilon_i$ times an integral power of $q$
for $i = 0,1$.
The pair $(\epsilon_0, \epsilon_1)$ is called the
\emph{type} of $W$.
For each pair $\epsilon_0, \epsilon_1 \in \lbrace -1,1 \rbrace$,
there exists an algebra automorphism of $U_q(\widehat{\mathfrak{sl}}_2)$
that sends
\begin{align*}
k_i \mapsto \epsilon_i k_i,
&&
e_i^+ \mapsto \epsilon_i e_i^+,
&&
e_i^- \mapsto e_i^-,
&&
(i = 0,1).
\end{align*}
By this automorphism,
any finite-dimensional irreducible
$U_q(\widehat{\mathfrak{sl}}_2)$-module of type $(\epsilon_0, \epsilon_1)$
becomes that of type $(1,1)$.

\begin{thm}[{\cite[Theorem~4.11]{Ch}}]\label{thm:CP}
Every finite-dimensional irreducible $U_q(\widehat{\mathfrak{sl}}_2)$-module
of type $(1,1)$
is isomorphic to a tensor product of evaluation modules.
Moreover, two such tensor products are isomorphic
if and only if one is obtained from the other by permuting the factors in the tensor product.
\end{thm}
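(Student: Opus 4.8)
The plan is to deduce the theorem from the classification of finite-dimensional irreducible type-$(1,1)$ modules by their Drinfeld polynomials; the reduction of an arbitrary type $(\epsilon_0,\epsilon_1)$ to $(1,1)$ is already recorded above, so I work throughout with type $(1,1)$. First I would pass to Drinfeld's second realization of $U_q(\widehat{\mathfrak{sl}}_2)$ (generated by loop elements $x_k^{\pm}$, $h_k$, $k_1^{\pm1}$, with the central element acting as $1$) and invoke the standard fact that every finite-dimensional irreducible type-$(1,1)$ module $W$ has a unique up-to-scalar $\ell$-highest weight vector --- a common eigenvector of the generating series $\psi^{\pm}(u)$ built from $k_1$ and the $h_k$ that is annihilated by all $x_k^{+}$ --- and that the eigenvalues of $\psi^{\pm}(u)$ on this vector are governed by a polynomial $\pi_W(u)\in\mathbb{C}[u]$ with $\pi_W(0)=1$ (the Drinfeld polynomial), the assignment $W\mapsto\pi_W$ being a bijection onto the set of all such polynomials.

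Granting this, the argument has three steps. \textbf{(1)} Compute the Drinfeld polynomial of an evaluation module: via the evaluation homomorphism $U_q(\widehat{\mathfrak{sl}}_2)\to U_q(\mathfrak{sl}_2)$ and the $U_q(\mathfrak{sl}_2)$-action on $V_d(\alpha)$ recorded above (the formulas for $k_1$ and $e_1^{\pm}$ on $\{u_i\}_{i=0}^d$), show that $u_0$ is $\ell$-highest weight and that $\pi_{V_d(\alpha)}(u)=\prod_{j=0}^{d-1}(1-\alpha q^{d-1-2j}u)$, a ``$q$-string'' of length $d$ centred at $\alpha$ (up to the harmless normalization of the evaluation map). \textbf{(2)} Show multiplicativity under tensor products: from the triangular shape of the comultiplication on the Drinfeld generators, the cyclic submodule of $V_{d_1}(\alpha_1)\otimes\cdots\otimes V_{d_N}(\alpha_N)$ generated by $u_0\otimes\cdots\otimes u_0$ is $\ell$-highest weight with $\psi^{\pm}(u)$-eigenvalues the products of those of the factors, so its irreducible quotient has Drinfeld polynomial $\prod_{m=1}^{N}\pi_{V_{d_m}(\alpha_m)}(u)$; in particular when the tensor product is irreducible this product is exactly its Drinfeld polynomial. \textbf{(3)} Given $W$ with Drinfeld polynomial $\pi$, factor the multiset of inverse roots of $\pi$ into maximal, pairwise-unlinked $q$-strings $S_1,\ldots,S_N$; writing $S_m$ as the length-$d_m$ string centred at $\alpha_m$, the module $V_{d_1}(\alpha_1)\otimes\cdots\otimes V_{d_N}(\alpha_N)$ has Drinfeld polynomial $\pi$ by (1)--(2) and is irreducible precisely because the strings are pairwise in general position, hence is isomorphic to $W$. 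This irreducibility criterion is the one genuine obstacle: I would prove it via the quasi-triangular structure of $U_q(\widehat{\mathfrak{sl}}_2)$, showing that the canonical intertwiner between $V_{d_1}(\alpha_1)\otimes\cdots\otimes V_{d_N}(\alpha_N)$ and the opposite tensor product is an isomorphism exactly when no two strings are linked, or by a deformation argument specializing from the generic case of algebraically independent evaluation parameters (where irreducibility is comparatively easy).

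For the ``moreover'' clause, if $V_{d_1}(\alpha_1)\otimes\cdots\otimes V_{d_N}(\alpha_N)\cong V_{d_1'}(\alpha_1')\otimes\cdots\otimes V_{d_M'}(\alpha_M')$ as (irreducible) modules, then by Step (2) both families of $q$-strings factor the common Drinfeld polynomial $\pi$; since each tensor product is irreducible, in each case the constituent strings are pairwise in general position (if two were linked the tensor product would be a proper subquotient), and the decomposition of $\pi$ into pairwise-general-position $q$-strings is unique up to order --- an elementary combinatorial fact about the root multiset. As $V_d(\alpha)$ is recovered from the length and centre of its $q$-string, the two multisets $\{(d_m,\alpha_m)\}$ and $\{(d_m',\alpha_m')\}$ coincide, so one tensor product is obtained from the other by permuting factors; the converse is immediate. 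I expect Steps (1)--(2) to be routine bookkeeping once the Drinfeld presentation and its comultiplication are fixed, and the general-position irreducibility criterion of Step (3) to carry the real weight of the proof.
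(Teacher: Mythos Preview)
The paper does not prove this theorem. It is quoted verbatim from \cite[Theorem~4.11]{Ch} as background material; the introduction states explicitly that Section~\ref{Uq(sl2hat)} ``repeat[s] the relevant material \ldots\ from \cite{Ch} without proofs.'' There is therefore nothing in the paper to compare your proposal against.

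That said, your outline is a faithful summary of the standard Chari--Pressley argument: pass to Drinfeld's loop presentation, classify irreducibles of type $(1,1)$ by their Drinfeld polynomials, compute that $\pi_{V_d(\alpha)}$ is the $q$-string of length $d$ centred at $\alpha$, use the triangularity of the coproduct on the loop generators to get multiplicativity of Drinfeld polynomials on highest-weight vectors in tensor products, and then combine this with the general-position irreducibility criterion (which the present paper also quotes without proof as Theorem~\ref{general}) to realise every irreducible as a tensor product of evaluation modules. Your identification of the irreducibility criterion as the step carrying the real weight is accurate. The only caution is that your sketch leans on several nontrivial inputs (existence and bijectivity of the Drinfeld-polynomial classification, the precise form of the coproduct in the second realisation, and the $R$-matrix or specialization argument for irreducibility) each of which requires substantial work; but as a high-level plan for reproducing \cite[Theorem~4.11]{Ch} it is correct.
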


With an evaluation module $V_d(\alpha)$, we associate the set of scalars
\[
S_d(\alpha) = 
\lbrace
\alpha q^{d-1}, \alpha q^{d-3}, \ldots, \alpha q^{-d+1}
\rbrace.
\]
The set $S_d(\alpha)$ is called a {\it $q$-string} of length $d$.
Two $q$-strings $S_{d_1}(\alpha_1)$, $S_{d_2}(\alpha_2)$
are said to be in {\it general position} if one of the following occurs:
\begin{enumerate}
\item
$S_{d_1}(\alpha_1) \cup S_{d_2}(\alpha_2)$ is not a $q$-string,
\item
$S_{d_1}(\alpha_1) \subseteq S_{d_2}(\alpha_2)$
or 
$S_{d_2}(\alpha_2) \subseteq S_{d_1}(\alpha_1)$.
\end{enumerate}
Moreover, several $q$-strings 
are said to be in {\it general position} 
if every two $q$-strings are in general position.

\begin{thm}[{\cite[Theorem~4.8]{Ch}}]\label{general}
A tensor product 
of evaluation modules for $U_q(\widehat{\mathfrak{sl}}_2)$
is irreducible
if and only if
the associated $q$-strings
are in general position.
\end{thm}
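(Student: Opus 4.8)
Since Theorem~\ref{general} is reproduced here from \cite{Ch}, the plan is to follow the argument given there. One works throughout with the Drinfeld (loop) presentation of $U_q(\widehat{\mathfrak{sl}}_2)$ and with $\ell$-highest weight modules and Drinfeld polynomials: the finite-dimensional simple modules of type $(1,1)$ are parametrized by their Drinfeld polynomial $\pi\in\mathbb{C}[u]$ with $\pi(0)=1$; the evaluation module $V_d(\alpha)$ has Drinfeld polynomial whose root data is exactly the $q$-string $S_d(\alpha)$; a simple module factors uniquely as a tensor product of evaluation modules, the factors corresponding to the ``maximal $q$-strings'' into which the roots of $\pi$ decompose, and $\dim L(\pi)$ equals the product over those maximal $q$-strings of (length $+\,1$); and the vector $u_0\otimes\cdots\otimes u_0$ of a tensor product $T=V_{d_1}(\alpha_1)\otimes\cdots\otimes V_{d_N}(\alpha_N)$ of evaluation modules is an $\ell$-highest weight vector whose Drinfeld polynomial $\pi$ is the product of those of the factors.

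\textbf{Step 1: reduction to a dimension identity.} Let $v=u_0\otimes\cdots\otimes u_0\in T$ and let $\pi$ be its Drinfeld polynomial. Then $U_q(\widehat{\mathfrak{sl}}_2)v$ is an $\ell$-highest weight module with Drinfeld polynomial $\pi$, so $\dim L(\pi)\le\dim U_q(\widehat{\mathfrak{sl}}_2)v\le\dim T=\prod_i(d_i+1)$. If $T$ is simple then $T=U_q(\widehat{\mathfrak{sl}}_2)v=L(\pi)$, whence $\dim T=\dim L(\pi)$; conversely, if $\dim L(\pi)=\prod_i(d_i+1)$ then both inequalities are equalities, so $U_q(\widehat{\mathfrak{sl}}_2)v=T$ and $T\cong L(\pi)$ is simple. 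Thus Theorem~\ref{general} is equivalent to the assertion
\[
\dim L\bigl(\pi_{d_1,\alpha_1}\cdots\pi_{d_N,\alpha_N}\bigr)=\prod_{i}(d_i+1)
\iff
\text{the $q$-strings }S_{d_i}(\alpha_i)\text{ are pairwise in general position.}
\]

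\textbf{Step 2: the dimension identity.} By the dimension formula recalled above, $\dim L(\pi)=\prod_i(d_i+1)$ holds if and only if the multiset of maximal $q$-strings of $\pi$ is precisely $\{S_{d_i}(\alpha_i)\}_{i}$, i.e.\ no two of these strings ``merge'' into a longer one; a short combinatorial lemma identifies this non-merging condition with the $S_{d_i}(\alpha_i)$ being pairwise in general position (if some pair's union is a $q$-string with neither containing the other, those two strings can be repackaged into strictly fewer, longer maximal strings, strictly lowering $\dim L(\pi)$ below $\prod_i(d_i+1)$; otherwise the maximal strings are recovered intact, possibly after peeling off nested ones). This already gives both directions. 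For concreteness in the reducible case one can also exhibit a proper submodule of $T$ directly: by Theorem~\ref{thm:CP} one may permute the tensor factors, and by coassociativity of $\Delta$ bring a failing pair $V_{d_i}(\alpha_i),V_{d_j}(\alpha_j)$ adjacent, so it suffices to see $V_{d_i}(\alpha_i)\otimes V_{d_j}(\alpha_j)$ is reducible; restricting this to $\langle e_1^\pm,k_1^{\pm1}\rangle\cong U_q(\mathfrak{sl}_2)$ gives a Clebsch--Gordan decomposition, and with \eqref{e0+u}--\eqref{k1u} one computes how $e_0^\pm$ link the $U_q(\mathfrak{sl}_2)$-summands, the linking scalars being products of linear factors of the shape $\alpha_iq^a-\alpha_jq^b$; failure of general position makes one of these vanish, disconnecting the summands and producing a proper $U_q(\widehat{\mathfrak{sl}}_2)$-submodule, which tensored with the remaining factors is a proper submodule of $T$.

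\textbf{Main obstacle.} The genuine input is the classification of finite-dimensional simple $U_q(\widehat{\mathfrak{sl}}_2)$-modules together with the dimension formula for $L(\pi)$ in terms of its maximal $q$-strings; this rests on the Drinfeld realization and the theory of Weyl modules (or, equivalently, of the trigonometric $R$-matrix), which is not developed in the present excerpt, so at that point I would simply invoke \cite{Ch}, where Theorems~4.8 and~4.11 are established together by exactly this route. The combinatorial lemma in Step~2 and the $U_q(\mathfrak{sl}_2)$-level computation are routine, but the latter must be carried out carefully with the $q$-binomial coefficients so that its vanishing locus matches the general-position condition precisely.
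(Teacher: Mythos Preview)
The paper does not prove Theorem~\ref{general} at all: as announced at the start of Section~\ref{Uq(sl2hat)}, that section merely ``repeat[s] the relevant material \ldots\ from \cite{Ch} without proofs'', and Theorem~\ref{general} is simply quoted as \cite[Theorem~4.8]{Ch}. So there is nothing to compare your proposal against in this paper; your sketch is a reasonable outline of the original Chari--Pressley argument and correctly identifies that the substantive input (the Drinfeld-polynomial classification and the dimension formula for $L(\pi)$) must be imported from \cite{Ch}.
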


\section{The algebra $\mathcal{H}$ and the quantum affine algebra $U_{q^{1/2}}(\widehat{\mathfrak{sl}}_2)$}\label{sec:main}

In this section, we get back to the subspace lattice $P$ over $\mathbb{F}_q$.
Recall the matrices $E_\lambda \in \mathcal{H}$ in Sections \ref{LRactiononV} and \ref{LRRLactiononV}.
Let  $\mu \in \lbrace 0,1 \rbrace^N$ and $\lambda \subseteq \lbrace 1, 2, \ldots, N \rbrace$ 
satisfy (ii) in Lemma \ref{LSmuLTmu}.
For $v \in E_\mu^*E_\lambda V$ and $1 \le m \le N$,
if $L_mv \neq 0$, then we have $m \in T_\mu$ and $m \not\in \lambda$ by Lemma \ref{Lmchiy0}
and so $(L_mR_m)L_mv = q^{\kappa(m,\mu,\lambda)}L_mv$
by Lemma \ref{lem:RmLmchiy}.
Therefore,
we define the matrix $(L_mR_m)^{-1}L_m$ by
\begin{equation}
(L_mR_m)^{-1}L_m v = \begin{cases}
q^{-\kappa(m,\mu,\lambda)}L_mv & \text{if $L_mv \neq 0$}, \\
0 & \text{if $L_mv = 0$}
\end{cases}\label{modifiedL}
\end{equation}
for $v \in V$.
We remark that
$(L_mR_m)^{-1}L_m$ does not mean the product of 
$(L_mR_m)^{-1}$ and $L_m$
since $L_mR_m$ is not invertible by Lemma \ref{lem:RmLmchiy}.
Similarly, we define the matrix $(R_mL_m)^{-1}R_m$ by
\begin{equation}
(R_mL_m)^{-1}R_m v = \begin{cases}
q^{-\kappa(m,\mu,\lambda)}R_mv & \text{if $R_mv \neq 0$}, \\
0 & \text{if $R_mv = 0$}
\end{cases}
\label{modifiedR}
\end{equation}
for $v \in V$.

\begin{thm}\label{support}
Let $\alpha_1,\alpha_2, \ldots, \alpha_N$ denote nonzero scalars.
The standard module $V$ supports a $U_{q^{1/2}}(\widehat{\mathfrak{sl}}_2)$-module
structure on which the Chevalley generators act as follows:
\begin{center}
\renewcommand{\arraystretch}{1.7}
\begin{tabular}{cc}
generators & actions on $V$ \\\hline\hline
$e_0^{+}$ & $q^{(1-N)/2}\sum_{m=1}^N \alpha_m R_m$ \\\hline
$e_1^{+}$ & $q^{(N-1)/2}\sum_{m=1}^N (L_mR_m)^{-1}L_m$ \\\hline
$e_0^{-}$ & $\sum_{m=1}^N \alpha_m^{-1} L_m$ \\\hline
$e_1^{-}$ & $\sum_{m=1}^N (R_mL_m)^{-1}R_m$ \\\hline
$k_0$ & $\prod_{m=1}^N K_m^{-1}$ \\\hline
$k_0^{-1}$ & $\prod_{m=1}^N K_m$ \\\hline
$k_1$ & $\prod_{m=1}^N K_m$ \\\hline
$k_1^{-1}$ & $\prod_{m=1}^N K_m^{-1}$ \\\hline
\end{tabular}
\end{center}
Here the matrices $(L_mR_m)^{-1}L_m$ and $(R_mL_m)^{-1}R_m$
are defined in \eqref{modifiedL} and in \eqref{modifiedR}, respectively.
\end{thm}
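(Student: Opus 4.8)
The plan is to pass to irreducible $\mathcal{H}$-modules and there to recognize the prescribed action as that of a tensor product of evaluation modules of dimension $\le 2$. First note that every matrix in the table maps each $\mathcal{H}$-submodule of $V$ into itself: $L_m$, $R_m$, $K_m$ lie in $\mathcal{H}$ by construction, while on each summand $E_\mu^* E_\lambda V$ the matrices $(L_mR_m)^{-1}L_m$ and $(R_mL_m)^{-1}R_m$ are, by \eqref{modifiedL} and \eqref{modifiedR}, scalar multiples of $L_m$ and $R_m$, and the idempotents $E_\mu^*$, $E_\lambda$ lie in $\mathcal{H}$ (Definition \ref{K} and Proposition \ref{prop:Eilambda}). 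Since $\mathcal{H}$ is semisimple (Proposition \ref{semisimple}) and $V$ is a direct sum of irreducible $\mathcal{H}$-modules with every isomorphism type occurring, it suffices to check the defining relations \eqref{defrel1}--\eqref{defrel4}, with $q$ replaced by $q^{1/2}$, on each irreducible $\mathcal{H}$-module $W$.

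Fix such a $W$, with endpoint $\mu$ and shape $\lambda$ (Theorem \ref{thm:endptshape}); thus $\lambda$ is column-full with respect to $\mu$, i.e.\ $T_\mu \subseteq \lambda$, which forces $|\mu| = |\lambda|/2$ and $\mu_m = 0$ for all $m \notin \lambda$. We work in the basis $\lbrace w(\varepsilon) \rbrace$ of Proposition \ref{prop:actLR}, with the $L_m$-, $R_m$-, $K_m$-actions recorded in Propositions \ref{prop:actLR} and \ref{prop:actK}. Directly from \eqref{kappa} and column-fullness one obtains, for $m \notin \lambda$,
\[
\kappa(m,\mu,\lambda) = |\lbrace j \mid 1 \le j < m,\ j \notin \lambda \rbrace| + |\mu|, \qquad
\kappa(m,\mu+\varepsilon,\lambda) = \kappa(m,\mu,\lambda) - (\varepsilon_1 + \cdots + \varepsilon_{m-1}) + (\varepsilon_{m+1} + \cdots + \varepsilon_N),
\]
the first because $T_\mu(m+1)\setminus\lambda = \emptyset$ and $S_\mu(m-1)\setminus\lambda = \lbrace j < m : j \notin \lambda\rbrace$. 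Feeding the second identity into \eqref{modifiedL}, \eqref{modifiedR} (evaluated at the location $\mu+\varepsilon$ of $w(\varepsilon)$) gives $\kappa$-free formulas for the $e_1^\pm$-actions on $w(\varepsilon)$.

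Now put $d_m = 1$ for $m \notin \lambda$ and $d_m = 0$ for $m \in \lambda$, and consider the $U_{q^{1/2}}(\widehat{\mathfrak{sl}}_2)$-module $V_{d_1}(\alpha_1) \otimes \cdots \otimes V_{d_N}(\alpha_N)$ with basis $\lbrace u(\varepsilon) \rbrace$ as in \eqref{basisfortensor}, whose index set for $\varepsilon$ is exactly that of \eqref{basis}. I claim the linear isomorphism $w(\varepsilon) \mapsto c(\varepsilon) u(\varepsilon)$, with $c(\varepsilon) = \prod_{m \,:\, \varepsilon_m = 1} q^{(d_1 + \cdots + d_{m-1})/2 + |\mu|}$, intertwines the action of the table with the action \eqref{e0+u}--\eqref{k1u} (base $q^{1/2}$). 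This is a term-by-term check: for $m \notin \lambda$ the pertinent $q^{1/2}$-integer $[d_m - \varepsilon_m + 1]_{q^{1/2}}$ or $[\varepsilon_m+1]_{q^{1/2}}$ in \eqref{e0+u}--\eqref{e1-u} equals $1$, and the surviving powers of $q$ match because the exponent $(N-1)/2 + \tfrac12(d_1+\cdots+d_{m-1}) - \tfrac12(d_{m+1}+\cdots+d_N)$ appearing in the $e_0^+$- and $e_1^+$-comparisons is precisely $\kappa(m,\mu,\lambda)$ (first identity above, using $|\mu| = |\lambda|/2$); the $k_i$-comparison reduces to $\prod_{m=1}^N K_m = q^{(N-|\lambda|)/2 - |\varepsilon|}$ on $w(\varepsilon)$. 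Since the tensor product is a genuine $U_{q^{1/2}}(\widehat{\mathfrak{sl}}_2)$-module, all of \eqref{defrel1}--\eqref{defrel4} hold on $W$, hence on $V$.

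The main obstacle is this last, purely computational bookkeeping: reconciling the powers of $q$ from Propositions \ref{prop:actLR}, \ref{prop:actK}, \eqref{modifiedL}, \eqref{modifiedR} with the $q^{1/2}$-integers and exponents in \eqref{e0+u}--\eqref{k1u}, whose crux is the closed form $\kappa(m,\mu,\lambda) = |\lbrace j < m : j \notin \lambda\rbrace| + |\mu|$ for column-full $\lambda$. If one prefers to avoid the tensor-product model and verify \eqref{defrel1}--\eqref{defrel4} directly on $\lbrace w(\varepsilon)\rbrace$, then \eqref{defrel1}, \eqref{defrel2} and the vanishing part of \eqref{defrel3} come from Propositions \ref{prop:relations1} and \ref{prop:relations2} (together with $L_m^2 = R_m^2 = 0$), the Serre relations \eqref{defrel4} are still most cheaply inherited from the tensor-product identification, and the commutators $e_i^+e_i^- - e_i^-e_i^+$ in \eqref{defrel3} follow from Lemma \ref{lem:mathfrakq} applied with $\mathfrak{q} = q^{\pm1}$ and endpoint $\mu+\varepsilon$, whose right-hand side $\frac{\mathfrak{q}^{N-|\mu+\varepsilon|} - \mathfrak{q}^{|\mu+\varepsilon|}}{\mathfrak{q}-1}$ is exactly what recombines into $\frac{k_i-k_i^{-1}}{q^{1/2}-q^{-1/2}}$.
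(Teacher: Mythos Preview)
Your proof is correct but follows a genuinely different route from the paper's. The paper verifies the defining relations \eqref{defrel1}--\eqref{defrel4} directly on each joint eigenspace $E_\mu^* E_\lambda V$ (for \emph{all} pairs $(\mu,\lambda)$, not just column-full ones): \eqref{defrel1}, \eqref{defrel2} and the second half of \eqref{defrel3} come from Propositions~\ref{prop:relations1} and~\ref{prop:relations2}, the Serre relations \eqref{defrel4} again from Proposition~\ref{prop:relations1}, and the crucial commutator identity in \eqref{defrel3} from Lemma~\ref{lem:mathfrakq}, which evaluates $\sum_{m\notin\lambda}(-1)^{\mu_m}q^{\kappa(m,\mu,\lambda)}$ in closed form. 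You instead reduce to irreducible $\mathcal{H}$-modules via Theorem~\ref{thm:endptshape} and there build an explicit isomorphism with a tensor product $V_{d_1}(\alpha_1)\otimes\cdots\otimes V_{d_N}(\alpha_N)$ of one- and two-dimensional evaluation modules, so that all relations are inherited from a known $U_{q^{1/2}}(\widehat{\mathfrak{sl}}_2)$-module. This is essentially proving Theorem~\ref{isomorphic} first and deducing Theorem~\ref{support} from it; your scaling $c(\varepsilon)$ is the inverse of the paper's $\gamma(\varepsilon)$, and your closed form $\kappa(m,\mu,\lambda)=|\{j<m:j\notin\lambda\}|+|\mu|$ is the content of Lemma~\ref{lem:dm}(ii). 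The trade-off: the paper's argument is more self-contained (it does not use the Section~\ref{sec:Hmod} classification), whereas yours yields Theorem~\ref{isomorphic} for free and replaces the somewhat delicate Lemma~\ref{lem:mathfrakq} by the simpler column-full formula for $\kappa$. One small point worth stating explicitly in your write-up: $w(\varepsilon)\in E_{\mu+\varepsilon}^*E_\lambda V$ with the \emph{same} $\lambda$, which is what justifies evaluating \eqref{modifiedL}, \eqref{modifiedR} at the pair $(\mu+\varepsilon,\lambda)$; this follows from Proposition~\ref{prop:actLR} together with Lemmas~\ref{lem:RmLmchiy}, \ref{lem:LmRmchiy}, or equivalently from the observation that $\lambda\cap S_{\mu+\varepsilon}=\lambda\cap S_\mu$ and $\lambda\cap T_{\mu+\varepsilon}=\lambda\cap T_\mu$ (since $\varepsilon_m=0$ for $m\in\lambda$).
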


\begin{proof}
Referring to the above table, for $i = 0,1$
let $\widehat{e}_i^+, \widehat{e}_i^-, \widehat{k}_i, \widehat{k}_i^{-1}$
denote the expressions to the right of $e_i^+, e_i^-, k_i, k_i^{-1}$ respectively.
We show these elements
$\widehat{e}_i^+, \widehat{e}_i^-, \widehat{k}_i, \widehat{k}_i^{-1}$ $(i = 0,1)$
satisfy the defining relations \eqref{defrel1}--\eqref{defrel4} of $U_{q^{1/2}}(\widehat{\mathfrak{sl}}_2)$ on $V$.

We first show $\widehat{e}_i^+, \widehat{e}_i^-, \widehat{k}_i, \widehat{k}_i^{-1}$ $(i = 0,1)$ satisfy the relations except the first relation in \eqref{defrel3}.
They satisfy the relations in \eqref{defrel1} by the definitions of $\widehat{k}_i, \widehat{k}_i^{-1}$ $(i = 0,1)$.
They satisfy the first relation in \eqref{defrel2} with $i = 0$ by Proposition \ref{prop:relations2}.
They satisfy the second relation in \eqref{defrel2} with $(i,j) = (1,0)$ by Proposition \ref{prop:relations2}.
Since the other relations involve $\widehat{e}_1^+, \widehat{e}_1^-$,
we show them as follows.
Fix a nonzero vector $v \in V$. 
Then we apply both sides of each defining relation to $v$
and check the results are the same.
These elements
$\widehat{e}_i^+, \widehat{e}_i^-, \widehat{k}_i, \widehat{k}_i^{-1}$ $(i = 0,1)$ satisfy the first relation in \eqref{defrel2} with $i = 1$ by Proposition \ref{prop:relations2}.
They satisfy the second relation in \eqref{defrel2} with $(i,j) = (0,1)$ by Proposition \ref{prop:relations2}.
They satisfy the second relation in \eqref{defrel3} and the relations in \eqref{defrel4} by Proposition \ref{prop:relations1}.

It remains to show that they satisfy the first relation in \eqref{defrel3}.
Take a nonzero vector $v \in E_\mu^* E_\lambda V$ for some $\mu = (\mu_1, \mu_2, \ldots, \mu_N) \in \lbrace 0,1 \rbrace^N$, $\lambda \subseteq \lbrace 1, 2, \ldots, N\rbrace$.
By Lemmas \ref{lem:RmLmchiy} and \ref{lem:LmRmchiy},
we have
\[
\left(\widehat{e}_0^+\widehat{e}_0^- - \widehat{e}_0^-\widehat{e}_0^+\right)v
=
-
\left(q^{(1-N)/2}\sum_m (-1)^{\mu_m} q^{\kappa(m,\mu,\lambda)} \right)v,
\]
where the sum is taken over all $1 \le m \le N$ with $m \not\in \lambda$.
On the other hand,
by the definition of $K_m$,
we have
\[
\left(\frac{\widehat{k}_0 - \widehat{k}_0^{-1}}{q^{1/2}-q^{-1/2}}\right) v
=
\left(\frac{q^{|\mu|-N/2} - q^{N/2 - |\mu|}}{q^{1/2}-q^{-1/2}}\right) v
\]
By Lemma \ref{lem:mathfrakq}, it turns out that both scalars are the same
and so
$\widehat{e}_0^+, \widehat{e}_0^-, \widehat{k}_0, \widehat{k}_0^{-1}$ satisfy the first relation in \eqref{defrel3}.
Similarly, 
$\widehat{e}_1^+, \widehat{e}_1^-, \widehat{k}_1, \widehat{k}_1^{-1}$ satisfy the first relation in \eqref{defrel3}.
\end{proof}

\begin{cor}\label{hom}
Let $\alpha_1,\alpha_2, \ldots, \alpha_N$ denote nonzero scalars.
There exists an algebra homomorphism from
$U_{q^{1/2}}(\widehat{\mathfrak{sl}}_2)$ to
$\mathcal{H}$ that sends
\begin{align*}
&
e_0^{+} \mapsto q^{(1-N)/2}\sum_{m=1}^N \alpha_mR_m, &&
e_1^{+} \mapsto q^{(N-1)/2}\sum_{m=1}^N (L_mR_m)^{-1}L_m, \\
&
e_0^{-} \mapsto \sum_{m=1}^N \alpha_m^{-1} L_m, &&
e_1^{-} \mapsto \sum_{m=1}^N (R_mL_m)^{-1}R_m, \\
&
k_0 \mapsto \prod_{m=1}^N K_m^{-1}, &&
k_1 \mapsto \prod_{m=1}^N K_m.
\end{align*}
\end{cor}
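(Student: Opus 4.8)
The plan is to deduce the corollary directly from Theorem \ref{support} together with the fact that $U_{q^{1/2}}(\widehat{\mathfrak{sl}}_2)$ is presented by the Chevalley generators subject to the relations \eqref{defrel1}--\eqref{defrel4}. The first task is to check that each of the eight target expressions is genuinely an element of $\mathcal{H}$. For $\prod_{m=1}^N K_m^{\pm 1}$ this is clear: $K_m \in \mathcal{K} \subseteq \mathcal{H}$, and since every coefficient $q^{1/2-\mu_m}$ in $K_m = \sum_\mu q^{1/2-\mu_m}E_\mu^*$ is nonzero, the inverse $K_m^{-1} = \sum_\mu q^{\mu_m - 1/2}E_\mu^*$ again lies in $\mathcal{K}$. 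Likewise $\sum_m \alpha_m R_m$ and $\sum_m \alpha_m^{-1}L_m$ lie in $\mathcal{H}$ by definition. The only expressions needing an argument are $(L_mR_m)^{-1}L_m$ and $(R_mL_m)^{-1}R_m$, which in \eqref{modifiedL} and \eqref{modifiedR} are defined through their action on the subspaces $E_\mu^* E_\lambda V$ rather than as actual matrix products. Using the decomposition of $V$ into the spaces $E_\mu^* E_\lambda V$ and the fact that $E_\lambda \in \mathcal{H}$ by Proposition \ref{prop:Eilambda}, I would rewrite
\[
(L_mR_m)^{-1}L_m = \sum_{\mu,\lambda} q^{-\kappa(m,\mu,\lambda)}\, L_m E_\mu^* E_\lambda,
\]
where the sum runs over all pairs $(\mu,\lambda)$ with $\mu \in \lbrace 0,1 \rbrace^N$ and $\lambda \subseteq \lbrace 1,2,\ldots,N\rbrace$ satisfying (ii) in Lemma \ref{LSmuLTmu}; the terms with $m \in S_\mu$ or $m \in \lambda$ vanish because then $L_m E_\mu^* E_\lambda = 0$ by Lemma \ref{Lmchiy0}, so this agrees with \eqref{modifiedL} on each $E_\mu^* E_\lambda V$, hence on all of $V$. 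Since $L_m$, $E_\mu^*$ and $E_\lambda$ all lie in $\mathcal{H}$, so does $(L_mR_m)^{-1}L_m$; the analogous identity, using Lemma \ref{Rmchiy0}, settles $(R_mL_m)^{-1}R_m$.

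Next I would observe that the standard module $V = \mathbb{C}P$ is a faithful module for $\mathrm{Mat}_P(\mathbb{C})$ under left multiplication --- a matrix annihilating every standard basis vector is $O$ --- and therefore also faithful as a module for the subalgebra $\mathcal{H}$. Consequently, any polynomial identity among elements of $\mathcal{H}$ that holds as operators on $V$ holds already in $\mathcal{H}$. Theorem \ref{support} asserts precisely that the eight expressions appearing in its table satisfy the defining relations \eqref{defrel1}--\eqref{defrel4} of $U_{q^{1/2}}(\widehat{\mathfrak{sl}}_2)$ as operators on $V$; by faithfulness, these same relations hold among the corresponding elements of $\mathcal{H}$.

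Finally, appealing to the presentation of $U_{q^{1/2}}(\widehat{\mathfrak{sl}}_2)$ in Definition \ref{Uqsl2hat} as the associative $\mathbb{C}$-algebra on the generators $e_i^{\pm}, k_i, k_i^{-1}$ modulo \eqref{defrel1}--\eqref{defrel4}, the assignment sending each Chevalley generator to the corresponding element of $\mathcal{H}$ (as listed in the statement, with $k_i^{-1}$ going to the product $\prod_m K_m^{\mp 1}$ forced by $k_i k_i^{-1}=1$) extends uniquely to an algebra homomorphism $U_{q^{1/2}}(\widehat{\mathfrak{sl}}_2) \to \mathcal{H}$, which is the desired map. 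I do not expect a genuine obstacle in this corollary, since its substance has already been absorbed into Theorem \ref{support} and Proposition \ref{prop:Eilambda}; the one point I would be careful to spell out is the membership $(L_mR_m)^{-1}L_m,\ (R_mL_m)^{-1}R_m \in \mathcal{H}$, because these symbols are defined by an action rather than as products of elements already known to lie in $\mathcal{H}$, and the rewriting displayed above via the $E_\lambda$ is what makes this rigorous.
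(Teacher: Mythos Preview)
Your proof is correct and follows the same approach as the paper, whose proof reads in full ``Immediate from Proposition \ref{support}.'' You have simply made explicit two points the paper leaves to the reader: that $(L_mR_m)^{-1}L_m$ and $(R_mL_m)^{-1}R_m$ genuinely lie in $\mathcal{H}$ (via the $E_\mu^*E_\lambda$ decomposition and Proposition \ref{prop:Eilambda}), and that faithfulness of $V$ transfers operator identities to identities in $\mathcal{H}$.
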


\begin{proof}
Immediate from Proposition \ref{support}.
\end{proof}

The algebra homomorphism in Corollary \ref{hom} turns an $\mathcal{H}$-module into a $U_{q^{1/2}}(\widehat{\mathfrak{sl}}_2)$-module.

\begin{lem}\label{lem:Uqsl2hataction}
Let $\mu \in \lbrace 0,1 \rbrace^N$ and $\lambda \subseteq \lbrace 1, 2, \ldots, N \rbrace$ 
satisfy (ii) in Lemma \ref{LSmuLTmu} where $\lambda$ is column-full with respect to $\mu$ in Definition \ref{columnfull}.
Let $W_{\mu, \lambda}$ denote an irreducible $\mathcal{H}$-module with endpoint $\mu$ and shape $\lambda$.
The basis \eqref{basis} for $W_{\mu, \lambda}$
has the following actions of Chevalley generators
via the algebra homomorphism in Corollary \ref{hom}.
\begin{align}
e_0^+ w(\varepsilon)
&=
q^{(1-N)/2}\sum_{m=1}^N \alpha_m
q^{\varepsilon_{m+1}+ \cdots + \varepsilon_N}
w(\varepsilon + \widehat{m}), \label{e0+w}\\
e_1^+ w(\varepsilon)
&=
q^{(N-1)/2}\sum_{m=1}^N
q^{-(\varepsilon_{m+1}+ \cdots + \varepsilon_N)}
w(\varepsilon - \widehat{m}),\label{e1+w} \\
e_0^- w(\varepsilon)
&=
\sum_{m=1}^N \alpha_m^{-1}
\theta_m(\mu,\lambda)
q^{-(\varepsilon_1+ \cdots + \varepsilon_{m-1})}
w(\varepsilon - \widehat{m}), \label{e0-w}\\
e_1^- w(\varepsilon)
&=
\sum_{m=1}^N
\theta_m(\mu,\lambda)^{-1}
q^{\varepsilon_1+ \cdots + \varepsilon_{m-1}}
w(\varepsilon + \widehat{m}),\label{e1-w} \\
k_0 w(\varepsilon)
&= q^{-N/2 + |\mu| + |\varepsilon|}w(\varepsilon),\label{k0w}\\
k_1 w(\varepsilon)
&= q^{N/2 - |\mu| - |\varepsilon|}w(\varepsilon),\label{k1w}
\end{align}
where $\varepsilon = (\varepsilon_1, \varepsilon_2, \ldots, \varepsilon_N) \in \lbrace 0,1 \rbrace^N$.
Here
we define $w(\varepsilon) = 0$
if $\varepsilon$ is not of the form
in \eqref{basis}.
\end{lem}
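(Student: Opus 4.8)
The plan is to derive every displayed identity by a direct computation, feeding the images of the Chevalley generators from Corollary \ref{hom} into the explicit $\mathcal{H}$-action on the basis \eqref{basis} of $W_{\mu,\lambda}$ recorded in Propositions \ref{prop:actLR} and \ref{prop:actK}, and unwinding the definitions \eqref{modifiedL}, \eqref{modifiedR} of the modified operators $(L_mR_m)^{-1}L_m$ and $(R_mL_m)^{-1}R_m$. The organizing observation is that for each admissible $\varepsilon$ one has $w(\varepsilon)\in E_{\mu+\varepsilon}^*E_\lambda V$, so that $\mu+\varepsilon$ is the ``endpoint'' of the module vector $w(\varepsilon)$ while the shape stays $\lambda$; this is exactly the datum that the scalars in \eqref{modifiedL} and \eqref{modifiedR} depend on.

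First I would handle the generators not involving the modified operators. Applying $\prod_{m}K_m^{\pm1}$ to $w(\varepsilon)$ and using $K_mw(\varepsilon)=q^{1/2-(\mu_m+\varepsilon_m)}w(\varepsilon)$ from Proposition \ref{prop:actK} gives the scalar $q^{\pm(N/2-|\mu|-|\varepsilon|)}$, which is precisely \eqref{k0w} and \eqref{k1w}. For $e_0^-=\sum_m\alpha_m^{-1}L_m$ and $e_0^+=q^{(1-N)/2}\sum_m\alpha_mR_m$ one substitutes the $L_m$- and $R_m$-actions of Proposition \ref{prop:actLR} verbatim; the terms with $m\in\lambda$ or with $w(\varepsilon\mp\widehat m)$ outside the admissible range drop out (equivalently, by Lemmas \ref{Lmchiy0} and \ref{Rmchiy0}), and writing $\theta_m(\mu,\lambda)=q^{\kappa(m,\mu,\lambda)}$ for $m\notin\lambda$ (and $0$ otherwise) the coefficient $q^{\kappa(m,\mu,\lambda)-(\varepsilon_1+\cdots+\varepsilon_{m-1})}$ coming from $L_m$ reorganizes into $\theta_m(\mu,\lambda)\,q^{-(\varepsilon_1+\cdots+\varepsilon_{m-1})}$. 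This yields \eqref{e0+w} and \eqref{e0-w}.

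The substantial part is $e_1^+$ and $e_1^-$. Here the scalar attached by \eqref{modifiedL}, \eqref{modifiedR} to $w(\varepsilon)$ is $q^{-\kappa(m,\mu+\varepsilon,\lambda)}$, with the shifted endpoint $\mu+\varepsilon$, not $\mu$. The key step I would isolate and prove is the shift identity
\[
\kappa(m,\mu+\varepsilon,\lambda)=\kappa(m,\mu,\lambda)-(\varepsilon_1+\cdots+\varepsilon_{m-1})+(\varepsilon_{m+1}+\cdots+\varepsilon_N)
\]
for admissible $\varepsilon$ with $\varepsilon_m$ such that the relevant image $w(\varepsilon\mp\widehat m)$ is nonzero. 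This follows directly from the defining formula \eqref{kappa}: using that $m\notin\lambda$ forces $m\in S_\mu$, each coordinate $n\neq m$ with $\varepsilon_n=1$ lies in $S_\mu\setminus\lambda$ and is moved into $T_{\mu+\varepsilon}$, so it removes one from $|S_\mu(m-1)\setminus\lambda|$ when $n<m$ and adds one to $|T_\mu(m+1)\setminus\lambda|$ when $n>m$, while $|\lambda|/2$ is unchanged and the value of $\varepsilon_m$ is irrelevant to both counts. Plugging this identity into $(L_mR_m)^{-1}L_mw(\varepsilon)=q^{-\kappa(m,\mu+\varepsilon,\lambda)}L_mw(\varepsilon)$ and $(R_mL_m)^{-1}R_mw(\varepsilon)=q^{-\kappa(m,\mu+\varepsilon,\lambda)}R_mw(\varepsilon)$, and then using Proposition \ref{prop:actLR} for $L_mw(\varepsilon)$ and $R_mw(\varepsilon)$, the powers of $q$ telescope: the $\kappa(m,\mu,\lambda)$ cancels, leaving $q^{-(\varepsilon_{m+1}+\cdots+\varepsilon_N)}w(\varepsilon-\widehat m)$ for $(L_mR_m)^{-1}L_m$ (so, after the prefactor $q^{(N-1)/2}$, formula \eqref{e1+w}) and $\theta_m(\mu,\lambda)^{-1}q^{\varepsilon_1+\cdots+\varepsilon_{m-1}}w(\varepsilon+\widehat m)$ for $(R_mL_m)^{-1}R_m$ (formula \eqref{e1-w}). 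Summing over $m$ completes the proof.

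I expect the only genuine points of care to be (a) establishing the shift identity for $\kappa$ and correctly tracking that the scalars in \eqref{modifiedL}, \eqref{modifiedR} are read off from the endpoint $\mu+\varepsilon$ of the intermediate vector, and (b) the bookkeeping of which of $w(\varepsilon\pm\widehat m)$ vanish (namely whenever $m\in\lambda$, or whenever the $m$-th coordinate of $\varepsilon$ already has the forbidden value, recalling $\varepsilon_m=0$ for $m\in\lambda$ by \eqref{basis}); everything else is routine substitution and cancellation.
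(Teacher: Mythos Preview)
Your proposal is correct and follows the same route the paper intends: the paper's own proof is the single line ``Use Propositions \ref{prop:actLR}, \ref{prop:actK} and Corollary \ref{hom}'', i.e.\ exactly the direct substitution you describe. What you add, and what the paper leaves implicit, is the identification of the one nontrivial point hidden in that substitution: to apply the definitions \eqref{modifiedL}, \eqref{modifiedR} to $w(\varepsilon)$ one must read the scalar $q^{-\kappa}$ off the pair $(\mu+\varepsilon,\lambda)$, which in turn rests on knowing that $w(\varepsilon)\in E_{\mu+\varepsilon}^*E_\lambda V$ and on your shift identity $\kappa(m,\mu+\varepsilon,\lambda)=\kappa(m,\mu,\lambda)-(\varepsilon_1+\cdots+\varepsilon_{m-1})+(\varepsilon_{m+1}+\cdots+\varepsilon_N)$. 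Your derivation of this identity from \eqref{kappa} and the column-full hypothesis $T_\mu\subseteq\lambda$ is correct, and once it is in hand the cancellations producing \eqref{e1+w} and \eqref{e1-w} go exactly as you say. The only thing you might make more explicit is why the shape really stays $\lambda$: since each $E_{\mu+\varepsilon}^*W_{\mu,\lambda}$ is one-dimensional and the $E_{\lambda'}$ are mutually annihilating idempotents in $\mathcal{H}$, the vector $w(\varepsilon)$ lies in a single $E_{\lambda'}V$, and your shift identity (matching the eigenvalues of $R_mL_m+L_mR_m$ from Lemmas \ref{lem:RmLmchiy}, \ref{lem:LmRmchiy} with those coming from Proposition \ref{prop:actLR}) then forces $\lambda'=\lambda$.
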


\begin{proof}
Use Propositions \ref{prop:actLR}, \ref{prop:actK} and Corollary \ref{hom}.
\end{proof}

\begin{lem}\label{lem:dm}
Let $\mu \in \lbrace 0,1 \rbrace^N$ and $\lambda \subseteq \lbrace 1, 2, \ldots, N \rbrace$ 
satisfy (ii) in Lemma \ref{LSmuLTmu} where $\lambda$ is column-full with respect to $\mu$ in Definition \ref{columnfull}.
We define 
$d = (d_1,d_2,\ldots,d_N) \in \lbrace 0,1 \rbrace^N$ by
\begin{align*}
d_m = \begin{cases}
1 & \text{if $m \not\in \lambda$}, \\
0 & \text{if $m \in \lambda$},
\end{cases}
&&
(1 \le m \le N).
\end{align*}
Then we have the following.
\begin{enumerate}
\item $|d| = N - 2|\mu|$.
\item If $m \not\in \lambda$, then $\kappa(m, \mu, \lambda) = (N-1)/2 + (d_1 + \cdots + d_{m-1})/2 - (d_{m+1} + \cdots + d_N)/2$ defined in \eqref{kappa}.
\end{enumerate}
\end{lem}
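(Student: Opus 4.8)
The plan is to reduce both parts to elementary counting, exploiting the hypothesis that $\lambda$ is column-full with respect to $\mu$.

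The first step, which underlies everything, is to record that $|\lambda| = 2|\mu|$. Column-fullness means $T_\mu \subseteq \lambda$, so $\lambda \cap T_\mu = T_\mu$ and hence $|\lambda \cap T_\mu| = |T_\mu| = |\mu|$; and since $\lambda$ satisfies (ii) in Lemma \ref{LSmuLTmu}, the pair $(\lambda \cap S_\mu, \lambda \cap T_\mu)$ satisfies (i) in Lemma \ref{PIcond}, giving $|\lambda \cap S_\mu| = |\lambda \cap T_\mu| = |\mu|$. As $\{1,\dots,N\}$ is the disjoint union of $S_\mu$ and $T_\mu$, we obtain $|\lambda| = |\lambda \cap S_\mu| + |\lambda \cap T_\mu| = 2|\mu|$. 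Part (i) is then immediate: by definition $|d|$ is the number of indices $m$ with $m \notin \lambda$, which is $N - |\lambda| = N - 2|\mu|$.

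For part (ii) I would expand $\kappa(m,\mu,\lambda)$ via its definition \eqref{kappa}, namely $\kappa(m,\mu,\lambda) = |S_\mu(m-1)\setminus\lambda| + |T_\mu(m+1)\setminus\lambda| + |\lambda|/2$, and simplify each summand under the column-full hypothesis. The middle term vanishes because $T_\mu(m+1) \subseteq T_\mu \subseteq \lambda$. For the first term, the set $\{\, n : 1 \le n \le m-1,\ n \notin \lambda \,\}$ meets $T_\mu$ trivially (again $T_\mu \subseteq \lambda$), so it coincides with $S_\mu(m-1)\setminus\lambda$, and its cardinality is exactly $d_1 + \cdots + d_{m-1}$. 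Finally $|\lambda|/2 = |\mu|$ by the first step. Hence $\kappa(m,\mu,\lambda) = (d_1 + \cdots + d_{m-1}) + |\mu|$.

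It then remains to match this with the asserted symmetric expression. Writing $A = d_1 + \cdots + d_{m-1}$ and $B = d_{m+1} + \cdots + d_N$, and using $d_m = 1$ (since $m \notin \lambda$) together with part (i), we have $A + 1 + B = |d| = N - 2|\mu|$, so $B = N - 1 - 2|\mu| - A$; substituting this into $(N-1)/2 + A/2 - B/2$ gives $A + |\mu|$, which agrees with the expression from the previous step. I do not anticipate a genuine obstacle in this lemma; the only points needing care are that it is precisely the column-full hypothesis that makes the $T_\mu$-contributions disappear, and that the combination of half-integers is in fact an integer (it is, since $N - 1 + A - B = 2A + 2|\mu|$).
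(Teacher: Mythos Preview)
Your proof is correct and follows essentially the same route as the paper: both establish $|\lambda|=2|\mu|$ from column-fullness and condition (i) of Lemma~\ref{PIcond}, then in part~(ii) observe $|T_\mu(m+1)\setminus\lambda|=0$ and $|S_\mu(m-1)\setminus\lambda|=d_1+\cdots+d_{m-1}$ and finish with the same arithmetic (the paper writes $|\lambda|/2=N/2-(d_1+\cdots+d_N)/2$ and substitutes $d_m=1$, which is algebraically equivalent to your substitution via $A+1+B=N-2|\mu|$). Your justification of $|\lambda|=2|\mu|$ is in fact more explicit than the paper's.
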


\begin{proof}
(i)
By the definition of $d$, we have $|d| = N - |\lambda|$.
By the assumption, we have $|\lambda| = 2|\mu|$ and so
the result follows.

(ii)
Assume $m \not\in \lambda$.
Observe that
\begin{align*}
|S_\mu(m-1) \setminus \lambda| = d_1 + \cdots + d_{m-1},
&&
|T_\mu(m+1) \setminus \lambda| = 0.
\end{align*}
By the definition of $d$, 
\[
|\lambda|/2 = N/2 - (d_1 + \cdots + d_N)/2.
\]
Hence the result follows from the above comments and $d_m = 1$.
\end{proof}

\begin{thm}\label{isomorphic}
Let $\mu \in \lbrace 0,1 \rbrace^N$ and $\lambda \subseteq \lbrace 1, 2, \ldots, N \rbrace$
satisfy (ii) in Lemma \ref{LSmuLTmu} where $\lambda$ is column-full with respect to $\mu$ in Definition \ref{columnfull}.
Let $W_{\mu, \lambda}$ denote an irreducible $\mathcal{H}$-module with endpoint $\mu$ and shape $\lambda$.
Then by the algebra homomorphism in Corollary \ref{hom},
$W_{\mu, \lambda}$ becomes a $U_{q^{1/2}}(\widehat{\mathfrak{sl}}_2)$-module
and we have the following.
\begin{enumerate}
\item $W_{\mu, \lambda}$ has type $(1,1)$.
\item $W_{\mu, \lambda}$ is isomorphic to
the tensor product of $V_1(\alpha_m)$,
where $1 \le m \le N$ such that $m \not\in \lambda$.
\end{enumerate}
\end{thm}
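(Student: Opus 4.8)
The plan is to exhibit an explicit isomorphism of $U_{q^{1/2}}(\widehat{\mathfrak{sl}}_2)$-modules from $W_{\mu,\lambda}$ onto the tensor product $V_{d_1}(\alpha_1)\otimes\cdots\otimes V_{d_N}(\alpha_N)$, where $d=(d_1,\ldots,d_N)$ is the $\lbrace 0,1\rbrace$-vector of Lemma \ref{lem:dm}, and then to discard the trivial tensor factors; part (i) will come out along the way.

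First I would simplify the $U_{q^{1/2}}(\widehat{\mathfrak{sl}}_2)$-action on $V_{d_1}(\alpha_1)\otimes\cdots\otimes V_{d_N}(\alpha_N)$ given in Section \ref{Uq(sl2hat)}, bearing in mind that there $q$ is to be read as $q^{1/2}$. Since each $d_m\le 1$, every quantum integer $[\varepsilon_m+1]_{q^{1/2}}$ or $[d_m-\varepsilon_m+1]_{q^{1/2}}$ that multiplies a nonzero term in \eqref{e0+u}--\eqref{e1-u} equals $[1]_{q^{1/2}}=1$, and \eqref{k0u}, \eqref{k1u} read $k_0u(\varepsilon)=(q^{1/2})^{2|\varepsilon|-|d|}u(\varepsilon)$, $k_1u(\varepsilon)=(q^{1/2})^{|d|-2|\varepsilon|}u(\varepsilon)$. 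Comparing with Lemma \ref{lem:Uqsl2hataction}, these are precisely the formulas \eqref{e0+w}--\eqref{k1w} except that the scalar in front of $u(\varepsilon\pm\widehat{m})$ and the scalar in front of $w(\varepsilon\pm\widehat{m})$ differ by a factor depending only on $m$; here Lemma \ref{lem:dm}(i) ($|d|=N-2|\mu|$) identifies the $k_0,k_1$ eigenvalues, and Lemma \ref{lem:dm}(ii), which says $\kappa(m,\mu,\lambda)=(N-1)/2+(d_1+\cdots+d_{m-1})/2-(d_{m+1}+\cdots+d_N)/2$ for $m\notin\lambda$, is exactly what aligns the $e_0^-,e_1^-$ formulas (recall $\theta_m(\mu,\lambda)=q^{\kappa(m,\mu,\lambda)}$).

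To absorb those extra factors I would introduce nonzero scalars $c(\varepsilon)$, indexed by $\varepsilon\in\lbrace 0,1\rbrace^N$ with $\varepsilon_m\le d_m$, determined by $c(\mathbf{0})=1$ and $c(\varepsilon+\widehat{m})/c(\varepsilon)=q^{(N-1)/2-(d_{m+1}+\cdots+d_N)/2}$; this is well defined since the right-hand side depends only on $m$, making the prescription path-independent. The linear bijection $\Psi\colon W_{\mu,\lambda}\to V_{d_1}(\alpha_1)\otimes\cdots\otimes V_{d_N}(\alpha_N)$, $w(\varepsilon)\mapsto c(\varepsilon)u(\varepsilon)$, is then checked to intertwine $e_0^{\pm},e_1^{\pm},k_0,k_1$ one generator at a time by plugging in the explicit formulas: it is immediate for $k_0,k_1$; it is exactly the defining recursion for $c$ in the cases $e_0^+$ and $e_1^+$; and it follows from the same recursion together with the formula for $\theta_m(\mu,\lambda)$ from Lemma \ref{lem:dm}(ii) in the cases $e_0^-$ and $e_1^-$. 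Hence $W_{\mu,\lambda}\cong V_{d_1}(\alpha_1)\otimes\cdots\otimes V_{d_N}(\alpha_N)$, and reading off the $k_0,k_1$ eigenvalues $(q^{1/2})^{2|\varepsilon|-|d|}$, which are integral powers of $q^{1/2}$, gives (i).

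Finally, $V_0(\alpha)$ is the one-dimensional trivial $U_{q^{1/2}}(\widehat{\mathfrak{sl}}_2)$-module, independent of $\alpha$, hence a unit for the tensor product; deleting the factors $V_{d_m}(\alpha_m)=V_0(\alpha_m)$ with $m\in\lambda$ therefore yields $V_{d_1}(\alpha_1)\otimes\cdots\otimes V_{d_N}(\alpha_N)\cong\bigotimes_{m\notin\lambda}V_1(\alpha_m)$, taken in increasing order of $m$, which together with the isomorphism above is (ii). I expect the only real obstacle to be the careful $q\mapsto q^{1/2}$ bookkeeping of the exponents in \eqref{e0+u}--\eqref{k1u} and the verification that the single family $c(\varepsilon)$ serves all four $e$-generators simultaneously; once Lemma \ref{lem:dm} is available this is routine but must be carried out attentively.
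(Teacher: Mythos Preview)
Your proof is correct and follows essentially the same approach as the paper: both construct an explicit diagonal isomorphism $w(\varepsilon)\leftrightarrow c(\varepsilon)u(\varepsilon)$ between $W_{\mu,\lambda}$ and $V_{d_1}(\alpha_1)\otimes\cdots\otimes V_{d_N}(\alpha_N)$, with the scaling determined by the same recursion (the paper writes the map in the opposite direction, so its $\gamma(\varepsilon)$ is your $c(\varepsilon)^{-1}$), and both rely on Lemma~\ref{lem:dm} to match the $e_0^-,e_1^-$ actions. Your explicit remark that the $V_0(\alpha_m)$ factors with $m\in\lambda$ are trivial and may be dropped is left implicit in the paper's ``it suffices to show,'' but the arguments are otherwise the same.
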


\begin{proof}
(i)
This follows from \eqref{k0w} and \eqref{k1w}.

(ii)
Recall $(d_1,d_2,\ldots,d_N) \in \lbrace 0,1 \rbrace^N$ from Lemma \ref{lem:dm}.
It suffices to show that
\[
W_{\mu, \lambda}
\simeq
V_{d_1}(\alpha_1) \otimes \cdots \otimes V_{d_N}(\alpha_N).
\]
Recall the basis $w(\varepsilon)$ in \eqref{basis} for $W_{\mu, \lambda}$
and the basis $u(\varepsilon)$ in \eqref{basisfortensor}
for $V_{d_1}(\alpha_1) \otimes \cdots \otimes V_{d_N}(\alpha_N)$,
where
$\varepsilon = (\varepsilon_1, \varepsilon_2, \ldots, \varepsilon_N) \in \lbrace 0,1 \rbrace^N$
such that
$w(\varepsilon) = 0$ and $u(\varepsilon) = 0$ 
if $d_m < \varepsilon_m$ for some $1 \le m \le N$.
We define a linear map
$\varphi$ from $V_{d_1}(\alpha_1) \otimes \cdots \otimes V_{d_N}(\alpha_N)$ to $W_{\mu, \lambda}$
that sends
$u(\varepsilon)$ to $\gamma(\varepsilon)w(\varepsilon)$,
where
\[
\gamma(\varepsilon) = 
q^{|\varepsilon|(1-N)/2}
\prod_{m \in T_\varepsilon} q^{(d_{m+1} + \cdots + d_N)/2}.
\]
We check $\varphi$ preserves the actions of Chevalley generators.
Observe that
\begin{equation}
\gamma(\varepsilon) = q^{(N-1)/2}q^{-(d_{m+1} + \cdots + d_N)/2}\gamma(\varepsilon + \widehat{m})
\label{eq:gamma}
\end{equation}
for $\varepsilon \in \lbrace 0,1 \rbrace^N$.

By \eqref{k0u} and  \eqref{k0w} and Lemma \ref{lem:dm} (i),
$\varphi$ preserves the action of $k_0$.
By \eqref{k1u} and  \eqref{k1w} and Lemma \ref{lem:dm} (i),
$\varphi$ preserves the action of $k_1$.
By \eqref{e0+u}, \eqref{e0+w} and \eqref{eq:gamma},
the map $\varphi$ preserves the action of $e_0^+$.
By \eqref{e1+u}, \eqref{e1+w} and \eqref{eq:gamma},
the map $\varphi$ preserves the action of $e_1^+$.
By \eqref{e0-u}, \eqref{e0-w}, \eqref{eq:gamma} and Lemma \ref{lem:dm} (ii),
the map $\varphi$ preserves the action of $e_0^-$.
By \eqref{e1-u}, \eqref{e1-w}, \eqref{eq:gamma} and Lemma \ref{lem:dm} (ii),
the map $\varphi$ preserves the action of $e_1^-$.
\end{proof}

\section*{Acknowledgments}
The author thanks his advisor, Hajime Tanaka,
for many valuable discussions and comments.
The author also thanks Paul Terwilliger
for giving valuable comments.
Finally, the author thanks the anonymous referee for valuable comments and useful suggestions.

\bigskip

\noindent
Yuta Watanabe \\
Graduate School of Information Sciences \\
Tohoku University \\
Sendai, 980-8579 Japan\\
email: \texttt{watanabe@ims.is.tohoku.ac.jp}

\bigskip

\noindent
Present Address:\\
Department of Mathematics\\
National Institute of Technology, Ube College\\
Ube, 755-8555 Japan\\
email: \texttt{ywatanabe@ube-k.ac.jp}

\end{document}